\newtheorem{theorem}{Theorem}[section]
\newtheorem{lemma}[theorem]{Lemma}
\newtheorem{corollary}[theorem]{Corollary}
\newtheorem{proposition}[theorem]{Proposition}
\newtheorem{question}[theorem]{Question}
\newtheorem{fact}[theorem]{Fact}
\newtheorem{em-remark}[theorem]{Remark}
\newtheorem{em-remarks}[theorem]{Remarks}
\newenvironment{remark}{\begin{em-remark} \em }{\end{em-remark}}
\theoremstyle{definition}
\newtheorem{example}[theorem]{Example}
\newtheorem{definition}[theorem]{Definition}
\numberwithin{equation}{section}
\renewcommand{\phi}{\varphi}
\renewcommand{\theta}{\vartheta}
\newcommand\Pf{\mathcal P_{\mathrm{fin}}}
\def\Ab{{\mathbf{Ab}}}
\def\CompS{{\mathbf{CompSp}}}
\def\Comp{{\mathbf{CompAb}}}
\def\CompG{{\mathbf{CompGr}}}
\newcommand\Ker{\mathrm{Ker}}
\newcommand{\U}{\mathcal U}
\def\N{{\mathbb N}}
\def\T{{\mathbb T}}
\def\R{{\mathbb R}}
\def\Z{{\mathbb Z}}
\def\C{{\mathcal C}}
\def\U{{\cal U}}
\def\V{{\cal V}}
\def\W{{\cal W}}
\def\cov{\mbox{{\rm cov}}}
\def\lmod#1{#1\text{-}\mathrm{Mod}}
\newcommand{\dual}[1]{{#1}^\wedge}
\def\Ob{\mathrm{Ob}}
\def\id{\mathrm{id}}
\def\F{{\cal F}}
\def\V{{\cal V}}
\def\W{{\cal W}}
\def\LCA{{\mathbf{LCA}}}
\def\cov{\mbox{{\rm cov}}}
\def\supp{\mathrm{supp}}
\def\lre{\mathrm{LRep}}
\def\rre{\mathrm{RRep}}
\def\U{{\cal U}}
\def\F{{\cal F}}
\def\V{{\cal V}}
\def\W{{\cal W}}
\def\End{\mathrm{End}}
\def\M{\mathfrak M}
\def\nl{\bar{\lambda}}
\def\op{\mathrm{op}}
\def\L{\mathcal L}
\def\M{\mathcal M}
\def\cl{\mathrm{cl}}
\def\op{\mathrm{o}}
\def\top{\mathrm{top}}
\def\alg{\mathrm{alg}}
\def\fin{\mathrm{fin}}
\def\cov{\mathrm{cov}}
\def\hom{\mathrm{Hom}}
\def\lmod#1{#1\text{-}\mathrm{Mod}}
\def\d{\mathrm{d}}
\def\s{{\frak s}}
\def\P{\mathfrak P}
\newlength{\bibitemsep}\setlength{\bibitemsep}{.0\baselineskip plus .0\baselineskip minus .0\baselineskip}
\newlength{\bibparskip}\setlength{\bibparskip}{-1pt}
\let\oldthebibliography\thebibliography
\renewcommand\thebibliography[1]{%
  \oldthebibliography{#1}%
  \setlength{\parskip}{\bibitemsep}%
  \setlength{\itemsep}{\bibparskip}%
}
\def\la#1{\mathrlap{\scalebox{.73}{\raisebox{1.35ex}{\hspace{6pt}$#1$}}}{\raisebox{-.25ex}{$\ \curvearrowright\ $}}}
\def\longla#1{\hspace{5pt}\mathrlap{\scalebox{.73}{\raisebox{1.4ex}{\hspace{-3pt}$#1$}}}{\raisebox{-.3ex}{$\ \curvearrowright\ $}}\hspace{6pt}}
\def\ra#1{\mathrlap{\scalebox{.73}{\raisebox{1.35ex}{\hspace{7pt}$#1$}}}{\raisebox{-.25ex}{$\ \curvearrowleft\ $}}}
\def\longra#1{\hspace{5pt}\mathrlap{\scalebox{.73}{\raisebox{1.4ex}{\hspace{-2.5pt}$#1$}}}{\raisebox{-.3ex}{$\ \curvearrowleft\ $}}\hspace{6pt}}
\title
{Ore localization of amenable monoid actions and applications towards entropy -- addition formulas and the bridge theorem}
\author{Dikran Dikranjan$^*$ \and Anna Giordano Bruno\footnote{The first and second named authors are members of GNSAGA - INdAM.} \and Simone Virili\footnote{The author was partially supported by MINECO (grant No. PID2020-113047GB-I00) and by the Generalitat de Catalunya as part of the research group ``Laboratori d'Interaccions entre Geometria, \`Algebra i Topologia (LIGAT)'' (grant No. 2021 SGR 01015). }}
\date{}
\begin{document}

\maketitle

\begin{abstract}
For a left action $S\la{\lambda}X$ of a cancellative right amenable monoid $S$ on a discrete Abelian group $X$, we construct its Ore localization $G\la{\lambda^*}X^*$, where $G$ is the group of left fractions of $S$; analogously, for a right action $K\ra{\rho}S$ on a compact space $K$, we construct its Ore colocalization $K^*\ra{\rho^*} G$. Both constructions preserve entropy, i.e., for the algebraic entropy $h_\alg$ and for the topological entropy $h_\top$ one has $h_\alg(\lambda)=h_\alg(\lambda^*)$ and $h_\top(\rho)=h_\top(\rho^*)$, respectively.

Exploiting these constructions and the theory of quasi-tilings, we extend the Addition Theorem for $h_\top$, known for right actions of countable amenable  groups on compact metrizable groups~\cite{Li}, to right actions $K\ra{\rho} S$ of cancellative right amenable monoids $S$ (with no restrictions on the cardinality) on arbitrary compact groups $K$.

When the compact group $K$ is Abelian, we prove that $h_\top(\rho)$ coincides with $h_\alg(\dual{\rho})$, where  $S\la{\dual{\rho}} X$ is the dual left action on the discrete Pontryagin dual $X=\dual{K}$, that is, a so-called Bridge Theorem.
From the Addition Theorem for $h_\top$ and the Bridge Theorem, we obtain an Addition Theorem for $h_\alg$ for left actions $S\la{\lambda} X$ on discrete Abelian groups, so far known only under the hypotheses that either $X$ is torsion~\cite{DFGB} or $S$ is locally monotileable~\cite{DFGS}. 

The proofs substantially use the unified approach towards  entropy based on the entropy of actions of cancellative right amenable monoids on appropriately defined normed monoids, as developed in \cite{DGB4} (for $\N$-actions) and \cite{VBT}.
\end{abstract}

\medskip
\noindent {\small Keywords: amenable monoid action, topological entropy, algebraic entropy, Ore (co)localization, 
addition theorem, bridge theorem.}

\noindent {\small MSC2020. Primary: 18A40, 20K30, 37B40, 54C70, 54H15. Secondary: 18F60, 20K40, 20M15,20M20, 22D35, 43A07, 54B30.}
%


\section*{Introduction}
\addcontentsline{toc}{section}{Introduction}

The notion of {\em topological entropy} $h_\top$ was first introduced by Adler, Konheim and McAndrew~\cite{AKM} in 1965 as an invariant of continuous self-maps of compact topological spaces; we denote by $\CompS$ (resp., $\mathbf{CompGr}$) the category of compact Hausdorff spaces (resp., groups) and continuous maps (resp., homomorphisms) {and in this paper when we write compact space (resp., group) we mean compact Hausdorff space (resp., group)}. Few years later, Bowen~\cite{B} introduced a notion of entropy for uniformly continuous self-maps of metric spaces (see also~\cite{Din}), later extended to general uniform spaces by Hood~\cite{Ho}. For a continuous {self-map $f\colon K\to K$ of a compact space $K$}, both Hood's extension of Bowen's entropy (taking $K$ with the unique compatible uniformity) and the original (Adler, Konheim and McAndrew's) notion of topological entropy coincide (see~\cite{DSV,GBV3} or Example~\ref{Gactionsextop2} for details).  

Let $S=(S,\cdot,{1})$ be a monoid, $K$ a compact space, and denote by $\End_{\CompS}(K)$ the monoid of continuous self-maps of $K$.
 A {\em right $S$-action} \(K\ra{\rho} S \)
is a monoid anti-homomorphism $\rho\colon S\to \End_{\CompS}(K)$, that is, letting, for each $s\in S$, $\rho_s=\rho(s)\in\End_{\CompS}(K)$,
\begin{enumerate}[({RA.}1)]
\item $\rho_{1}=\id_K$;
\item $\rho_{s t}=\rho_t\circ\rho_s$, for all $s, t\in S$.
\end{enumerate}
If $K$ is a compact group, we require that the monoid anti-homomorphism $\rho$ takes values in $\End_{\mathbf{CompGr}}(K)\subseteq \End_{\mathbf{CompSp}}(K)$, that is, we want each $\rho_s$ to be a continuous endomorphism of $K$. A continuous self-map $f\colon K\to K$ of a compact space $K$ (respectively, a continuous endomorphism of a compact group $K$) uniquely identifies a right action $\rho$ of the additive monoid $\N=(\N,+,0)$ on $K$ by setting $\rho(1)=f$ (and so $\rho(n)=f^n$ for every $n\in\N$, with the usual convention that $f^0=\id_K$). 
Using this terminology, the topological entropy can be seen as an invariant of the right $\N$-actions on compact spaces (respectively, compact groups). 

Of course, there is no reason to confine the theory of topological entropy to actions of the naturals. In fact, already in the 1980s, several authors developed a theory of topological entropy for actions of amenable groups (see, for example, \cite{Oll,OW,ST} and, for more recent results, \cite{CT,Den,Li}). A recent breakthrough is the extension of this entropy theory to actions of sofic groups by Bowen~\cite{Bo} (see also~\cite{li_kerr_book,Weisss}).

A foundational result for the theory of topological entropy of amenable group actions is the so-called Ornstein-Weiss Lemma. 
Roughly speaking, to define the topological entropy one has to study certain nets of real numbers (see \S\ref{sec:background} for details); {Lindenstrauss and Weiss~\cite{LW} (see also \cite{Krieger} for a different approach based on an idea from \cite{Gromov}) proved, using the theory of quasi-tilings of Ornstein and Weiss~\cite{OW}, that, for amenable group actions, all these nets converge.} Recently, Ceccherini-Silberstein, Coornaert and Krieger~\cite{CCK} were able to extend this convergence result to include any right action $K\ra{\rho}S$ of a cancellative and right amenable monoid $S$ on a compact space $K$, setting the bases for the study of the topological entropy of such right $S$-actions (see \S\ref{subs:top_ent}). We denote by $h_\top(\rho)$ the topological entropy of $\rho$, keeping the same notation of the case of right $\N$-actions (i.e., those induced by single continuous self-maps).

\smallskip
Any amenable group $G$ is, trivially, also a cancellative and right amenable monoid. On the other hand, any cancellative and right amenable monoid $S$ is a {\em left Ore monoid} and, therefore, it has a (necessarily amenable) group of left fractions $G=S^{-1}S$ (see \S\ref{amenable_implies_ore_subsection}). In other words, there is a natural (i.e., canonical, and even functorial) way to assign an amenable group $G$ to any cancellative and right amenable monoid $S$. We will see in \S\ref{sec_compact} that, in fact, the formation of the group of left fractions can be extended to right actions on compact spaces via the construction of the so-called {\em Ore colocalization} of a right $S$-action $K\ra\rho S$ on a compact space $K$, which is a right $G$-action $K^*\ra{\rho^*}G$ of the group of left fractions of $S$ (so necessarily an action by homeomorphisms) on a new compact space $K^*$; such colocalization can be thought of (in a sense that can be made precise) as the right $G$-action that ``best approximates'' $K\ra\rho S$. More precisely, $K^*\ra{\rho^*}G$ is the result of the following two successive modifications of $K\ra{\rho}S$. First, one restricts $\rho$ to a closed invariant subspace $\bar K$ of $K$, called the {\em surjective core} of $\rho$, to get a right $S$-action $\bar K\ra{\bar\rho}S$ by surjective continuous self-maps. This first construction already satisfies the following universal property: for any right $S$-action $K'\ra{\rho'}S$ on a compact space $K'$ by surjective continuous self-maps, any $S$-equivariant continuous map $f\colon K'\to K$  factors uniquely through the inclusion $\bar K\to K$. 
The second step then consists in building a surjective inverse system $\frak K$ of copies of $\bar K$, with transition maps suitably induced by $\bar\rho$; the inverse limit $K^*$ of  $\frak K$  admits a canonical right $S$-action $K^*\ra{}S$ by homeomorphisms, which is characterized by an analogous universal property among the right $S$-actions by homeomorphisms. This $S$-action  extends uniquely to a right $G$-action $K^*\ra{\rho^*}G$: the Ore colocalization of $K\ra{\rho}G$.
The above construction of the restricted action on the surjective core and of the Ore colocalization are remarkably well-behaved for several reasons: first, they can be directly applied (with no modification required) also to actions on compact (Abelian) groups by continuous endomorphisms, they are functorial and they both preserve the injectivity and surjectivity of equivariant morphisms. Finally and, perhaps, most importantly: they both preserve the topological entropy, that is, in the above notation, $h_\top(\rho)=h_\top(\bar \rho)=h_\top(\rho^*)$. 

\smallskip
In the sequel we discuss a deep property of the topological entropy. To this end we recall first that, for a right $S$-action $K\ra{\rho} S$ on a compact group $K$ and an $S$-invariant (not necessarily normal) subgroup $H$ of $K$, one can consider the two induced right $S$-actions 
$H\ra{\rho_H}S$ and $K/H\longra{\rho_{K/H}} S$, 
where $\rho_H$ is the restriction of $\rho$ to $H$ and $\rho_{K/H}$ is the action induced by $\rho$ on the compact space $K/H$ of left $H$-cosets in $K$. 
The equality 
\[
h_{\top}(\rho)=h_{\top}(\rho_{H})+h_{\top}(\rho_{K/H})\eqno{(\text{\sc at}_\top)}
\]
is known as Addition Theorem. It was established by Li~\cite{Li} (see also \cite{li_kerr_book,LL}) when $S$ is a countable amenable  group and $K$ is a compact metrizable group; 
in fact, this is a direct consequence of \cite[Theorem~6.1]{Li} (see also \cite[Theorem~13.47]{li_kerr_book}, including the brief discussion introducing the result).

The first of our main results  is the following extension of Li's Addition Theorem, in which we prove (\text{\sc at$_\top$})
for right $S$-actions on compact groups $K$. Therefore, our  (\text{\sc at$_\top$}) extends Li's Addition Theorem in three
different directions: first, $S$ is allowed to be non-countable, second, we pass from actions of amenable groups to actions of cancellative right amenable monoids, and third, the action is on compact groups that are not necessarily metrizable.

\medskip\noindent
{\bf Topological Addition Theorem.}
\emph{Let $S$ be a cancellative and right amenable monoid, $K$ a compact group, $K\ra{\rho} S$ a right $S$-action and $H$ a closed $S$-invariant subgroup of $K$. Then {$(\text{\sc at}_\top)$} holds.}

\medskip
The Addition Theorem is also called Yuzvinski's addition formula, since it was first proved by Yuzvinski~\cite{Yuz} for $\Z$-actions on compact metrizable groups. This result was extended to $\Z^d$-actions in by Lind, Schmidt and Ward \cite{LSW} (see also \cite{Sch}) and  further extensions (that now can be seen as particular cases of Li's result) can be found in \cite{Miles,MB}. These theorems are about measure entropy with respect to the Haar measure, but in all these cases it coincides with the topological entropy.

In \S\ref{ProofAT:top}, we first give a direct proof of {(\text{\sc at$_\top$})} in the case when $S$ is a group. 
For this, we use a new approach, that differs from the one adopted in \cite{Li,li_kerr_book} as, for example, it does not rely 
neither on the Variational Principle nor  on Bowen's definition of entropy in the metrizable case {(our approach is close to that used in \cite{WZ}, although 
these authors treat a different entropy).} After that, we deduce the general case of {(\text{\sc at$_\top$})} using {two important properties of the Ore colocalization: its ``exactness'' (i.e., taking the Ore colocalization commutes with restrictions and quotients of an action with respect to a closed invariant subgroup) and the invariance of $h_\top$ under this construction. These properties together allow us to reduce statements about the additivity of $h_\top$ for actions of cancellative right amenable monoids to the corresponding questions for actions of amenable groups.}

\medskip 
Dual to the notion of topological entropy is that of {\em algebraic entropy}. Quite surprisingly, the first definition of algebraic entropy for endomorphisms of torsion discrete Abelian groups was given by Adler, Konheim and McAndrew~\cite{AKM}, in a short final remark of the same paper where the topological entropy was introduced. This algebraic invariant was then studied for endomorphisms of any discrete Abelian group in~\cite{P1,Weiss} and, more recently, in~\cite{DGBpak,DGB,DGSZ,V1} (see~\cite{GBS,GBST,GSp} for the non-Abelian case). 

As we did above on the topological side, let us now introduce left actions of a monoid $S$ on discrete Abelian groups by endomorphisms. Denote by $\Ab$ the category of Abelian groups and group homomorphisms, fix a discrete Abelian group $X$ and let  $\End_\Ab(X)$ be the endomorphism ring of $X$. A {\em left $S$-action} 
\(S\la{\lambda} X\)
is a monoid homomorphism $\lambda\colon S\to \End_{\Ab}(X)$, that is, letting $\lambda_s=\lambda(s)\in\End_\Ab(X)$ for each $s\in S$,
\begin{enumerate}[({LA.}1)]
\item $\lambda_{{1}}=\id_X$;
\item $\lambda_{st}=\lambda_s\circ\lambda_t$, for all $s, t\in S$.
\end{enumerate}
An endomorphism $f\colon X\to X$ uniquely identifies a left action $\lambda$ of the additive monoid $\N=(\N,+,0)$ by setting $\lambda(1)=f$ as above. Using this terminology, the algebraic entropy can be seen as an invariant of the left $\N$-actions on Abelian groups. As shown in~\cite{DFGB}, the extension given by Ceccherini-Silberstein, Coornaert and Krieger~\cite{CCK} of the  Ornstein-Weiss Lemma allows one to extend the notion of algebraic entropy $h_\alg$ to left actions $ S\la{\lambda} X$ of cancellative and right amenable monoids $S$ on discrete Abelian groups $X$ (see \S\ref{subs_back_ent}). 

\smallskip
For any left action  $S\la{\lambda}X$ of a cancellative and right amenable monoid $S$ on a discrete Abelian group $X$, there is a canonical left action $G\la{\lambda^*}X^*$ of the group of left fractions $G=S^{-1}S$ (see \S\ref{amenable_implies_ore_subsection}) on a new discrete Abelian group $X^*$, obtained from $X$ and $\lambda$ via a suitable direct limit. This important construction, that we call {\em Ore localization}, is introduced and studied in detail in \S\ref{sec_discrete}. Such localization is built in a two-step process that closely parallels the construction of the Ore colocalization in the topological setting. Indeed, starting with $S\la{\lambda}X$, one has first to identify the smallest $S$-invariant subgroup  $Y\leq X$, called the {\em kernel of $\lambda$}, for which the action $S\la{\bar\lambda}\bar X=X/Y$ induced by $\lambda$ is by injective endomorphisms. One then proceeds to build an injective direct system $\frak X$ of copies of $\bar X$, with transition morphisms suitably induced by $\bar \lambda$, and to verify that the direct limit $X^*$ of $\frak X$ has a natural left $S$-action by automorphisms $S\la{}X^*$, that uniquely extends to a left $G$-action $G\la{\lambda^*}X^*$, the Ore localization of $ S\la{\lambda}X$. 
Let us remark that the action induced on the quotient over the kernel and the Ore localization satisfy suitable universal properties, that completely characterize them among the left actions by injective endomorphisms and automorphisms, respectively. In particular, both constructions are functorial. Furthermore, they produce exact functors (i.e., they both commute with restrictions and quotients of an action with respect to an invariant subgroup) and they preserve the algebraic entropy, i.e., in the above notation, $h_\alg(\lambda)=h_\alg(\bar \lambda)=h_\alg (\lambda^*)$.

\smallskip
Starting from the earliest stages after its introduction, the algebraic entropy was studied in connection with the topological entropy, by means of the Pontryagin duality. To make this idea more precise, let $S$ be a monoid, $K$ a compact Abelian group and $X=\dual K$ the (discrete) dual Abelian group of $K$.  Denote by $\Comp$ the full subcategory of $\CompG$ of compact Abelian groups. By the Pontryagin duality we then obtain a ring anti-isomorphism
\[\dual{(-)}\colon \End_{\Comp}(K)\to \End_\Ab(X),\quad\text{such that}\ \ \phi\mapsto\dual{\phi}.\]
So, starting with a right $S$-action $K\ra{\rho}S$, we get its dual left $S$-action $S\la{\dual\rho} X$ by letting
\[\dual\rho=\dual{(-)}\circ \rho.\]
Analogously, starting with a left $S$-action $S\la{\lambda} X$ on a discrete Abelian group $X$, we get a right $S$-action $K\ra{\dual\lambda} S$ on the compact Abelian group  $K=\dual X$.
By the Pontryagin duality (see \S\ref{subs:duality}), $\rho^{\wedge\wedge}$ is conjugated to $\rho$ and $\lambda^{\wedge\wedge}$ is conjugated to $\lambda$, so 
$h_\top(\rho^{\wedge\wedge}) = h_\top(\rho)$ and $h_\alg(\lambda^{\wedge\wedge}) = h_\alg(\lambda)$ (see \S\ref{subs_back_ent} and \S\ref{subs:top_ent}).

For a right $S$-action $K\ra{\rho}S$ on a compact Abelian group $K$, we say that {\em the Bridge Theorem holds for $\rho$} when the following equality holds: $$h_\top(\rho)=h_\alg(\dual\rho).$$

There are several instances of the Bridge Theorem in the literature; let us list here the most relevant to our situation: the first one was proved by M.\ D.\ Weiss~\cite{Weiss} for right $\N$-actions on totally disconnected compact Abelian groups (i.e., those for which the dual is a torsion Abelian group), Peters~\cite{P1} then verified the Bridge Theorem for right $\Z$-actions on compact metrizable Abelian groups. A completely different proof of Peters' result was given in~\cite{DGB}, also extending it to right $\N$-actions on arbitrary compact Abelian groups, while in~\cite{GB} Weiss' result was extended to semigroup actions. Finally, Peters' result was extended by Kerr and Li~\cite{li_kerr_book} to actions of countable amenable groups on compact metrizable Abelian groups. 
See also \cite{P2} and \cite{VBT} for $\Z$-actions and amenable  group actions, respectively, on general locally compact Abelian groups. 
Finally, let us mention that an instance of the Bridge Theorem for actions of sofic groups on compact metrizable Abelian groups has been proved by Liang~\cite{Liang}.

\smallskip
The second main goal of this paper is to prove the following extension of the known Bridge Theorems: 

\medskip\noindent
{\bf Bridge Theorem.} {\em Let $S$ be a cancellative and  right amenable  monoid, $K$ a compact Abelian group with a right $S$-action $K\ra{\rho} S$  and $X$ a discrete Abelian group with a left $S$-action $S\la{\lambda}X$. Then: 
\begin{enumerate}[(1)]
\item $h_\top(\rho)=h_\alg(\dual\rho)$; 
\item $h_\alg(\lambda)=h_\top(\dual\lambda)$.
\end{enumerate}}

The case of the Bridge Theorem when $S$ is a group is given in Theorem \ref{BT_for_invertible_actions}. This result is then extended to the general case in \S\ref{proofs}, by using the invariance of the topological and of the algebraic entropy under Ore colocalization and localization, respectively, and the fact that these two constructions are each other's dual with respect to the Pontryagin duality, that is, in the above notation, $\dual{(\rho^*)}=\lambda^*$ and $\dual{(\lambda^*)}=\rho^*$.

The proof  of Theorem \ref{BT_for_invertible_actions}, as well as that of (\text{\sc at$_\top$}), is heavily based on the uniform approach to entropy 
via the entropy of monoid actions on normed monoids {developed in \cite{DG_PC,DGB4} for $\N$-actions and then extended to the general case in \cite{tesi_simone,VBT}. This approach covers, beyond the algebraic and the various versions of the topological entropy,
also the measure entropy and many others (see  \cite{DGB4}). It is exposed with more details in \S\ref{AppA}.

\smallskip
As an application of the Addition Theorem for $h_\top$ and of the Bridge Theorem, we show that also $h_{\alg}$ satisfies an Addition Theorem for left $S$-actions $S\la{\lambda}X$ on discrete Abelian groups, so far known only under the hypotheses that either $X$ is torsion~\cite{DFGB} or $S$ is locally monotileable~\cite{DFGS}:

\bigskip\noindent
{\bf Algebraic Addition Theorem.} 
\emph{Let $S$ be a cancellative and right amenable monoid, $X$ an Abelian group,  $S\la{\lambda}X$ a left \mbox{$S$-action}  and $Y$ an $S$-invariant subgroup of $X$. Then, 
\[h_{\alg}(\lambda)=h_{\alg}(\lambda_{ Y})+h_{\alg}(\lambda_{X/Y}),\eqno{(\text{\sc at}_\alg)}
\] 
where $\lambda_Y$ and $\lambda_{X/Y}$ are the obvious left $S$-actions induced by $\lambda$ on $Y$ and $X/Y$, respectively.}

\bigskip
Unlike (\text{\sc at$_\top$}), (\text{\sc at}$_\alg$) fails if the group $X$ is not Abelian (a {simple} counterexample involving a $\Z$-action on a metabelian group can be found in \cite{GSp}).
 On the other hand, the first instance of (\text{\sc at}$_\alg$) for $\N$-actions on torsion Abelian groups was given in \cite{DGSZ}, while the general case for $\N$-actions on Abelian groups was settled in \cite{DGB}. Moreover, (\text{\sc at}$_\alg$) holds also for $\N$-actions on some special classes of non-Abelian groups \cite{GBS,GBST,Shloss}.

\medskip
The paper is organized as follows, starting with \S\ref{sec:background}, which contains preliminary results and definitions from Pontryagin-van Kampen duality, actions and entropy.

Then, \S\ref{sec_discrete} deals with the algebraic entropy for left actions of cancellative and right amenable monoids on discrete Abelian groups.
A reduction to actions by injective endomorphisms is carried out in \S\ref{reduction_to_injective}.  
In \S\ref{Ore_invariance} we describe the Ore localization of a left action, which is a functorial way to produce an action by automorphisms from any given action, and we establish the invariance of the algebraic entropy under Ore localization. Two different, much more categorical, interpretations of these constructions are then given in \S\ref{categorial_ore_loc_subs}.

The case of the topological entropy for right actions of cancellative and right amenable monoids on compact spaces and groups is discussed in \S\ref{sec_compact}. First, in \S\ref{subs_red_top_to_surj} we obtain a reduction of the computation of the topological entropy to the case of actions by surjective continuous self-maps.  Then in \S\ref{subs_dual_ore} we build the Ore colocalization of a right $S$-action on a compact space and we establish the invariance of the topological entropy under this construction. In \S\ref{categorical_coloc_subs} we give a categorical interpretation of the construction of the Ore colocalization.
 
 The aim of \S\ref{AppB} is to give a proof of (\text{\sc at$_\top$}) in \S\ref{ProofAT:top}. Before that, in \S\ref{N--} we study technical properties of the {minimum} cardinality of finite open covers of compact spaces related to projections and in \S\ref{evensec} we discuss special open covers of compact groups, that are used in the proof of \text{\sc at$_\top$}).

Finally, \S\ref{proofs} contains the proof of the Bridge Theorem and (\text{\sc at}$_\alg$). 

In the appendix, \S\ref{AppC}, we provide a self-contained proof of the Bridge Theorem for amenable group actions,  which is a particular case of the general result from \cite{VBT}  concerning amenable group actions on locally compact Abelian groups. 

\medskip
The readers who prefer to see the proof of (\text{\sc at$_\top$}) (resp., the Bridge Theorem) only for group actions can go directly to \S\ref{AppB} (resp., \S\ref{AppC}) keeping in mind that the background is recalled in \S\ref{sec:background}.

\subsection*{Acknowledgements}
We warmly thank Hanfeng Li for the useful discussion with the second and third named authors at the conference ``Entropies and soficity'' held in Lyon in January 2018, when he pointed out that a given cancellative right amenable monoid satisfies the left Ore condition and, therefore, it has an associated group of left fractions.

\section{Preliminary results and definitions}\label{sec:background}

Here we recall the necessary background. In \S\ref{subs:duality} we include some basic facts regarding the Pontryagin-van Kampen duality.
In \S\ref{amenable_implies_ore_subsection} we recall that a cancellative and right amenable monoid satisfies the left Ore condition and, therefore, it embeds in its group of left fractions. 
In \S\ref{representations} we introduce the formalism of left and right actions and
in \S\ref{AppA} the entropy for actions of right amenable cancellative monoids on normed monoids.
In \S\ref{subs_back_ent} and \S\ref{subs:top_ent}, we recall the definitions and few basic properties of the algebraic and the topological entropy, respectively. 

\subsection{The Pontryagin-van Kampen duality for locally compact Abelian groups}\label{subs:duality}

In what follows we denote by $\LCA$ the category of locally compact Abelian groups, with continuous group homomorphisms between them and we identify the categories $\Ab$ and $\Comp$ with the full subcategories of discrete (resp., compact) groups in $\LCA$. 
We denote by $\T=\R/\Z$ the torus group, which can be considered both as an object in $\Ab$, when taken with its discrete topology, but also as an object in $\Comp$, when taken with the topology inherited from the reals. It will be always clear from the context whether we take $\T$ as a discrete or as a compact group. 

Consider the contravariant functor $\dual{(-)}\colon \LCA \to \LCA$ that associates to $G \in \LCA$ the Abelian group of continuous homomorphisms $\dual G=\hom_\LCA(G,\T)$, which is a locally compact Abelian group when endowed with the compact-open topology. Furthermore, given a morphism $f\colon G\to H$ in $\LCA$, we define $\dual f\colon \dual H\to \dual G$ as follows:
\[
\dual f\colon \hom_\LCA(H,\T) \to  \hom_\LCA(G,\T),\quad\text{such that}\ \ \chi\mapsto \chi\circ f.
\]
The Pontryagin-van Kampen duality states that  for $G\in\LCA$ there is a natural topological isomorphism $\omega_G\colon G \to G^{\wedge\wedge}$, such that $\omega_G(x)(\chi)=\chi(x)$ for every $x\in G$ and $\chi\in\dual G$, i.e.,  $(-)^{\wedge}\colon\mathcal L\to\mathcal L$ is an involutive duality of  the category $\LCA$. Using this duality, we identify $G$ with $G^{\wedge\wedge}$ whenever needed or convenient.  As $\dual G$ is compact (resp., discrete) precisely when $G$ is discrete (resp., compact),  the restrictions of this functor to the categories $\Ab$ and $\Comp$ that, with a slight abuse of notation, we denote by the same symbol, give a duality between the categories $\Ab$ and $\Comp$. 

Given $G\in \LCA$, there is an anti-isomorphism between lattices of subobjects (i.e., closed subgroups of $G$ and  of $\dual G$):
\[
(-)^\perp\colon  \mathcal L (G) \longrightarrow  \L (\dual G), \quad\text{such that}\quad H\mapsto H^\perp =\{\chi\colon G\to \T: \chi(H)=0\}.
\] 
Furthermore, if $f\colon G\to G'$ is a morphism in $\LCA$ and $H\in \L (G)$, then $(f(H))^{\perp}=(\dual f)^{-1}(H^{\perp})$.
In particular, for $H=G$, this formula gives
\[\mathrm{Im}(f)^\perp=(f(G))^\perp=(\dual f)^{-1}(G^{\perp})=(\dual f)^{-1}(0)=\Ker(\dual f).\]
 See~\cite{Book} for these and other properties concerning the Pontryagin-van Kampen duality used in this paper.

\subsection{Right amenability and the left Ore condition}\label{amenable_implies_ore_subsection}

Denote by $\Pf(A)$ the family of all non-empty finite subsets of a set $A$.

A \emph{right F\o lner net} for a monoid $S$ is a net $\{F_i\}_{i\in I}$ in $\Pf(S)$, where $(I,\leq)$ is a directed set, such that, for every $s\in S$,
$$\lim_{i\in I}\frac{|F_is\setminus F_i|}{|F_i|}=0.$$
We say that a cancellative monoid $S$ is \emph{right amenable} if it admits a right F\o lner net \cite{Day2,N}.

\begin{lemma}\label{fol} 
Let $\{F_i\}_{i\in I}$ be a right F\o lner net for a cancellative monoid $S$ and take $\{s_i\}_{i\in I}\subseteq S$. Then, $\{s_iF_i\}_{i\in I}$ is a right F\o lner net for $S$.
\end{lemma}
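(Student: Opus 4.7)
The plan is to reduce the F\o lner condition for $\{s_iF_i\}_{i\in I}$ to the one for $\{F_i\}_{i\in I}$ via a cancellativity argument that both equates the cardinalities of $s_iF_i$ and $F_i$, and injects the ``boundary'' $s_iF_is\setminus s_iF_i$ into $F_is\setminus F_i$.

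First, I would observe that left multiplication by $s_i$ is injective on $S$ (since $S$ is cancellative), so $|s_iF_i|=|F_i|$ for each $i\in I$. Thus the denominator in the F\o lner quotient is unaffected by translating on the left.

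Next, fix $s\in S$. The key step is to build an injective map
\[
\phi\colon s_iF_is\setminus s_iF_i\longrightarrow F_is\setminus F_i,\qquad s_ifs\longmapsto fs.
\]
Well-definedness of $\phi$ on elements: by left cancellation, each $x\in s_iF_is$ is of the form $s_ifs$ for a \emph{unique} $f\in F_i$, so $x\mapsto fs$ is unambiguous. To see that the image lies in $F_is\setminus F_i$, suppose $fs\in F_i$, say $fs=g$ with $g\in F_i$; then $x=s_ifs=s_ig\in s_iF_i$, contradicting $x\notin s_iF_i$. Injectivity of $\phi$: if $fs=f's$ with $f,f'\in F_i$, then right cancellation gives $f=f'$, hence $s_ifs=s_if's$.

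Combining these two observations,
\[
\frac{|s_iF_is\setminus s_iF_i|}{|s_iF_i|}\;\le\;\frac{|F_is\setminus F_i|}{|F_i|}\xrightarrow[i\in I]{}0,
\]
so $\{s_iF_i\}_{i\in I}$ is a right F\o lner net for $S$. There is essentially no obstacle here: the only subtle point is that one must use \emph{both} sides of the cancellation hypothesis (left cancellation to compare cardinalities and guarantee that $fs\notin F_i$, right cancellation for the injectivity of $\phi$), so the proof really does use that $S$ is cancellative on both sides, not just right amenable.
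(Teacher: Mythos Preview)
Your proof is correct and follows essentially the same approach as the paper: both use cancellativity to compare $|s_iF_i|$ with $|F_i|$ and $|s_iF_is\setminus s_iF_i|$ with $|F_is\setminus F_i|$, then appeal to the F\o lner condition for $\{F_i\}_{i\in I}$. The only cosmetic difference is that the paper asserts the equality $|s_iF_is\setminus s_iF_i|=|F_is\setminus F_i|$ outright (your map $\phi$ is in fact a bijection, since for $fs\in F_is\setminus F_i$ one has $s_ifs\notin s_iF_i$ by left cancellation), whereas you only record the inequality; either suffices for the conclusion.
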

\begin{proof}
Since $S$ is cancellative, $|s_i F_i|=|F_i|$ and $|s_iF_is\setminus s_iF_i|=|F_is\setminus F_i|$, for all $s\in S$ and $i\in I$. Hence, for each $s\in S$,
\[
\lim_{i\in I}\frac{|s_iF_is\setminus s_iF_i|}{|s_i F_i|}=\lim_{i\in I}\frac{|F_is\setminus F_i|}{|F_i|}=0, 
\]
where the second equality holds as $\{F_i\}_{i\in I}$ is right F\o lner. The fact that the limit on the left-hand side of the above formula goes to $0$ for all $s\in S$ tells us that also $\{s_iF_i\}_{i\in I}$ is right F\o lner, as desired.
\end{proof}

Recall that a cancellative monoid $S$ satisfies the {\em left Ore condition}, or that it is {\em left Ore}, if:
\begin{enumerate}[(LO)]
  \item for any pair of elements $s,t\in S$, the intersection $Ss\cap St$ is non-empty.
\end{enumerate}
The {\em right divisibility relation} on a monoid $S$ is the following partial preorder:
\begin{quotation}
for $s,s'\in S$ let $s\leq s'$ if and only if there exists $t\in S$ such that $s' = ts$.
\end{quotation}

Using the right divisibility relation, it is easy to see that the left Ore condition is equivalent to each of the following reformulations:

\begin{enumerate}
  \item[(LO$'$)]  $(S,\leq)$ is directed, that is, given $s,s'\in S$, there is $t\in S$ such that $t\geq s$ and $t\geq s'$;
  \item[(LO$''$)] the subset $Ss\subseteq S$ is cofinal in $(S,\leq)$ for all $s\in S$, that is, given $s,s'\in S$ there exists $t\in S$ such that $ts\geq s'$.
\end{enumerate}
In the following lemma we collect a couple of less obvious consequences of the left Ore condition:

\begin{lemma}\label{GO}
Let $S$ be a left Ore monoid and $s, s_1,\ldots,s_n\in S$. Then:
\begin{enumerate}[(1)]
    \item there exist $t_1,\ldots,t_n,t\in S$ such that $t_js_k=ts$ for every {$j=1,\ldots,n$};
    \item there exist $t_1,\ldots,t_n,t\in S$ such that $ts_j=t_js$ for every $j=1,\ldots,n$.
\end{enumerate}
\end{lemma}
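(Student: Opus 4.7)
My plan for part (1) is to use the directedness reformulation (LO$'$) of the left Ore condition. Since $(S,\leq)$ is a directed preorder, a straightforward finite induction shows that any finite subset of $S$ admits a common upper bound. Applied to the set $\{s,s_1,\ldots,s_n\}$, this produces some $u\in S$ with $u\geq s$ and $u\geq s_j$ for each $j=1,\ldots,n$. By definition of the right divisibility relation, $u\geq s$ means $u=ts$ for some $t\in S$, and $u\geq s_j$ means $u=t_js_j$ for some $t_j\in S$. The equalities $t_js_j=u=ts$ yield the conclusion (interpreting the $s_k$ in the statement as the typographical variant of $s_j$).

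For part (2), the plan is induction on $n$. The base case $n=1$ is exactly the Ore condition (LO) applied to the pair $\{s_1,s\}$: there exist $t,t_1\in S$ with $ts_1=t_1 s$. For the inductive step, suppose we have already found $t',t_1',\ldots,t_{n-1}'\in S$ such that $t's_j=t_j's$ for every $j=1,\ldots,n-1$. Applying (LO) to the pair $\{t's_n,s\}$ gives $a,b\in S$ with $a(t's_n)=bs$. I then set
\[
t := at',\qquad t_j := at_j'\ \ (j=1,\ldots,n-1),\qquad t_n := b.
\]
For $j\leq n-1$ one verifies $ts_j=at's_j=a(t_j's)=(at_j')s=t_js$, while for $j=n$ one has $ts_n=at's_n=bs=t_ns$. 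This completes the induction.

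There is no real obstacle here; both parts are essentially bookkeeping applications of the Ore condition, once one observes that (LO$'$) immediately upgrades from pairs to finite subsets and that, in the inductive step of (2), the Ore condition can be applied to any pair in $S$, including a pair built from the previously constructed auxiliary element $t'$. The only mildly delicate point is remembering to multiply the previously chosen $t_j'$ on the left by the new factor $a$ so that the old equalities $t's_j=t_j's$ are preserved after replacing $t'$ by $at'$.
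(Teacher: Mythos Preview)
Your proof is correct and follows essentially the same approach as the paper. For part~(1) the paper carries out explicitly the step-by-step induction that you package as ``any finite subset of a directed preorder has an upper bound'', and for part~(2) the paper's construction of the $t'_j$ and $v_j$ is exactly your inductive step written out in unrolled form (your $t'$ is their $t'_{n-1}\cdots t'_1$ and your $a,b$ are their $t'_n,v_n$).
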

\begin{proof}
(1) By (LO), there exist $l_1, k_1\in S$ such that $l_1s_1=k_1s$. Similarly, one can find $l_2,k_2\in S$ such that $l_2s_2=k_2k_1s$; in particular, $k_2l_1s_1=k_2k_1s$. Proceeding  by induction, we find $l_1,\ldots, l_n\in S$ and $k_1,\ldots,k_n\in S$ such that $l_js_j=k_jk_{j-1}\ldots k_1s$ for every {$j=1,\ldots,n$}. One concludes by letting  $t=k_n\ldots k_1$, and $t_j=k_n\ldots k_{j+1}l_j$.

\smallskip
(2) We first show that there exist $v_1,\ldots,v_n,t'_1,\ldots,t'_n\in S$ such that
\begin{equation}\label{GO_c}
(t'_j\ldots t'_1)s_j=v_js,\quad\text{for all $j=1,\ldots,n$}.
\end{equation}
Indeed, by (LO), there exist $t_1',v_1\in S$ such that $t_1's_1=v_1s$. Then there exist $t'_2,v_2\in S$ such that $t'_2t'_1s_2=v_2s$. Proceeding in this way by induction 
we find the last pair $v_n,t'_n\in S$ such that $(t'_n\ldots t'_1)s_n=v_ns$.
Put $t= t'_n\ldots t'_1$ and
\[u_j=  \begin{cases}
t_n'\ldots t'_{j+1}& \text{if $j< n$;}\\
1 &\text{if $n=1$.}
\end{cases}\]
Then,  $ts_j=u_j (t'_j\ldots t'_1)s_j=u_jv_js$, in view of \eqref{GO_c}. Letting $t_j= u_jv_j$, we deduce that $ts_j=t_js$, for every {$j=1,\ldots,n$}.
\end{proof}

If $S$ is a cancellative and left Ore monoid, it can be embedded in a group $G=S^{-1}S$ that we call  {\em group of left  fractions} of $S$  (see \cite[Proposition~1.26]{Pat}). Clearly $G$ is generated as a group by $S$.
Furthermore, it is well-known (see \cite[Proposition~1.23]{Pat}) that a cancellative and right amenable monoid $S$ is automatically left Ore. In the following lemma (which is \cite[Lemma~2.11]{DFGB}) we recall the useful fact that the right amenability of $S$ implies that its group of left fractions $G$ is amenable: 

\begin{lemma}\label{Lemma:ex-2.2}       
Let $G$ be a group and $S$ a submonoid of $G$ that generates $G$ as a group. 
\begin{enumerate}[(1)]
  \item If $S$ is right amenable (and, necessarily, cancellative), then $G$ is amenable.
  \item If $\{F_i\}_{i\in I}$ is a right F\o lner net for $S$, then it is also a right F\o lner net for $G$.
\end{enumerate}
\end{lemma}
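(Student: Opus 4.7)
Statement (1) is an immediate consequence of (2): once $\{F_i\}_{i \in I}$ is shown to be a right F\o{}lner net for $G$, the group $G$ is amenable by the very definition. Thus the core of the argument lies in (2), which asks us to verify that
$$\lim_{i \in I}\frac{|F_ig \setminus F_i|}{|F_i|} = 0 \quad \text{for every } g \in G.$$

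The first step is to identify $G$ with the group of left fractions $S^{-1}S$. Since $S$ is cancellative and right amenable it is left Ore (\S\ref{amenable_implies_ore_subsection}), so the subset $S^{-1}S \subseteq G$ is a subgroup of $G$: closure under inverses is clear from $(s^{-1}t)^{-1} = t^{-1}s$, and for closure under products one rewrites $ts'^{-1}$ as $v^{-1}u$, where $u,v \in S$ satisfy $us' = vt$ by (LO). Since $S^{-1}S$ contains the generating set $S$, it equals $G$, and every $g \in G$ admits a presentation $g = s^{-1}t$ with $s,t \in S$.

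Fix such a presentation. The main estimate will come from the elementary inequality $|A \setminus C| \leq |A \setminus B| + |B \setminus C|$ applied to $A = F_is^{-1}t$, $B = F_is^{-1}$, $C = F_i$. The first summand $|F_is^{-1}t \setminus F_is^{-1}|$ equals $|F_it \setminus F_i|$ via the bijection of right-multiplication by $s$ on $G$; the second summand $|F_is^{-1} \setminus F_i|$ equals $|F_i \setminus F_is|$ by the same bijection, and the latter equals $|F_is \setminus F_i|$ because $|F_is| = |F_i|$ by cancellativity. Combining these observations,
$$|F_ig \setminus F_i| \leq |F_it \setminus F_i| + |F_is \setminus F_i|,$$
and dividing by $|F_i|$ the right-hand side tends to $0$ by the F\o{}lner condition of $\{F_i\}_{i \in I}$ for $S$ applied to the elements $t$ and $s$. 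I do not expect any genuine obstacle here; the only step that is not a bookkeeping exercise is the closure of $S^{-1}S$ under products, and that is a single invocation of the left Ore condition. Everything else is the triangle-type set inequality together with the fact that right multiplication is a bijection of $G$ preserving cardinalities of set differences.
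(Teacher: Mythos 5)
The paper itself does not reprove this lemma; it cites \cite[Lemma~2.11]{DFGB} and gives no internal argument to compare against. Your overall strategy --- write every $g\in G$ as $s^{-1}t$ with $s,t\in S$, then split $|F_ig\setminus F_i|$ with a triangle-type set inequality and use that right multiplication preserves cardinalities --- is the right one, but there is a genuine error in the first summand.

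You take $A=F_is^{-1}t$, $B=F_is^{-1}$, $C=F_i$ and assert that $|A\setminus B|=|F_is^{-1}t\setminus F_is^{-1}|$ equals $|F_it\setminus F_i|$ ``via right-multiplication by $s$.'' That bijection sends $A$ to $F_is^{-1}ts$ and $B$ to $F_i$, so it gives $|A\setminus B|=|F_is^{-1}ts\setminus F_i|$, not $|F_it\setminus F_i|$: the element $s^{-1}ts$ need not equal $t$, and need not even lie in $S$, so the right F\o lner condition for $S$ cannot be invoked. The fix is to put the intermediate set on the other side: take $B=F_it$. Then right multiplication by $t^{-1}$ gives $|A\setminus B|=|F_is^{-1}t\setminus F_it|=|F_is^{-1}\setminus F_i|$, and right multiplication by $s$ together with $|F_i|=|F_is|$ gives $|F_is^{-1}\setminus F_i|=|F_i\setminus F_is|=|F_is\setminus F_i|$; meanwhile $|B\setminus C|=|F_it\setminus F_i|$ is exactly the F\o lner condition for $t$. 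This yields the bound you aimed for, $|F_ig\setminus F_i|\leq |F_is\setminus F_i|+|F_it\setminus F_i|$, just with the roles of $s$ and $t$ entering the other way around. (Equivalently, one can observe that the set of $g\in G$ for which the right F\o lner condition holds is a submonoid of $G$; it contains $S$ by hypothesis and $S^{-1}$ by your --- correct --- second-summand computation, hence all of $G=\langle S\rangle$.) Your preliminary identification of $G$ with $S^{-1}S$ via the left Ore condition is fine.
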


Next we endow $G$ with the preorder relation $\leq_S$, called the \emph{$S$-preorder} on $G$, that makes it into a directed set:  
\begin{quotation}
$g_1\leq_Sg_2$ if and only if $g_2^{-1}g_1\in S$, for all $g_1,g_2\in G$.
\end{quotation}
In other words, for $g_1,g_2\in G$, $g_1\leq_Sg_2$ if and only if there exists $s\in S$ such that $g_1=g_2s$.

\begin{lemma}
Let $S$ be a cancellative left Ore monoid and $G=S^{-1}S$ its group of left fractions. Then, the preordered set $(G,\leq_S)$ is directed. 
\end{lemma}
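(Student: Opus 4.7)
The plan is to show that for any $g_1, g_2 \in G$, there is an element $g \in G$ which is a common upper bound, i.e., $g_i \leq_S g$ for $i = 1, 2$, meaning $g^{-1}g_i \in S$. The key device is the left Ore condition itself, which lets us build a "common left denominator" for $g_1$ and $g_2$.

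First, since $G = S^{-1}S$ is the group of left fractions, every element of $G$ can be represented in the form $a^{-1}b$ with $a, b \in S$; so write $g_1 = a_1^{-1}b_1$ and $g_2 = a_2^{-1}b_2$ with $a_i, b_i \in S$. Next, applying (LO) to the pair $a_1, a_2 \in S$, we find $p_1, p_2 \in S$ such that $p_1 a_1 = p_2 a_2$; call this common element $r \in S$. Now set $g := r^{-1} \in G$. A direct computation gives
\[
g^{-1} g_i \;=\; r \cdot a_i^{-1} b_i \;=\; (p_i a_i)\, a_i^{-1} b_i \;=\; p_i b_i \;\in\; S
\]
for $i=1,2$, since $S$ is closed under multiplication. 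Hence $g_i \leq_S g$ for both $i$, so $g$ is the required common upper bound.

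The step that requires the most care is simply keeping straight the direction of the preorder: by the stated definition, $g' \leq_S g$ means $g^{-1}g' \in S$, i.e., $g' = g s$ for some $s \in S$, and this is precisely what the computation above verifies (with $s = p_i b_i$). Apart from this bookkeeping, the argument is essentially one invocation of the left Ore condition, which is natural here because (LO) is exactly the property that lets us find a common left multiple for two elements of $S$, and hence a common element $r \in S$ whose inverse dominates both fractions.
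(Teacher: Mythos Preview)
Your proof is correct and is essentially identical to the paper's argument: both write $g_i$ as left fractions, apply the left Ore condition to the denominators to obtain a common left multiple $r$ (the paper's $u_1s_1=u_2s_2$), and take $g=r^{-1}$ as the common upper bound. The only difference is notational.
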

\begin{proof}
Consider two elements $g_1,g_2\in G$ and write them as $g_1=s_1^{-1}t_1$ and $g_2=s_2^{-1}t_2$ for suitable $s_1,s_2,t_1,t_2\in S$ (this is always possible because $G=S^{-1}S$ is the group of left fractions of $S$). Using the left Ore condition, choose $u_1$ and $u_2\in S$ such that $u_1s_1=u_2s_2$ and let $g=s_1^{-1}u_1^{-1}=s_2^{-1}u_2^{-1}\in G$. Then,
\[g^{-1}g_1=(u_1s_1)(s_1^{-1}t_1)=u_1t_1\in S\quad \text{and}\quad g^{-1}g_2=(u_2s_2)(s_2^{-1}t_2)=u_2t_2\in S.\] 
Hence, we both have that $g_1\leq_S g$ and $g_2\leq_S g$, showing that $(G,\leq_S)$ is directed. 
\end{proof}

The main reason to use two different symbols for the right divisibility relation on $S$ and for the $S$-preorder on $G$ is that these two preorders do not coincide on $S$: actually, they are opposite to each other,  that is, $s_1\leq s_2$ in $S$ if and only if $s_2\leq_Ss_1$ in $G$; they coincide precisely when $S$ is a group 
(that is, the two preorders coincide on $S$ if and only if they are both trivial, if and only if $S=G$). In the following sections both preordered sets will turn out to be very important.

\subsection{Left and right representations}\label{representations}

Given a monoid $S$ and a category $\C$, we denote by $\lre(S,\C)$ and $\rre (S,\C)$ the categories of left and right representations in $\C$, respectively. 

The objects of $\lre(S,\C)$ are the left $S$-actions, that is, the pairs $(X,\lambda)$, denoted by $S\la{\lambda}X$ in the sequel, where $X$ is an object of $\C$ and $\lambda\colon S\to \End_\C(X)$ is a monoid homomorphism. A morphism $f\colon(X,\lambda)\to (X',\lambda')$ in $\lre (S,\C)$  is a morphism $f\colon X\to X'$ in $\C$ that is \emph{$S$-equivariant}, i.e., such that the following diagram commutes, for each $s\in S$:
\[\xymatrix@C=40pt{
X\ar[r]^{f}\ar[d]_{\lambda_s}&X'\ar[d]^{\lambda'_s}\\
X\ar[r]_{f}&X'.}\] 
If there exists an $S$-equivariant isomorphism between $X$ and $X'$, its inverse is automatically $S$-equivariant, and we say that the two left $S$-actions $S\la{\lambda}X$ and $S\la{\lambda'}X'$ are {\em isomorphic} or {\em conjugated}. In other words, two left $S$-actions are conjugated if and only if they are isomorphic in the category $\lre(S,\C)$. 

The monomorphisms in $\lre(S,\C)$ are exactly the $S$-equivariant morphisms that are monic in $\C$. In particular, the subobjects of a left $S$-action $S\la{\lambda} X$ are all of the form $\iota\colon Y \to X$, where $\iota$ is a monomorphism in $\C$, and $Y$  is {\em $S$-invariant} in $X$, that is,  for each $s\in S$, the composition $\lambda_s\circ \iota$ factors through $\iota$, i.e., there exists a (necessarily unique) morphism $(\lambda_Y)_s\colon Y\to Y$ such that $\lambda_s\circ \iota=\iota\circ (\lambda_Y)_s$. In this case, $Y$ is endowed with the unique possible action that makes $\iota$ into an $S$-equivariant morphism: 
\[S\overset{\lambda_{Y}}{\curvearrowright}Y\quad\text{with}\quad \lambda_Y(s)=(\lambda_Y)_s,\text{ for all $s\in S$}.\]

Objects, morphisms, conjugations and subobjects in the category $\rre(S,\C)$ are defined similarly. 

\smallskip
In the sequel, the category of left representations $\lre(S,\C)$ will only appear with $\C = \Ab$ and with $\C$ the category of normed monoids introduced in \S\ref{AppA} below. On the other hand, $\rre(S,\C)$ will appear with $\C = \CompS$, $\C = \CompG$ and $\C = \Comp$.

\begin{remark}
When $S$ is a cancellative and right amenable monoid, with $G=S^{-1}S$ its group of left fractions, one can consider $\lre(G, \Ab)$ simply as a full subcategory of $\lre(S, \Ab)$, by taking the restriction of a $G$-action to an $S$-action. In \S\ref{sec_discrete} we build, for each $S\la{\lambda} X \in \lre(S, \Ab)$, a canonical $G$-action $G\la{\lambda^*} X^* \in \lre(G, \Ab)$, functorially associated with $\lambda$, called the Ore localization of $\lambda$ (see Definition \ref{def_left_Ore_loc}). For a deeper analysis, using a heavier impact of categorical algebra, see \S\ref{categorial_ore_loc_subs}.

Similarly, $\rre(G,\CompS)$ (resp., $\rre(G,\CompG)$, $\rre(G,\Comp)$) can be viewed as a full subcategory of $\rre(S,\CompS)$ (resp., $\rre(S,\CompG)$, $\rre(S,\Comp)$). In \S\ref{sec_compact} we build  the Ore colocalization of a given right $S$-action $K\ra{\rho} S$ in $\rre(S, \CompS)$. This is a canonical right $G$-action $K^*\ra{\rho^*} G$ in $\rre(G, \CompS)$, functorially associated with $\rho$ (see Definition \ref{def_k_star}). As for the Ore localization, we give a categorical interpretation of the Ore colocalization in \S\ref{categorical_coloc_subs}.
\end{remark}

\subsection{Entropy of actions on normed monoids}\label{AppA}

Here we recall the notion of entropy for actions of cancellative and right amenable monoids on normed monoids, and the notions of asymptotic equivalence, introduced in \cite{tesi_simone} (see also \cite{VBT}), and {that of weak asymptotic equivalence, that} conveniently weaken the more rigid condition of being conjugated. Let $S$ be a fixed cancellative and right amenable monoid for the rest of this section.

\subsubsection{The category of normed monoids}

A {\em normed monoid} is a pair $(M,v)$, where $(M,+,0)$ is a commutative monoid and $v\colon M\to \R_{\geq0}$ is a function. 

Given a normed monoid $(M,v)$, we say that the norm $v$ is:
\begin{enumerate}[\rm --] 
      \item {\em monotone} provided $v(x)\leq v(x+y)$, for all $x$, $y\in M$; 
      \item {\em subadditive} provided $v(x+y)\leq v(x)+v(y)$, for all $x$, $y\in M$.
\end{enumerate}

A {\em homomorphism of normed monoids} $f\colon (M_1,v_1)\to (M_2,v_2)$ is a monoid homomorphism $f\colon M_1\to M_2$ such that $v_2(f(m))\leq v_1(m)$ for all $m\in M_1$. Furthermore, $f$ is an {\em isomorphism} if it is a monoid isomorphism and $v_2(f(m))= v_1(m)$ for all $m\in M_1$. We denote by $\mathfrak M$ the category of normed monoids defined in this way. 

\medskip
The algebraic and the topological entropy are based on the following normed monoids (see \S\ref{subs_back_ent} and \S\ref{subs:top_ent}). 

\begin{example}\label{nmexalg} 
Let $X$ be a discrete Abelian group.
\begin{enumerate}[(1)]
   \item Denote by $\Pf^0(X)$ the family of all finite subsets of $X$ that contain $0$. The pair $(\Pf^0(X),+)$ is a commutative monoid, with norm $v_{\frak F}$  defined by $v_{\frak F}(H)=\log|H|$, for all $H\in\Pf^0(X)$. The norm $v_{\frak F}$ is both monotone and subadditive. 
   \item The submonoid $\mathfrak F(X)$ of $\Pf^0(X)$, consisting of all finite symmetric subsets of $X$ that contain $0$, will also be used in the sequel; $\mathfrak F(X)$ is cofinal in $\Pf^0(X)$ with respect to the inclusion $\subseteq$. 
   \item Another submonoid of $\Pf^0(X)$ is $\L^{\fin}(X)$ consisting of all finite subgroups of $X$; clearly, $\L^{\fin}(X)\subseteq \mathfrak F(X)$ and $\L^{\fin}(X)$ is cofinal in $\Pf^0(X)$ (if and only if it is cofinal in $\mathfrak F(X)$) precisely when $X$ is torsion.
\end{enumerate}
\end{example}

\begin{example}\label{nmextop} 
Let $K$ be a compact space. Denote by $\cov(K)$ the family of all open covers of $K$ and recall that for $\mathcal U, \mathcal V \in \cov(K)$, 
$\mathcal U \vee \mathcal V=\{U\cap V : U\in\U, V\in\V\}\in\cov(K)$. For a subset $B$ of $K$ and $\mathcal U\in\cov(K)$, let 
\[
N_B(\mathcal U)=\min\{n\in\N_+\colon\mathcal U\ \text{admits a subcover of $B$ of size $n$}\};
\]
when $B=K$ we simply write $N(\mathcal U)$ instead of $N_K(\mathcal U)$. 

Then $(\cov(K),\vee)$ is a commutative monoid with a monotone and subadditive norm $v_{\cov}$ given by $v_{\cov}(\mathcal U) = \log N(\mathcal U)$, for all $\mathcal U \in \cov(K)$.
\end{example}

\begin{example}\label{nmextop2}
Now assume that $K$ is a compact group and $\mu$ is the Haar measure on $K$ (so that $\mu(K)=1$).
\begin{enumerate}[(1)]
   \item Let ${\frak U}(K)$ be the family of all symmetric compact neighborhoods of $1$ in $K$. Then the pair $({\frak U}(K),\cap)$ is a commutative monoid, with norm $v_{\frak U}$ defined by $v_{\frak U}(U)=-\log\mu(U)$, for each $U\in{\frak U}(K)$. Clearly, $v_{\frak U}$ is monotone, but not subadditive in general.
   \item Similarly, the larger family of {\em all} compact neighborhoods of $1$ in $K$ with $\cap$ as operation and $v_{\frak U}$ as above, is a normed monoid.
Then $\mathfrak U(K)$ is a submonoid of this monoid, cofinal with respect to $\supseteq$.
Our preference for $\mathfrak U(K)$ is motivated by its application  in \S\ref{AppC}, where the symmetry of its elements is needed. 
   \item One can also consider the submonoid $\L^{\op}(K)$ of $\mathfrak U(K)$ consisting of all open subgroups of $K$.
When $U\in\L^{\op}(K)$, clearly $U$ has finite index $[K:U]$ and $\mu(U)=1/[K:U]$, so $v_{\frak U}(U)=\log[K:U]$. In particular, when restricted to $\L^{\op}(K)$, the norm $v_{\frak U}$ is also subadditive. 
   \item  With $X=\dual K$, the map $\L^\fin(X)\to \L^{\op}(K)$ defined by $F\mapsto F^\perp$ is a monoid isomorphism (by the Pontryagin duality -- see \S\ref{subs:duality}) and $\log|F|=\log[K:F^\perp]$ for every $F\in\L^\fin(X)$; that is, we have an isomorphism of normed monoids $\L^\fin(X)\cong \L^{\op}(K)$.
   \item By identifying each $U\in\L^{\op}(K)$ with $\U_{ K}[U]=\{xU:x\in K\}\in\cov(K)$, $\L^{\op}(K)$ can also be considered as a normed submonoid of $\cov(K)$, since, for every $U,V\in\L^{\op}(K)$, $\U_{ K}[U\cap V]=\U_K[U]\vee\U_K[V]$ and $[K:U]=N(\U_K(U))$.
   \item The compact group $K$ is profinite precisely when $\L^{\op}(K)$ is cofinal in $\mathfrak U(K)$ or, equivalently, when $\L^{\op}(K)$ is cofinal in $\cov(K)$ (see Lemma~\ref{tech_AT}(1)).
\end{enumerate}
\end{example}

\subsubsection{Asymptotic domination}

Let $S$ be a cancellative and right amenable monoid. A left $S$-action $S\la{\alpha} M$ on a normed monoid $M$ is a monoid homomorphism $\alpha\colon S\to \End_{\mathfrak M}(M)$ (where $\End_{\mathfrak M}(M)$ is the monoid of all endomorphisms of normed monoids $M\to M$). For $x\in M$ and $F=\{f_1,\dots,f_k\}\subseteq S$, we let
\[T_F(\alpha,x)=\alpha_{f_1}(x)+\ldots+\alpha_{f_k}(x).\]

Recall from \S\ref{representations} that two left  $S$-actions $S\la{\alpha_1} M_1$ and $S\la{\alpha_1} M_1$ on the two normed monoids $(M_1,v_1)$ and $(M_2,v_2)$ are conjugated if there exists an $S$-equivariant isomorphism of normed monoids $f\colon M_1\to M_2$. Now we introduce three notions of ``equivalence'', all weaker than conjugation, between actions on normed monoids:

\begin{definition}
Let $S\la{\alpha_1} M_1$ and $S\la{\alpha_2} M_2$ be two left $S$-actions, where $M_j=(M_j,v_j)$ is a normed monoid for {$j=1,2$}. We say that:
\begin{enumerate}[(i)]
   \item $\alpha_2$ {\em dominates} $\alpha_1$  if, for each $x\in M_1$, there exists $y\in M_2$ such that, for each  $F\in\Pf(S)$,
\[v_1(T_F(\alpha_1,x))\leq v_2(T_F(\alpha_2,y));\]
   \item $\alpha_2$ {\em asymptotically dominates} $\alpha_1$  if, for each $x\in M_1$, there are two sequences $\{y_n\}_{n\in\N}$ in $M_2$ and $\{\varepsilon_n\}_{n\in \N}$ in $\R$ such that $\lim_{n\to\infty}\varepsilon_n=0$ and, for each $n\in\N$ and $F\in\Pf(S)$, 
\[\frac{v_1(T_F(\alpha_1,x))}{|F|}\leq \frac{v_2(T_F(\alpha_2,y_n))}{|F|}+\varepsilon_n;\]
   \item $\alpha_2$ \emph{weakly asymptotically dominates} $\alpha_1$ if, for every right F\o lner net $\s=\{F_i\}_{i\in I}$ for $S$ and for every $x\in M_1$, there exist a sequence $\{y_n\}_{n\in\N}$ in $M_2$ and a sequence $\{f_n\}_{n\in\N}$ of functions $f_n\colon\R_{\geq0} \to\R_{\geq0}$ such that:
\begin{itemize}
   \item[(iii$_1$)] $\{f_n\}_{n\in\N}$ converges uniformly to $\id_{\R_{\geq0}}\colon \R_{\geq0}\to\R_{\geq0}$ on every bounded interval $[0,C]$, that is, for every $\varepsilon>0$ there exists $\bar n\in \N$, such that $|f_n(r)-r|<\varepsilon$ for every $n\geq \bar n$ in $\N$ and for every $r\in [0,C]$; 
   \item[(iii$_2$)]  there exists $j\in I$ such that, for every $i\geq j$ in $I$ and for every $n\in\N$, 
$$
\frac{v_1(T_{F_i}(\alpha_1,x))}{|F_i|}\leq f_n\left(\frac{v_2(T_{F_i}(\alpha_2,y_n))}{|F_i|}\right);
$$
\end{itemize}
   \item $\alpha_1$ is {\em  equivalent} (resp., \emph{asymptotically equivalent}, \emph{weakly asymptotically equivalent}) to $\alpha_2$ if the two actions dominate (resp., asymptotically dominate, weakly asymptotically dominate) each other.
\end{enumerate}
\end{definition}

Clearly, conjugated actions are equivalent, and equivalent actions are asymptotically equivalent. Here are two examples when domination arises in a rather natural way. 

\begin{example}\label{exa:domini} 
Let $S\la{\alpha_1} M_1$ and $S\la{\alpha_2} M_2$ be two left $S$-actions on normed monoids, and assume that $f\colon M_1 \to M_2$ is an $S$-equivariant morphism in $\mathfrak M$.
\begin{enumerate}[(1)] 
  \item If $f$ is an embedding (i.e., $f$ is injective and $f\colon M_1 \to f(M_1)$ is an isomorphism in $\mathfrak M$), then $\alpha_2$ {\em dominates} $\alpha_1$ (to check it,  for $x \in M_1$ put $y=f(x))$. 
  \item If $f$ is surjective, then  $\alpha_1$ {\em dominates} $\alpha_2$ (to check it, for $x \in M_2$ pick $y\in M_1$ such that $x=f(y))$. 
\end{enumerate}
\end{example}

The following is another natural example of an equivalence.

\begin{example}\label{cofinal}
Let $S\la{\alpha_2} M_2$ be a left $S$-action on a normed monoid $M_2=(M_2,v_2)$, 
{$M_1$ an $S$-invariant submonoid of $M_2$, endowed with the norm $(v_2)_{\restriction M_1}$, and $S\la{\alpha_1} M_1$ the restriction of the action $\alpha_2$ to $M_1$. According to Example~\ref{exa:domini}(1),  $\alpha_2$ dominates $\alpha_1$. }

Now assume that $M_2$ is preordered, with the norm $v_2$ monotone with respect to the preorder (i.e., if $x\leq y$ in $M_2$, then $v_2(x)\leq v_2(y)$). If $M_1$ is cofinal in $M_2$, then also $\alpha_1$ dominates $\alpha_2$, and so we can conclude that $\alpha_1$ and $\alpha_2$ are equivalent. This permits to restrict from a normed monoid to a convenient cofinal $S$-invariant submonoid.
\end{example}

\begin{lemma}  Let $S\la{\alpha_1} M_1$ and $S\la{\alpha_2} M_2$ be two left $S$-actions on normed monoids. If $\alpha_2$ asymptotically dominates $\alpha_1$, then $\alpha_2$ weakly asymptotically dominates $\alpha_1$.
\end{lemma}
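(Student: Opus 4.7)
The plan is direct: given asymptotic domination with witnessing sequences $\{y_n\}_{n\in\N}\subseteq M_2$ and $\{\varepsilon_n\}_{n\in\N}\subseteq \R$ with $\lim_n \varepsilon_n = 0$, we will use the same sequence $\{y_n\}$ to witness weak asymptotic domination, and build the functions $f_n$ by simply translating the identity by $\varepsilon_n$.

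First, fix $x\in M_1$ and, by asymptotic domination, pick $\{y_n\}$ and $\{\varepsilon_n\}$ as above, so that
\[
\frac{v_1(T_F(\alpha_1,x))}{|F|}\leq \frac{v_2(T_F(\alpha_2,y_n))}{|F|}+\varepsilon_n
\]
for every $n\in\N$ and every $F\in\Pf(S)$. Since replacing $\varepsilon_n$ by $\max\{\varepsilon_n,0\}$ preserves the inequality above, and also preserves the property $\lim_n\varepsilon_n=0$, we may assume without loss of generality that $\varepsilon_n\geq 0$ for all $n\in\N$.

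Now define, for each $n\in\N$, the function $f_n\colon \R_{\geq 0}\to \R_{\geq 0}$ by $f_n(r)=r+\varepsilon_n$; this is well-defined into $\R_{\geq 0}$ precisely because $\varepsilon_n\geq 0$. For condition (iii$_1$), observe that $|f_n(r)-r|=\varepsilon_n$ for every $r\in\R_{\geq 0}$, so $\{f_n\}_{n\in\N}$ converges uniformly to $\id_{\R_{\geq 0}}$ on the whole $\R_{\geq 0}$, and in particular on every bounded interval $[0,C]$. For condition (iii$_2$), fix an arbitrary right F\o lner net $\s=\{F_i\}_{i\in I}$ for $S$ and let $j\in I$ be any element (no restriction on $i$ is needed here). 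Then, for every $i\in I$ and every $n\in\N$, specializing the asymptotic domination inequality to $F=F_i$ gives
\[
\frac{v_1(T_{F_i}(\alpha_1,x))}{|F_i|}\leq \frac{v_2(T_{F_i}(\alpha_2,y_n))}{|F_i|}+\varepsilon_n=f_n\!\left(\frac{v_2(T_{F_i}(\alpha_2,y_n))}{|F_i|}\right),
\]
which is exactly the inequality required in (iii$_2$). This completes the argument.

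There is essentially no obstacle here: the only subtle point is ensuring that the functions $f_n$ land in $\R_{\geq 0}$, which is the reason to replace $\varepsilon_n$ by $\max\{\varepsilon_n,0\}$; once this is done, the translation $f_n(r)=r+\varepsilon_n$ automatically gives uniform convergence to the identity on all of $\R_{\geq 0}$ and the F\o lner net $\s$ plays no role in the verification.
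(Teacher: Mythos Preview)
Your proof is correct and follows essentially the same approach as the paper: define $f_n(r)=r+\varepsilon_n$ and observe that the asymptotic-domination inequality specializes immediately to any F\o lner net. Your extra step of replacing $\varepsilon_n$ by $\max\{\varepsilon_n,0\}$ is a nice touch of rigor, since the definition allows $\varepsilon_n\in\R$ while $f_n$ must land in $\R_{\geq 0}$; the paper's proof simply takes $\varepsilon_n\in\R_{\geq 0}$ without comment.
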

\begin{proof} 
Let $x\in M_1$. There exist two sequences $\{y_n\}_{n\in\N}$ in $M_2$ and $\{\varepsilon_n\}_{n\in\N}$ in $\R_{\geq0}$ such that $\lim_{n\to \infty}\varepsilon_n=0$ and, for every $F\in\Pf(S)$, 
$$ v_1(T_F(\alpha_1,x))\leq v_2(T_F(\alpha_2,y_n))+\varepsilon_n|F|.$$
For every $n\in\N$, define $f_n\colon\R_{\geq0}\to\R_{\geq0}$ by $f_n(r)=r+\varepsilon_n$ for every $r\in\R_{\geq0}$.
Then $f_n$ converges uniformly to $\id_{\R_{\geq0}}$ on all of $\R_{\geq0}$.
If $\s=\{F_i\}_{i\in I}$ is a right F\o lner net for $S$, then for every $i\in I$,
\[\frac{v_1(T_{F_i}(\alpha_1,x))}{|F_i|}\leq \frac{v_2(T_{F_i}(\alpha_2,y_n))+\varepsilon_n|F_i|}{|F_i|} = f_n\left(\frac{v_2(T_{F_i}(\alpha_2,y_n))}{|F_i|}\right).\qedhere\]
\end{proof}

\subsubsection{Entropy and its properties}

\begin{definition}[\cite{DGB4,VBT}] \label{Def:h}
Let $M=(M,v)$ be a normed monoid, $S\la{\alpha} M$ a left $S$-action and fix a right F\o lner net $\s=\{F_i\}_{i\in I}$ for $S$. The {\em $\s$-entropy} of $\alpha$ at a given $m\in M$ is 
\[H(\alpha,\s,m)=\limsup_{i\in I}\frac{v(T_{F_i}(\alpha,m))}{|F_i|}.\]
The {\em $\s$-entropy} of $\alpha$ is $h(\alpha,\s)=\sup_{m\in M}H(\alpha,\s,m)$.
\end{definition}

If the norm $v$ is also subadditive, then our definition of entropy can be sensibly improved:

\begin{lemma}\label{reallim} 
Let $M=(M,v)$ be a normed monoid with $v$ monotone and subadditive, and let $S\la{\alpha} M$ be a left $S$-action. For each $m\in M$ and each right F\o lner net $\s=\{F_i\}_{i\in I}$ for $S$, the limit superior defining $H(\alpha,\s,m)$ is a limit, which does not depend on the choice of $\s$.
\end{lemma}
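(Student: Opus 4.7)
The plan is to realize this lemma as a direct application of the generalized Ornstein--Weiss type convergence theorem for cancellative right amenable monoids, as proved in~\cite{CCK} and systematically used in~\cite{DGB4,VBT}. That theorem states that for any function $a\colon\Pf(S)\to\R_{\geq 0}$ which is monotone (with respect to inclusion), subadditive (i.e., $a(F\cup F')\leq a(F)+a(F')$) and $S$-subinvariant in the appropriate sense, the net $\{a(F_i)/|F_i|\}_{i\in I}$ converges along every right F\o lner net $\{F_i\}_{i\in I}$ for $S$, with limit equal to $\inf_{F\in\Pf(S)}a(F)/|F|$. This would immediately give both the existence of the limit and its independence of $\s$.

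Concretely, for a fixed $m\in M$ I would set $a(F):=v(T_F(\alpha,m))$ and verify the three required properties. For monotonicity, if $F\subseteq F'$ then $T_{F'}(\alpha,m)=T_F(\alpha,m)+T_{F'\setminus F}(\alpha,m)$, so monotonicity of $v$ forces $a(F)\leq a(F')$. For subadditivity, writing $F\cup F'$ as the disjoint union $F\sqcup(F'\setminus F)$, one has $T_{F\cup F'}(\alpha,m)=T_F(\alpha,m)+T_{F'\setminus F}(\alpha,m)$, so subadditivity of $v$ combined with the monotonicity of $a$ just established yields $a(F\cup F')\leq a(F)+a(F'\setminus F)\leq a(F)+a(F')$. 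For subinvariance, since $\alpha$ is a left action one has $T_{sF}(\alpha,m)=\alpha_s(T_F(\alpha,m))$ for every $s\in S$, so the fact that $\alpha_s\in\End_{\mathfrak{M}}(M)$ (and therefore $v\circ\alpha_s\leq v$) gives $a(sF)\leq a(F)$.

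Once these three properties are in place, the cited convergence lemma applies and the proof is complete. The only subtle point to watch out for is the matching between the side on which $a$ is subinvariant and the side on which the F\o lner condition is imposed: the natural subinvariance coming from the left $S$-action of $\alpha$ is with respect to left multiplication ($sF$), whereas the paper uses right F\o lner nets (where $|F_is\setminus F_i|/|F_i|\to 0$); what is needed is the version of the Ornstein--Weiss lemma in which these two opposite sides are paired. This is precisely the convention adopted in~\cite{CCK,DGB4,VBT}, so this step reduces to a citation rather than a new argument.
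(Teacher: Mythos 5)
Your proposal is correct and follows essentially the same route as the paper: define $a(F)=v(T_F(\alpha,m))$, verify subadditivity (via monotonicity plus subadditivity of $v$) and left subinvariance (via $T_{sF}(\alpha,m)=\alpha_s(T_F(\alpha,m))$ and the norm-nonincreasing property of $\alpha_s$), then invoke the Ornstein--Weiss-type convergence result of Ceccherini-Silberstein, Coornaert and Krieger~\cite[Corollary~1.2]{CCK} for right F\o lner nets. The paper does not list monotonicity of $a$ as a separate hypothesis (it is absorbed into the subadditivity verification), but this is a presentational difference only; your remark about matching left subinvariance with right F\o lner nets correctly identifies the convention used in~\cite{CCK}.
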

\begin{proof}
 For $m\in M$, consider the function $f_m\colon \Pf(S)\to \R_{\geq0}$ such that $f_m(F)=v(T_F(\alpha,m))$. Each $f_m$ has the following properties, which allow one to conclude by \cite[Corollary~1.2]{CCK}: 
\begin{enumerate}[\rm --]
  \item {\em subadditivity}, that is, given $F,F'\in \Pf(S)$, we have:
\begin{align*}
f_m(F\cup F')&=v(T_{F\cup F'}(\alpha,m))\\
&=v(T_{F}(\alpha,m)+T_{F'\setminus F}(\alpha,m))\\
&\leq v(T_{F}(\alpha,m))+v(T_{F'\setminus F}(\alpha,m))&\text{by subadditivity of $v$;}\\
&\leq v(T_{F}(\alpha,m))+v(T_{F'}(\alpha,m))=f_m(F)+f_m(F')&\text{by monotonicity of $v$.}
\end{align*}
   \item {\em left subinvariance}, that is, given $F\in\Pf(S)$ and $s\in S$,
\[f_m(sF)=v(T_{sF}(\alpha,m))=v(\alpha_s(T_F(\alpha,m)))\overset{(*)}\leq v(T_{F}(\alpha,m))=f_m(F),\]
where $(*)$ follows since $\alpha_s$ is a homomorphism of normed monoids.
\qedhere
\end{enumerate}
\end{proof}


%

For normed monoids $M_1=(M_1,v_1)$, $M_2=(M_2,v_2)$ and  left $S$-actions $S\overset{\alpha_1}\curvearrowright M_1$, $S\overset{\alpha_2}\curvearrowright M_2$, define the coproduct $M_1\oplus M_2$ with the norm $v_1\oplus v_2$ such that $(v_1\oplus v_2)(x,y)=v_1(x)+v_2(y)$ for every $(x,y)\in M_1\oplus M_2$, and the left $S$-action 
$$S\longla{\alpha_1\oplus\alpha_2} M_1\oplus M_2,\quad \text{such that, for every $s\in S$, $(\alpha_1\oplus\alpha_2)_s(x,y)=((\alpha_1)_s(x),(\alpha_2)_s(y))$ for every $(x,y)\in M_1\oplus M_2$.}$$
If both $v_1$ and $v_2$ are monotone (resp., subadditive), then so is $v_1\oplus v_2$. 
In the proof of {(\text{\sc at$_\top$})} we use the following weak Addition Theorem (we omit its straightforward proof). 

\begin{proposition}\label{wAT} 
In the above notation, if $v_1$ and $v_2$ are both monotone and subadditive, then \[h(\alpha_1\oplus\alpha_2,\s)=h(\alpha_1,\s)+h(\alpha_2,\s)\] for any right F\o lner net $\s$ for $S$.
\end{proposition}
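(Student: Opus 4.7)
The plan is to reduce everything to the pointwise identity
\[
H(\alpha_1\oplus\alpha_2,\s,(x,y))=H(\alpha_1,\s,x)+H(\alpha_2,\s,y),
\]
and then take suprema in each variable separately.

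First I would unwind the definitions. For any $F\in\Pf(S)$ and any $(x,y)\in M_1\oplus M_2$, the componentwise action yields
\[
T_F(\alpha_1\oplus\alpha_2,(x,y))=\bigl(T_F(\alpha_1,x),\,T_F(\alpha_2,y)\bigr),
\]
and so, by the very definition of the coproduct norm,
\[
(v_1\oplus v_2)\bigl(T_F(\alpha_1\oplus\alpha_2,(x,y))\bigr)=v_1\bigl(T_F(\alpha_1,x)\bigr)+v_2\bigl(T_F(\alpha_2,y)\bigr).
\]
Dividing by $|F_i|$ along the right F\o lner net $\s=\{F_i\}_{i\in I}$, the assumption that $v_1$ and $v_2$ are both monotone and subadditive (hence so is $v_1\oplus v_2$) allows me to invoke Lemma~\ref{reallim}: each of the quantities $v_1(T_{F_i}(\alpha_1,x))/|F_i|$ and $v_2(T_{F_i}(\alpha_2,y))/|F_i|$ converges (to $H(\alpha_1,\s,x)$ and $H(\alpha_2,\s,y)$, respectively). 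Since the sum of two convergent nets of real numbers converges to the sum of the limits, the displayed identity for $H$ follows at once.

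Finally I would pass to the supremum. The inequality $h(\alpha_1\oplus\alpha_2,\s)\le h(\alpha_1,\s)+h(\alpha_2,\s)$ is immediate from the identity together with the standard fact that the supremum of a sum is bounded by the sum of the suprema. Conversely, for arbitrary $x\in M_1$ and $y\in M_2$, the identity gives
\[
H(\alpha_1,\s,x)+H(\alpha_2,\s,y)=H(\alpha_1\oplus\alpha_2,\s,(x,y))\le h(\alpha_1\oplus\alpha_2,\s),
\]
and taking the supremum first over $x$ and then over $y$ yields $h(\alpha_1,\s)+h(\alpha_2,\s)\le h(\alpha_1\oplus\alpha_2,\s)$.

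There is no real obstacle: the only subtle point is that one needs genuine limits (not just limits superior) to split the sum across the two summands without loss, which is exactly what Lemma~\ref{reallim} provides under the monotonicity and subadditivity hypotheses on $v_1$ and $v_2$.
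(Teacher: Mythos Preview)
Your argument is correct and is exactly the ``straightforward proof'' the paper alludes to (the paper omits the proof entirely). The only nontrivial step is the passage from $\limsup$ to $\lim$ via Lemma~\ref{reallim}, which you identify and use correctly; once the pointwise identity holds, the supremum splits because $(x,y)$ ranges over all of $M_1\oplus M_2$.
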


The key to the proof of both the Topological Addition Theorem and the Bridge Theorem is the following:

\begin{proposition}\label{wad}\label{conjumon}
Let $M_1=(M_1,v_1)$, $M_2=(M_2,v_2)$ be normed monoids, $S\overset{\alpha_1}\curvearrowright M_1$, $S\overset{\alpha_2}\curvearrowright M_2$ left $S$-actions, {and $\s=\{F_i\}_{i\in I}$ a right F\o lner net for $S$.} If $S\la{\alpha_2} M_2$ weakly asymptotically dominates $S\la{\alpha_1} M_1$, then $h(\alpha_1,\s)\leq h(\alpha_2,\s)$.
In particular, $h(\alpha_1,\s)= h(\alpha_2,\s)$ whenever $\alpha_1$ and $\alpha_2$ are weakly asymptotically equivalent.
\end{proposition}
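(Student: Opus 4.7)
The ``in particular'' clause is immediate from the first assertion applied in both directions, so it suffices to show $h(\alpha_1,\s)\leq h(\alpha_2,\s)$. Fix $m\in M_1$; it is enough to show $H(\alpha_1,\s,m)\leq h(\alpha_2,\s)$, since the conclusion then follows by taking the supremum over $m$. If $h(\alpha_2,\s)=\infty$ there is nothing to prove, so assume $h(\alpha_2,\s)<\infty$.

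By the weak asymptotic domination hypothesis applied to the given F\o lner net $\s=\{F_i\}_{i\in I}$ and the chosen $m\in M_1$, we obtain a sequence $\{y_n\}_{n\in\N}$ in $M_2$, a sequence of functions $\{f_n\}_{n\in\N}$ with $f_n\colon\R_{\geq0}\to\R_{\geq0}$ converging uniformly to $\id_{\R_{\geq0}}$ on every bounded interval, and an index $j\in I$ such that
\[
a_i:=\frac{v_1(T_{F_i}(\alpha_1,m))}{|F_i|}\ \leq\ f_n\!\left(\frac{v_2(T_{F_i}(\alpha_2,y_n))}{|F_i|}\right)\quad\text{for every }i\geq j,\ n\in\N.\tag{$*$}
\]
For each $n$, set $L_n:=H(\alpha_2,\s,y_n)=\limsup_i v_2(T_{F_i}(\alpha_2,y_n))/|F_i|\leq h(\alpha_2,\s)<\infty$.

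Fix $\eta>0$ and set $C:=h(\alpha_2,\s)+\eta$. Since $\{f_n\}$ converges uniformly to $\id_{\R_{\geq0}}$ on $[0,C]$, there is $\bar n\in\N$ such that $f_n(r)\leq r+\eta$ for every $n\geq\bar n$ and every $r\in[0,C]$. Fix any $n\geq\bar n$. Since $L_n\leq h(\alpha_2,\s)<C$, there exists $j_n\in I$ with $j_n\geq j$ such that, for every $i\geq j_n$, one has $v_2(T_{F_i}(\alpha_2,y_n))/|F_i|\leq L_n+\eta\leq C$; plugging this into $(*)$ and using the inequality $f_n(r)\leq r+\eta$ valid on $[0,C]$, we deduce
\[
a_i\ \leq\ \frac{v_2(T_{F_i}(\alpha_2,y_n))}{|F_i|}+\eta\quad\text{for every }i\geq j_n.
\]
Taking $\limsup_{i\in I}$ on both sides yields $H(\alpha_1,\s,m)\leq L_n+\eta\leq h(\alpha_2,\s)+\eta$. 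Since $\eta>0$ was arbitrary we conclude $H(\alpha_1,\s,m)\leq h(\alpha_2,\s)$, as required.

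The only delicate point is making the uniform-convergence bound for $f_n$ effective at the level of the $\limsup$ on the right-hand side of $(*)$: this is precisely where we need both that the norms $v_2(T_{F_i}(\alpha_2,y_n))/|F_i|$ stay eventually inside a fixed bounded interval (ensured by $h(\alpha_2,\s)<\infty$) and that $f_n$ can be made arbitrarily close to the identity on that interval. Once these two ingredients are available, the argument amounts to an $\varepsilon/\delta$-style manipulation of limsup, with no use of subadditivity or any convergence of the defining nets beyond the F\o lner condition already packaged in the hypothesis.
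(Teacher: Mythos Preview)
Your proof is correct and follows essentially the same line as the paper's: reduce to $h(\alpha_2,\s)<\infty$, use the weak asymptotic domination data for a fixed $m\in M_1$, pick $n$ large enough so that $f_n$ is $\eta$-close to the identity on a bounded interval containing the eventual values of $v_2(T_{F_i}(\alpha_2,y_n))/|F_i|$, and then pass to the $\limsup$. The only cosmetic difference is that the paper fixes $C=h(\alpha_2,\s)+1$ once and for all, while you let $C=h(\alpha_2,\s)+\eta$ vary with $\eta$; both choices work.
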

\begin{proof} If $h(\alpha_2,\s)= \infty$, we are done, so assume $h(\alpha_2,\s)<\infty$. 
 We verify that for every $x\in M_1$, $H(\alpha_1,\s,x)\leq h(\alpha_2,\s)$. Let 
$\{y_n\}_{n\in \N}$, $\{f_n\}_{n\in\N}$ and $j\in I$ be as in the definition. Then, for every $n\in\N$, and $j\leq i \in I$
\begin{equation}\label{Eq:June26}
 \frac{v_1(T_{F_i}(\alpha_1,x))}{|F_i|} \leq  f_n\left(\frac{v_2(T_{F_i}(\alpha_2,y_n))}{|F_i|}\right)
\end{equation}
Fix an arbitrary $\varepsilon>0$ and put $C = h(\alpha_2,\s) + 1$. There exists $m\in \N$ such that 
\begin{equation}\label{Eq:June26**}
|f_n(r)-r|<\varepsilon \ \mbox{ for every }  n\geq m  \ \mbox{ and } r\in [0,C].  
\end{equation}
 Since 
$$
 \lim_{j\leq i \in I} \frac{v_2(T_{F_i}(\alpha_2,y_m))}{|F_i|} = H(\alpha_2,\s,y_m) \leq h(\alpha_2,\s) < h(\alpha_2,\s) + 1 = C, 
 $$
there exists $j_m \in I$ such that
\begin{equation}\label{Eq:June26*}
\frac{v_2(T_{F_i}(\alpha_2,y_m))}{|F_i|} \leq C \ \mbox{ for every } \ i \geq j_m. 
\end{equation}
Now pick a $j_0$ such that $j_0 \geq j$ and $j_0\geq j_m$. Then, combining (\ref{Eq:June26}), (\ref{Eq:June26**}) and (\ref{Eq:June26*})  we deduce that
$$ \frac{v_1(T_{F_i}(\alpha_1,x))}{|F_i|} \leq  \frac{v_2(T_{F_i}(\alpha_2,y_m))}{|F_i|} + \varepsilon \ \mbox{ for every } \ i \geq j_0.$$
After taking limits, this gives 
$$
H(\alpha_1,\s,x) \leq H(\alpha_2,\s,y_m)+\varepsilon \leq h(\alpha_2,\s) +\varepsilon.
$$
Since $\varepsilon>0$ was chosen arbitrarily, we conclude that $H(\alpha_1,\s,x) \leq h(\alpha_2,\s)$.
As this holds for each $x\in M_1$, we get the desired inequality $h(\alpha_1,\s)\leq h(\alpha_2,\s)$. The second assertion obviously follows from the first one.
\end{proof}

Example \ref{exa:domini} and the above proposition give: 

\begin{corollary}\label{Mono:mono}  With $S\la{\alpha_1} M_1$ and $S\la{\alpha_2} M_2$ as above, and $f\colon M_1 \to M_2$ an $S$-equivariant morphism in $\mathfrak M$, for $\s$ a right F\o lner net for $S$,
\begin{enumerate}[(1)]
  \item $h(\alpha_1,\s)\leq h(\alpha_2,\s)$, if $f$ is an embedding; 
  \item $h(\alpha_1,\s)\geq h(\alpha_2,\s)$, if $f$ is surjective. 
\end{enumerate}
\end{corollary}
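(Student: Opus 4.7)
The corollary is a direct combination of Example \ref{exa:domini} and Proposition \ref{wad}, and my plan is to unpack exactly this chain of implications for each of the two cases.

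For part (1), I would start by invoking Example \ref{exa:domini}(1), which tells me that whenever $f\colon M_1\to M_2$ is an embedding of normed monoids that is $S$-equivariant, the action $\alpha_2$ dominates $\alpha_1$ in the strong sense: for every $x\in M_1$, the witness $y = f(x)\in M_2$ satisfies $v_2(T_F(\alpha_2,y)) = v_1(T_F(\alpha_1,x))$ for all $F\in\Pf(S)$ (using that $f$ is $S$-equivariant and norm-preserving on its image). Plain domination implies asymptotic domination (choose the constant sequence $y_n=y$ and $\varepsilon_n=0$), which in turn implies weak asymptotic domination by the lemma preceding Proposition \ref{wad}. Now Proposition \ref{wad} yields $h(\alpha_1,\s)\leq h(\alpha_2,\s)$.

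For part (2), the argument is symmetric: Example \ref{exa:domini}(2) guarantees that if $f$ is surjective and $S$-equivariant, then $\alpha_1$ dominates $\alpha_2$; here the witness for a given $x\in M_2$ is any $y\in M_1$ with $f(y)=x$, and the inequality $v_2(T_F(\alpha_2,x))\leq v_1(T_F(\alpha_1,y))$ follows from $f$ being a morphism of normed monoids (hence norm-decreasing) together with $S$-equivariance. The same upgrade chain (domination $\Rightarrow$ asymptotic domination $\Rightarrow$ weak asymptotic domination) and a single application of Proposition \ref{wad} give $h(\alpha_2,\s)\leq h(\alpha_1,\s)$.

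Since both statements follow essentially by citation, no step is really an obstacle; the only minor point to check carefully is that the witnesses in Example \ref{exa:domini} really do produce the required inequalities in $\Pf(S)$-indexed form, which is immediate from $S$-equivariance together with the definitions of embedding and of morphism in $\mathfrak M$. No further computation is needed, so I would keep the write-up to one or two sentences per case.
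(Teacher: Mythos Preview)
Your proposal is correct and follows exactly the paper's own route: the corollary is stated immediately after Example~\ref{exa:domini} and Proposition~\ref{wad} with the one-line justification that these two results give the claim. Your unpacking of the chain ``domination $\Rightarrow$ asymptotic domination $\Rightarrow$ weak asymptotic domination $\Rightarrow$ entropy inequality'' is precisely what the paper leaves implicit.
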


\subsection{Background on the algebraic entropy}\label{subs_back_ent}

In this subsection we recall the definition and some of the basic properties of the algebraic entropy $h_\alg(\lambda)$ of the left action $S\la{\lambda} X$ of the cancellative and right amenable monoid $S$ on a discrete Abelian group $X$. 

Given $F\in\Pf(S)$ and $E\in\Pf^0(X)$, we let 
\[T_{F\,}(\lambda,E)=\sum_{s\in F}\lambda_s(E).\]

\begin{definition}\label{def:entropy}
For $E\in\Pf^0(X)$, the \emph{algebraic entropy of $\lambda$ with respect to $E$} is 
\begin{equation}\label{def_partial_h_alg}
H_{\alg}(\lambda,E)=\lim_{i\in I}\frac{\log|T_{F_i}(\lambda,E)|}{|F_i|}.
\end{equation}
The \emph{algebraic entropy of $\lambda$} is $h_{\alg}(\lambda)=\sup\{H_{\alg}(\lambda,E): E\in \Pf^0(X)\}.$  
\end{definition}

Here is an alternative way to arrive at this notion which allows us to easily obtain many of the properties of $h_{\alg}$. 

\begin{example}\label{Gactionsexalg}
Let $X$ be a discrete Abelian group.
\begin{enumerate}[(1)]
  \item The assignment $X \mapsto \Pf^0(X)$ from Example~\ref{nmexalg} can be completed to a covariant functor $\mathcal P\colon \Ab \to \mathfrak M$ by letting $\mathcal P(f)(E) = f(E)$ for any morphism $f\colon X \to Y$ in $\Ab$ and $E \in \Pf^0(X)$. If $j\colon Y\to X$ is a subgroup embedding in $\Ab$, 
then the map $\mathcal P(j)\colon \mathcal P(Y) \to \mathcal P(X)$ is an embedding in $\mathfrak M$, while the quotient map $q\colon X \to X/Y$ gives rise to a surjective morphism $\mathcal P(q) \colon \mathcal P(X) \to \mathcal P(X/Y)$ in $\mathfrak M$. 

  \item For a left $S$-action $S\la{\lambda} X$, the functor $\mathcal P$ induces a left $S$-action
$$S\la{\lambda_{\mathcal P}} \Pf^0(X),\quad\text{ with }\ (\lambda_{\mathcal P})_s(E)=\lambda_s(E)\ \text{for every $s\in S$ and $E\in\Pf^0(X)$}.$$
For any $F\in\Pf(S)$ and $E\in\Pf^0(X)$, $$T_F(\lambda_{\mathcal P},E)=\sum_{s\in F}\lambda_s(E)=T_F(\lambda,E).$$
So, just by definition, for a given right F\o lner net $\s$ for $S$, $H_{\alg}(\lambda,E)=H(\lambda_{\mathcal P},\s ,E)$ and $h_\alg(\lambda)=h(\lambda_{\mathcal P},\s)$.

Hence, the existence of the limit in \eqref{def_partial_h_alg} and its independence on the choice of $\s$ follow from Lemma~\ref{reallim}.
This freedom in the choice of the right F\o lner net will be fundamental in the proofs of the following sections. 

   \item The submonoid $\mathfrak F(X)$ of $\Pf^0(X)$ is  $S$-invariant, so we can consider the restriction $S\la{\lambda_{\frak F}}\mathfrak F(X)$ of $\lambda_\mathcal P$ to $\mathfrak F(X)$.
By Example~\ref{cofinal}, $\lambda_{\frak F}$ and $\lambda_{\mathcal P}$ are equivalent, since $\mathfrak F(X)$ is cofinal in $\Pf^0(X)$. 
Hence, for any right F\o lner net $\s$ for $S$, 
$$h_\alg(\lambda)=h(\lambda_{\mathcal P},\s)=h(\lambda_{\frak F},\s)$$
by Proposition~\ref{wad}.
Therefore, for the computation of $h_\alg$ one can use $\frak F(X)$ instead of the whole $\Pf^0(X)$, namely, $h_{\alg}(\lambda)=\sup\{H_{\alg}(\lambda,E): E\in \frak F (X)\}.$
    
    \item Also the submonoid $\L^{\fin}(X)$ of $\Pf^0(X)$ is $S$-invariant. Since $X$ is torsion precisely when $\L^{\fin}(X)$ is cofinal in $\Pf^0(X)$, in this case one can use $\L^{\fin}(X)$ for the computation of the algebraic entropy, that is, $h_\alg(\lambda)=h((\lambda_\mathcal P)_{\restriction \L^\fin(X)},\s)$ for a right F\o lner net $\s$ for $S$; in other terms, $h_{\alg}(\lambda)=\sup\{H_{\alg}(\lambda,E): E\in \L^{\fin} (X)\}$. 
\end{enumerate}
\end{example}

\begin{remark}\label{RemarkMay16}
Assume that $G$ is the group of left fractions of $S$ and $G\la{\lambda} X$ is a left $G$-action on the discrete Abelian group $X$. By Lemma \ref{Lemma:ex-2.2}, $G$ is amenable and every right F\o lner net for $S$ is also a right F\o lner net for $G$. 
Therefore, $h_{\alg}(\lambda) = h_{\alg}(\lambda_{\restriction S})$. 
\end{remark}

In case a subgroup $Y\leq X$ is $S$-invariant, that is, $\lambda_s(Y)\leq Y$ for all $s\in S$, one obtains the following induced left $S$-actions 
\[S\overset{\lambda_{Y}}{\curvearrowright}Y\quad\text{and}\quad S\overset{\lambda_{X/Y}}{\curvearrowright}X/Y.\]
Both the inclusion $Y\to X$ and the projection $X\to X/Y$ are examples of $S$-equivariant homomorphisms. 

For what follows we just need the following  general facts about $h_{\alg}$. We refer to~\cite{DFGB} for the proof of these and other general properties  of the algebraic entropy, a short alternative proof along the same lines of that of Proposition~\ref{mono-top} (see also Proposition~\ref{properties_h_top}) can be used as well.

\begin{proposition}\label{mon}
In our setting, the following statements hold true:
\begin{enumerate}[(1)]
   \item if $S\la{\lambda'} X'$ is a left action conjugated to $S\la{\lambda} X$, then $h_{\alg}(\lambda')=h_{\alg}(\lambda)$;
   \item if $Y$ is an $S$-invariant subgroup of $X$, then $h_{\alg}(\lambda)\geq \max\{h_{\alg}(\lambda_{Y}),\,h_{\alg}(\lambda_{X/Y})\}.$
\end{enumerate}
\end{proposition}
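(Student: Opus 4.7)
The plan is to lift everything to the monoid $\Pf^0$ of finite subsets (with the log-cardinality norm) and then invoke Corollary~\ref{Mono:mono}, so that both parts reduce to checking that the functor $\mathcal{P}\colon \Ab \to \mathfrak{M}$ of Example~\ref{Gactionsexalg}(1) transports the relevant morphisms of left $S$-actions into $S$-equivariant embeddings or surjections in $\mathfrak{M}$. Fix a right F\o lner net $\s=\{F_i\}_{i\in I}$ for $S$, recalled in Example~\ref{Gactionsexalg}(2) that $h_\alg(\lambda)=h(\lambda_{\mathcal{P}},\s)$ for every left $S$-action $S\la{\lambda} X$.

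For item~(1), suppose $f\colon X\to X'$ is an $S$-equivariant group isomorphism realizing the conjugation between $\lambda$ and $\lambda'$. Then $\mathcal{P}(f)\colon \Pf^0(X)\to \Pf^0(X')$, $E\mapsto f(E)$, is a bijective monoid homomorphism which preserves the norm (because $|f(E)|=|E|$), hence is an isomorphism in $\mathfrak{M}$, and it is $S$-equivariant by functoriality of $\mathcal{P}$. In particular $\mathcal{P}(f)$ is simultaneously an embedding and surjective, so both parts of Corollary~\ref{Mono:mono} apply and give $h_\alg(\lambda)=h(\lambda_{\mathcal{P}},\s)=h(\lambda'_{\mathcal{P}},\s)=h_\alg(\lambda')$.

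For item~(2), let $j\colon Y\hookrightarrow X$ be the inclusion and $q\colon X\twoheadrightarrow X/Y$ the quotient; both are $S$-equivariant by definition of $\lambda_Y$ and $\lambda_{X/Y}$. Applying $\mathcal{P}$ we obtain two $S$-equivariant morphisms in $\mathfrak{M}$: the map $\mathcal{P}(j)\colon \Pf^0(Y)\to \Pf^0(X)$ is injective and norm-preserving (since $|j(E)|=|E|$), hence an embedding, while $\mathcal{P}(q)\colon \Pf^0(X)\to \Pf^0(X/Y)$ is surjective (any finite $F\subseteq X/Y$ containing $0$ admits a section through $q$ of the same size) and satisfies $\log|q(E)|\leq \log|E|$, so it is a surjective morphism of normed monoids. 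Corollary~\ref{Mono:mono}(1) applied to $\mathcal{P}(j)$ gives $h_\alg(\lambda_Y)\leq h_\alg(\lambda)$, and Corollary~\ref{Mono:mono}(2) applied to $\mathcal{P}(q)$ gives $h_\alg(\lambda_{X/Y})\leq h_\alg(\lambda)$. Taking the maximum yields the desired inequality.

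There is essentially no obstacle: once one has accepted the normed-monoid interpretation of $h_\alg$ together with the monotonicity under embeddings and surjections in $\mathfrak{M}$ (Corollary~\ref{Mono:mono}), the only task is the trivial verification that the functor $\mathcal{P}$ converts injections (resp.\ surjections) of Abelian groups into embeddings (resp.\ surjections) of normed monoids, which is immediate from the definition of the cardinality norm.
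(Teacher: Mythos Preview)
Your proof is correct and follows precisely the route the paper itself indicates: it defers the statement to \cite{DFGB} but remarks that ``a short alternative proof along the same lines of that of Proposition~\ref{mono-top} \dots\ can be used as well,'' and Proposition~\ref{mono-top} is proved exactly by applying the functor to the relevant morphism and invoking Corollary~\ref{Mono:mono}. You have carried this out for the covariant functor $\mathcal P$ (whose behavior on injections and surjections is already recorded in Example~\ref{Gactionsexalg}(1)), so your argument matches the paper's intended approach.
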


The first property says that the algebraic entropy is an invariant for left $S$-actions on Abelian groups, while the second one is a monotonicity property with respect to 
taking invariant subgroups and quotients, that can be seen as consequences of the Algebraic Addition Theorem.

\subsection{Background on the topological entropy}\label{subs:top_ent}


In this subsection we recall the needed background about the topological entropy $h_\top(\rho)$ of a right action $K\ra{\rho} S$ of the cancellative and right amenable monoid $S$ on a compact space $K$. 

Given two open covers $\mathcal U=\{U_i\}_{i\in I}$ and $\mathcal V=\{V_j\}_{j\in J}$ of $K$, one says that $\mathcal V$ \emph{refines} $\mathcal U$, denoted by $\mathcal V\succ\mathcal U$, if for every $j\in J$ there exists $i_j\in I$ such that $V_j\subseteq U_{i_j}$. 
For two given open covers $\mathcal U,\mathcal V\in \cov(K)$,
\begin{equation}\label{succ}
\text{if}\quad \mathcal V\succ\mathcal U\quad \text{then}\quad N(\mathcal V)\geq N(\mathcal U).
\end{equation}
If $K'$ is a compact space, $f\colon K'\to K$ a continuous map, and $\mathcal U=\{U_j\}_{j\in J}\in\cov(K)$, let 
\[f^{-1}(\mathcal U)=\{f^{-1}(U_j)\}_{j\in J}\in\cov(K').\]
Moreover, for $\mathcal U\in\cov(K)$ and $F\in\Pf(S)$, let 
\[\mathcal U_{\rho,F}=\bigvee_{s\in F}\rho_s^{-1}(\mathcal U).\]
\begin{definition}\label{def:top_entropy}
For $\mathcal U\in\cov(K)$, the \emph{topological entropy of $\rho$ with respect to $\mathcal U$} is 
\begin{equation}\label{def_partial_h_top}
H_{\top}(\rho,\mathcal U)=\lim_{i\in I}\frac{\log N(\mathcal U_{\rho,F_i})}{|F_i|}.
\end{equation}
The \emph{topological entropy of $\rho$} is $h_{\top}(\rho)=\sup\{H_{\top}(\rho,\mathcal U): \mathcal U\in \cov(K)\}.$
\end{definition}

For $h_\top$ we present the following two alternative descriptions.

\begin{example}\label{Gactionsextop} 
Let $K$ be a compact space.
\begin{enumerate}[(1)]
\item Analogously to Example~\ref{Gactionsexalg}, the assignment $K \mapsto {\cov}(K)$ from Example~\ref{nmextop} can be completed to a contravariant functor 
${\cov}\colon \CompS \to \mathfrak M$ 
such that ${\cov}(f)(\U) = f^{-1}(\U)$ for any morphism $f\colon K \to L$ in $\CompS$ and open cover $\U \in {\cov}(L)$. 
If $j\colon Y\to X$ is a subspace embedding in $\CompS$,  the map ${\cov}(j) \colon \cov (X) \to \cov (Y)$ is a surjective morphism in $\mathfrak M$, while a quotient map $q\colon X \to Y$ gives rise to an embedding  $\cov (q) \colon \cov( Y) \to \cov (X)$  in $\mathfrak M$. 

\item For a right $S$-action $K\ra{\rho}S$, the functor $\cov$ induces a left $S$-action 
$$S\la{\rho_\cov} \cov(K),\quad \text{with}\ (\rho_\cov)_s(\U)=\rho_s^{-1}(\U)\ \text{for every $s\in S$ and $\U\in\cov(K)$}.
$$
For any $F\in\Pf(S)$ and $\U\in\cov(K)$, $$T_F(\rho_\cov,\U)=\bigvee_{s\in F}\rho_s^{-1}(\U)=\U_{\rho,F}.$$ So, for any right F\o lner net $\s$ for $S$, 
$H_{\top}(\rho,\U) = H(\rho_\cov ,\s ,\U)$ and $h_\top(\rho)=h(\rho_\cov,\s)$.

Hence, the existence of the limit in \eqref{def_partial_h_top} and its independence on the choice of $\s$ follow from Lemma~\ref{reallim}. 
\end{enumerate}
\end{example}

 The following example in used in \S\ref{AppC}.

\begin{example}\label{Gactionsextop2} 
Let $K$ be a compact group.
\begin{enumerate}[(1)]
\item Consider the contravariant functor 
$\mathfrak U\colon \CompG\to \mathfrak M$ 
defined by $K\mapsto \mathfrak U(K)$ (see Example~\ref{nmextop2}) and by letting $\mathfrak U(f)(V)=f^{-1}(V)$ for every morphism $f\colon K\to L$ in $\CompG$ and $V\in\mathfrak U(L)$.

\item For a right $S$-action $K\ra{\rho}S$, the functor $\mathfrak U$ induces a left $S$-action 
$$S\la{\rho_{\frak U}} {\frak U}(K),\quad\text{with}\  (\rho_{\frak U})_s(U)=\rho_s^{-1}(U)\ \text{for every $s\in S$ and $U\in\mathfrak U(K)$}.$$
For any $F\in\Pf(S)$ and $U\in\mathfrak U(K)$, $$T_F(\rho_\mathfrak U,U)=\bigcap_{s\in F}\rho_s^{-1}(U)=C_F(\rho,U).$$
 In these terms, for a right F\o lner net $\s=\{F_i\}_{i\in I}$ for $S$,
$$h(\rho_\mathfrak U,\s)=\sup\{H(\rho_\mathfrak U,\s,U):U\in\mathfrak U(K)\}, \quad\text{with}\ H(\rho_\mathfrak U,\s,U)=\limsup_{i\in I}\frac{-\log \mu(C_F(\rho,U) )}{|F_i|}.$$

\item  The submonoid $\L^{\op}(K)$ {of $\mathfrak U(K)$} is $S$-invariant, so $\rho_\mathfrak U$ restricts to it. When $K$ is profinite, that is, $\L^{\op}(K)$ is cofinal in $\mathfrak U(K)$, by Example~\ref{cofinal} and Proposition~\ref{wad}, recalling that the norm $v_\mathfrak U$ restricted to $\L^{\op}(K)$ is subadditive and so the above limit superior becomes a limit by Lemma~\ref{reallim}, we get
$$h(\rho_\mathfrak U,\s)=h((\rho_\mathfrak U)_{\restriction \L^{\op}(K)},\s)=\sup\{H(\rho_\mathfrak U,\s,U):U\in\mathfrak \L^{\op}(K)\}, \quad\text{with}\ H(\rho_\mathfrak U,\s,U)=\lim_{i\in I}\frac{\log [K:C_F(\rho,U)]}{|F_i|},$$
and the quantities do not depend on the choice of the right F\o lner net $\s$ for $S$.

\item {Considering $\L^{\op}(K)$ as a submonoid of $\cov(K)$ (as in Example \ref{nmextop2}(5)), $\L^{\op}(K)$ is $S$-invariant also in $\cov(K)$}, so $\rho_\cov$ restricts to it and $h((\rho_\cov)_{\restriction \L^{\op}(K)},\s)=h((\rho_\mathfrak U)_{\restriction \L^{\op}(K)},\s)$ for a right F\o lner net $\s$ for $S$.
When $K$ is profinite, that is, $\L^{\op}(K)$ is cofinal both in $\mathfrak U(K)$ and in $\cov(K)$, we then deduce that
$h_\top(\rho)=h((\rho_\cov)_{\restriction \L^{\op}(K)},\s)=h((\rho_\mathfrak U)_{\restriction \L^{\op}(K)},\s)$ for a right F\o lner net $\s$ for $S$.

\item When $S=G$ is a group, $G\la{\rho_{\frak U}}{\frak U}(K)$ and $G\la{\rho_{\cov}}{\cov}(K)$ are equivalent (see Proposition~\ref{cor3}); hence, $h_{\top}(\rho) = h(\rho_{\frak U},\s)$ for any right F\o lner net $\s=\{F_i\}_{i\in I}$ for $G$, in view of Example~\ref{Gactionsextop} and Proposition \ref{conjumon}. In detail, 
$$
h_\top(\rho)=\sup\left\{\lim_{i\in I}\frac{-\log \mu(C_F(\rho,U))}{|F_i|} : U\in\frak U(K)\right\}.
$$
As a by-product we obtain a new proof of the well-known fact that the topological entropy defined by Adler, Konheim and McAndrew
and that defined by Bowen (namely, Hood's extension) coincide on $\CompG$, as mentioned in the very beginning of the introduction.
\end{enumerate}
\end{example}

\begin{remark}\label{RemarkMay16*}
Let $G$ be the group of left fractions of $S$ and  $G\ra{\rho} K$ a right $G$-action. Then $G$ is amenable and every right F\o lner net for $S$ is also a right F\o lner net for $G$, by Lemma \ref{Lemma:ex-2.2}. Therefore, $h_{\top}(\rho) = h_{\top}(\rho_{\restriction S})$. 
\end{remark}

\begin{proposition}\label{mono-top}
Let $K'$ be another compact space and $f\colon K\to K'$ an $S$-equivariant continuous map with respect to the right $S$-actions $K\ra{\rho}S$ and $K'\ra{\rho'}S$.
Then, the following statements hold true:
\begin{enumerate}[(1)]
   \item if $f$ is surjective, then $h_\top(\rho)\geq h_\top(\rho')$;
   \item if $f$ is injective, then $h_\top(\rho)\leq h_\top(\rho')$.
\end{enumerate}
\end{proposition}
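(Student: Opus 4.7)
The approach is to reduce both claims to Corollary \ref{Mono:mono} via the contravariant functor $\cov\colon \CompS \to \mathfrak M$ from Example \ref{Gactionsextop}. Recall that for any right F\o lner net $\s$ for $S$ one has $h_\top(\rho) = h(\rho_\cov, \s)$ and $h_\top(\rho') = h(\rho'_\cov, \s)$, where $S\la{\rho_\cov}\cov(K)$ and $S\la{\rho'_\cov}\cov(K')$ are the induced left actions on normed monoids. The $S$-equivariance of $f$ dualizes to $S$-equivariance of the morphism $\cov(f)\colon \cov(K')\to\cov(K)$, $\U\mapsto f^{-1}(\U)$, with respect to $\rho'_\cov$ and $\rho_\cov$. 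Thus the two inequalities will follow from Corollary \ref{Mono:mono} once we establish that $\cov(f)$ is an embedding in $\mathfrak M$ when $f$ is surjective, and a surjection in $\mathfrak M$ when $f$ is injective.

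For (1), the identity $N(f^{-1}(\U)) = N(\U)$ for every $\U \in \cov(K')$ is immediate from the surjectivity of $f$: one inequality is trivial (any subcover of $\U$ pulls back to a subcover of $f^{-1}(\U)$ of the same size), and for the other, if $\{f^{-1}(U_{i_1}),\ldots,f^{-1}(U_{i_n})\}$ covers $K$ then $\{U_{i_1},\ldots,U_{i_n}\}$ covers $K'$ by surjectivity of $f$. Thus $\cov(f)$ is norm-preserving; it is also injective, since $f^{-1}(U)=f^{-1}(V)$ forces $U=V$ for open $U,V\subseteq K'$ whenever $f$ is surjective. Hence $\cov(f)$ is an embedding in $\mathfrak M$, and Corollary \ref{Mono:mono}(1) delivers $h_\top(\rho')\leq h_\top(\rho)$.

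For (2), the compactness of $K$ together with the Hausdorffness of $K'$ forces $f$ to be a closed embedding, so we may identify $K$ with the closed subspace $f(K)\subseteq K'$. Given $\V=\{V_j\}_{j\in J}\in\cov(K)$, pick for each $j\in J$ an open $\widetilde V_j\subseteq K'$ with $\widetilde V_j\cap f(K)=f(V_j)$, and set $\U = \{\widetilde V_j\}_{j\in J}\cup\{K'\setminus f(K)\}\in\cov(K')$. A short check gives $\cov(f)(\U) = \V\cup\{\emptyset\}$, whose $N$-value coincides with $N(\V)$ and whose join with any other cover agrees with the corresponding join involving $\V$. The only subtle point is the harmless appearance of $\emptyset$, which prevents $\cov(f)$ from being strictly surjective; this can be dealt with either by enlarging $\cov(K)$ to include covers augmented by $\emptyset$ (which does not alter the associated entropy), or, more directly, by verifying the domination condition $v_\cov(T_F(\rho_\cov,\V)) \leq v_\cov(T_F(\rho'_\cov,\U))$ for every $F\in\Pf(S)$---an immediate consequence of the fact that $f^{-1}$ commutes with joins and always satisfies $N(f^{-1}(\cdot))\leq N(\cdot)$. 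Either route makes $\rho'_\cov$ dominate $\rho_\cov$ in the sense of Example \ref{exa:domini}(2), so Corollary \ref{Mono:mono}(2) yields $h_\top(\rho)\leq h_\top(\rho')$. The main obstacle is precisely this empty-set bookkeeping in (2); case (1) is essentially automatic once the functor $\cov$ is in place.
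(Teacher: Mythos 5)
Your proposal takes exactly the paper's route: reduce to Corollary~\ref{Mono:mono} via the contravariant functor $\cov$, using that $\cov(f)$ is an embedding (resp.\ surjection) in $\mathfrak M$ when $f$ is surjective (resp.\ injective), which is precisely what is recorded in Example~\ref{Gactionsextop}(1) and then invoked in the paper's two-line proof. The empty-set worry you flag in (2) is actually avoidable: rather than adjoining $K'\setminus f(K)$ as a separate member of $\U$, absorb it into a single $\widetilde V_{j_0}$ (replace $\widetilde V_{j_0}$ by $\widetilde V_{j_0}\cup(K'\setminus f(K))$, whose trace on $f(K)$ is still $f(V_{j_0})$), so that $\cov(f)(\U)=\V$ on the nose and $\cov(f)$ is genuinely surjective.
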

\begin{proof}
(1) By Example~\ref{Gactionsextop}, $\cov(f) \colon \cov (K') \to \cov (K)$ is an embedding in $\mathfrak M$ so $h_\top(\rho) = h(\rho_\cov) \geq h(\rho'_\cov) = h_\top(\rho')$, in view of Corollary~\ref{Mono:mono}(1). 

\smallskip
(2) By Example~\ref{Gactionsextop}, $\cov(f)$ is a surjective morphism in $\mathfrak M$, so 
$h_\top(\rho) \! = \! h(\rho_\cov) \!\leq \! h(\rho'_\cov) \! = \! h_\top(\rho')$ by Corollary~\ref{Mono:mono}(2).
\end{proof}

As a consequence of the above proposition,  $h_\top(\rho)= h_\top(\rho')$, whenever the actions $\rho$ and $\rho'$ are conjugated.

\subsubsection{Topological entropy for linear actions on groups}

Suppose now that $K$ is a compact group and that the right $S$-action $K\ra{\rho}S$ is by continuous endomorphisms. In case $H$ is a closed (but not necessarily normal) subgroup of $K$ which is $S$-invariant, that is, $\rho_s(H)\leq H$ for each $s\in S$, there is an obvious right $S$-action $H\ra{\rho_{ H}}S$ that makes the inclusion $H\to K$ an $S$-equivariant injective continuous homomorphism. 

Denote by $\pi\colon K\to K/H=\{kH:k\in K\}$ the canonical projection to the space of left $H$-cosets. These objects always satisfy the following important properties:
\begin{itemize}
   \item when endowed with the quotient topology, $K/H$ is a compact space \cite[Theorems 5.21 and 5.22]{HR}, and $\pi$ is continuous, open and surjective;
   \item $K$ acts on the left on $ K/H$ and $\pi$ commutes with this action, i.e., $\pi$ is $K$-equivariant.
\end{itemize}
 Moreover, let $K/H\ra{\rho_{K/H}}\ \ S$ be the right action induced by $\rho$, with respect to which $\pi$ is $S$-equivariant.

\smallskip
As a consequence of Proposition~\ref{mono-top}, we obtain the following

\begin{proposition}\label{properties_h_top} 
In the above notation, the following statements hold true:
\begin{enumerate}[(1)]
  \item if $K'$ is another compact group and $K'\ra{\rho'}S$ is a right $S$-action conjugated to $K\ra{\rho}S$, then $h_{\top}(\rho')=h_{\top}(\rho)$;
  \item if $H$ is a closed $S$-invariant subgroup of $K$, then $h_{\top}(\rho)\geq \max\{h_{\top}(\rho_{ H}),\,h_{\top}( \rho_{K/H})\}$. 
\end{enumerate}
\end{proposition}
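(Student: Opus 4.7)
The plan is to deduce both parts of the proposition directly from Proposition~\ref{mono-top}, which already provides the needed monotonicity of $h_\top$ under equivariant surjective and injective continuous maps.

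For (1), a conjugation between the right $S$-actions $K\ra\rho S$ and $K'\ra{\rho'}S$ is, by definition (see \S\ref{representations}), an isomorphism in $\rre(S,\CompG)$, that is, an $S$-equivariant homeomorphism $f\colon K\to K'$. In particular, $f$ is both a continuous $S$-equivariant surjection and a continuous $S$-equivariant injection. Applying Proposition~\ref{mono-top}(1) to $f$ gives $h_\top(\rho)\geq h_\top(\rho')$, while applying Proposition~\ref{mono-top}(2) to $f$ gives $h_\top(\rho)\leq h_\top(\rho')$; hence $h_\top(\rho)=h_\top(\rho')$.

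For (2), I would exhibit two $S$-equivariant continuous maps associated with the closed $S$-invariant subgroup $H\leq K$. First, the inclusion $\iota\colon H\hookrightarrow K$ is an injective continuous homomorphism and it is $S$-equivariant by definition of the induced action $\rho_H$. Second, the canonical projection $\pi\colon K\to K/H$ onto the compact space of left $H$-cosets is continuous and surjective (as recalled just before the statement), and $S$-equivariant by definition of $\rho_{K/H}$. Applying Proposition~\ref{mono-top}(2) to $\iota$ yields $h_\top(\rho_H)\leq h_\top(\rho)$, and applying Proposition~\ref{mono-top}(1) to $\pi$ yields $h_\top(\rho_{K/H})\leq h_\top(\rho)$. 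Taking the maximum gives the claimed inequality.

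There is no real obstacle here: the entire content of the proposition is packaged into Proposition~\ref{mono-top}, and the only thing to verify is that conjugations unpack into a pair (surjective + injective) of equivariant continuous maps, and that the inclusion $H\to K$ and the quotient $K\to K/H$ are $S$-equivariant. Both of these are immediate from the definitions in \S\ref{representations} and from the standard facts about quotients of compact groups by closed (not necessarily normal) subgroups recalled in the preceding paragraph.
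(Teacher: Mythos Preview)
Your proposal is correct and follows exactly the approach of the paper, which simply states that Proposition~\ref{properties_h_top} is a consequence of Proposition~\ref{mono-top} without giving further details. You have spelled out precisely the intended argument: conjugations are simultaneously $S$-equivariant injections and surjections, the inclusion $H\hookrightarrow K$ is an $S$-equivariant injection, and the projection $K\to K/H$ is an $S$-equivariant surjection.
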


\section{Ore localization of actions on discrete Abelian groups}\label{sec_discrete}

Throughout this section we fix a cancellative and right amenable monoid $S$, its group of left fractions $G=S^{-1}S$, a discrete Abelian group $X$ and a left $S$-action $S\la{\lambda} X$. 
%
The aim of this section is to reduce the computation of $h_\alg(\lambda)$ to the computation of the algebraic entropy of a suitable left $G$-action $G\la{\lambda^*} X^*$, called the (left) Ore localization of $S\la{\lambda}X$. 

First, in \S\ref{reduction_to_injective}, we construct a left $S$-action by injective endomorphisms $S\la{\bar\lambda} \bar X$ and we prove that $h_\alg(\bar\lambda)=h_\alg(\lambda)$. Then, in \S\ref{Ore_invariance_subs}, we use $S\la{\bar\lambda} \bar X$ to construct the Ore localization $G\la{\lambda^*} X^*$ of $S\la{\lambda}X$, and we prove that $h_\alg(\lambda)=h_\alg(\lambda^*)$. Finally, in \S\ref{categorial_ore_loc_subs}, we sketch some categorical interpretations of the construction of the Ore localization.

\subsection{Reduction to actions by injective endomorphisms}\label{reduction_to_injective}

The goal of this subsection is to give an explicit construction that, starting with a left $S$-action $S\la{\lambda} X$ on an Abelian group $X$, produces a new left $S$-action $S\la{\bar\lambda} \bar X$ such that $\bar \lambda_s$ is injective for each $s\in S$. Furthermore, we see that the algebraic entropy does not distinguish between $\lambda$ and $\bar \lambda$, that is, $h_\alg(\lambda)=h_\alg(\bar \lambda)$. This fact allows us to restrict our attention to the algebraic entropy of those left $S$-actions that act by injective endomorphisms, knowing that such a technical simplification causes no loss in generality. 

The main ingredient in our construction is the kernel of $S\la{\lambda} X$, which is defined as follows:
\[\Ker(\lambda)= \{x\in X: \exists s\in S,\ \lambda_s(x) = 0\}=\bigcup_{s\in S}\Ker(\lambda_s).\]

\begin{lemma}\label{props_ker_lemma}
 The following statements hold true: 
\begin{enumerate}[(1)]
    \item $\Ker(\lambda)$ is a subgroup of $X$;
    \item $\lambda_s^{-1}(\Ker(\lambda))=\Ker(\lambda)$, for all $s\in S$.
\end{enumerate}
\end{lemma}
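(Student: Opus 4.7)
The plan is to prove both parts using the left Ore condition (LO) that holds for any cancellative right amenable monoid (cf.\ \S\ref{amenable_implies_ore_subsection}), together with the defining relations (LA.1)--(LA.2) of a left $S$-action. The picture to keep in mind is that each $\Ker(\lambda_s)$ is already a subgroup, and the Ore condition lets us ``synchronize'' two elements of $\Ker(\lambda)$ that are killed by different elements of $S$ by finding a common element of $S$ that annihilates both.

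For part (1), I would check that $\Ker(\lambda)$ is closed under the group operations. Closure under negation is trivial because each $\Ker(\lambda_s)$ is a subgroup, and $0 \in \Ker(\lambda_1)$. The only real content is additive closure: given $x,y \in \Ker(\lambda)$ with $\lambda_s(x)=0$ and $\lambda_t(y)=0$ for some $s,t\in S$, apply (LO) to find $a,b\in S$ with $as = bt$, and set $u = as$. Then (LA.2) gives $\lambda_u(x) = \lambda_a(\lambda_s(x)) = 0$ and likewise $\lambda_u(y) = \lambda_b(\lambda_t(y)) = 0$, so $\lambda_u(x+y) = 0$ and $x+y \in \Ker(\lambda_u) \subseteq \Ker(\lambda)$.

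For part (2), I would prove the two inclusions separately. The inclusion $\lambda_s^{-1}(\Ker(\lambda)) \subseteq \Ker(\lambda)$ is the easy one: if $\lambda_s(x) \in \Ker(\lambda)$, pick $u\in S$ with $\lambda_u(\lambda_s(x))=0$, i.e.\ $\lambda_{us}(x)=0$ by (LA.2), so $x\in \Ker(\lambda_{us})\subseteq \Ker(\lambda)$. The reverse inclusion $\Ker(\lambda) \subseteq \lambda_s^{-1}(\Ker(\lambda))$ is where Ore is needed: given $x\in \Ker(\lambda)$ with $\lambda_t(x) = 0$, apply (LO) to the pair $s, t$ to find $u, v \in S$ with $us = vt$; then $\lambda_u(\lambda_s(x)) = \lambda_{us}(x) = \lambda_{vt}(x) = \lambda_v(\lambda_t(x)) = 0$, showing $\lambda_s(x) \in \Ker(\lambda)$.

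Neither step is genuinely hard; the only place where a non-trivial monoid-theoretic input enters is the invocation of (LO) to produce a common multiplier, and this is exactly the reason one cannot simply consider $\Ker(\lambda)$ as a union of nested kernels (the family $\{\Ker(\lambda_s)\}_{s\in S}$ is directed with respect to the right divisibility preorder on $S$ precisely because of (LO)). The subsequent results of \S\ref{reduction_to_injective}, which pass to the quotient $\bar X = X/\Ker(\lambda)$ to obtain an induced action by injective endomorphisms, will rely on (2) to guarantee that $\bar\lambda_s$ is well-defined and injective, so it is worth emphasizing invariance under $\lambda_s^{-1}$ rather than just $\lambda_s$.
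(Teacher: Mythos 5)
Your proof is correct and rests on the same ideas as the paper's: directedness of the family $\{\Ker(\lambda_s)\}_{s\in S}$ under the right divisibility preorder (which the paper invokes via ``direct system of subgroups'') for part (1), and for part (2) the cofinality of $Ss$ in $(S,\leq)$, which you unwind into element-chasing via (LO). The only difference is stylistic --- the paper packages the argument through the identity $\lambda_s^{-1}(\Ker(\lambda_t))=\Ker(\lambda_{ts})$ and unions over cofinal subsets, whereas you argue pointwise --- but the monoid-theoretic input (the left Ore condition) is used in exactly the same places.
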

\begin{proof}
(1) Consider the direct system $\{\Ker(\lambda_s):s\in S\}$ of subgroups of $X$, indexed by $(S,\leq)$ with the right divisibility relation. Then, $\Ker(\lambda)=\bigcup_{s\in S}\Ker(\lambda_s)=\sum_{s\in S}\Ker(\lambda_s)$, which is therefore a subgroup of $X$.

\smallskip
(2) Given $s,t\in S$, we have that  $\lambda_s^{-1}(\Ker(\lambda_t))=\Ker(\lambda_{ts})\leq \Ker(\lambda)$. Furthermore, $Ss$ is cofinal in $(S,\leq)$ by (LO$''$), and so $\bigcup_{t\in S}\Ker(\lambda_{ts})=\Ker(\lambda)$. Therefore, $\lambda_s^{-1}(\Ker(\lambda))=\lambda_s^{-1}(\bigcup_{t\in S}\Ker(\lambda_t))=\bigcup_{t\in S}\lambda_s^{-1}(\Ker(\lambda_t))=\bigcup_{t\in S}\Ker(\lambda_{ts})= \Ker(\lambda).$
\end{proof}

By the above lemma, $\Ker(\lambda)$ is an $S$-invariant subgroup of $X$. Let $\bar X=X/\Ker(\lambda)$ and denote by $\pi_X\colon X\to \bar X$ the canonical projection. Then, $\lambda$ induces a left $S$-action on $\bar X$
\[ S\overset{\bar\lambda}{\curvearrowright}\bar X, \quad\text{such that}\quad \bar\lambda_s(\pi_X(x))=\bar\lambda_s(x+\Ker(\lambda))=\lambda_s(x)+\Ker(\lambda)=\pi_X(\lambda_s(x)),\quad \text{for all}\ s\in S,\  x\in X.\] 
Therefore, $\pi_X$ is $S$-equivariant.

\begin{corollary}\label{coro_act_by_inj}
In the above setting, the following statements hold true:
\begin{enumerate}[(1)] 
    \item $\bar \lambda$ acts on $\bar X$ by injective endomorphisms, that is, $\bar\lambda_s$ is injective for all $s\in S$;
    \item given a second left $S$-action $S\la{\lambda'}X'$ on an Abelian group $X'$, and an $S$-equivariant homomorphism $\phi\colon X\to X'$, there is a unique homomorphism $\bar\phi\colon \bar X\to \bar X'$ such that $\pi_{X'}\circ\phi=\bar \phi\circ\pi_{X}$. Furthermore, $\bar\phi$ is $S$-equivariant and it is injective (resp., surjective), whenever $\phi$ is is injective (resp., surjective);
    \item $h_\alg(\nl)\leq h_\alg(\lambda)$.
\end{enumerate}
\end{corollary}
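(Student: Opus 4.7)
The plan is to address the three parts in order, each building on the previous constructions.

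For (1), I would rely on Lemma~\ref{props_ker_lemma}(2), which states that $\lambda_s^{-1}(\Ker(\lambda))=\Ker(\lambda)$ for all $s\in S$. Unwinding the definition of $\bar\lambda$, the equality $\bar\lambda_s(\pi_X(x))=0$ means exactly $\lambda_s(x)\in\Ker(\lambda)$, which by the lemma forces $x\in\Ker(\lambda)$, i.e., $\pi_X(x)=0$; hence $\bar\lambda_s$ is injective.

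For (2), the first step is to verify that any $S$-equivariant morphism $\phi\colon X\to X'$ necessarily sends $\Ker(\lambda)$ into $\Ker(\lambda')$. This is immediate from equivariance: if $\lambda_s(x)=0$ for some $s\in S$, then $\lambda'_s(\phi(x))=\phi(\lambda_s(x))=0$. Once this is in hand, the universal property of the quotient yields the required $\bar\phi\colon\bar X\to\bar X'$, uniquely determined by $\bar\phi\circ\pi_X=\pi_{X'}\circ\phi$; the $S$-equivariance of $\bar\phi$ follows from that of $\phi$ together with the surjectivity of $\pi_X$. Preservation of surjectivity is automatic (since $\pi_{X'}\circ\phi$ is surjective when $\phi$ is). The main subtlety is preservation of injectivity: if $\phi$ is injective and $\bar\phi(\pi_X(x))=0$, then $\phi(x)\in\Ker(\lambda')$, so $\lambda'_s(\phi(x))=0$ for some $s\in S$; injectivity of $\phi$ combined with equivariance yields $\lambda_s(x)=0$, hence $x\in\Ker(\lambda)$ and $\pi_X(x)=0$.

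For (3), I would invoke the categorical viewpoint from Example~\ref{Gactionsexalg}: the surjective $S$-equivariant map $\pi_X\colon X\to\bar X$ yields, through the functor $\mathcal P\colon\Ab\to\mathfrak M$, a surjective $S$-equivariant morphism of normed monoids $\Pf^0(X)\to\Pf^0(\bar X)$ that intertwines $\lambda_{\mathcal P}$ and $\bar\lambda_{\mathcal P}$. By Example~\ref{exa:domini}(2), $\lambda_{\mathcal P}$ then dominates $\bar\lambda_{\mathcal P}$, and Corollary~\ref{Mono:mono}(2) delivers the inequality $h_\alg(\bar\lambda)\leq h_\alg(\lambda)$. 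I do not foresee any substantive obstacle: once Lemma~\ref{props_ker_lemma} is in place, the entire corollary reduces to a short diagram chase for (1) and (2), and to a direct application of the general monotonicity of entropy under surjective equivariant morphisms for (3).
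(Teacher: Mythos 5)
Your proposal is correct and follows essentially the same route as the paper's own proof: part (1) is the same unwinding of Lemma~\ref{props_ker_lemma}(2), and part (2) is the same well-definedness/diagram chase, including the observation $\phi(\Ker(\lambda))\subseteq\Ker(\lambda')$ and the kernel computation for injectivity. The only (minor) divergence is in (3), where the paper simply invokes Proposition~\ref{mon}(2) while you unwind it into the normed-monoid machinery (Example~\ref{Gactionsexalg}, Example~\ref{exa:domini}(2), Corollary~\ref{Mono:mono}(2)); the paper itself remarks that this alternative derivation works, so the two are substantively the same.
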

\begin{proof}
(1) Given $s\in S$, we have that $\Ker(\bar\lambda_s)={\pi_X}(\lambda_s^{-1}(\Ker({\lambda})))={\pi_X}(\Ker(\lambda))=0$, by Lemma~\ref{props_ker_lemma}(2). Thus, the action of $S$ on $\bar X$ is by injective endomorphisms. 

\smallskip
(2) The uniqueness and existence of $\bar\phi$ are clear, it is well-defined since $\phi(\Ker(\lambda))\subseteq \Ker(\lambda')$, and by construction it is $S$-equivariant. Suppose now that $\phi$ is surjective. Then, $\bar \phi(\bar X)=\bar\phi(\pi_X(X))=\pi_{X'}(\phi(X))=\pi_{X'}(X')=\bar X'$, showing that $\bar \phi$ is surjective. On the other hand, if $\phi$ is injective, then $\Ker(\bar\phi)=\bar\phi^{-1}(\pi_{X'}(\Ker(\lambda')))=\pi_X(\phi^{-1}(\Ker(\lambda')))$, and so we have just to show that $\phi^{-1}(\Ker(\lambda'))\subseteq \Ker(\lambda)$. Indeed, assume that $x\in \phi^{-1}(\Ker(\lambda'))$. Then  $\phi(x) \in \Ker(\lambda')$, so  there is $s\in S$ such that $\lambda'_s(\phi(x))=0$. Then  $\phi(\lambda_s(x))=\lambda'_s(\phi(x))=0$, which means that $\lambda_s(x)=0$ by the injectivity of $\phi$. Therefore, $x\in \Ker(\lambda)$. 

\smallskip
(3) follows by Proposition~\ref{mon}(2), since the projection ${\pi_X}\colon X\to \bar X$ is surjective and $S$-equivariant.
\end{proof}

As a consequence of part (2) of the above corollary, the assignment  $(X,\lambda)\mapsto (\bar X,\bar \lambda)$ is part of an exact functor 
\[\overline{(-)}\colon\lre(S,\Ab)\to \lre(S,\Ab).\]
  
Now our aim is to sharpen the inequality in part (3) of the above corollary to the  equality 
$h_{\alg}(\lambda)=h_{\alg}(\nl)$. 
We need first the following technical lemma:

\begin{lemma}\label{lemma2} 
In the above setting, for each $F\in\Pf(X)$ there is an $s\in S$ such that, for any choice of $x,y\in F$,
\[(*)\qquad\qquad\qquad {\pi_X}(x)={\pi_X}(y) \qquad \Longrightarrow \qquad \lambda_s(x)=\lambda_s(y).\qquad\qquad\qquad\]
In particular, $|\lambda_s(F)| \leq |{\pi_X}(F)|$. 
\end{lemma}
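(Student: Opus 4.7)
The plan is to exploit the directedness of the right divisibility preorder on $S$ (property (LO$'$) from \S\ref{amenable_implies_ore_subsection}) together with the description $\Ker(\lambda) = \bigcup_{s \in S} \Ker(\lambda_s)$ used in the proof of Lemma~\ref{props_ker_lemma}. First I observe that if $x, y \in F$ satisfy $\pi_X(x) = \pi_X(y)$, then $x - y \in \Ker(\lambda)$, so there exists some $s_{x,y} \in S$ with $\lambda_{s_{x,y}}(x) = \lambda_{s_{x,y}}(y)$. Since $F$ is finite, only finitely many pairs $(x, y) \in F \times F$ satisfy $\pi_X(x) = \pi_X(y)$, and thus only finitely many elements $s_1, \dots, s_n \in S$ arise this way.

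Next I apply the left Ore condition in the form (LO$'$), iterated pairwise a finite number of times, to obtain a common upper bound $s \in S$ of $s_1, \dots, s_n$ with respect to the right divisibility preorder; equivalently, there exist $t_1, \dots, t_n \in S$ with $s = t_i s_i$ for every $i$ (alternatively this is a direct consequence of Lemma~\ref{GO}(1), applied with the auxiliary element taken to be $1$). For any pair $(x, y) \in F \times F$ with $\pi_X(x)=\pi_X(y)$ and corresponding witness $s_i$, the monoid homomorphism property (LA.2) then yields
\[\lambda_s(x) = \lambda_{t_i}(\lambda_{s_i}(x)) = \lambda_{t_i}(\lambda_{s_i}(y)) = \lambda_s(y),\]
which establishes the implication $(*)$.

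For the ``in particular'' statement, $(*)$ says precisely that the assignment $x \mapsto \lambda_s(x)$ on $F$ factors through $\pi_X|_F$, so the induced surjection $\pi_X(F) \to \lambda_s(F)$ gives $|\lambda_s(F)| \leq |\pi_X(F)|$. I do not anticipate a substantive obstacle here: the only genuine content is the construction of a single $s$ that simultaneously ``kills'' the finitely many differences $x - y$ showing up in $F$, which is a standard consequence of the left Ore condition.
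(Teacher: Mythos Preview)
Your proof is correct and follows essentially the same approach as the paper: enumerate the finitely many pairs $(x,y)\in F\times F$ with $\pi_X(x)=\pi_X(y)$, pick for each an $s_j\in S$ annihilating $x-y$, and then use Lemma~\ref{GO}(1) (equivalently, directedness of $(S,\leq)$) to find a single $s$ that is a common right multiple of all the $s_j$. Your additional remark that $(*)$ means $\lambda_s|_F$ factors through $\pi_X|_F$ is a clean way to phrase the ``in particular'' clause.
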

\begin{proof}
Define a subset $A\subseteq F\times F$ as follows (in fact, $A$ is an equivalence relation on $F$):
\[A=\{(x,y)\in F\times F\colon {\pi_X}(x)={\pi_X}(y)\}.\]
Since $F$ is finite, $A$ is finite as well, so one can enumerate the elements of $A$ as $A=\{a_1,\ldots,a_n\}$, with $a_j=(x_j,y_j)$ for all $j=1,\ldots,n$. Now, for each {$j=1,\ldots,n$}, the equality ${\pi_X}(x_j)={\pi_X}(y_j)$ holds if and only if $x_j-y_j\in \Ker(\lambda)$ and, since $\Ker(\lambda)=\bigcup_{s\in S}\Ker(\lambda_s)$ by Lemma~\ref{props_ker_lemma}(1), there is an $s_j\in S$ with $x_j-y_j\in \Ker(\lambda_{s_i})$, that is, $\lambda_{s_j}(x_j) = \lambda_{s_j}(y_j)$. By Lemma~\ref{GO}(1), one can choose $s, t_1, \ldots, t_n\in S$ such that $s=t_1s_1=\ldots=t_ns_n$.  For each {$j=1,\dots, n$}, the choice of $s_j$, $t_j$ and $s$ allows us to show that
\[\lambda_s(x_j)=\lambda_{t_j}(\lambda_{s_j}(x_j))=\lambda_{t_j}(\lambda_{s_j}(y_j))=\lambda_s(y_j).\] 
Hence, we have found an $s\in S$ such that $\lambda_s(x)=\lambda_s(y)$, for all $(x,y)\in A$, as desired.
\end{proof}


\begin{proposition}\label{prop2} 
In the above setting, $h_{\alg}(\lambda) = h_{\alg}(\nl)$.
\end{proposition}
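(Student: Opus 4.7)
The inequality $h_{\alg}(\bar\lambda) \leq h_{\alg}(\lambda)$ is already given by Corollary~\ref{coro_act_by_inj}(3), so the task is to establish the reverse inequality $h_{\alg}(\lambda) \leq h_{\alg}(\bar\lambda)$. The plan is to bound, for each $E \in \Pf^0(X)$, the partial entropy $H_{\alg}(\lambda,E)$ by $H_{\alg}(\bar\lambda,\bar E)$ where $\bar E = \pi_X(E) \in \Pf^0(\bar X)$, exploiting Lemma~\ref{lemma2} together with the freedom to change the F\o lner net allowed by Example~\ref{Gactionsexalg}(2).

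Fix $E \in \Pf^0(X)$ and a right F\o lner net $\{F_i\}_{i\in I}$ for $S$. For each index $i$, set $E_i := T_{F_i}(\lambda,E)$, which is a finite subset of $X$. By Lemma~\ref{lemma2} applied to $E_i$, there exists $s_i \in S$ such that $\lambda_{s_i}$ identifies any two elements of $E_i$ having the same image under $\pi_X$; hence
\[
|\lambda_{s_i}(E_i)| \leq |\pi_X(E_i)|.
\]
The key computation is now that
\[
\lambda_{s_i}(E_i) = \lambda_{s_i}\Bigl(\sum_{s\in F_i}\lambda_s(E)\Bigr) = \sum_{s\in F_i}\lambda_{s_is}(E) = T_{s_i F_i}(\lambda,E),
\]
using that $\lambda$ is a monoid homomorphism, while $\pi_X(E_i) = T_{F_i}(\bar\lambda,\bar E)$ by the $S$-equivariance of $\pi_X$. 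Combining these, we obtain
\[
|T_{s_i F_i}(\lambda,E)| \leq |T_{F_i}(\bar\lambda,\bar E)|
\qquad\text{for every } i\in I.
\]

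By Lemma~\ref{fol}, the net $\{s_i F_i\}_{i\in I}$ is again a right F\o lner net for $S$, and cancellativity gives $|s_i F_i|=|F_i|$. By Lemma~\ref{reallim} (see Example~\ref{Gactionsexalg}(2)), $H_{\alg}(\lambda,E)$ may be computed along any right F\o lner net, in particular along $\{s_iF_i\}_{i\in I}$. Taking logarithms, dividing by $|F_i|$, and passing to the limit in $I$, we deduce
\[
H_{\alg}(\lambda,E) = \lim_{i\in I}\frac{\log|T_{s_iF_i}(\lambda,E)|}{|s_iF_i|} \leq \lim_{i\in I}\frac{\log|T_{F_i}(\bar\lambda,\bar E)|}{|F_i|} = H_{\alg}(\bar\lambda,\bar E) \leq h_{\alg}(\bar\lambda).
\]
Taking the supremum over $E \in \Pf^0(X)$ yields $h_{\alg}(\lambda) \leq h_{\alg}(\bar\lambda)$, which together with Corollary~\ref{coro_act_by_inj}(3) proves the equality.

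The main subtlety is the sleight of hand at the F\o lner level: Lemma~\ref{lemma2} produces an element $s_i$ that depends on $i$, so the inequality it yields is naturally of the form $|T_{s_iF_i}(\lambda,E)|\leq |T_{F_i}(\bar\lambda,\bar E)|$ rather than a direct inequality between $|T_{F_i}(\lambda,E)|$ and $|T_{F_i}(\bar\lambda,\bar E)|$. This is harmless precisely because $H_{\alg}(\lambda,E)$ is independent of the choice of F\o lner net (Lemma~\ref{reallim}) and because translating a F\o lner net on the left preserves the F\o lner property (Lemma~\ref{fol}); these two facts are what make the argument go through without any further care.
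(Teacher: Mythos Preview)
Your proof is correct and follows essentially the same route as the paper's: apply Lemma~\ref{lemma2} to each $T_{F_i}(\lambda,E)$ to obtain $s_i\in S$ with $|T_{s_iF_i}(\lambda,E)|\leq |T_{F_i}(\bar\lambda,\pi_X(E))|$, then use Lemma~\ref{fol} and the independence of $H_{\alg}$ from the choice of F\o lner net to conclude. Your write-up is slightly more explicit about the intermediate identities $\lambda_{s_i}(T_{F_i}(\lambda,E))=T_{s_iF_i}(\lambda,E)$ and $\pi_X(T_{F_i}(\lambda,E))=T_{F_i}(\bar\lambda,\bar E)$, but the argument is the same.
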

\begin{proof} 
The inequality $h_{\alg}(\bar\lambda)\leq h_{\alg}(\lambda)$ is proved  in Corollary~\ref{coro_act_by_inj}. 
Let $\{F_i\}_{i\in I}$ be a right F\o lner net for $S$ and let $E\in\Pf(X)$. 
By Lemma~\ref{lemma2} applied to $T_{F_i}(\lambda,E)$, for each $i\in I$, there exists $s_i\in S$ such that 
\[
|T_{s_iF_i}(\lambda,E)|= |\lambda_{s_i}(T_{F_i}(\lambda,E))|\leq |{\pi_X}(T_{F_i}(\lambda,E))|=|T_{F_i}(\bar \lambda,{\pi_X}(E))|.
\]
By Lemma~\ref{fol}, the net $\{s_iF_i\}_{i\in I}$ is right F\o lner for $S$ and so:
\begin{align*}
H_{\alg}(\lambda,E)&=\lim_{i\in I}\frac{\log|T_{s_iF_i}(\lambda,E)|}{|s_i F_i|}\leq \lim_{i\in I}\frac{\log|T_{F_i}(\bar\lambda,{\pi_X}(E))|}{|F_i|}=H_{\alg}(\bar\lambda,{\pi_X}(E)).
\end{align*}
Since $E$ was chosen arbitrarily, we conclude that $h_{\alg}(\lambda)\leq h_{\alg}(\bar\lambda)$.
\end{proof}

\subsection{Invariance under Ore localization for $h_\alg$}\label{Ore_invariance_subs}\label{Ore_invariance}

Given a left $S$-action $S\la\lambda X$ on the Abelian group $X$, our first goal is to construct a canonical left $G$-action $G\la{\lambda^*}X^*$ associated with $\lambda$ (see Definition~\ref{def_left_Ore_loc}). We do this in two steps: the group $X^*$ is introduced in Definition~\ref{def_x_star}, while the action $\lambda^*$ is described by its universal property in Lemma~\ref{def_lambda_star_lemma}.

\begin{definition}\label{def_x_star}
Let $S\la{\bar \lambda}\bar X=X/\Ker(\lambda)$ be the action induced by $\lambda$ on the quotient. Consider the following direct system of Abelian groups with index set $(G, \leq_S)$:
\begin{itemize}
  \item ${\frak X}=\{(X_{g}, \varepsilon_{gs,g}\colon X_{gs}\to X_g): g\in G,\ s\in S\}$, where $X_g = \bar X$ and $\varepsilon_{gs,g}=\bar\lambda_s\colon \bar X\to \bar X$, for all $s\in S$ and $g\in G$;
  \item denote the direct limit by $X^*=\varinjlim_{(G,\leq_S)}{\frak X}$;
  \item and let $\varepsilon_g=\varepsilon_g^X\colon \bar X=X_g\to X^*$ be the canonical morphism to the direct limit, for all $g\in G$.
\end{itemize}
\end{definition}
In particular, by definition of direct limit, the following relations hold, for all $s\in S$ and $g\in G$:
\begin{equation}\label{eq_bar_to_star_equation}
\varepsilon_g\circ \bar\lambda_s=\varepsilon_g\circ \varepsilon_{gs,g}=\varepsilon_{gs}.
\end{equation}

\begin{lemma}\label{def_lambda_star_lemma} Let $X^*$ and $\varepsilon_g\colon \bar X=X_g\to X^*$, for all $g\in G$, be as in Definition~\ref{def_x_star}. Then the following assertions hold true:
\begin{enumerate}[\rm (1)]
   \item $\varepsilon_g$ is injective,  for all $g\in G$. In particular, identifying $X_g$ with the image of $\varepsilon_g$, we have that $X^*=\bigcup_{g\in G}X_g$;
   \item there is a unique left $G$-action $G\la{\lambda^*}X^*$ such that the following diagram commutes, for all $g,h\in G$:
\begin{equation}\label{univ_prop_act_alg_comp_eq}
\xymatrix@C=50pt{
X_{g}\ar[r]^{\id_{\bar X}}\ar[d]_{\varepsilon_{g}}&X_{gh}\ar[d]^{\varepsilon_{gh}}\\
X^*\ar[r]_{\lambda^*_g}&X^*;}
\end{equation}
   \item $\varepsilon_1\colon \bar X\to X^*$ is $S$-equivariant when we let $S$ act on $\bar X$ via $\bar\lambda$ and on $X^*$ via $(\lambda^*)_{\restriction S}$;
   \item given a second left $S$-action $S\la{\lambda'}X'$ on an Abelian group $X'$ and an $S$-equivariant homomorphism $\phi\colon X\to X'$, there is a unique homomorphism $\phi^*\colon X^*\to (X')^*$ such that, for every $g\in G$, the following diagram commutes
\[\xymatrix@C=50pt{X^*\ar[r]^{\phi^*}\ar@{<-}[d]_{\varepsilon^X_g}&(X')^*\ar@{<-}[d]^{\varepsilon^{X'}_g}\\
\bar X\ar[r]_{\bar\phi}&\bar X'.}\]
Furthermore, $\phi^*$ is $G$-equivariant and if $\phi$ is injective (resp., surjective) then so is $\phi^*$.
\end{enumerate}
\end{lemma}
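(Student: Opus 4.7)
The plan is to obtain everything by systematic application of the universal property of the directed colimit $X^*$, using along the way that $\Ab$ is an AB5 category (so filtered colimits are exact). For (1), the observation is that all transition maps $\varepsilon_{gs,g}=\bar\lambda_s$ are injective by Corollary~\ref{coro_act_by_inj}(1), and $(G,\leq_S)$ is directed; since a filtered colimit of monomorphisms in $\Ab$ has a monic colimit cocone, each $\varepsilon_g\colon X_g\to X^*$ is injective, which immediately gives the identification $X^*=\bigcup_{g\in G}X_g$.

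For (2), fix $g\in G$ and consider the family of morphisms $\{\varepsilon_{gh}\colon X_h\to X^*\}_{h\in G}$. Compatibility with the transition maps reduces, for each $h\in G$ and $s\in S$, to the identity $\varepsilon_{gh}\circ\bar\lambda_s=\varepsilon_{ghs}$, which is precisely a case of (\ref{eq_bar_to_star_equation}) with the pair $(gh,s)$ in place of $(g,s)$. The universal property therefore produces a unique $\lambda^*_g\colon X^*\to X^*$ with $\lambda^*_g\circ\varepsilon_h=\varepsilon_{gh}$ for every $h\in G$, which is the content of diagram~(\ref{univ_prop_act_alg_comp_eq}). The $G$-action axioms will follow from the uniqueness clause of the universal property, once one checks that, for every $g,g',h\in G$, one has $\lambda^*_{gg'}\circ\varepsilon_h=\varepsilon_{gg'h}=(\lambda^*_g\circ\lambda^*_{g'})\circ\varepsilon_h$ and $\lambda^*_1\circ\varepsilon_h=\varepsilon_h$; as $G$ is a group, each $\lambda^*_g$ is then automatically an automorphism. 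Part (3) is the specialization $h=1$ of the defining identity combined with (\ref{eq_bar_to_star_equation}): $\lambda^*_s\circ\varepsilon_1=\varepsilon_s=\varepsilon_1\circ\bar\lambda_s$ for every $s\in S$.

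For (4), given an $S$-equivariant homomorphism $\phi\colon X\to X'$, first appeal to Corollary~\ref{coro_act_by_inj}(2) to obtain the induced $S$-equivariant $\bar\phi\colon\bar X\to\bar X'$. Applying the universal property of $X^*$ to the family $\{\varepsilon^{X'}_g\circ\bar\phi\colon X_g\to (X')^*\}_{g\in G}$ --- whose compatibility with the transition maps follows from the $S$-equivariance of $\bar\phi$ together with (\ref{eq_bar_to_star_equation}) --- delivers the required $\phi^*$ satisfying the stated square. The $G$-equivariance is then verified by showing that $\phi^*\circ\lambda^*_g$ and $(\lambda')^*_g\circ\phi^*$ agree after precomposition with every $\varepsilon^X_h$ and invoking uniqueness. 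Preservation of injectivity and surjectivity descends from the corresponding properties of $\bar\phi$ (via Corollary~\ref{coro_act_by_inj}(2)) together with exactness of filtered colimits in $\Ab$. The only genuinely delicate point will be maintaining clean bookkeeping between the index $g$ of the direct system and the group parameter used for the action; once (\ref{eq_bar_to_star_equation}) is invoked uniformly to handle all compatibilities with the transition maps, every verification reduces to a short formal calculation.
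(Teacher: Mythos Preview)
Your proposal is correct and follows essentially the same approach as the paper: both argue via the universal property of the direct limit, using injectivity of the transition maps (from Corollary~\ref{coro_act_by_inj}(1)) for part~(1), the cocone $\{\varepsilon_{gh}\}_{h\in G}$ and its compatibility via~(\ref{eq_bar_to_star_equation}) for part~(2), the specialization $h=1$ for part~(3), and Corollary~\ref{coro_act_by_inj}(2) together with exactness of direct limits in $\Ab$ for part~(4). Your write-up is slightly more explicit in places (e.g., spelling out the verification of $G$-equivariance of $\phi^*$), but the structure and all key ingredients coincide.
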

\begin{proof}
Part (1) follows from the fact that the transition maps in the direct system $\frak X$ of Definition~\ref{def_x_star} are all injective. 

\smallskip
(2) Let $g\in G$ and consider the following family of group homomorphisms $\{\lambda^*_{g,h}\colon X_h\to X^*:h\in G\}$ where $X_h=\bar X$ and $\lambda_{g,h}^*=\varepsilon_{gh}\colon \bar X\to X^*$. This family is compatible with the transition maps in $\frak X$, in fact, for each $h\in G$ and $s\in S$, we have that $\varepsilon_{ghs}=\varepsilon_{gh}\circ \bar\lambda_s$, by \eqref{eq_bar_to_star_equation}. Then, by the universal property of the direct limit, there exists a unique group homomorphism $\lambda_g^*\colon X^*\to X^*$, such that 
\begin{equation}\label{univ_prop_act_alg_comp_equation}
\lambda_g^*\circ \varepsilon_h=\lambda^*_{g,h}=\varepsilon_{gh}.
\end{equation}
To conclude, it is enough to verify that the family $\{\lambda_g^*:g\in G\}$ is actually a left $G$-action on $X^*$. Indeed, $\lambda_1^*=\id_{X^*}$ since $\id_{X^*}\circ \varepsilon_h=\varepsilon_{h}=\lambda^*_{1,h}$ for all $h\in G$, so $\id_{X^*}$ satisfies the universal property that defines $\lambda_1^* $ (i.e., \eqref{univ_prop_act_alg_comp_equation} with $g=1$). Similarly, given $g_1,g_2\in G$, we have that $\lambda_{g_1g_2}^*=\lambda_{g_1}^*\circ\lambda_{g_2}^*$ because 
\[\lambda_{g_1}^*\circ\lambda_{g_2}^*\circ \varepsilon_h=\lambda_{g_1}^*\varepsilon_{g_2h}=\varepsilon_{g_1g_2h}=\lambda^*_{g_1g_2,h},\]
for all $h\in G$. In particular, $\lambda_{g_1}^*\circ\lambda_{g_2}^*$ satisfies the universal property that defines $\lambda_{g_1g_2}^* $ (i.e., \eqref{univ_prop_act_alg_comp_equation} with $g=g_1g_2$).

\smallskip
(3) Given $s\in S$, we deduce by \eqref{univ_prop_act_alg_comp_equation} with $h=1$ and $g=s$, that $\lambda_s^*\circ \varepsilon_1=\varepsilon_{s}$. Furthermore, by \eqref{eq_bar_to_star_equation} with $g=1$, we deduce that $\varepsilon_1\circ \bar\lambda_s=\varepsilon_{s}$. Hence, $\lambda_s^*\circ \varepsilon_1=\varepsilon_1\circ \bar\lambda_s$, showing that $\varepsilon_1$ is $S$-equivariant.

\smallskip
(4) Existence and uniqueness of  $\phi^*$ are clear by the universal property of the direct limit, while $G$-equivariance follows by construction. By  Corollary~\ref{coro_act_by_inj}(2), $\bar\phi$ is injective (resp., surjective) whenever $\phi$ has the same property and, therefore, one can conclude by the exactness of direct limits in $\Ab$.
\end{proof}

\begin{definition}\label{def_left_Ore_loc}
Let $S\la{\lambda}X$ be a left $S$-action on the Abelian group $X$. The {\em (left) Ore localization} of $\lambda$ is the left \mbox{$G$-action}  $G\la{\lambda^*}X^*$, where $X^*$ is the group introduced in Definition~\ref{def_x_star} and $\lambda^*$ is the left $G$-action uniquely characterized in Lemma~\ref{def_lambda_star_lemma}(2).
\end{definition}

By Lemma~\ref{def_lambda_star_lemma}(4), the assignment $(X,\lambda)\mapsto (X^*,\lambda^*)$ is part of an exact functor
\[(-)^*\colon \lre(S,\Ab)\to \lre(G,\Ab).\]

The rest of this subsection is devoted to the proof of the equality $h_{\alg}({\lambda})=h_{\alg}({\lambda^*})$, verifying the paradigm that ``the algebraic entropy is invariant under Ore localization". 
Before that,  we need to establish the following easy consequence of Lemma~\ref{def_lambda_star_lemma}:

\begin{corollary}\label{coro_finite_shift}
In the above setting, given $E\in\Pf(X^*)$, there is $s\in S$ such that $\lambda^{*}_s(E)\subseteq \varepsilon_1(\bar X) \leq X^*$.
\end{corollary}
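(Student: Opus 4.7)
The plan is to exploit the description of $X^*$ as the union $\bigcup_{g\in G}\varepsilon_g(\bar X)$ (Lemma~\ref{def_lambda_star_lemma}(1)) in order to represent each of the finitely many elements of $E$ as coming from some component $X_g$, and then to find a \emph{single} $s\in S$ that shifts all of these witnesses simultaneously into the $X_1$-component.

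Concretely, first I would enumerate $E=\{x_1,\dots,x_n\}$ and, for each $j$, pick $g_j\in G$ and $y_j\in\bar X$ with $x_j=\varepsilon_{g_j}(y_j)$. The defining relation \eqref{univ_prop_act_alg_comp_equation} of $\lambda^*$ gives, for any $s\in S$,
\[
\lambda^*_s(x_j)=\lambda^*_s(\varepsilon_{g_j}(y_j))=\varepsilon_{sg_j}(y_j).
\]
So the task reduces to producing a single $s\in S$ with $sg_j\in S$ for every $j$: once this holds one has $1\leq_S sg_j$, the transition map $X_{sg_j}\to X_1$ is $\bar\lambda_{sg_j}$, and the direct-limit compatibility \eqref{eq_bar_to_star_equation} yields $\varepsilon_{sg_j}=\varepsilon_1\circ\bar\lambda_{sg_j}$, hence $\lambda^*_s(x_j)\in\varepsilon_1(\bar X)$ as desired.

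The remaining point is purely monoid-theoretic. Since $G=S^{-1}S$, I would write each $g_j=c_j^{-1}d_j$ with $c_j,d_j\in S$, so that $sg_j=sc_j^{-1}d_j$ lies in $S$ as soon as $sc_j^{-1}\in S$ for every $j$, i.e.\ as soon as $s\in Sc_j$ for every $j$. Such a common right multiple of $c_1,\dots,c_n$ is produced by Lemma~\ref{GO}(1), applied to $c_1,\dots,c_n$ together with the dummy element $1\in S$: it yields $t_1,\dots,t_n,t\in S$ with $t_jc_j=t$ for every $j$. Setting $s:=t$ gives $sg_j=tc_j^{-1}d_j=t_jd_j\in S$ for each $j$, completing the argument.

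I do not foresee a serious obstacle: the only nontrivial input is the left Ore condition, already packaged in Lemma~\ref{GO}(1), and all other steps are bookkeeping with the universal properties of the direct limit $X^*$ established in Lemma~\ref{def_lambda_star_lemma}.
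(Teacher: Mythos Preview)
Your argument is correct, modulo a slip in the direction of the $S$-preorder: you write $1\leq_S sg_j$, but by definition $g_1\leq_S g_2$ means $g_2^{-1}g_1\in S$, so $sg_j\in S$ is equivalent to $sg_j\leq_S 1$, not the reverse. Your subsequent use of the transition map $X_{sg_j}\to X_1$ and of \eqref{eq_bar_to_star_equation} is nonetheless the correct one, so this is purely a notational slip.

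The paper's proof is a bit more streamlined: rather than treating the $x_j$ one by one, it first uses that $(G,\leq_S)$ is directed to place all of $E$ inside a single $\varepsilon_g(\bar X)$, and then simply writes $g=s^{-1}t$ with $s,t\in S$, so that $\lambda^*_s(E)\subseteq\lambda^*_s\varepsilon_g(\bar X)=\varepsilon_{sg}(\bar X)=\varepsilon_t(\bar X)\subseteq\varepsilon_1(\bar X)$. Your route packages the same Ore-type directedness into the explicit common-multiple step via Lemma~\ref{GO}(1); the underlying content is identical, the paper just invokes it one step earlier and thereby avoids the elementwise bookkeeping.
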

\begin{proof} By Lemma~\ref{def_lambda_star_lemma}(1) we know that $X^*$ can be written as a direct union of subgroups $X^*=\bigcup_{g\in G}\varepsilon_g(\bar X)$ and, in particular, there is some $g\in G$ such that $E\subseteq \varepsilon_g(\bar X)$. Write $g=s^{-1}t$ with $s,t\in S$, and note that $\lambda_s^*(E)\subseteq \lambda_s^*\varepsilon_g(\bar X)=\varepsilon_t(\bar X)$, where the last equality follows by Lemma~\ref{def_lambda_star_lemma}(2). To conclude, take into account that $t\leq_S1$ in $(G,\leq_S)$, and so $\varepsilon_t(\bar X)\subseteq \varepsilon_1(\bar X)$.
\end{proof}

We are finally ready for the proof of the invariance of the algebraic entropy under Ore localization:

\begin{theorem}[Invariance under Ore localization]\label{Th1}
In the above setting,  
$h_{\alg}(\lambda) = h_{\alg}(\lambda^*)$.
\end{theorem}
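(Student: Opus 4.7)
The plan is to proceed in two steps, exploiting the preparatory work already done in \S\ref{reduction_to_injective} and \S\ref{Ore_invariance_subs}. By Proposition~\ref{prop2} we have $h_{\alg}(\lambda)=h_{\alg}(\bar\lambda)$, so it suffices to establish the equality $h_{\alg}(\bar\lambda)=h_{\alg}(\lambda^*)$. Moreover, by Remark~\ref{RemarkMay16}, $h_{\alg}(\lambda^*)=h_{\alg}((\lambda^*)_{\restriction S})$, so both sides can be computed using the same right F\o lner net $\{F_i\}_{i\in I}$ for $S$.

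For the inequality $h_{\alg}(\bar\lambda)\leq h_{\alg}(\lambda^*)$: by Lemma~\ref{def_lambda_star_lemma}(1) and (3), the canonical map $\varepsilon_1\colon \bar X\to X^*$ is an injective $S$-equivariant homomorphism (when we equip $X^*$ with $(\lambda^*)_{\restriction S}$). Applying the functor $\mathcal P$ from Example~\ref{Gactionsexalg}(1) to $\varepsilon_1$ produces an embedding of normed monoids $\mathcal P(\varepsilon_1)\colon \mathcal P(\bar X)\to \mathcal P(X^*)$ which is $S$-equivariant. Corollary~\ref{Mono:mono}(1) (or directly Example~\ref{exa:domini}(1) combined with Proposition~\ref{conjumon}) then yields $h_{\alg}(\bar\lambda)\leq h_{\alg}((\lambda^*)_{\restriction S})=h_{\alg}(\lambda^*)$.

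For the reverse inequality $h_{\alg}(\lambda^*)\leq h_{\alg}(\bar\lambda)$, the key is Corollary~\ref{coro_finite_shift}. Given $E\in\Pf^0(X^*)$, pick $s\in S$ with $\lambda^*_s(E)\subseteq \varepsilon_1(\bar X)$, and set $E'=\varepsilon_1^{-1}(\lambda^*_s(E))\in\Pf^0(\bar X)$. For every $F_i$ in our chosen right F\o lner net for $S$ (hence $F_i\subseteq S$), the $S$-equivariance of $\varepsilon_1$ gives
\[
\lambda^*_s\bigl(T_{F_i}(\lambda^*,E)\bigr)=T_{F_i}(\lambda^*,\lambda^*_s(E))=\sum_{t\in F_i}\lambda^*_t(\varepsilon_1(E'))=\varepsilon_1\!\left(\sum_{t\in F_i}\bar\lambda_t(E')\right)=\varepsilon_1\bigl(T_{F_i}(\bar\lambda,E')\bigr).
\]
Since both $\lambda^*_s$ (as an element of the $G$-action, acting by automorphism) and $\varepsilon_1$ are injective, we deduce that $|T_{F_i}(\lambda^*,E)|=|T_{F_i}(\bar\lambda,E')|$ for every $i\in I$. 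Dividing by $|F_i|$, taking the limit, and using the independence of the limit from the choice of the right F\o lner net (Lemma~\ref{reallim} and Example~\ref{Gactionsexalg}(2)), we obtain $H_{\alg}(\lambda^*,E)=H_{\alg}(\bar\lambda,E')\leq h_{\alg}(\bar\lambda)$. Taking the supremum over all $E\in\Pf^0(X^*)$ yields $h_{\alg}((\lambda^*)_{\restriction S})\leq h_{\alg}(\bar\lambda)$, and the proof is complete by Remark~\ref{RemarkMay16}.

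The main subtlety is the displayed chain of equalities used to transfer the computation on $X^*$ to one on $\bar X$: it critically relies on the $S$-equivariance of $\varepsilon_1$, which in turn requires the indexing set $F_i$ to lie in $S$ (not just in $G$), together with the injectivity of both $\varepsilon_1$ and $\lambda^*_s$. All other ingredients (reduction to injective actions, invariance of the limit under the F\o lner net) are already available.
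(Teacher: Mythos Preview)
Your overall strategy is the same as the paper's, but the displayed chain of equalities contains a genuine error in the non-commutative case. You claim
\[
\lambda^*_s\bigl(T_{F_i}(\lambda^*,E)\bigr)=T_{F_i}(\lambda^*,\lambda^*_s(E)),
\]
but for a left action one has $\lambda^*_s\circ\lambda^*_t=\lambda^*_{st}$, so the left-hand side is $\sum_{t\in F_i}\lambda^*_{st}(E)=T_{sF_i}(\lambda^*,E)$, whereas the right-hand side is $\sum_{t\in F_i}\lambda^*_{ts}(E)=T_{F_is}(\lambda^*,E)$. These coincide only when $sF_i=F_is$, which you cannot assume. Consequently your conclusion $|T_{F_i}(\lambda^*,E)|=|T_{F_i}(\bar\lambda,E')|$ does not follow: what actually holds (via the $S$-equivariance and injectivity of $\varepsilon_1$) is $|T_{F_is}(\lambda^*,E)|=|T_{F_i}(\bar\lambda,E')|$.

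The fix is exactly what the paper does: do not apply $\lambda^*_s$ to the trajectory, but instead switch to the right-translated net $\{F_is\}_{i\in I}$, which is again a right F\o lner net for $S$ (and hence for $G$) with $|F_is|=|F_i|$. Then
\[
H_{\alg}(\lambda^*,E)=\lim_{i\in I}\frac{\log|T_{F_is}(\lambda^*,E)|}{|F_is|}=\lim_{i\in I}\frac{\log|T_{F_i}(\bar\lambda,E')|}{|F_i|}=H_{\alg}(\bar\lambda,E')\leq h_{\alg}(\bar\lambda),
\]
using the independence of $H_{\alg}$ from the choice of F\o lner net. With this correction your argument goes through and matches the paper's proof.
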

\begin{proof} 
By Proposition~\ref{prop2}, it is enough to verify that $h_{\alg}(\bar\lambda) = h_{\alg}(\lambda^*)$. 

Given $E\in \Pf(X^*)$, we know by Corollary~\ref{coro_finite_shift} that there exists $s\in S$ such that $\lambda^*_s(E)\subseteq  \varepsilon_1(\bar X)\leq X^*$.  Furthermore, by \cite[Lemma~2.7(b)]{DFGB}, we know that if $\{F_i\}_{i\in I}$ is a right F\o lner net for $S$, then so is $\{F_is\}_{i\in I}$ and they both are right F\o lner nets for $G$, by Lemma~\ref{Lemma:ex-2.2}(2). Therefore:
\begin{align*}
H_{\alg}(\lambda^*,E)=\lim_{i\in I}\frac{\log|T_{F_is}(\lambda^*,E)|}{|F_is|}=\lim_{i\in I}\frac{\log|T_{F_i}(\lambda^*,\lambda^*_s(E))|}{|F_is|}\overset{(*)}{=}\lim_{i\in I}\frac{\log|T_{F_i}( \bar\lambda,\varepsilon_1^{-1}(\lambda_s^{*}(E)))|}{|F_i|}=H_{\alg}(\bar\lambda,\varepsilon_1^{-1}(\lambda_s^{*}(E)))\leq h_{\alg}(\bar \lambda),
\end{align*}
where the equality $(*)$ is true since $|F_i|=|F_is|$ and since $\varepsilon_1$ induces a bijection between $T_{F_i}(\bar\lambda,\varepsilon_1^{-1}(\lambda_s^{*}(E)))$ and  $T_{F_i}(\lambda^*,\lambda^*_s(E))$ (use that $\varepsilon_1$ is injective and $S$-equivariant by Lemma~\ref{def_lambda_star_lemma}(1) and (3)). This proves that  $h_{\alg}(\lambda^*) \leq h_{\alg}(\bar\lambda)$. 

For the proof of the converse inequality, note that the injective homomorphism $\varepsilon_1 \colon \bar X \to X^*$ is $S$-equivariant, by Lemma \ref{def_lambda_star_lemma}(3).  Hence, $h_{\alg}(\bar\lambda) \leq h_{\alg}(\lambda^*_{\restriction S})$.  By Remark \ref{RemarkMay16}, $h_{\alg}(\lambda^*_{\restriction S})= h_{\alg}(\lambda^*)$, therefore, $h_{\alg}(\bar\lambda)\leq h_{\alg}(\lambda^*)$.
\end{proof}

\subsection{Categorical interpretations of the Ore localization}\label{categorial_ore_loc_subs}

{Given a left action $S\la{\lambda}X$ on a discrete Abelian group $X$, in the previous subsections we have made the deliberate choice of giving a direct (and very explicit) construction of $S\la{\bar\lambda}\bar X$ and $S\la{\lambda^*}X^*$. This approach has the advantage of making the text much more accessible and, furthermore, some degree of concreteness was needed for the proof of the equalities $h_\alg(\lambda)=h_\alg(\bar\lambda)=h_\alg(\lambda^*)$. On the other hand, it is also possible to express these constructions in a more categorical language and, in this way, obtain a better understanding of  the whole process that, starting from $S\la{\lambda}X$, produces its Ore localization $S\la{\lambda^*}X^*$.

\medskip
First of all, we observe that the category $\lre(S,\Ab)$ is  equivalent to 
$\lmod{\Z[S]}$, the category of left modules over the monoid ring $\Z[S]$. In fact, a left $\Z[S]$-module ${}_{\Z[S]}X$ is nothing else but an Abelian group ${}_\Z X$ with a specified left $\Z[S]$-action, which is a ring homomorphism $\lambda\colon\Z[S]\to \End_{\Ab}(X)$,  uniquely determined by the monoid homomorphism $\lambda_{\restriction S}\colon S\to \End_{\Ab}(X)$ together with $\Z$-linearity.

Now, inside $\lre(S,\Ab)\cong\lmod{\Z[S]}$, one can identify two reflective subcategories, where a full subcategory $\C'$ of a category $\C$ is said to be reflective if the inclusion functor $\C'\to \C$ has a left adjoint functor $\C\to \C'$, called the reflector (see \cite[\S IV.3]{maclane} for more details): 
\[
 \lre_{\rm bij}(S,\Ab)\subseteq\lre_{\rm inj}(S,\Ab)\subseteq \lre(S,\Ab).
\] 
Here $\lre_{\rm bij}(S,\Ab)$ and $\lre_{\rm inj}(S,\Ab)$ are the full subcategories of $\lre(S,\Ab)$ of those $S\la{\lambda}X$ such that each $\lambda_s$  is bijective or injective, for all $s\in S$, respectively. Moreover,  $\lre_{\rm bij}(S,\Ab)\cong \lre(G,\Ab)\cong \lmod{\Z[G]}$, with $G=S^{-1}S$.

To verify the reflectivity of these subcategories we need to build a left adjoint to each of the two inclusion functors $\lre_{\rm inj}(S,\Ab)\to \lre(S,\Ab)$ and $\lre_{\rm bij}(S,\Ab)\to \lre(S,\Ab)$. In \S\ref{reduction_to_injective} and \S\ref{Ore_invariance_subs}, we have built the corresponding reflectors ``manually", verifying that the $S$-equivariant projection $\pi_X\colon X\to \bar X$ is a reflection in $\lre_{\rm inj}(S,\Ab)$ of the left $S$-action $(S\la\lambda X)\in\lre(S,\Ab)$, while the composition $\varepsilon_1\circ\pi_X\colon X\to X^*$ of $\pi_X$, followed by the $S$-equivariant injection $\varepsilon_1\colon \bar X \to X^*$, is a reflection of $S\la\lambda X$ in $\lre_{\rm bij}(S,\Ab)$.

Alternatively, this problem can be  solved via the abstract machinery of hereditary torsion pairs. Recall that, for a ring $R$, a full subcategory $\mathcal T$ of $\lmod R$ is said to be a hereditary torsion class if it is closed under submodules, quotients, extensions and arbitrary direct sums. Given $\mathcal T$, the corresponding torsionfree class is $\F=\{M\in \lmod R:\hom_R(T,M)=0,\ \forall T\in\mathcal T\}$, and $\tau=(\mathcal T,\F)$ is said to be a hereditary torsion pair (see \cite[\S VI.2-3]{Ste}). In this situation, $\F$ is always a reflective subcategory of $\lmod R$. Moreover, $\tau$ induces a so-called Gabriel topology $\frak F_\tau:=\{{}_RI\leq R:R/I\in \mathcal T\}$ on $R$ (see \cite[\S VI.4-5-6]{Ste}), with which we can define the corresponding Gabriel localization $\lmod {(R,\frak F_\tau)}$ of $\lmod R$ (also commonly denoted by $\lmod R/\mathcal T$), as follows. This is the full subcategory of the $\tau_s$-closed (or $\tau_s$-local) modules, that is, those $M\in \lmod R$ such that the canonical map $M\cong\hom_R(R,M)\to \hom_R(I,M)$ is an isomorphism for all $I\in\frak F_\tau$ (see \cite[\S IX.1-2]{Ste}). Then,  $\lmod {(R,\frak F_\tau)}$ is a Giraud subcategory of $\lmod R$ (see \cite[\S X.1-2]{Ste}), 
 that is, a reflective subcategory whose reflector $Q_\tau\colon \lmod R\to \lmod R/\mathcal T$, called the Gabriel $\tau$-localization or $\tau$-quotient functor, is an exact functor. Being a reflection, $Q_\tau$ is ``surjective'' (on isomorphism classes of objects) while, for any left $R$-module $M$, $Q_\tau(M)=0$ if, and only if, $M\in\mathcal T$. Hence, we obtain a sort of ``short exact sequence'' $0\to \mathcal T\to \lmod R\to \lmod R/\mathcal T\to 0$ of Abelian categories, given by the exact embedding $\mathcal T \to \lmod R$, followed by exact $\tau$-quotient functor, where the ``image'' of  the former is precisely the ``kernel'' of the latter; this explains both the notation $\lmod R/\mathcal T$ and also the fact that $Q_\tau$ is called a quotient functor.
  
 Let us briefly indicate how this approach can be applied to our concrete situation:
\begin{itemize}
\item first of all, there is a hereditary torsion pair $\tau_S=(\mathcal T_S,\F_S)$ in $\lre(S,\Ab)$, with $\mathcal T_S$ the class of those actions $S\la{\lambda}X$ such that $\Ker(\lambda)=X$, and $\F_S$ the class of those actions $S\la{\lambda}X$ such that $\Ker(\lambda)=0$ (i.e., $\F_S=\lre_{\rm inj}(S,\Ab)$). 
Then, the inclusions $\mathcal T_S\to  \lre(S,\Ab)$ and $\F_S\to  \lre(S,\Ab)$ have a right and a left adjoint, respectively,
\[
t_S\colon \lre(S,\Ab)\to \mathcal T_S\quad\text{and}\quad (1:t_S)\colon \lre(S,\Ab)\to \F_S,
\]
(see the definitons in \eqref{May13}) which are usually referred to as the $\tau_S$-torsion radical and the $\tau_S$-torsionfree coradical;
\item furthermore, $\lre_{\rm bij}(S,\Ab)$ is a Giraud subcategory of $\lre(S,\Ab)$, which is equivalent to the 
$\tau_S$-localization $\lre(S,\Ab)/\mathcal T_S=\lmod{(\Z[S],\frak F_{\tau_s})}$. In particular, the inclusion $\lre_{\rm bij}(S,\Ab)\to \lre(S,\Ab)$ has the following exact left adjoint functor (that we describe explicitly below):
\[
Q_S\colon \lre(S,\Ab)\to \lre(S,\Ab)/\mathcal T_S\cong\lre_{\rm bij}(S,\Ab).
\]
\end{itemize}
In fact, $\tau_S$ is not just a hereditary torsion pair, but it is a very special and well-behaved one:  the subset $S\subseteq \Z[S]$ is a multiplicative subset that satisfies the left Ore condition for rings. By \cite[Example~2 in \S{VI.4} and Example in \S{VI.9}]{Ste}, given ${}_{\Z[S]}X\in\lmod{\Z[S]}$, 
\begin{equation}\label{May13}
t_S(X)=\{x\in X:sx=0\text{ for some $s\in S$}\}\quad\text{and}\quad (1:t_S)(X)= X/t_S(X).
\end{equation}
This means that, in the language of left $S$-actions, $t_S(X)=\Ker(\lambda)$ and $(1:t_S)(X)= \bar X$. Thus, our ``reduction to actions by injective endomorphisms'' can be viewed as a ``reduction to $\tau_S$-torsionfree $\Z[S]$-modules''. 

Now, to construct $Q_S(X)$, one can first note that $\Z[G]\cong S^{-1}\Z[S]$ is the ring of left $S$-fractions of $\Z[S]$. In particular, each element $r\in \Z[G]$ can be written in the form $g r'$ for some $r'\in \Z[S]$ and $g\in G$. Therefore, considering $g \Z[S]$ as a subgroup of $\Z[G]$ in the obvious way, one can write $\Z[G]$ as a direct union of Abelian groups $\Z[G]=\bigcup_{g\in G}g \Z[S]$, where there is an inclusion $\iota_{g,g'}\colon g \Z[S]\hookrightarrow g' \Z[S]$ if, and only if, $g\leq_Sg'$, for all $g,g'\in G$. Furthermore, by \cite[Lemma~1.6 and Example~2 in {\S}IX.1]{Ste},
\[
Q_S(X)\cong \Z[G]\otimes_{\Z[S]}X\cong \Z[G]\otimes_{\Z[S]}(1:t_S)(X)= \Z[G]\otimes_{\Z[S]}\bar X. 
\]
Hence, as an Abelian group, $Q_S(X)$ can be seen to be isomorphic to $\varinjlim_{(G,\leq_S)}X_g$, where $X_g=g\Z[S]\otimes \bar X=g\otimes \bar X\cong \bar X$ and where the transition maps are given by $\iota_{g,g'}\otimes \id_{\bar X}\colon X_g\to X_{g'}$. Therefore, $Q_S(X)$ is a direct limit of a direct system in $\Ab$ which is isomorphic to the direct system $\frak X$ of Definition~\ref{def_x_star}. In particular, $Q_S(X)\cong X^*$, showing that our ``reduction to actions by bijective endomorphism'' can be viewed as a ``reduction to $\tau_S$-local left $\Z[S]$-modules''.

\medskip
Even if the above torsion theoretic interpretation is very neat, the theory of Gabriel localizations is specific to Abelian categories and, in fact, it works best for Grothendieck categories, which are the cocomplete Abelian categories with a generator and exact directed colimits (in fact, by a famous result of Gabriel and Popescu, an Abelian category is Grothendieck if, and only if, it is of the form $\lmod {(R,\frak F_\tau)}$ for a ring $R$ and a hereditary torsion pair $\tau$ in $\lmod R$). In particular, even if we could try a similar approach for the Abelian category $\rre(S,\Comp)$, one is forced to turn to a different categorical machinery in the case of $\rre(S,\CompS)$. 
So, let us conclude this subsection by showing that the construction of the Ore localization can also be viewed as a suitable left Kan extension (see \cite[\S X.3]{maclane}). As Kan extensions can be defined for general functor categories, this second approach will be easy to adapt to $\rre(S,\CompS)$ (see \S\ref{categorical_coloc_subs} for more details). Indeed, observe that:
\begin{itemize}
     \item $S$ and $G$ can be viewed as categories with one object, say $\Ob(S)=\{\star\}=\Ob(G)$, and 
     \[
     \End_S(\star)=(S,\cdot,1)\subseteq (G,\cdot,1)=\End_G(\star),
     \] where the composition is defined by the multiplication in $S$ and $G$, respectively. Moreover, the inclusion $\iota\colon S\to G$ can be viewed as a (non-full) inclusion of a subcategory;
    \item $\lre(S,\Ab)$ and $\lre(G,\Ab)$ can be seen as categories of covariant functors $S\to \Ab$ and $G\to \Ab$, respectively. In fact, a functor $F\colon S\to \Ab$ is uniquely determined by the Abelian group $X=F(\star)$ and by the left $S$-action $S\la{\lambda}X$ such that $\lambda_s=F(s)$, for all $s\in S$. Similarly, one can view covariant functors $G\to \Ab$ as left $G$-actions. Therefore, the inclusion $\iota\colon S\to G$ induces a forgetful functor
\[
\iota^*\colon \lre(G,\Ab)\to \lre(S,\Ab),\quad\text{such that}\quad F\mapsto F\circ \iota.
\]
In the language of left actions, this means that $\iota^*$ sends a left $G$-action $G\la{\lambda}X$ to the left $S$-action $S\la{\lambda_{\restriction S}}X$ (so just ``forgetting'' part of the action), that is, $\iota^*$ is the inclusion $\lre(G,\Ab)\to \lre(S,\Ab)$  introduced in \S\ref{representations}.
\end{itemize}
When it exists, the left adjoint functor $\iota_!\colon \lre(S,\Ab)\to \lre(G,\Ab)$ is called the left Kan extension along $\iota$. In fact, it is well-known that, whenever the target category (which, in our case, is $\Ab$) is cocomplete, left Kan extensions always exist and they can be computed pointwise using colimits (see \cite[Theorem~3.7.2]{Borceux} or \cite[Theorem 1 in \S X.3]{maclane}). Unraveling all the definitions, one ends up seeing (again) that, given a functor $F\colon S\to \Ab$, which corresponds uniquely to the left $S$-action $S\la{\lambda}X=F(\star)$, and letting $\iota_!(F)\colon G\to \Ab$ be its left Kan extension along $\iota$, there is an isomorphism 
\[
\iota_!(F)(\star)\cong \varinjlim_{(G,\leq_S)}\frak X= X^*,
\] 
where $\frak X$ is the direct system of Definition~\ref{def_x_star}. We omit the details about this isomorphism as we will be much more explicit in \S\ref{categorical_coloc_subs}, when describing the Ore colocalization $\rre(S,\CompS)\to \rre(G,\CompS)$ as a right Kan extension.

\begin{remark}
Even if the categorical treatment in this subsection may seem overly complicated, especially when compared to the  natural constructions of \S\ref{reduction_to_injective} and \S\ref{Ore_invariance_subs}, it is actually an accurate illustration of the steps that we took in developing our research project. In fact, starting from Hanfeng Li's remark that a cancellative right amenable monoid $S$ is always left Ore, it did not take us too long to  conclude that it was possible to use the theory of modules of left fractions (which is, essentially, the above torsion theoretic approach) to associate to any given left $S$-actions on a discrete Abelian group a canonical left $G=S^{-1}S$-action (though it took longer to prove that the process was entropy-preserving). Moreover, just formally following the  dual steps (in the sense of Pontryagin-van Kampen), we readily produced a theory of Ore colocalizations for right actions on compact Abelian groups. Nevertheless, when we described the Ore localizations via suitable left Kan extensions, we obtained a purely categorical construction that applies to functor categories in general and, as usual in category theory, it comes with its formal dual. At that point, the task of defining the Ore colocalizations of actions on compact spaces was reduced to an exercise of formally inverting the direction of arrows. 

The very last step in refining our theory was a bit of a technicality: the first direct system we used to build the Ore localizations in the discrete case (which was the natural choice in the context of modules of left fractions) was indexed by $S$, so it was smaller than (even if cofinal in)  the one used in \S\ref{Ore_invariance_subs}, which is indexed by $G$, and  is the natural choice in the context of Kan extensions. This little change allowed for several simplifications in the arguments (e.g., in Lemma~\ref{def_lambda_star_lemma}), and it is ultimately responsible for the ``natural'' appearance of  \S\ref{Ore_invariance_subs}.
\end{remark}

\section{Ore colocalization of actions on compact spaces}\label{sec_compact}

Throughout this section we fix  a cancellative and right amenable monoid $S$, its group of left fractions $G=S^{-1}S$, a compact space $K$ and a right $S$-action $K\ra{\rho} S$.


Mirroring the results of \S\ref{sec_discrete} for left actions on discrete Abelian groups, in this section we introduce first the right $S$-action $\bar K\ra{\bar\rho} S$ by surjective continuous self-maps (see \S\ref{subs_red_top_to_surj}) and then the Ore colocalization $K^*\ra{\rho^*} G$ of $K\ra{\rho} S$  (see \S\ref{subs_dual_ore}). Moreover, we verify the equalities $h_\top(\rho)=h_\top(\bar\rho)=h_\top(\rho^*)$ (see Theorems~\ref{rhobarrho} and~\ref{Th2}).

\subsection{Inverse limits of compact groups and spaces}\label{topology_of_inverse_limit_subs}

We need to recall some useful facts about inverse limits in the category $\CompS$ of compact spaces.

Consider a directed preordered set $(I,\leq)$ and an inverse system in $\CompS$:
\begin{equation}\label{inv_sys_eq_gen_props}
\frak K=\{(K_i, \pi_{i,j}):i\geq j\ \text{in}\ I\},
\end{equation}
that is, all the maps $\pi_{i,j}\colon K_i\to K_j$, with $i\geq j$ in $I$, are supposed to be continuous and $\pi_{j,k}\circ\pi_{i,j}=\pi_{i,k}$, for all $i\geq j\geq k$ in $I$. 

In the rest of the section we analyze the structure, the topology and the open covers of the limit of ${\frak K}$ in $\CompS$ and prove some exactness-like property for the inverse limit functor in $\CompS$. 

First of all, let  $\prod_{i\in I}K_i$ be the cartesian product and denote by $p_j\colon \prod_{i\in I}K_i\to K_j$ the canonical projection, for each $j\in I$. By the Tychonoff Theorem, $(\prod_{i\in I}K_i,(p_i)_{i\in I})$ is a product in $\CompS$ when equipped with the product topology, generated by the pre-base:
\[\mathcal B=\{p^{-1}_i(U):i\in I,\, U\subseteq K_i\ \text{ open}\}.\]
\begin{lemma}\label{description_B*}
In the above setting, let 
$K^\sharp=\bigcap_{j\leq i\in I}\{x\in  \prod_{k\in I}K_k :p_j(x) = \pi_{i,j}(p_i(x))\} \leq \prod_{i\in I}K_i$ 
and $\pi_j=(p_j)_{\restriction K^\sharp}\colon K^\sharp\to K_j$, for each $j\in I$. The following statements hold true:
\begin{enumerate}[(1)] 
  \item $\pi_{i,j}\pi_i=\pi_j$, for all $j\leq i$ in $I$;
  \item $(K^\sharp,(\pi_i)_{i\in I})$ is an inverse limit for ${\frak K}$ in $\CompS$;
  \item the family $\mathcal B^*=\{\pi_{i}^{-1}(U): i\in I,\, U\subseteq K_i\ \text{ open}\}$ is a base for the topology of $K^\sharp$;
  \item every open cover of $ K^\sharp$ has a (finite) refinement consisting of elements of $\mathcal B^*$.
\end{enumerate}
\end{lemma}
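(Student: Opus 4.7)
The plan is to prove the four assertions in order, noting that (1) is essentially by definition, (2) is a standard check that a closed subspace of a Tychonoff product provides the inverse limit in $\CompS$, (3) is a refinement-of-basis argument that uses directedness of $(I,\leq)$ together with (1), and (4) is an immediate consequence of (3) combined with compactness.

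For (1), given $x\in K^\sharp$ and $j\leq i$ in $I$, the defining condition $p_j(x)=\pi_{i,j}(p_i(x))$ reads precisely $\pi_j(x)=\pi_{i,j}(\pi_i(x))$, so $\pi_{i,j}\circ\pi_i=\pi_j$. For (2), the set $K^\sharp$ is an intersection of equalizers $\{x\colon p_j(x)=\pi_{i,j}(p_i(x))\}$ of pairs of continuous maps into the Hausdorff space $K_j$, hence each such equalizer is closed in $\prod_{k\in I}K_k$; therefore $K^\sharp$ is closed in the compact Hausdorff space $\prod_{k\in I}K_k$, and so it is compact Hausdorff. The universal property is then routine: given a cone $\{f_i\colon L\to K_i\}_{i\in I}$ in $\CompS$ such that $\pi_{i,j}\circ f_i=f_j$ for all $j\leq i$, the product gives a unique continuous map $f\colon L\to \prod_{k\in I}K_k$ with $p_i\circ f=f_i$, and the cone compatibility forces $f(L)\subseteq K^\sharp$; thus $f$ factors uniquely as a continuous map $L\to K^\sharp$ compatible with the $\pi_i$'s.

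For (3), the product topology on $\prod_{k\in I}K_k$ has the pre-base $\mathcal B=\{p_i^{-1}(U)\}$, so the subspace topology on $K^\sharp$ has the pre-base $\mathcal B^*$. It remains to check that $\mathcal B^*$ is closed under finite intersections up to enlargement. Given $\pi_{i_1}^{-1}(U_1),\ldots,\pi_{i_n}^{-1}(U_n)\in \mathcal B^*$, use the directedness of $(I,\leq)$ to pick $k\in I$ with $k\geq i_1,\ldots,i_n$. By (1), $\pi_{i_\ell}=\pi_{i_\ell,k}$ would be wrong notation; rather $\pi_{i_\ell}=\pi_{k,i_\ell}\circ\pi_k$, hence $\pi_{i_\ell}^{-1}(U_\ell)=\pi_k^{-1}(\pi_{k,i_\ell}^{-1}(U_\ell))$. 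Setting $V=\bigcap_{\ell=1}^n \pi_{k,i_\ell}^{-1}(U_\ell)$, which is open in $K_k$, we obtain $\bigcap_{\ell=1}^n\pi_{i_\ell}^{-1}(U_\ell)=\pi_k^{-1}(V)\in \mathcal B^*$. Thus $\mathcal B^*$ is in fact a base for the topology of $K^\sharp$.

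For (4), let $\mathcal U$ be an open cover of $K^\sharp$. By (3), for each $x\in K^\sharp$ choose $U_x\in\mathcal U$ with $x\in U_x$ and $B_x\in\mathcal B^*$ with $x\in B_x\subseteq U_x$. Then $\{B_x\colon x\in K^\sharp\}\subseteq \mathcal B^*$ is an open cover of $K^\sharp$ that refines $\mathcal U$, and by compactness of $K^\sharp$ it admits a finite subcover, which is the desired finite refinement. The only mildly delicate point throughout is the use of directedness together with (1) to see that the canonical pre-base is actually a base; the rest is a straightforward unwinding of the definition of $K^\sharp$ and the universal property of the product in $\CompS$.
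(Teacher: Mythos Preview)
Your proof is correct and follows the standard route; the paper itself does not give a self-contained argument but simply cites Engelking's \emph{General Topology} (\S2.5 and Proposition~2.5.5) for all four items. Your write-up is thus more detailed than the paper's own proof, but the underlying ideas---closedness of $K^\sharp$ as an intersection of equalizers, the universal property via the product, and the directedness argument upgrading the pre-base $\mathcal B^*$ to a base---are exactly the classical ones Engelking records.
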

\begin{proof} 
(1) is true by construction, (2) follows by \cite[\S2.5]{Engelking} and (3-4) by \cite[Proposition~2.5.5]{Engelking}.
\end{proof}

As a consequence of the above lemma, any open subset of $K^\sharp$ is a union (possibly infinite) of elements of $\mathcal B^*$.  

\begin{remark}\label{rem_lims_of_groups}
Consider the obvious forgetful functors $\CompG\to \CompS$ and $\Comp\to \CompS$, that forget the group structures and ``only retain the topological information'' about objects and morphisms in $\CompG$ and $\Comp$, respectively. Given an inverse system $\frak K$ like in \eqref{inv_sys_eq_gen_props} in $\CompG$ (resp., $\Comp$), then the product $\prod_{i\in I}K_i$ in $\CompG$ (resp., $\Comp$) is just the cartesian product with the product topology, so the above forgetful functors preserve products. Furthermore, defining $K^\sharp$ as in Lemma~\ref{description_B*}, this is a subgroup of $\prod_{i\in I}K_i$, so it is an object in $\CompG$ (resp., $\Comp$). Lifting the universal property along the forgetful functor, one concludes that $K^\sharp$ is an inverse limit in $\CompG$ (resp., $\Comp$). By this observation, all the results in this subsection apply also to inverse limits in $\CompG$ (resp., $\Comp$).
\end{remark}

By Lemma~\ref{description_B*}, we have an explicit description of the limit $K^\sharp$ of our inverse system $\frak K$ from \eqref{inv_sys_eq_gen_props}. On the other hand, to have an even better control on the structure of $K^\sharp$, it is often useful to slightly modify $\frak K$ in such a way that the connecting maps become all surjective. Indeed, consider the following inverse system:
\[\bar{\frak K}=\{(\bar K_i, \bar \pi_{i,j}):i\geq j\ \text{in}\ I\},\]
where $\bar K_i=\mathrm{Im}(\pi_i)\subseteq K_i$ and $\bar \pi_{i,j}=(\pi_{i,j})_{\restriction \bar K_i}$, for all $i\geq j$ in $I$. Then, all the connecting maps in $\bar{\frak K}$ become surjective and, furthermore, the limit of $\frak K$ and that of $\bar{\frak K}$ are the same:

\begin{lemma}\label{description_B**}
In the above setting, the following statements hold true:
\begin{enumerate}[(1)]
   \item $\bar\pi_{i,j}$ is surjective for all $i\geq j\in I$;
   \item let $\bar\pi_i$ be the corestriction to its image of $\pi_i\colon K^\sharp\to K_i$, for all $i\in I$. Then each $\bar\pi_i$ is surjective;
   \item $(K^\sharp, (\bar\pi_i)_{i\in I})$ is the limit of $\bar{\frak K}$.
\end{enumerate}
In particular, $\varprojlim {\frak K}\cong \varprojlim {\bar {\frak K}}$.\end{lemma}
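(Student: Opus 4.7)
My plan is to prove the three parts in order, using only the explicit description of $K^\sharp$ from Lemma~\ref{description_B*} and the compatibility relation $\pi_{i,j}\circ\pi_i=\pi_j$ for $j\leq i$ in $I$.

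For part~(1), I would fix $j\leq i$ in $I$ and pick $y\in\bar K_j=\pi_j(K^\sharp)$, say $y=\pi_j(x)$ for some $x\in K^\sharp$. Setting $x'=\pi_i(x)\in\bar K_i$, the relation $\pi_{i,j}\circ\pi_i=\pi_j$ gives $\bar\pi_{i,j}(x')=\pi_{i,j}(x')=\pi_j(x)=y$, so $\bar\pi_{i,j}$ is surjective. Part~(2) is essentially definitional: $\bar\pi_i$ is obtained from $\pi_i$ by corestricting to its image $\bar K_i$, hence is surjective by construction.

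For part~(3), the plan is to verify the universal property of the inverse limit directly. First, the family $(\bar\pi_i)_{i\in I}$ is compatible with $\bar{\frak K}$: for $j\leq i$, both $\bar\pi_{i,j}$ and $\bar\pi_i$ are the appropriate (co)restrictions of $\pi_{i,j}$ and $\pi_i$, and the identity $\pi_{i,j}\circ\pi_i=\pi_j$ restricts to $\bar\pi_{i,j}\circ\bar\pi_i=\bar\pi_j$. Next, suppose $L$ is any compact space equipped with a compatible cone $(f_i\colon L\to \bar K_i)_{i\in I}$ for $\bar{\frak K}$. Composing each $f_i$ with the inclusion $\iota_i\colon \bar K_i\hookrightarrow K_i$ yields a compatible cone over the original system $\frak K$, so the universal property of $(K^\sharp,(\pi_i)_{i\in I})$ produces a unique continuous map $f\colon L\to K^\sharp$ with $\pi_i\circ f=\iota_i\circ f_i$ for every $i\in I$. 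In particular, $\pi_i(f(L))\subseteq \bar K_i$, so $\bar\pi_i\circ f = f_i$. Uniqueness of $f$ as a morphism into $\varprojlim\bar{\frak K}$ is inherited from uniqueness into $\varprojlim\frak K$, since any $f'$ with $\bar\pi_i\circ f' = f_i$ also satisfies $\pi_i\circ f' = \iota_i\circ f_i$.

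The main conceptual point — and the only place requiring attention — is confirming that the universal property transfers cleanly along the corestriction, i.e., that a cone over $\bar{\frak K}$ really is the same as a cone over $\frak K$ landing in the $\bar K_i$'s. No serious obstacle is expected, as everything reduces to bookkeeping with the compatibility relation in Lemma~\ref{description_B*}(1); the final assertion $\varprojlim{\frak K}\cong \varprojlim{\bar{\frak K}}$ then follows from (3) by uniqueness of limits in $\CompS$.
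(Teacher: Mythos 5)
Your argument is correct and complete. Note, though, that the paper does not actually write out a proof of this lemma: it simply cites \cite[Proposition~2.5.6]{Engelking}, which establishes precisely this statement by the same standard verification of the universal property (a compatible cone over $\bar{\frak K}$ is the same as a compatible cone over $\frak K$ landing in the images $\bar K_i$, since a cone over $\frak K$ from the inverse limit automatically factors through the images, and uniqueness transfers). So your proof is mathematically the same as what the cited reference does; you have simply made explicit the bookkeeping — surjectivity of $\bar\pi_{i,j}$ via a preimage lifted through $\pi_i$, definitional surjectivity of $\bar\pi_i$, and the transfer of the universal property along the corestrictions — that the paper delegates to Engelking. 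One small point worth making explicit, which you implicitly use, is that $\bar\pi_{i,j}$ is well-defined in the first place, i.e.\ that $\pi_{i,j}(\bar K_i)\subseteq\bar K_j$: this follows immediately from $\pi_{i,j}\circ\pi_i=\pi_j$, and your surjectivity argument in fact shows the stronger statement $\pi_{i,j}(\bar K_i)=\bar K_j$.
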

\begin{proof} 
See \cite[Proposition~2.5.6]{Engelking}.
\end{proof}

The following corollary is essentially covered by~\cite[Proposition 5.9]{DGB4}; we give a direct proof for the reader's convenience.

\begin{corollary}\label{dual_finite_shift}
In the above setting, let $\mathcal U$ be a finite open cover of $K^\sharp$. Then there exist $i\in I$ and a finite open cover $\mathcal U_i$ of $K_i$ such that $\pi_i^{-1}(\mathcal U_i)$ is a refinement of $\mathcal U$. 
\end{corollary}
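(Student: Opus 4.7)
The plan is to exploit the description of the basic opens of $K^\sharp$ given by Lemma~\ref{description_B*}, together with the directedness of $(I,\leq)$ and the compactness of $K^\sharp$, to pull the cover back to a single index.

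First, I would apply Lemma~\ref{description_B*}(4) to produce a finite refinement $\mathcal{V}=\{\pi_{i_1}^{-1}(U_1),\ldots,\pi_{i_n}^{-1}(U_n)\}$ of $\mathcal{U}$ with each $U_k$ open in $K_{i_k}$ (finiteness is automatic since $K^\sharp$ is compact, so any refinement by basic opens can be trimmed to a finite subcover). Next, using that $(I,\leq)$ is directed, I would pick an index $i\in I$ with $i\geq i_k$ for every $k=1,\ldots,n$. Lemma~\ref{description_B*}(1) then gives $\pi_{i_k}=\pi_{i,i_k}\circ\pi_i$, so setting
\[
V_k=\pi_{i,i_k}^{-1}(U_k)\subseteq K_i\qquad (k=1,\ldots,n)
\]
we obtain open sets in $K_i$ satisfying $\pi_i^{-1}(V_k)=\pi_{i_k}^{-1}(U_k)$ for each $k$.

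The key remaining point is that $\{V_1,\ldots,V_n\}$ need not cover the whole of $K_i$; it only covers the image $\bar K_i=\pi_i(K^\sharp)$, which is closed in $K_i$ (as the continuous image of a compact space in a Hausdorff space). To compensate, I would let $V_0=K_i\setminus \bar K_i$, which is open in $K_i$, and take
\[
\mathcal{U}_i=\{V_0,V_1,\ldots,V_n\}.
\]
This is then a (finite) open cover of $K_i$. Since $\pi_i(K^\sharp)\subseteq \bar K_i$, one has $\pi_i^{-1}(V_0)=\emptyset$, so
\[
\pi_i^{-1}(\mathcal{U}_i)=\{\emptyset,\pi_{i_1}^{-1}(U_1),\ldots,\pi_{i_n}^{-1}(U_n)\},
\]
and every member of this family is contained in some element of $\mathcal{U}$ (the non-empty ones because $\mathcal{V}\succ\mathcal{U}$, the empty one trivially). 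Hence $\pi_i^{-1}(\mathcal{U}_i)\succ\mathcal{U}$, as required.

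The only subtle point is the handling of the gap $K_i\setminus \bar K_i$: one cannot directly ``take preimages back'' without first ensuring one has a genuine cover of $K_i$. I expect this to be the main obstacle for a careful reader, but it is resolved cleanly by the closedness of $\bar K_i$; alternatively, one may avoid introducing $V_0$ altogether by replacing $V_1$ with $V_1\cup(K_i\setminus \bar K_i)$, whose preimage under $\pi_i$ is unchanged.
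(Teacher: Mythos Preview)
Your proof is correct and follows essentially the same route as the paper's: refine to basic opens via Lemma~\ref{description_B*}(4), pass to a common index $i$ by directedness, and pull the pieces back along the transition maps $\pi_{i,i_k}$.

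The one genuine difference is your handling of the covering condition on $K_i$. The paper argues that $\{\pi_{i,i_m}^{-1}(U_{m,i_m})\}$ already covers $K_i$ by asserting that $\pi_i\colon K^\sharp\to K_i$ is surjective (citing Lemma~\ref{description_B**}). That surjectivity holds when the transition maps of the inverse system are surjective---which is the case in the application to $K^*$ in Theorem~\ref{Th2}, since the $\bar\rho_s$ are surjective---but it is not guaranteed in the general setting of \S\ref{topology_of_inverse_limit_subs}. Your introduction of the open complement $V_0=K_i\setminus\bar K_i$ (with $\pi_i^{-1}(V_0)=\emptyset$) patches this cleanly and makes the argument go through without any surjectivity hypothesis. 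So your version is slightly more careful than the paper's at the level of generality in which the corollary is stated, while the paper's shortcut is justified in the situation where the result is actually used.
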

\begin{proof} 
By Lemma~\ref{description_B*}(4), we can suppose that $\mathcal U=\{U_1,\dots,U_n\}$, where $U_m=\pi_{i_m}^{-1}(U_{m,i_m})$ for some open  $U_{m,i_m}\subseteq K_{i_m}$, with $i_m\in I$, for all $m=1,\dots, n$. Using that $(I,\leq)$ is directed, we can choose $i\in I$ such that $i\geq i_m$ for all $m=1,\dots,n$. Let
\[\U_i=\{\pi_{i,i_m}^{-1}(U_{m,i_m}): m=1,\dots,n\}.\]
By Lemma~\ref{description_B*}(4), we deduce that $\pi_i^{-1}(\pi_{i,i_m}^{-1}(U_{m,i_m}))=\pi_{i_m}^{-1}(U_{m,i_m})=U_m$, for all $m=1,\dots,n$, that is, $\pi_i^{-1}(\U_i)=\U$. We have to verify that $\U_i$ covers $K_i$. Indeed, $\pi_i\colon K^\sharp\to K_i$ is surjective by Lemma~\ref{description_B**}(3), and so 
\[K_i=\pi_i(K^\sharp)=\pi_i(U_1\cup\ldots\cup U_n)=\pi_i(U_1)\cup\ldots\cup \pi_i(U_n)=\pi_{i,i_1}^{-1}(U_{1,i_1})\cup\ldots\cup \pi_{i,i_n}^{-1}(U_{n,i_n})=\bigcup \U_i.\qedhere\]
\end{proof}

We want now to analyze some ``exactness'' properties of inverse limits in $\CompS$. For this consider a directed preordered set $(I,\leq)$, two inverse systems $\frak K_1=\{(K^{(1)}_i, \pi^{(1)}_{i,j}):i\geq j\ \text{in}\ I\}$, $\frak K_2=\{(K^{(2)}_i, \pi^{(2)}_{i,j}):i\geq j\ \text{in}\ I\}$ and a compatible system of continuous maps $\{\phi_i\colon K^{(1)}_i\to K^{(2)}_i:i\in I\}$. Finally, denote by $\phi^\sharp\colon K_1^\sharp\to K_2^\sharp$ the continuous map induced on the inverse limits. 

\begin{lemma}\label{lim_is_exact_for_comp_lemma}
In the above notation, the following statements hold true:
\begin{enumerate}[\rm (1)]
   \item if $\phi_i$ is injective for all $i\in I$, then so is $\phi^\sharp$;
   \item if $\phi_i$ is surjective for all $i\in I$, then so is $\phi^\sharp$.
\end{enumerate}
In particular, inverse limits in $\CompG$ and $\Comp$ are exact (that is, they send inverse systems of short exact sequences to short exact sequences).
\end{lemma}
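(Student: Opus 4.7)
The plan is to exploit the explicit description of $K^\sharp_\ell$ (for $\ell=1,2$) as the closed subspace of the product $\prod_{i\in I}K_i^{(\ell)}$ cut out by the compatibility conditions of Lemma~\ref{description_B*}(2). Under this identification $\phi^\sharp$ is nothing but the restriction of the product map $\prod_{i\in I}\phi_i\colon \prod_{i\in I}K_i^{(1)}\to \prod_{i\in I}K_i^{(2)}$. This immediately takes care of (1): if every $\phi_i$ is injective, then so is the product map, and therefore so is its restriction $\phi^\sharp$.

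For (2), I would fix $y=(y_i)_{i\in I}\in K_2^\sharp$ and consider, for each $i\in I$, the fiber $F_i=\phi_i^{-1}(y_i)\subseteq K_i^{(1)}$. Since each $\phi_i$ is a continuous surjection from a compact Hausdorff space, $F_i$ is a nonempty closed (hence compact Hausdorff) subspace of $K_i^{(1)}$. A one-line check using the compatibility relation $\phi_j\circ \pi^{(1)}_{i,j}=\pi^{(2)}_{i,j}\circ\phi_i$ and the fact that $\pi^{(2)}_{i,j}(y_i)=y_j$ shows that $\pi^{(1)}_{i,j}(F_i)\subseteq F_j$ for $j\leq i$, so the restricted maps turn $\{F_i\}_{i\in I}$ into an inverse system in $\CompS$ of nonempty compact Hausdorff spaces. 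Invoking the classical fact that the inverse limit of such a system over a directed set is nonempty (a direct finite-intersection-property argument inside the compact space $\prod_{i\in I}F_i$), one obtains a point $(x_i)_{i\in I}\in K_1^\sharp$ with $x_i\in F_i$ for all $i$, and this point maps to $y$ under $\phi^\sharp$.

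For the ``in particular'' clause, Remark~\ref{rem_lims_of_groups} ensures that inverse limits in $\CompG$ and $\Comp$ are computed, as topological spaces, exactly as in $\CompS$, and the monomorphisms (resp.\ epimorphisms) in $\Comp$ are precisely the injective (resp.\ surjective) continuous group homomorphisms. Given a short exact sequence of inverse systems in $\Comp$, parts (1) and (2) applied pointwise then yield the injectivity of the leftmost map and the surjectivity of the rightmost map of the induced sequence on limits; exactness in the middle is a purely formal consequence, since inverse limits always commute with kernels in an additive setting, and the kernel of the rightmost induced map agrees objectwise with the image of the leftmost one by exactness of the original sequences.

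I do not anticipate a genuine obstacle. The only nontrivial ingredient is the nonemptiness of cofiltered inverse limits of nonempty compact Hausdorff spaces, which is standard; everything else reduces to formal manipulations with the product description of $K_\ell^\sharp$ and to the observation that the fiber subsystem $\{F_i\}$ is well-posed.
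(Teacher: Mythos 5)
Your proof is correct and follows essentially the same route as the paper, which simply cites \cite[Lemma 2.5.9]{Engelking} for (1) and \cite[Theorem 3.2.14]{Engelking} for (2); what you have done is reconstruct, in a self-contained way, exactly the standard arguments behind those references (restriction of the product map for injectivity, and the nonemptiness of inverse limits of nonempty compact Hausdorff fibers for surjectivity). The paper does not spell out the ``in particular'' clause either, and your sketch of it (injectivity and surjectivity on the ends from (1) and (2), exactness in the middle from the fact that limits commute with kernels) is the correct way to deduce it.
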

\begin{proof}
Part (1) follows by  \cite[Lemma~2.5.9]{Engelking} while (2) is \cite[Theorem~3.2.14]{Engelking}.
\end{proof}

Let us conclude this subsection by underlining a useful technical consequence of part (2) of the above lemma, that comes handy for computations:

\begin{corollary}\label{cont_func_on_comp_commutes_w_cap}
Let $K$ and $K'$ be compact Haudorff spaces, $\{H_i:i\in I\}$ a family of closed subspaces of $K$ that is downward directed by inclusion, and $\phi\colon K\to K'$ a continuous map. Then, $\phi(\bigcap_{i\in I}H_i)=\bigcap_{i\in I}\phi(H_i)$. 
\end{corollary}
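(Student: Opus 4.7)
The inclusion $\phi(\bigcap_i H_i)\subseteq \bigcap_i \phi(H_i)$ is automatic from the definition of $\phi(\bigcap_i H_i)$. The content is the reverse inclusion, and the plan is to deduce it from Lemma~\ref{lim_is_exact_for_comp_lemma}(2), as the surrounding text suggests.

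Direct the index set $I$ by declaring $i\preceq j$ if and only if $H_j\subseteq H_i$; the downward direction of $\{H_i\}$ by inclusion makes $(I,\preceq)$ into a directed preordered set. With this order, I can package $\{H_i\}$ into an inverse system $\frak K_1=\{(H_i,\iota_{j,i})\}$ in $\CompS$, where $\iota_{j,i}\colon H_j\hookrightarrow H_i$ is the inclusion whenever $i\preceq j$. Applying $\phi$ objectwise yields a second inverse system $\frak K_2=\{(\phi(H_i),\iota'_{j,i})\}$ together with a compatible family of morphisms $\phi_i:=\phi|_{H_i}\colon H_i\to \phi(H_i)$, which are all surjective by construction. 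Lemma~\ref{lim_is_exact_for_comp_lemma}(2) then gives that the induced map $\phi^\sharp\colon \varprojlim \frak K_1\to \varprojlim \frak K_2$ is surjective.

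The second step is identifying the two limits with the intended intersections. Using the explicit description in Lemma~\ref{description_B*}(2), a point of $\varprojlim \frak K_1$ is a thread $(x_i)_{i\in I}\in\prod_i H_i$ satisfying $x_i=\iota_{j,i}(x_j)=x_j$ for all $i\preceq j$; since the transition maps are inclusions, all coordinates coincide with a single element of $\bigcap_i H_i$, giving a canonical homeomorphism $\varprojlim \frak K_1\cong \bigcap_i H_i$ (the inverse sends $x\in \bigcap H_i$ to its diagonal). The same reasoning gives $\varprojlim \frak K_2\cong \bigcap_i \phi(H_i)$, and through these identifications $\phi^\sharp$ is precisely the restriction of $\phi$ to $\bigcap_i H_i$ with codomain $\bigcap_i \phi(H_i)$. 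Surjectivity of $\phi^\sharp$ is then the required equality.

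The only genuinely non-mechanical step is the identification of the two inverse limits with the respective intersections, but this is routine from Lemma~\ref{description_B*}(2) because the transition maps are inclusions. I do not expect any serious obstacle; in fact, a completely elementary alternative also works: for $y\in \bigcap_i \phi(H_i)$ the family $\{\phi^{-1}(y)\cap H_i\}_{i\in I}$ is a downward directed family of non-empty closed subsets of the compact space $K$, hence has non-empty intersection, providing a preimage of $y$ in $\bigcap_i H_i$.
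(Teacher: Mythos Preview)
Your proof is correct and follows essentially the same approach as the paper: both view $\{H_i\}$ and $\{\phi(H_i)\}$ as inverse systems with inclusion transition maps, identify their limits with the respective intersections, and apply Lemma~\ref{lim_is_exact_for_comp_lemma}(2) to the surjective restrictions $\phi_i=\phi_{\restriction H_i}$. You simply spell out more of the routine details (the directed preorder on $I$, the identification of the limits via Lemma~\ref{description_B*}) and additionally note the direct finite-intersection-property argument, which the paper omits.
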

\begin{proof}
Both $\{H_i:i\in I\}$ and $\{\phi(H_i):i\in I\}$ are inverse systems in $\CompS$, and their inverse limits can be identified with $\bigcap_{i\in I}H_i$ and $\bigcap_{i\in I}\phi(H_i)$, respectively. The family of surjective continuous maps $\{\phi_i=\phi_{\restriction H_i}\colon H_i\to \phi(H_i)\}$ is compatible and so, by Lemma~\ref{lim_is_exact_for_comp_lemma}(2), the induced map $\phi^\sharp=\phi_{\restriction \bigcap_{i\in I}H_i}\colon \bigcap_{i\in I}H_i\to \bigcap_{i\in I}\phi(H_i)$ is surjective.
\end{proof}

\subsection{Reduction to the case of actions by surjective maps}\label{subs_red_top_to_surj}

The {\em surjective core of $K\ra\rho S$} is the following closed {subspace of the compact space} $K$:
\[E(\rho) = \bigcap_{t\in S} \rho_t (K) \subseteq K.\]  
\begin{lemma}\label{bar_is_surj_lemma} In the above notation, $E(\rho)$ is closed in $K$ and $\rho_s(E(\rho))=E(\rho)$, for all $s\in S$.
\end{lemma}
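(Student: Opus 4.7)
The plan is to handle the two assertions separately. Closedness of $E(\rho)$ is immediate: each $\rho_t(K)$ is the continuous image of a compact space under $\rho_t$, hence compact, and compact subsets of the Hausdorff space $K$ are closed; so $E(\rho)$ is an intersection of closed sets.

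For the equality $\rho_s(E(\rho))=E(\rho)$, the key observation is that the family $\{\rho_t(K)\}_{t\in S}$ of closed subsets of $K$ is downward directed by inclusion. Indeed, given $t_1,t_2\in S$, the left Ore condition (which holds for $S$ by \S\ref{amenable_implies_ore_subsection}) produces $a_1,a_2\in S$ with $a_1 t_1=a_2 t_2=:r$; then (RA.2) yields $\rho_r=\rho_{t_j}\circ\rho_{a_j}$, whence $\rho_r(K)\subseteq\rho_{t_j}(K)$ for $j=1,2$. Applying Corollary~\ref{cont_func_on_comp_commutes_w_cap} (continuous maps commute with downward directed intersections of closed subsets of a compact Hausdorff space), and using (RA.2) once more to rewrite $\rho_s\circ\rho_t$ as $\rho_{ts}$, I would compute
\[
\rho_s(E(\rho))=\rho_s\Bigl(\bigcap_{t\in S}\rho_t(K)\Bigr)=\bigcap_{t\in S}\rho_s(\rho_t(K))=\bigcap_{t\in S}\rho_{ts}(K)=\bigcap_{r\in Ss}\rho_r(K).
\]

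The final step is to identify this last intersection with $E(\rho)$. The assignment $r\mapsto\rho_r(K)$ is order-reversing with respect to the right divisibility preorder on $S$ (if $r'=cr$ with $c\in S$, then $\rho_{r'}(K)=\rho_r(\rho_c(K))\subseteq\rho_r(K)$), while $Ss$ is cofinal in $(S,\leq)$ by (LO$''$). Hence, for each $t\in S$ there exists $r\in Ss$ with $r\geq t$, and then $\rho_r(K)\subseteq\rho_t(K)$; this yields $\bigcap_{r\in Ss}\rho_r(K)\subseteq\bigcap_{t\in S}\rho_t(K)=E(\rho)$, while the reverse inclusion is obvious since $Ss\subseteq S$.

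The main obstacle is precisely the ability to commute $\rho_s$ with the intersection defining $E(\rho)$; this is exactly what the downward directedness of $\{\rho_t(K)\}$ buys us, and it is the point where the left Ore condition enters crucially. Beyond that, every step is either a formal consequence of the anti-homomorphism property (RA.2) or a direct application of a result already established in the excerpt.
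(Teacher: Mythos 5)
Your proof is correct and follows essentially the same route as the paper's: closedness via compactness of each $\rho_t(K)$, then passing $\rho_s$ through the intersection via Corollary~\ref{cont_func_on_comp_commutes_w_cap}, rewriting $\rho_s\circ\rho_t=\rho_{ts}$ by (RA.2), and invoking cofinality of $Ss$ in $(S,\leq)$ from (LO$''$). One small point in your favour: you explicitly check the downward-directedness of $\{\rho_t(K)\}_{t\in S}$ (using the left Ore condition) before applying the corollary, a hypothesis the paper uses tacitly.
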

\begin{proof}
Since $K$ is compact, each subset $\rho_s(K)$ is compact, so closed. Therefore, $E(\rho)$ is closed as well. Observe now that $Ss$ is cofinal in $(S,\leq)$ by (LO$''$), so that $E(\rho)=\bigcap_{t\in S}\ \rho_{ts}(K)$. Furthermore, by Corollary~\ref{cont_func_on_comp_commutes_w_cap},
\[\rho_s(E(\rho))=\rho_s\left(\bigcap_{t\in S}\rho_t(K)\right)\overset{}=\bigcap_{t\in S}\rho_s\rho_t(K)=\bigcap_{ts\in Ss}\rho_{ts}(K)=E(\rho).\qedhere\]
\end{proof}

By the above lemma, $E(\rho)$ is  a closed $S$-invariant  subspace of $K$. Let $\bar K=E(\rho)$, denote by $\varepsilon_K\colon E(\rho)\to K$ the inclusion map, and let 
\[ \bar K\overset{\bar\rho}{\curvearrowleft}S,\quad\text{such that}\ \bar\rho_s=(\rho_s)_{\restriction \bar K}\ \text{for all}\ s\in S.\] 

\begin{corollary}\label{coro_top_by_surj}
In the above setting, the following statements hold true:
\begin{enumerate}[(1)]
   \item $\bar \rho$ acts on $\bar K$ by surjective continuous self-maps, that is, $\bar\rho_s$ is surjective for all $s\in S$;
   \item given a second right $S$-action $K'\ra{\rho'}S$ on the compact space $K'$, and an $S$-equivariant continuous map $\phi\colon K\to K'$, there is a unique continuous map $\bar\phi\colon \bar K\to \bar K'$ such that $\phi\circ \varepsilon_K=\varepsilon_{K'}\circ\bar \phi$. Furthermore, $\bar\phi$ is $S$-equivariant and it is injective (resp., surjective), whenever $\phi$ is is injective (resp., surjective);
   \item $h_\top(\bar\rho)\leq h_\top(\rho)$.
\end{enumerate}
\end{corollary}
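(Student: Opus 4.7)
The plan is to establish the three claims essentially by exploiting the description $\bar K = \bigcap_{t\in S} \rho_t(K)$ as the (downward directed) intersection of closed subsets and by leveraging Lemma~\ref{bar_is_surj_lemma}, Corollary~\ref{cont_func_on_comp_commutes_w_cap} and Proposition~\ref{mono-top} from the earlier part of the paper.

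Part (1) is immediate from Lemma~\ref{bar_is_surj_lemma}: since $\bar\rho_s = (\rho_s)_{\restriction \bar K}$ has image $\rho_s(\bar K) = \rho_s(E(\rho)) = E(\rho) = \bar K$, it is a surjective continuous self-map of $\bar K$.

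For part (2), the core step is to verify that $\phi(\bar K) \subseteq \bar K'$. First, the family $\{\rho_t(K) : t\in S\}$ is downward directed by inclusion: given $s_1,s_2\in S$, by (LO$'$) there exist $t\in S$ and $r_1,r_2\in S$ with $t = r_1 s_1 = r_2 s_2$, so $\rho_t(K) = \rho_{s_i}(\rho_{r_i}(K)) \subseteq \rho_{s_i}(K)$ for $i=1,2$. Therefore, by Corollary~\ref{cont_func_on_comp_commutes_w_cap} combined with the $S$-equivariance of $\phi$,
\[
\phi(\bar K) \;=\; \phi\Bigl(\bigcap_{t\in S} \rho_t(K)\Bigr) \;=\; \bigcap_{t\in S} \phi(\rho_t(K)) \;=\; \bigcap_{t\in S} \rho'_t(\phi(K)) \;\subseteq\; \bigcap_{t\in S} \rho'_t(K') \;=\; \bar K'.
\]
This yields a unique continuous map $\bar\phi\colon \bar K \to \bar K'$ with $\varepsilon_{K'}\circ\bar\phi = \phi\circ\varepsilon_K$, namely the (co)restriction of $\phi$; uniqueness is forced because $\varepsilon_{K'}$ is injective. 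Its $S$-equivariance is a direct restriction argument using $\bar\rho_s = (\rho_s)_{\restriction \bar K}$ and $\bar\rho'_s = (\rho'_s)_{\restriction \bar K'}$, and injectivity of $\bar\phi$ trivially follows from injectivity of $\phi$. The interesting case is surjectivity: given $y\in \bar K'$, for each $t\in S$ write $y = \rho'_t(z_t)$ with $z_t\in K'$; by surjectivity of $\phi$, pick $w_t\in K$ with $\phi(w_t) = z_t$, so $y = \phi(\rho_t(w_t))$ and hence $A_t := \phi^{-1}(y)\cap \rho_t(K)$ is non-empty. The collection $\{A_t\}_{t\in S}$ consists of closed subsets of the compact space $\phi^{-1}(y)$ and is downward directed by the same argument as above; the finite intersection property then furnishes $x\in \bigcap_{t\in S} A_t = \phi^{-1}(y) \cap \bar K$, showing $\bar\phi$ is surjective.

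Finally, part (3) is an immediate application of Proposition~\ref{mono-top}(2): the inclusion $\varepsilon_K\colon \bar K \hookrightarrow K$ is, by construction, an $S$-equivariant injective continuous map from $(\bar K,\bar\rho)$ to $(K,\rho)$, so $h_\top(\bar\rho) \leq h_\top(\rho)$. The main obstacle is the surjectivity part of (2), where the finite-intersection-property argument inside the compact fiber $\phi^{-1}(y)$ is essential; everything else reduces to bookkeeping with the already-established Lemma~\ref{bar_is_surj_lemma}, Corollary~\ref{cont_func_on_comp_commutes_w_cap} and Proposition~\ref{mono-top}.
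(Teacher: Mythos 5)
Your proof is correct, and parts (1) and (3) coincide exactly with the paper's argument (for (3) you appeal to Proposition~\ref{mono-top}(2), which is indeed the right reference). The only place where you diverge is the surjectivity claim in part (2). You give a direct finite-intersection-property argument inside the compact fiber $\phi^{-1}(y)$: for each $t$ the set $A_t=\phi^{-1}(y)\cap\rho_t(K)$ is non-empty and closed, the family $\{A_t\}_{t\in S}$ is downward directed, hence $\bigcap_t A_t=\phi^{-1}(y)\cap\bar K\neq\emptyset$. The paper instead observes that surjectivity of $\phi$ forces $\phi(\rho_s(K))=\rho'_s(\phi(K))=\rho'_s(K')$ for every $s$, so the inclusion
\[
\phi(\bar K)=\bigcap_{s\in S}\phi(\rho_s(K))\subseteq\bigcap_{s\in S}\rho'_s(K')=\bar K'
\]
that you already established via Corollary~\ref{cont_func_on_comp_commutes_w_cap} becomes an equality at once. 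Both arguments are valid and rest on the same compactness input (the Corollary itself is proved by exactly the kind of inverse-limit/FIP reasoning you redo by hand). The paper's variant is a bit slicker because it recycles the display you already wrote, avoiding a second compactness argument; yours is arguably more self-contained since it spells out where compactness is used. You also explicitly check that $\{\rho_t(K)\}_{t\in S}$ is downward directed via (LO$'$), which the paper leaves implicit when invoking the Corollary -- a reasonable addition.
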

\begin{proof}
(1) It is clear from Lemma~\ref{bar_is_surj_lemma} that $\bar \rho$ acts by surjective self-maps. 

\smallskip
(2) Using  Corollary~\ref{cont_func_on_comp_commutes_w_cap}, we conclude that 
\begin{equation}\label{Eq:July8}
\phi(\bar K)=\phi\left(\bigcap_{s\in S}\rho_s(K)\right)=\bigcap_{s\in S}\phi(\rho_s(K))=\bigcap_{s\in S}\rho'_s(\phi(K'))\subseteq 
\bigcap_{s\in S}\rho'_s(K') = 
\bar K'.
\end{equation}
Hence, $\phi$ restricts to a unique continuous map $\bar\phi=\phi_{\restriction \bar K}\colon \bar K\to \bar K'$, that is clearly $S$-equivariant. Furthermore, if $\phi$ is injective, its restriction $\bar\phi$ is also injective. On the other hand, if $\phi$ is surjective, then for each $s\in S$ we have that $\phi(\rho_s(K))=\rho_s'(\phi(K))=\rho_s'(K')$. Exploiting this equality we can deduce that the last inclusion in \eqref{Eq:July8} becomes an equality, so $\bar \phi(\bar K)=\phi(\bar K)= \bigcap_{s\in S}\rho'_s(K')=\bar K'$.

\smallskip
(3) This inequality follows by Proposition~\ref{properties_h_top}(2), as the inclusion $\varepsilon_K\colon \bar K\to K$ is injective and $S$-equivariant.
\end{proof}

By part (2) of the above corollary, the assignment $(K,\rho)\mapsto (\bar K,\bar\rho)$ is part of a functor
\begin{equation}\label{EqMay21-}
\overline{(-)}\colon \rre(S,\CompS)\to \rre(S,\CompS),
\end{equation}
that sends continuous injective (resp., surjective)  $S$-equivariant maps to continuous injective  (resp., surjective) $S$-equivariant maps, respectively. Furthermore, this functor restricts to exact functors
\[\overline{(-)}\colon \rre(S,\CompG)\to \rre(S,\CompG)\quad\text{and}\quad \overline{(-)}\colon \rre(S,\Comp)\to \rre(S,\Comp).\]

Now our aim is to upgrade the inequality in Corollary~\ref{coro_top_by_surj}(3) to the equality $h_{\top}(\rho)=h_{\top}(\bar\rho)$. First we need the following technical lemma:

\begin{lemma}\label{claim1} 
In the above notation, given a finite open cover $\U$ of $K$, there exists $s\in S$ such that 
\begin{equation}\label{eq_claim1}
N_{E(\rho)}(\U) = N_{\rho_s(K)}(\U) = N_{K}(\rho_s^{-1}(\U))  .
\end{equation}
\end{lemma}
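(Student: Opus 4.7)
My plan is to split the double equality into two independent statements. The right-hand equality $N_{\rho_s(K)}(\U) = N_K(\rho_s^{-1}(\U))$ holds for \emph{every} $s\in S$ and is essentially formal: since $\rho_s$ corestricts to a surjection $K\twoheadrightarrow \rho_s(K)$, the assignment $\{U_{i_1},\ldots,U_{i_n}\}\mapsto \{\rho_s^{-1}(U_{i_1}),\ldots,\rho_s^{-1}(U_{i_n})\}$ gives a bijection between finite subfamilies of $\U$ that cover $\rho_s(K)$ and finite subfamilies of $\rho_s^{-1}(\U)$ that cover $K$, preserving cardinalities. The left equality is the substantive part, so I will spend most of the argument there.

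For the left equality $N_{E(\rho)}(\U)=N_{\rho_s(K)}(\U)$, the inequality $\leq$ is automatic for every $s$ since $E(\rho)\subseteq \rho_s(K)$, so it suffices to produce one $s\in S$ realizing the reverse inequality. Setting $N=N_{E(\rho)}(\U)$, I would pick a minimal subcover $U_1,\ldots,U_N\in \U$ with $E(\rho)\subseteq V:=U_1\cup\cdots\cup U_N$ and consider the compact set $C=K\setminus V$. Because $E(\rho)\cap C=\emptyset$ by construction and $E(\rho)=\bigcap_{t\in S}\rho_t(K)$, the family $\{C\cap \rho_t(K):t\in S\}$ of closed subsets of the compact space $K$ has empty intersection, so by the finite intersection property there exist $s_1,\ldots,s_n\in S$ such that $C\cap \rho_{s_1}(K)\cap\cdots\cap \rho_{s_n}(K)=\emptyset$.

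The key step where the left Ore condition enters is to collapse these finitely many constraints into a single element of $S$: applying Lemma~\ref{GO}(1) to $s_1,\ldots,s_n$ (with any choice of auxiliary element), I obtain $u_1,\ldots,u_n,s\in S$ with $u_j s_j = s$ for each $j$. Using the right-action identity (RA.2) one has $\rho_s(K)=\rho_{u_j s_j}(K)=\rho_{s_j}(\rho_{u_j}(K))\subseteq \rho_{s_j}(K)$ for every $j$, hence $\rho_s(K)\subseteq \rho_{s_1}(K)\cap\cdots\cap \rho_{s_n}(K)\subseteq V$. Therefore $\{U_1,\ldots,U_N\}$ already covers $\rho_s(K)$, giving $N_{\rho_s(K)}(\U)\leq N=N_{E(\rho)}(\U)$, as required. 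The only mildly delicate point is getting the direction of the inclusion right under the right-action convention $\rho_{st}=\rho_t\circ\rho_s$; once this is handled, both equalities fall out cleanly.
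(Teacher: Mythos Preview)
Your proof is correct and follows essentially the same strategy as the paper: both use compactness (the finite intersection property) to find finitely many $s_1,\dots,s_n\in S$ with $\bigcap_j\rho_{s_j}(K)\subseteq V$, and then the left Ore condition to collapse these into a single $s\in S$ with $\rho_s(K)\subseteq\bigcap_j\rho_{s_j}(K)$. The only organizational difference is that the paper first chooses $s$ as a minimizer of $t\mapsto N_{\rho_t(K)}(\U)$ (which exists because these values are bounded by $N_K(\U)$) and then uses the compactness/Ore argument to bound that minimum by $N_{E(\rho)}(\U)$, whereas you construct $s$ directly from the Ore step; your version is marginally more streamlined since it avoids the auxiliary minimizer.
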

\begin{proof} 
The  second equality is clear, we prove the first one.
It is clear that $N_{\rho_t(K)}(\U) \leq  N_{K}(\U)$, for each $t\in S$. Therefore, $\{N_{\rho_t(K)}(\U) : t\in S \} \subseteq \{1,2,\ldots , N_{K}(\U)\}$ and so there exists $s\in S$ such that 
\begin{equation}\label{eq5.5}
N_{\rho_s (K)}(\U) = \min\{N_{\rho_t(K)}(\U) : t\in S \}.
\end{equation}
Let us show that this $s$ verifies \eqref{eq_claim1}. Indeed, pick a subfamily $\V$ of $\U$ that covers $E(\rho)$ and such that $N_{E(\rho)}(\U) =|\V|$. 
Then the subset $A=\bigcup \V$ of $K$ is open. Since $E(\rho) = \bigcap_{t\in S}\rho_t(K)\subseteq A$ and $K$ is compact, there is a finite subset $F\subseteq S$ such that $E(\rho)\subseteq\bigcap_{t\in F}\rho_t(K)\subseteq A$. As $\{\rho_t(K): t\in S\} $ is an inverse system, there exists a single $t\in S$ with 
$E(\rho)  \subseteq \rho_t(K) \subseteq A$.
Therefore, $N_{E(\rho)}(\U) \geq N_{\rho_t(K)}(\U) \geq N_{\rho_s(K)}(\U)$, by the choice of $\V$, $A$ and \eqref{eq5.5}. The converse inequality $N_{\rho_s(K)}(\U) \geq N_{E(\rho)}(\U)$ follows by the inclusion $E(\rho)  \subseteq \rho_s(K)$.
\end{proof}

We are ready to show that the computation of the topological entropy can be reduced to actions by surjective self-maps:

\begin{theorem}\label{Oldie}\label{E_rho_annihilator}\label{rhobarrho} 
In the above notation, $h_{\top}(\rho) = h_{\top}(\bar \rho)$.
\end{theorem}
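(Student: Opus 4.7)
The inequality $h_\top(\bar\rho) \leq h_\top(\rho)$ is already Corollary~\ref{coro_top_by_surj}(3) (via the $S$-equivariant injective inclusion $\varepsilon_K \colon \bar K \hookrightarrow K$), so the plan is to establish the reverse inequality $h_\top(\rho) \leq h_\top(\bar\rho)$. To this end, I would fix an arbitrary finite open cover $\U \in \cov(K)$ and a right F\o lner net $\s = \{F_i\}_{i \in I}$ for $S$, and aim to show $H_\top(\rho, \U) \leq h_\top(\bar\rho)$; taking the supremum over $\U$ would then finish the proof.

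The key tool is Lemma~\ref{claim1}, which I would apply not to the cover $\U$ itself, but to each of the refined covers $\U_{\rho, F_i}$ (these are finite open covers of $K$). This yields, for each $i \in I$, some $s_i \in S$ such that
\[
N_{E(\rho)}(\U_{\rho, F_i}) = N_K\bigl(\rho_{s_i}^{-1}(\U_{\rho, F_i})\bigr).
\]
A direct computation using the anti-homomorphism property $\rho_{t s_i} = \rho_{s_i} \circ \rho_t$ (RA.2) gives
\[
\rho_{s_i}^{-1}(\U_{\rho, F_i}) = \bigvee_{t \in F_i} \rho_t^{-1}\bigl(\rho_{s_i}^{-1}(\U)\bigr) = \bigvee_{t \in F_i} \rho_{t s_i}^{-1}(\U) = \U_{\rho,\, F_i s_i}.
\]
On the other hand, since $\bar K = E(\rho)$ is $S$-invariant, if $\bar \U = \{U \cap \bar K : U \in \U\}$ denotes the restriction of $\U$ to $\bar K$, then one checks $\U_{\rho, F_i} \cap \bar K = \bar\U_{\bar\rho, F_i}$, and therefore
\[
N_{\bar K}\bigl(\bar\U_{\bar\rho, F_i}\bigr) = N_{E(\rho)}\bigl(\U_{\rho, F_i}\bigr) = N_K\bigl(\U_{\rho,\, F_i s_i}\bigr).
\]

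Next, I would observe that $\{F_i s_i\}_{i \in I}$ is again a right F\o lner net for $S$: the cancellativity of $S$ gives $|F_i s_i| = |F_i|$ and $|F_i s_i t \setminus F_i s_i| = |F_i t \setminus F_i|$ for each $t \in S$, so the F\o lner condition is preserved (this is the right-multiplication analogue of Lemma~\ref{fol}, already used as \cite[Lemma~2.7(b)]{DFGB} in the proof of Theorem~\ref{Th1}). Since $H_\top(\rho, \U)$ does not depend on the choice of F\o lner net (Example~\ref{Gactionsextop}(2) combined with Lemma~\ref{reallim}), we may compute
\[
H_\top(\rho, \U) = \lim_{i \in I} \frac{\log N_K(\U_{\rho,\, F_i s_i})}{|F_i s_i|} = \lim_{i \in I} \frac{\log N_{\bar K}(\bar\U_{\bar\rho, F_i})}{|F_i|} = H_\top(\bar\rho, \bar\U) \leq h_\top(\bar\rho),
\]
and taking the supremum over $\U \in \cov(K)$ yields $h_\top(\rho) \leq h_\top(\bar\rho)$.

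The main obstacle is already encapsulated in Lemma~\ref{claim1}: the nontrivial content is that, by pushing the cover $\U_{\rho, F_i}$ forward by a suitable $\rho_{s_i}$, one can reach the ``surjective core'' $E(\rho)$ without changing the minimum cover count. The rest is a bookkeeping argument using the freedom to replace $\{F_i\}$ by $\{F_i s_i\}$ in the F\o lner limit, which relies crucially on cancellativity.
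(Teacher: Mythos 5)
Your strategy is the same as the paper's: both inequalities are handled identically, with Lemma~\ref{claim1} applied to each $\U_{\rho,F_i}$, then an absorbed shift $s_i$ and the freedom to replace the F\o lner net. However, there is a computational slip in the key identity. You claim
\[
\rho_{s_i}^{-1}(\U_{\rho,F_i})=\bigvee_{t\in F_i}\rho_t^{-1}\bigl(\rho_{s_i}^{-1}(\U)\bigr)=\U_{\rho,\,F_is_i},
\]
but the first equality is not what pulling back the join gives. Distributing $\rho_{s_i}^{-1}$ over the join yields $\bigvee_{t\in F_i}\rho_{s_i}^{-1}\bigl(\rho_t^{-1}(\U)\bigr)=\bigvee_{t\in F_i}(\rho_t\circ\rho_{s_i})^{-1}(\U)$, and by (RA.2) $\rho_t\circ\rho_{s_i}=\rho_{s_it}$, so the correct result is $\U_{\rho,\,s_iF_i}$, not $\U_{\rho,\,F_is_i}$. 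Consequently, the F\o lner replacement you need is $\{s_iF_i\}_{i\in I}$ (left translates, as provided by Lemma~\ref{fol}), not the right translates $\{F_is_i\}_{i\in I}$ from \cite[Lemma~2.7(b)]{DFGB}. After making this correction the argument goes through and is exactly the paper's proof: the chain becomes $N_{E(\rho)}(\U_{\rho,F_i})=N_K(\U_{\rho,s_iF_i})$, Lemma~\ref{fol} shows $\{s_iF_i\}$ is right F\o lner with $|s_iF_i|=|F_i|$ by cancellativity, and the limit identification finishes. This is a localized indexing error, not a structural gap, but as written the displayed computation is incorrect.
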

\begin{proof} 
In view of Corollary~\ref{coro_top_by_surj}, it is enough to see that $h_{\top}(\rho) \leq h_{\top}(\bar\rho)$. Hence, let $\{F_i\}_{i\in I}$ be a right F\o lner net for $S$, $\U\in\cov(K)$ and, for every $i\in I$, let $\V_i= \U_{\rho, F_i}$. By Lemma~\ref{claim1} there exists $s_i\in S$ such that  
\begin{equation}\label{eq5}
N_{K}(\rho_{s_i}^{-1}(\V_i))  = N_{E(\rho)}(\V_i).
\end{equation}
Next we observe that  
\[
\rho_{s_i}^{-1}(\V_i)=\rho_{s_i}^{-1}\left(\bigvee_{f\in F_i}\rho_f^{-1} (\U)\right)=\bigvee_{f\in F_i}\rho_{s_i}^{-1}(\rho_f^{-1} (\U))=\bigvee_{f\in F_i}(\rho_f\rho_{s_i})^{-1} (\U)=\bigvee_{f\in F_i}\rho_{s_if}^{-1} (\U)= \U_{\rho, s_iF_i}.
\] 
The latter equality and (\ref{eq5}) give 
\begin{equation}\label{eq6}
N_{K}(\U_{\rho, s_iF_i})  = N_{E(\rho)}(\U_{\rho, F_i}).
\end{equation}
The net $\{s_iF_i\}_{i\in I}$ is right F\o lner by Lemma~\ref{fol} with $|s_iF_i| = |F_i|$ (as $S$ is cancellative), so, from (\ref{eq6}), we get:
\[
H_{\top}(\rho,\U) = \lim_{i\in I} \frac{\log N_{K}(\U_{\rho, s_iF_i})}{|s_iF_i| }= \lim_{i\in I} \frac{\log N_{E(\rho)}(\U_{\rho, F_i})}{|F_i| } = 
H_{\top}(\bar \rho,\U) \leq h_{\top}(\bar\rho).
\]
Since $\U\in\cov(K)$ was chosen arbitrarily, we deduce that $h_{\top}(\rho) \leq h_{\top}(\bar \rho)$, as desired.
\end{proof}

\begin{remark}\label{rem:Halmos}
According to the well-known Halmos' paradigm, an endomorphism of a compact group is measure-preserving with respect to the Haar measure if and only if it is surjective. 
Therefore, {when applied to a right action $K\ra{\rho}S$ on a compact group $K$}, the above theorem suggests how to pass from $\rho$ 
to the continuous and measure-preserving right $S$-action $E(\rho)\ra{\bar \rho}S$. So, one can also discuss the measure entropy of such an action; it is known that for actions of amenable countable groups on compact metrizable groups the topological and the measure entropy coincide \cite[Proposition 13.2]{li_kerr_book}.
\end{remark}

\subsection{Invariance under Ore colocalization for $h_\top$}\label{subs_dual_ore}


We are now going to modify the right $S$-action $\bar K\ra{\bar\rho}S$ introduced in the previous section to make it into a new action by invertible maps:

\begin{lemma}\label{def_of_k_star_lemma}
Consider the following inverse system ${\frak K}=\{(K_g, \bar\rho_s\colon K_{g}\to K_{gs}):g\in G,\ s\in S\}$, where $K_g=\bar K$ for all $g\in G$, and let $K^*=\varprojlim_{(G,\leq_S)}\mathfrak K$, with $\pi_g=\pi_g^K\colon K^*\to K_g$ the canonical map from the inverse limit, for all $g\in G$. Then:
\begin{enumerate}[(1)]
   \item $\pi_g$ is surjective and $\pi_{gs}=\bar\rho_s\circ\pi_g$, for all $g\in G$ and $s\in S$;
   \item let $\rho^*_g\colon K^*\to K^*$ be the unique continuous map such that the following diagram commutes for all $g,h\in G$:
\begin{equation}\label{univ_prop_act_comp_eq}
\xymatrix@C=50pt{
K_{gh}\ar[r]^{\id_{\bar K}}\ar@{<-}[d]_{\pi_{gh}}&K_{h}\ar@{<-}[d]^{\pi_{h}}\\
K^*\ar[r]_{\rho^*_g}&K^*.}
\end{equation}
Then, $K^*\ra{\rho^*} G$ is a right $G$-action;
   \item $\pi_1\colon K^*\to \bar K$ is (surjective and) $S$-equivariant, when $K^*$ is endowed with the restriction  $(\rho^*)_{\restriction S}$ of the action $\rho^*$ to $S$; 
   \item given a second right $S$-action $K'\ra{\rho'}S$ on a compact space $K'$ and an $S$-equivariant continuous map $\phi\colon K\to K'$, there is a unique continuous map $\phi^*\colon K^*\to (K')^*$ such that, for every $g\in G$, the following diagram commutes
\[
\xymatrix@C=50pt{K^*\ar[r]^{\phi^*}\ar[d]_{\pi^K_g}&(K')^*\ar[d]^{\pi^{K'}_g}\\
\bar K\ar[r]_{\bar\phi}&\bar K'.}
\]
Furthermore, $\phi^*$ is $G$-equivariant and if $\phi$ is injective (resp., surjective) then so is $\phi^*$.
\end{enumerate}
\end{lemma}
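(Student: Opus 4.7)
The plan is to derive all four statements from the universal property of the inverse limit $K^*$, using the facts about inverse limits with surjective transition maps established in \S\ref{topology_of_inverse_limit_subs} and the reduction to actions by surjective maps from \S\ref{subs_red_top_to_surj}.

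For part (1), I would first check that $\frak K$ really is an inverse system over $(G,\leq_S)$: the relation $gs\leq_S g$ (since $g^{-1}(gs)=s\in S$) means that $\bar\rho_s\colon K_g\to K_{gs}$ points toward a smaller index, and associativity in $S$ gives compatibility among transition maps. The identity $\pi_{gs}=\bar\rho_s\circ\pi_g$ is then just the compatibility relation of Lemma~\ref{description_B*}(1). The surjectivity of each $\pi_g$ is the crucial point: by Corollary~\ref{coro_top_by_surj}(1), every transition map $\bar\rho_s$ is surjective, hence Lemma~\ref{description_B**}(2) applies directly and gives surjectivity of $\pi_g$ for every $g\in G$.

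For part (2), I would define $\rho^*_g$ via the universal property of the limit $K^*$. The family $\{\pi_{gh}\colon K^*\to K_h\}_{h\in G}$ is compatible with the transition maps $\bar\rho_s\colon K_h\to K_{hs}$ because part (1) applied to $gh$ gives $\pi_{ghs}=\bar\rho_s\circ\pi_{gh}$. Hence there is a unique continuous $\rho^*_g\colon K^*\to K^*$ with $\pi_h\circ\rho^*_g=\pi_{gh}$ for every $h\in G$, which is precisely the commutativity of \eqref{univ_prop_act_comp_eq}. The axioms (RA.1) and (RA.2) are then checked by uniqueness: both $\id_{K^*}$ and $\rho^*_1$ satisfy $\pi_h\circ(-)=\pi_h$, and both $\rho^*_{g_1g_2}$ and $\rho^*_{g_2}\circ\rho^*_{g_1}$ satisfy $\pi_h\circ(-)=\pi_{g_1g_2h}$, so they coincide. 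Part (3) is then immediate: surjectivity of $\pi_1$ is the case $g=1$ of part (1), and for $s\in S$ the definitions give $\pi_1\circ\rho^*_s=\pi_s=\bar\rho_s\circ\pi_1$.

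For part (4), Corollary~\ref{coro_top_by_surj}(2) first produces an $S$-equivariant continuous map $\bar\phi\colon\bar K\to\bar K'$, which is injective (resp.\ surjective) when $\phi$ is. Then the family $\{\bar\phi\circ\pi^K_g\colon K^*\to K'_g\}_{g\in G}$ is compatible with the transition maps of the inverse system defining $(K')^*$: compatibility reduces to $\bar\rho'_s\circ\bar\phi=\bar\phi\circ\bar\rho_s$ (from $S$-equivariance of $\bar\phi$) combined with part (1) for both $K$ and $K'$. This yields a unique continuous $\phi^*\colon K^*\to(K')^*$ with $\pi^{K'}_g\circ\phi^*=\bar\phi\circ\pi^K_g$. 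Its $G$-equivariance is again forced by uniqueness after composing with each $\pi^{K'}_h$. Finally, $\phi^*$ is the map induced on inverse limits by the componentwise family whose entries are all $\bar\phi$, so Lemma~\ref{lim_is_exact_for_comp_lemma} transports injectivity and surjectivity of $\bar\phi$ (and hence of $\phi$) to $\phi^*$.

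The main obstacle is essentially bookkeeping: all four parts dissolve into applications of the universal property of $\varprojlim$. The single genuine input, as opposed to categorical manipulation, is the surjectivity of $\pi_g$ in part (1), which is where the preliminary reduction to the surjective core $\bar K\ra{\bar\rho}S$ pays off: it supplies the surjective transition maps needed to invoke Lemma~\ref{description_B**}.
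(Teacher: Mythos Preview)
Your proposal is correct and follows essentially the same route as the paper: both proofs construct $\rho^*_g$ via the universal property of the inverse limit applied to the cone $\{\pi_{gh}\}_{h\in G}$, verify (RA.1) and (RA.2) by uniqueness, deduce $S$-equivariance of $\pi_1$ from $\pi_1\circ\rho^*_s=\pi_s=\bar\rho_s\circ\pi_1$, and handle part (4) by combining Corollary~\ref{coro_top_by_surj}(2) with Lemma~\ref{lim_is_exact_for_comp_lemma}. If anything, your write-up is slightly more careful in two places: you explicitly invoke Lemma~\ref{description_B**}(2) (rather than Lemma~\ref{description_B*}) for the surjectivity of $\pi_g$, and you spell out why the cone $\{\pi_{gh}\}_h$ is compatible, which the paper leaves implicit.
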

\begin{proof}
(1) follows by Lemma \ref{description_B*}.

\smallskip
(2) Given $g\in G$, the family of maps $\{\pi_{gh}\colon K^*\to K_h:h\in G\}$ is compatible with the transition maps in $\frak K$. Hence, there is a unique continuous map $\rho^*_g\colon K^*\to K^*$ such that 
\begin{equation}\label{equational_un_prop_rho_star_eq}
\pi_h\circ \rho^*_g=\pi_{gh},\quad\text{for all $h\in G$.} 
\end{equation}
Let us show that this defines a right action $K\ra{\rho^*}G$. Indeed, $\rho_1^*=\id_{K^*}$ since $\pi_h\circ \id_{K^*}=\pi_{h}$ holds for all $h\in G$, that is, $\id_{K^*}$ satisfies the universal property  \eqref{equational_un_prop_rho_star_eq} with $g=1$. Similarly, given $g_1,g_2\in G$, we have that $\rho_{g_1}^*\circ\rho_{g_2}^*=\rho_{g_2g_1}^*$, in fact, the equalities
\[
\pi_h\circ (\rho_{g_1}^*\circ\rho_{g_2}^*)=(\pi_h\circ \rho_{g_1}^*)\circ\rho_{g_2}^*=\pi_{g_1h}\circ\rho_{g_2}^*=\pi_{(g_2g_1)h}
\]
hold for all $h\in G$. In particular, $\rho_{g_1}^*\circ\rho_{g_2}^*$ satisfies the universal property \eqref{equational_un_prop_rho_star_eq} with $g=g_2g_1$.

\smallskip
(3) For each $s\in S$ and $x\in K^*$, we have that $(\pi_1\circ\rho^*_s)(x)=\pi_s(x)={(\bar \rho_s \circ \pi_1)}(x)$, where the former equality comes from the commutative diagram \eqref{univ_prop_act_comp_eq} with $h=1$ and $g=s$, while the latter comes from part (1) with $g=1$.

\smallskip
(4) The existence and uniqueness of $\phi^*$ are clear by the universal property of inverse limits, while $\phi^*$ is clearly \mbox{$G$-equivariant} by construction. Furthermore, by Corollary~\ref{coro_top_by_surj}(2), $\bar\phi$ is injective (resp., surjective) whenever $\phi$ has the same property. One can then conclude by Lemma~\ref{lim_is_exact_for_comp_lemma}.
\end{proof}

\begin{definition}\label{def_k_star}
The {\em (left) Ore colocalization} of $K\ra{\rho} S$ is the right $G$-action $K^*\ra{\rho^*} G$ on the compact space $K^*$, built in Lemma \ref{def_of_k_star_lemma}. 
\end{definition}

Let us remark that, by part (4) of the above lemma, the assignment $(K,\rho)\mapsto (K^*,\rho^*)$ is part of a functor
\begin{equation}\label{EqMay21}
{(-)^*}\colon \rre(S,\CompS)\to \rre(G,\CompS),
\end{equation}
that sends continuous injective (resp., surjective) $S$-equivariant maps to continuous injective (resp., surjective) $G$-equivariant maps. 
Furthermore, this functor restricts to exact functors
\[{(-)^*}\colon \rre(S,\CompG)\to \rre(G,\CompG)\quad\text{and}\quad {(-)^*}\colon \rre(S,\Comp)\to \rre(G,\Comp).\]
In fact, the Ore colocalization is even ``exact'' on ``generalized short exact sequence'' of the form $H\hookrightarrow K\twoheadrightarrow K/H$, where $K$ is a compact group with a right $S$-action, $H$ is an $S$-invariant closed subgroup, and $K/H$ is the $K$-space of left $H$-cosets.

\begin{corollary}\label{strong_exactness_coro}
Let $K\ra\rho S$ be a right $S$-action on a compact group $K$, let $H\leq K$ be a closed $S$-invariant subgroup and let $H\ra{\rho_H}S$ and $K/H\longra{\rho_{K/H}}S$ be the right $S$-actions (by continuous endomorphisms, and by continuous self-maps, respectively) induced by $\rho$ on $H$ and on $K/H$, respectively. If we denote by $\iota\colon H\to K$ the inclusion and by $\pi\colon K\to K/H$ the projection, then:
\begin{enumerate}[(1)]
\item $\iota^*\colon H^*\to K^*$ is an injective, $G$-equivariant, continuous and closed group homomorphism, i.e.,  the action $H^*\longra{\ \ (\rho_H)^*}G$ is conjugated to the action $\iota^*(H^*)\longra{(\rho^*)_{\iota^*(H^*)}}\ G$;
\item $\pi^*\colon K^*\to (K/H)^*$ is a surjective, $G$-equivariant, continuous and open map;
moreover, the action $(K/H)^*\longra{(\rho_{K/H})^*}G$ is conjugated to the action $K^*/H^*\longra{(\rho^*)_{K^*/H^*}}\ \ G$ induced by $\rho^*$ on the space of left $H^*$-cosets.
\end{enumerate} 
\end{corollary}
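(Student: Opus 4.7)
The plan is to reduce both statements largely to the functoriality of the Ore colocalization from Lemma~\ref{def_of_k_star_lemma}(4), complemented by elementary topology and, for~(2), an explicit computation inside the inverse limit that describes $K^*$.

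For~(1): Since $\iota\colon H\hookrightarrow K$ is a continuous, injective, $S$-equivariant morphism in $\CompG$, Lemma~\ref{def_of_k_star_lemma}(4), applied in the restricted setting $(-)^*\colon\rre(S,\CompG)\to\rre(G,\CompG)$ recorded right after Definition~\ref{def_k_star}, gives directly that $\iota^*$ is injective, $G$-equivariant, continuous and a group homomorphism. Since $H^*$ is compact and $K^*$ is Hausdorff, $\iota^*$ is automatically closed, hence a topological embedding with closed image; the $G$-equivariance then translates into the required conjugation of $(\rho_H)^*$ with $(\rho^*)_{\iota^*(H^*)}$.

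For~(2): Lemma~\ref{def_of_k_star_lemma}(4) again gives that $\pi^*$ is surjective, continuous and $G$-equivariant. I would then show that $\pi^*$ factors as $\pi^*=\tilde\pi\circ q$, where $q\colon K^*\to K^*/\iota^*(H^*)$ is the canonical projection onto the space of left cosets (open since $K^*$ is a compact group and $\iota^*(H^*)$ is a closed subgroup, by~(1)) and $\tilde\pi$ is a $G$-equivariant homeomorphism; openness of $\pi^*$ and the claimed conjugation in~(2) both follow immediately. Writing elements of $K^*$ as compatible families $(x_g)_{g\in G}$ with $x_g\in\bar K$ and $\bar\rho_s(x_g)=x_{gs}$ (and analogously for $(K/H)^*$), componentwise multiplication makes $\pi^*\bigl(x\cdot\iota^*(h)\bigr)=(\pi(x_g h_g))_g=(\pi(x_g))_g=\pi^*(x)$ immediate for $h=(h_g)\in H^*$, so $\pi^*$ descends through $q$ to a continuous surjection $\tilde\pi$.

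The decisive point is the injectivity of $\tilde\pi$. Given $(x_g),(x_g')\in K^*$ with $\pi(x_g)=\pi(x_g')$ for every $g\in G$, set $h_g:=x_g^{-1}x_g'\in H$; the task is to upgrade this to $h_g\in\bar H=\bigcap_{s\in S}\rho_s(H)$. Fix $g\in G$ and $s\in S$ and let $g':=gs^{-1}\in G$, so that $g's=g$; the transition map $\bar\rho_s\colon K_{g'}\to K_g$ then yields $x_g=\bar\rho_s(x_{g'})$ and $x_g'=\bar\rho_s(x_{g'}')$, and since $\bar\rho_s$ is a group homomorphism one gets $h_g=\bar\rho_s(h_{g'})\in\rho_s(H)$ (using that $h_{g'}\in H$). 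Varying $s$ over $S$ yields $h_g\in\bar H$, and carrying this out at every $g$ shows $x^{-1}x'\in\iota^*(H^*)$, giving injectivity of $\tilde\pi$. A continuous bijection from the compact space $K^*/\iota^*(H^*)$ to the Hausdorff space $(K/H)^*$ is automatically a homeomorphism, and its $G$-equivariance is inherited from that of $\pi^*$ and $q$. The main obstacle is precisely this injectivity step, which crucially uses the invertibility of elements of $S$ inside $G=S^{-1}S$ in order to reach each component $K_g$ of the inverse system from a suitably higher-indexed component via a transition map $\bar\rho_s$.
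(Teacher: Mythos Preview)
Your argument is correct. Part~(1) is essentially identical to the paper's: both invoke Lemma~\ref{def_of_k_star_lemma}(4) and then compactness of $H^*$ for closedness.

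For part~(2) the two proofs diverge. The paper obtains surjectivity, continuity and $G$-equivariance from Lemma~\ref{def_of_k_star_lemma}(4), then asserts openness of $\pi^*$ via ``the open mapping theorem which applies to all compact groups'', and finally says the conjugation ``follows from the first part'' without further detail. You instead establish the conjugation $(K/H)^*\cong K^*/\iota^*(H^*)$ directly, by an explicit computation in the inverse limit showing that the fibers of $\pi^*$ are exactly the left $\iota^*(H^*)$-cosets, and then read off openness from the openness of the coset projection $q$. Your route is more self-contained: the paper's appeal to the open mapping theorem is delicate, since $(K/H)^*$ is only a compact \emph{space} when $H$ is not normal, so $\pi^*$ is not a priori a group homomorphism to which that theorem applies; and the fiber identification that the paper leaves implicit is precisely the step you spell out (using that every index $g$ is reached from $gs^{-1}$ via $\bar\rho_s$, which forces each $h_g$ into $\rho_s(H)$ for every $s$). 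The paper's approach is shorter but relies on the reader unpacking both of these points; yours trades brevity for an argument that works verbatim in the non-normal case.
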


\begin{proof} Recall that $\iota\colon H\to K$ is an $S$-equivariant closed embedding and that $\pi\colon K\to K/H$ is an $S$-equivariant surjective continuous and open map.

\smallskip
(1) According to Lemma \ref{def_of_k_star_lemma}(4), the continuous $G$-equivariant homomorphism $\iota^*\colon H^*\to K^*$ is injective, since $\iota$ is injective. Closedness follows from the compactness of $H^*$.

\smallskip
(2)  According to Lemma \ref{def_of_k_star_lemma}(4), the continuous $G$-equivariant homomorphism $\pi^*$ is surjective, since $\pi$ is surjective.  
Openness of  $\pi^*$ follows from the open mapping theorem which applies to all compact groups \cite[Corollary~8.4.2]{Book}. 
The second part of (2) follows from the first one.
\end{proof}

Let us conclude this section by proving that the topological entropy ``is invariant under Ore colocalization'':

\begin{theorem}[Invariance under Ore colocalization]\label{Th2} 
In the above notation, $h_{\top}(\rho)=  h_{\top}(\rho^*)$.
\end{theorem}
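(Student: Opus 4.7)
By Theorem~\ref{rhobarrho} it suffices to show $h_{\top}(\bar\rho)=h_{\top}(\rho^*)$, so the proof splits into two inequalities, mirroring (dually) the structure of the proof of Theorem~\ref{Th1}.

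For the inequality $h_{\top}(\bar\rho)\leq h_{\top}(\rho^*)$, I would use that by Lemma~\ref{def_of_k_star_lemma}(3) the map $\pi_1\colon K^*\to\bar K$ is surjective and $S$-equivariant with respect to $(\rho^*)_{\restriction S}$ and $\bar\rho$. Hence Proposition~\ref{mono-top}(1) gives $h_{\top}((\rho^*)_{\restriction S})\geq h_{\top}(\bar\rho)$, and Remark~\ref{RemarkMay16*} yields $h_{\top}(\rho^*)=h_{\top}((\rho^*)_{\restriction S})\geq h_{\top}(\bar\rho)$.

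For the reverse inequality $h_{\top}(\rho^*)\leq h_{\top}(\bar\rho)$, fix a right F\o lner net $\{F_i\}_{i\in I}$ for $S$ (hence also for $G$ by Lemma~\ref{Lemma:ex-2.2}(2)) and an arbitrary $\U\in\cov(K^*)$. By Corollary~\ref{dual_finite_shift} applied to the inverse limit $K^*=\varprojlim_{(G,\leq_S)}\frak K$, there exist $g\in G$ and a finite open cover $\U_g$ of $K_g=\bar K$ such that $\pi_g^{-1}(\U_g)\succ\U$; by \eqref{succ} one then has $H_{\top}(\rho^*,\U)\leq H_{\top}(\rho^*,\pi_g^{-1}(\U_g))$. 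Writing $g=s^{-1}t$ with $s,t\in S$, I would shift the F\o lner net to $\{F_is\}_{i\in I}$, which is still a right F\o lner net for $G$ (by the analogue of \cite[Lemma~2.7(b)]{DFGB} cited in the proof of Theorem~\ref{Th1}), and use the independence of the limit from the choice of the right F\o lner net (Lemma~\ref{reallim}, applicable since $v_\cov$ is monotone and subadditive by Example~\ref{nmextop}). The universal property \eqref{equational_un_prop_rho_star_eq} gives $\pi_g\circ\rho^*_f=\pi_{fg}$ for each $f\in G$, so
\[
\bigvee_{f\in F_is}(\rho^*_f)^{-1}\pi_g^{-1}(\U_g)=\bigvee_{f\in F_i}\pi_{fsg}^{-1}(\U_g)=\bigvee_{f\in F_i}\pi_{ft}^{-1}(\U_g).
\]
Since $ft\in S$ for every $f\in F_i$, Lemma~\ref{def_of_k_star_lemma}(1) (with $g=1$) yields $\pi_{ft}=\bar\rho_{ft}\circ\pi_1$, so the displayed cover equals $\pi_1^{-1}\!\left(\bigvee_{f\in F_i}\bar\rho_f^{-1}\bar\rho_t^{-1}(\U_g)\right)$. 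The surjectivity of $\pi_1$ forces $N_{K^*}(\pi_1^{-1}(\W))=N_{\bar K}(\W)$ for every $\W\in\cov(\bar K)$, so combining everything with $|F_is|=|F_i|$ gives
\[
H_{\top}(\rho^*,\pi_g^{-1}(\U_g))=\lim_{i\in I}\frac{\log N_{\bar K}\!\left((\bar\rho_t^{-1}(\U_g))_{\bar\rho,F_i}\right)}{|F_i|}=H_{\top}(\bar\rho,\bar\rho_t^{-1}(\U_g))\leq h_{\top}(\bar\rho).
\]
Since $\U\in\cov(K^*)$ was arbitrary, the desired inequality follows.

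The only delicate point is the bookkeeping with the right F\o lner net: we must verify that $\{F_is\}_{i\in I}$ remains right F\o lner for $G$ and that the limit computing $H_{\top}(\rho^*,-)$ is indeed net-independent, so that the shift is harmless. Everything else reduces to applying the universal properties of $\pi_g$ and $\rho^*_g$ already collected in Lemma~\ref{def_of_k_star_lemma}.
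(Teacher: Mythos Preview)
Your proposal is correct and follows essentially the same strategy as the paper: reduce to $h_\top(\bar\rho)=h_\top(\rho^*)$ via Theorem~\ref{rhobarrho}, get one inequality from the surjective $S$-equivariant map $\pi_1$, and get the other by pulling an arbitrary cover of $K^*$ back through the inverse-limit description (Corollary~\ref{dual_finite_shift}) and shifting the F\o lner net. The only cosmetic difference is that the paper asserts the index produced by Corollary~\ref{dual_finite_shift} can be taken directly in $S$ and then shifts to $\{F_is^{-1}\}$, whereas you keep the general index $g=s^{-1}t\in G$, shift to $\{F_is\}$, and land on the cover $\bar\rho_t^{-1}(\U_g)$; your bookkeeping is arguably cleaner, but the two arguments are the same in substance.
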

\begin{proof}
By Theorem~\ref{E_rho_annihilator}, it is enough to verify that $h_{\top}(\bar\rho) =  h_{\top}(\rho^*)$.
Fix, all along this proof, a right F\o lner net $\{F_i\}_{i\in I}$ for $S$, so that, $\{F_i\}_{i\in I}$ is also a right F\o lner net for $G$, by Lemma~\ref{Lemma:ex-2.2}(2). 

Let $\U\in\cov(K^*)$. By Corollary~\ref{dual_finite_shift}, there exist $s\in S$ and a finite open cover $\V$ of $K_s= \bar K$ such that $\U=\pi_s^{-1}(\V)$. 
We then have the following chain of equalities: 
\begin{equation}\label{Eq**:Feb16}
\rho^*_{s}(\U)= \rho^*_{s}(\pi_s^{-1}(\V))= (\pi_s\circ \rho^*_{s^{-1}})^{-1}(\V) \overset{{(*)}}{=} \pi_1^{-1}(\V),
\end{equation}
where $(*)$ follows since $\pi_s\circ \rho^*_{s^{-1}} = \pi_1$ (see \eqref{univ_prop_act_comp_eq} with $g=s^{-1}$ and $h=s$).
By \cite[Lemma 2.7(b)]{DFGB},  $\{F_is^{-1}\}_{i\in I}$ is a right F\o lner net for $G$.  Now, for each $i\in I$,
\begin{align}\label{last-inch}
\U_{\rho^*,F_is^{-1}}&=\bigvee_{f\in F_i}(\rho^*_{fs^{-1}})^{-1}(\U)=\bigvee_{f\in F_i}(\rho^*_{f})^{-1}\circ(\rho^*_{s^{-1}})^{-1}(\U)=\bigvee_{f\in F_i}(\rho^*_{f})^{-1}(\rho^*_{s}(\U))=(\rho^*_{s}(\U))_{\rho^*,F_i}.
\end{align}
Furthermore, using that $\pi_1\circ\rho_s^*=\bar\rho_s\circ \pi_1$, for all $s\in S$ (by Lemma~\ref{def_of_k_star_lemma}(3)), we obtain that: 
\begin{align}\label{very_last-inch}
(\pi_1^{-1}(\V))_{\rho^*,F_i}=\bigvee_{f\in F_i}(\rho^*_f)^{-1}(\pi_1^{-1}(\V))=\bigvee_{f\in F_i}\pi_1^{-1}((\bar\rho_f)^{-1}(\V))=\pi_1^{-1}\left(\bigvee_{f\in F_i}(\bar\rho_f)^{-1}(\V)\right)=\pi_1^{-1}(\V_{\bar\rho,F_i}).
\end{align}
Now, the chains of equalities \eqref{very_last-inch} and \eqref{last-inch} can be connected via \eqref{Eq**:Feb16}:
\[
\U_{\rho^*,F_is^{-1}}\ \overset{\eqref{last-inch}}{=}\ (\rho^*_{s}(\U))_{\rho^*,F_i}\ \overset{\eqref{Eq**:Feb16}}=\ (\pi_1^{-1}(\V))_{\rho^*,F_i}\ \overset{\eqref{very_last-inch}}{=}\ \pi_1^{-1}(\V_{\bar\rho,F_i}).
\]
Hence, $N(\U_{\rho^*,F_is^{-1}})=N(\pi_1^{-1}(\V_{\bar\rho,F_i})
)=N(\V_{\bar \rho,F_i})$, where the second equality uses the surjectivity of $\pi_1$. By the above equalities,
\[
H_{\top}(\rho^*,\U)=\lim_{i\in I}\frac{\log N(\U_{\rho^*,F_is^{-1}})}{|F_is^{-1}|}=\lim_{i\in I}\frac{\log N(\V_{\bar\rho,F_i})}{|F_i|}=H_{\top}(\bar\rho, \V)\leq h_{\top}(\bar\rho).
\]
Therefore, one gets $ h_{\top}(\rho^*)\leq h_{\top}(\bar\rho)$. 

\smallskip 
By Lemma \ref{def_of_k_star_lemma}(3), $\pi_1\colon K^*\to \bar K$ is (surjective and) $S$-equivariant,  hence $h_{\top}(\rho^*_{\restriction S})\geq h_{\top}(\bar\rho)$ by Proposition~\ref{mono-top}(1). On the other hand, $h_{\top}(\rho^*) = h_{\top}(\rho^*_{\restriction S})$, by Remark \ref{RemarkMay16*}. This proves that $ h_{\top}(\rho^*)\geq h_{\top}(\bar\rho)$.
\end{proof}

\subsection{Categorical interpretation of the Ore colocalization}\label{categorical_coloc_subs}

As announced in \S\ref{categorial_ore_loc_subs}, it is possible to give a categorical interpretation of the Ore colocalization by using the machinery of right Kan extensions (see \cite[\S 3.7]{Borceux} or \cite[\S X.3]{maclane}). Indeed, consider $S$ and $G$ as categories with one object (called $\star$, as in \S\ref{categorial_ore_loc_subs}) and note that $\rre(S,\CompS)$ and $\rre(G,\CompS)$ can be seen as categories of contravariant functors $S\to \CompS$ and $G\to \CompS$, respectively. In fact, a functor $F\colon S\to \CompS$ is uniquely determined by the compact space $K=F(\star)$ and by the right $S$-action $K\ra{\rho}S$ such that $\rho_s=F(s)$, for all $s\in S$. Similarly, one can view any contravariant functor $F'\colon G\to \CompS$ as a right $G$-action on the compact space $F'(\star)$. 

As usual, it is convenient to identify the categories of contravariant functors $\rre(S,\CompS)$ and $\rre(G,\CompS)$ with the categories of covariant functors $\lre(S^{\op},\CompS)$ and $\lre(G^{\op},\CompS)$, respectively, where $\C^\op$ denotes the opposite of a given category $\C$. 

Therefore, the inclusion $\iota^{\op}\colon S^\op\to G^\op$ induces a forgetful functor
\[
\iota^*\colon \lre(G^\op,\CompS)\to \lre(S^\op,\CompS),\quad\text{such that}\quad F\mapsto F\circ \iota^\op.
\]
In the language of right actions, this means that $\iota^*$ sends a right $G$-action $K\ra{\rho}G$ to the right $S$-action $K\ra{\rho_{\restriction S}}S$ (which just ``forgets'' part of the action), that is, $\iota^*$ is naturally isomorphic to the inclusion $\rre(G,\CompS)\to \rre(S,\CompS)$.}

By the dual of \cite[Theorem~3.7.2]{Borceux} (see also \cite[Theorem~1 in \S X.3]{maclane}), and since $\CompS$ has all limits, the functor $\iota^*$ has a right adjoint functor:
\[
\iota_*\colon \lre(S^\op,\CompS)\to \lre(G^\op,\CompS).
\] 
This right adjoint $\iota_*$ to the forgetful functor is called the right Kan extension along $\iota^\op$. In fact, given a covariant functor $F\colon S^\op\to \CompS$, which corresponds uniquely to the right $S$-action $F(\star)=(K\ra{\rho}S)$, there is an explicit construction of the right Kan extension $\iota_*(F)\colon G^\op\to \CompS$ as the limit of a suitable inverse system, indexed by a specific comma category (for the general result, see \cite[Theorem~1 in \S X.3]{maclane}). In what follows we suggest how to recover our construction of $K^*\ra{\rho^*}G$ in \S\ref{subs_red_top_to_surj} and \ref{subs_dual_ore} from the general theory of the right Kan extensions. 

In other words, we show that the action $K^*\ra{\rho^*}G$ is the right Kan extension of $K\ra{\rho}S$ or, equivalently, that $K^*\ra{\rho^*}G$, together with the $S$-equivariant morphism $K^*\to K$ obtained as a composition of the projection $\pi_1\colon K^*\to \bar K$ and the inclusion $\bar K \to K$, is a coreflection of $(K\ra{\rho}S)\in \rre(S,\CompS)$ in $\rre(G,\CompS)$.

We start recalling the pointwise construction of the right Kan extension of a functor $F\colon S\to \CompS$ as a limit: 
\begin{itemize}
\item  consider the comma category $\star/\iota^\op$, that is a category with objects $\Ob(\star/\iota^\op)=\{(\star, g):g\in \End_{G^\op}(\star)\}$ and where a morphism $(\star,g)\to (\star,g')$ is just a morphism $s\in \End_{S^\op}(\star)$ such that $sg=g'$ (in $G^\op$);
\item we denote by $p\colon\star/\iota^\op\to S^{\op}$ the projection onto the first component of the objects in $\star/\iota^\op$, and we take the composition $F\circ p\colon \star/\iota^\op\to \CompS$;
\item one can then compute the right Kan extension via the following limit 
\[
\iota_*(F)(\star)\cong \lim{}_{\star/\iota^\op}(F\circ p).
\]
\end{itemize} 
Let us try to make the above construction more explicit: the category $\star/\iota^\op$ is easily seen to be equivalent to the category $(G,\leq_S)^\op$, such that $\Ob((G,\leq_S)^\op)=\{g:g\in G\}$ and 
\[
\hom_{(G,\leq_S)^\op}(g,g')=\begin{cases}\{s\}&\text{if, and only if, $g'=gs$ in $G$;}\\
\emptyset&\text{otherwise.}\end{cases}
\] 
In particular, $\hom_{(G,\leq_S)^\op}(g,g')$ has one element precisely when $g'\leq_Sg$, while it is empty otherwise. Furthermore, the functor $p\colon (G,\leq_S)^\op\to S^\op$ sends each object $g\in\Ob((G,\leq_S)^\op)$ to the unique object $\star\in \Ob(S^{\op})$, while it sends the unique morphism $s\in \hom_{(G,\leq_S)^\op}(g,gs)$ to the endomorphism $p(s)=s\in \End_{S^\op}(\star)$.

Therefore, the diagram $F\circ p\colon (G,\leq_S)^{\op}\to \CompS$ is such that $(F\circ p)(g)=K$ for all $g\in G$. Furthermore, for each $g\in G$ and $s\in S$, we have that $(F\circ p)(s\colon g\to gs)=(\rho_s\colon K\to K)$. Hence, $\iota_*(F)(\star)$ is the inverse limit  in $\CompS$ of an inverse system of copies of $K$, indexed by $(G,\leq_S)$, with connecting maps given by suitable $\rho_s$.

As discussed in Lemma~\ref{description_B**}, there is a canonical way to ``transform'' the diagram $F\circ p$ into a new diagram with surjective transition maps but with the same inverse limit. Indeed, one defines:
\[K_g=\bar K=\bigcap_{s\in S}\rho_s(K)\quad\text{and}\quad  \bar\rho_s=(\rho_s)_{\restriction \bar K}\colon K_{g}\to K_{gs},\]
for all $g\in G$ and $s\in S$. Then, the inverse system ${\frak K}=\{(K_g, \bar\rho_s\colon K_{g}\to K_{gs}):g\in G,\ s\in S\}$ is the same that we have used in Lemma~\ref{def_of_k_star_lemma} and, therefore, by the above discussion and by Lemmas~\ref{description_B**} and~\ref{def_of_k_star_lemma}, we obtain the desired isomorphisms:
\[K^*=\varprojlim{}_{(G,\leq_S)}{\frak K}\cong \varprojlim (F\circ p)\cong \iota_*(F)(\star).\]

\section{The Topological Addition Theorem}\label{AppB}

The aim of this section is to prove the Topological Addition Theorem, as stated in the introduction.
First we prove it for actions of amenable groups; we then deduce the general case by using the invariance of the topological entropy under Ore colocalization from \S\ref{subs_dual_ore}.

\subsection{Properties of open covers of compact spaces related to projections
}\label{N--}

 In this subsection we deal with an amenable group $G$, compact spaces $K$ and $Q$, two right $G$-actions $K\ra{\rho}G$ and $Q\ra{\rho_Q}G$, and a $G$-equivariant surjective continuous map $\pi\colon K\to Q$. To simplify our heavy notations, we let, for each $\U\in \cov(K)$, $\W\in \cov(Q)$ and $F\in \Pf(G)$,
\[\U_F=\U_{\rho,F}=\bigvee_{f\in F}\rho_{f}^{-1}(\U)\quad\text{and}\quad \W_F=\W_{\rho_Q,F}=\bigvee_{f\in F}(\rho_Q)_{f}^{-1}(\W);\]
 moreover, for $f\in F$, $\U_f=\rho_{f}^{-1}(\U)$ and $\W_f=(\rho_Q)_{f}^{-1}(\W)$.

The reader may keep in mind that the properties established in this section are needed in \S\ref{ProofAT:top} for the specific case when $K\in \CompG$, $H$ is a closed $G$-invariant subgroup of $K$, $Q=K/H$ is the left cosets space and $\pi\colon K \to Q$ is the $G$-equivariant projection. Nevertheless, we keep the more general setting here since we feel that also appropriate counterparts of  Propositions~\ref{Propo_next-to-last} and~\ref{Propo_last} for compact spaces can be proved using uniform covers in place of even ones in that more general setting.

\begin{definition}
Let  $K$ and $Q$ be compact spaces, $\pi\colon K\to Q$ a surjective continuous map, and $\W\in \cov(Q)$.  Define:
\[N(\U|\W)=\sup_{W\in \W}N_{\pi^{-1}(W)}(\U)\quad\text{and}\quad N(\U|\pi)=\sup_{q\in Q}N_{\pi^{-1}(q)}(\U).\]
\end{definition}

When $K$ is a compact group and $Q=K/H$, where $H$ is a closed subgroup of $K$, we get $N(\U|\pi)=\sup_{k\in K}N_{kH}(\U)$.

\begin{proposition}\label{tech_proposition_conditional} In the above setting, the following properties of $N(-|-)$ hold true:
\begin{enumerate}[(1)]
   \item $1 = N(\U_0|\W) \leq N(\U_1|\W)\leq N(\U_2|\W)$, if $\{K\}=\U_0\preceq \U_1\preceq \U_2$ in $\cov(K)$ and $\W\in\cov(Q)$;
   \item $N(\U|\W_2)\leq N(\U|\W_1)\leq N(\U|\W_0)= N(\U)$, if $\U\in \cov(K)$ and $\{Q\}=\W_0\preceq\W_1\preceq\W_2$ in $\cov(Q)$;
   \item $\log N(\U)\leq \log N(\W) + \log  N(\U|\W) $, for all $\U\in \cov(K)$ and $\W\in \cov(Q)$;
   \item $\log N(\U_1\vee\U_2|\W)\leq \log N(\U_1|\W) + \log N(\U_2|\W)$, for all $\U_1,\U_2\in \cov(K)$ and $\W\in\cov(Q)$. 
\end{enumerate}
Furthermore, $N(-|-)$ has the following relations with the invariant $N(-|\pi)$:
\begin{enumerate}[(1)]
\setcounter{enumi}{4}
   \item for each $\U\in \cov(K)$, there exists $q\in Q$, such that $N(\U|\pi)=N_{\pi^{-1}(q)}(\U)$, so $N(\U|\W)\geq N(\U|\pi)$, for all $\U\in\cov(K)$ and $\W\in\cov(Q)$;
   \item for every finite subset $\{\U_1, \ldots, \U_n\}\subseteq \cov(K)$,
there exists $\W\in \cov(Q)$ such that $N(\U_{t}|\W)=N(\U_{t}|\pi)$ for all $t=1,\dots,n$ (and so, $N(\U_{t}|\W')=N(\U_{t}|\pi)$ for all $\W\preceq\W'\in\cov(Q)$).
\end{enumerate}
\end{proposition}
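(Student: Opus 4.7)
The proof splits into three natural pieces: the formal monotonicity-type facts (1)--(4), the attainment claim underpinning (5), and the construction of a witness cover $\W$ for (6), which is where the real content lies.

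For (1)--(4) I would rely only on the elementary facts that $N_B(\U)$ is monotone in $B$ (with respect to inclusion) and in $\U$ (with respect to refinement), together with the fiberwise subadditivity $N_B(\U_1\vee\U_2)\leq N_B(\U_1)\cdot N_B(\U_2)$. Specifically, (1) follows from $N_B(\{K\})=1$ and from refining $\U$ on each preimage $\pi^{-1}(W)$. For (2), if $\W_1\preceq\W_2$ then each $W_2\in\W_2$ sits inside some $W_1\in\W_1$, so $\pi^{-1}(W_2)\subseteq\pi^{-1}(W_1)$ and taking suprema gives the inequality, while $N(\U|\{Q\})=N(\U)$ is immediate from $\pi^{-1}(Q)=K$. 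For (3), I would pick a minimal $\W$-subcover $\{W_1,\dots,W_{N(\W)}\}$ of $Q$, cover each $\pi^{-1}(W_j)$ by at most $N(\U|\W)$ elements of $\U$, and take the union. Part (4) is the fiberwise subadditivity applied at every $W\in\W$, passed to the sup.

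For (5), the key observation is that the function $q\mapsto N_{\pi^{-1}(q)}(\U)$ takes values in the finite set $\{1,2,\dots,N(\U)\}$, since any minimal $\U$-subcover of $K$ already covers each fiber $\pi^{-1}(q)$. Hence the supremum defining $N(\U|\pi)$ is actually a maximum, attained at some $q\in Q$. For any $\W\in\cov(Q)$, choosing $W\in\W$ with $q\in W$ yields $\pi^{-1}(q)\subseteq\pi^{-1}(W)$, so $N(\U|\W)\geq N_{\pi^{-1}(W)}(\U)\geq N_{\pi^{-1}(q)}(\U)=N(\U|\pi)$.

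The main step is (6), which rests on the fact that $\pi$ is a closed map (being a continuous surjection between compact Hausdorff spaces). For each $q\in Q$ and each $t\in\{1,\dots,n\}$ I would fix a minimal subcover of $\pi^{-1}(q)$ drawn from $\U_t$, of cardinality $n_t(q)\leq N(\U_t|\pi)$, let $A_t^{(q)}\subseteq K$ be the open union of the chosen sets, and set $V_t^{(q)}=Q\setminus\pi(K\setminus A_t^{(q)})$; closedness of $\pi$ makes $V_t^{(q)}$ open, it contains $q$, and $\pi^{-1}(V_t^{(q)})\subseteq A_t^{(q)}$. Then $V_q=\bigcap_{t=1}^n V_t^{(q)}$ is an open neighborhood of $q$ whose preimage is simultaneously covered by $n_t(q)\leq N(\U_t|\pi)$ elements of $\U_t$ for every $t$. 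Extracting a finite subcover $\W=\{V_{q_1},\dots,V_{q_m}\}$ of $Q$ by compactness gives $N_{\pi^{-1}(W)}(\U_t)\leq N(\U_t|\pi)$ for every $W\in\W$ and every $t$; combined with (5) this forces the equality $N(\U_t|\W)=N(\U_t|\pi)$. The parenthetical claim for any refinement $\W\preceq\W'$ is then immediate: each $W'\in\W'$ lies inside some $W\in\W$, so $N_{\pi^{-1}(W')}(\U_t)\leq N_{\pi^{-1}(W)}(\U_t)\leq N(\U_t|\pi)$, and the matching lower bound again comes from (5). The only delicate step in the whole proof is the use of closedness of $\pi$ to produce the open neighborhoods $V_t^{(q)}$; everything else is a routine compactness and refinement argument.
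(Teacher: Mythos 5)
Your proof of (1)--(5) follows the same lines as the paper's. For (6), both arguments rest on the identical key step: for each $q\in Q$ produce an open neighborhood $W$ of $q$ with $\pi^{-1}(q)\subseteq\pi^{-1}(W)\subseteq A_q$, where $A_q$ is the union of a chosen minimal $\U$-subcover of the fiber $\pi^{-1}(q)$. You obtain $W$ directly as $Q\setminus\pi(K\setminus A_q)$, invoking closedness of $\pi$ (a continuous surjection between compact Hausdorff spaces), which is the standard tube-lemma mechanism; the paper instead uses regularity of $Q$ to get open sets $O_i\ni q$ with $\bigcap_i\cl(O_i)=\{q\}$, and then compactness of $K$ to pass to a finite intersection $\bigcap_{i\in J}\pi^{-1}(\cl(O_i))\subseteq A_q$ and take $W=\bigcap_{i\in J}O_i$. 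Your route is shorter and more transparent, and as a byproduct you handle the general $n$ in one stroke by intersecting the $V_t^{(q)}$ over $t=1,\dots,n$, whereas the paper first settles $n=1$ and then passes to the common refinement $\W=\bigvee_{t}\W_t$ using part (2). Both strategies are sound; the difference is a matter of which topological input (closedness of $\pi$ versus regularity of $Q$ plus compactness of $K$) is made explicit.
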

\begin{proof} 
(1) Given $W\in \W$, it is clear that $\pi^{-1}(W)\subseteq K$, so that $N(\U_0|\W)=1$. Now, if $\U_2'=\{U_{2,j}:{j=1,\dots, n}\}\subseteq \U_2$ is a subset such that ${\pi^{-1}(W)}\subseteq \bigcup_{j=1}^nU_{2,j}$, there is $\U_1'=\{U_{1,j}:{j=1,\dots,n}\}\subseteq \U_1$ such that $U_{2,j}\subseteq U_{1,j}$ for all {$j=1,\dots, n$}, and therefore $N_{\pi^{-1}(W)}(\U_1)\leq N_{\pi^{-1}(W)}(\U_2)$. As this holds for all $W\in \W$, we deduce that $N(\U_1|\W)\leq N(\U_2|\W)$. 
 Since $\U_0\preceq \U_1$, this also shows that $N(\U_0|\W) \leq N(\U_1|\W)$.

\smallskip
(2) For each $W\in \W_2$, there exists $W'\in \W_1$ such that $W\subseteq W'$, and therefore, $N_{{\pi^{-1}(W)}}(\U)\leq N_{{\pi^{-1}(W')}}(\U)$. As this holds for all $W\in \W_2$, we deduce that $N(\U|\W_2)\leq N(\U|\W_1)$. Similarly, as $\W_0\preceq\W_1$, also $N(\U|\W_1)\leq N(\U|\W_0)$ and, by definition, $N(\U|\W_0)=N_{\pi^{-1}(Q)}(\U)=N_K(\U)=N(\U)$.

\smallskip
(3) Suppose that $N(\W)=n$ and let $\W'=\{W_1,\dots,W_n\}\subseteq \W$ be a minimal subcover. Choose, for  each {$k=1,\dots,n$}, a minimal subset $\U_i=\{U_{k,1},\dots,U_{k,m_k}\}\subseteq \U$ that covers $\pi^{-1}(W_k)$. Then, 
\[K=\pi^{-1}(Q)=\bigcup_{k=1}^n\pi^{-1}(W_k)=\bigcup_{k=1}^n\left(\bigcup_{j=1}^{m_k}U_{k,j}\right),\]
and therefore, $\U'=\bigcup_{k=1}^n\U_k$ is a subcover of $\U$, showing that $N(\U)\leq |\U'|\leq \sum_{k=1}^nm_k$. By construction, we have that $m_k=N_{\pi^{-1}({W}_k)}(\U)\leq N(\U|\W)$ and $n=N(\W)$, thus $N(\U)\leq n\cdot N(\U|\W)=N(\W)\cdot N(\U|\W)$. To conclude, apply logarithms on both sides of the equation.

\smallskip
(4) For each $W\in \W$, let $\U_{1,W}\subseteq \U_1$ and $\U_{2,W}\subseteq \U_2$ be two minimal subsets that cover $\pi^{-1}(W)$. Then, by definition, $\U_{1,W}\vee\U_{2,W}=\{U_1\cap U_2:U_1\in \U_{1,W},\ U_2\in \U_{2,W}\}$ is a subset of $\U_1\vee \U_2$ that covers $\pi^{-1}(W)$. In particular, $N_{\pi^{-1}(W)}(\U_1)\cdot N_{\pi^{-1}(W)}(\U_2)=|\U_{1,W}|\cdot |\U_{2,W}|\geq |\U_{1,W}\vee\U_{2,W}|\geq N_{\pi^{-1}(W)}(\U_{1}\vee\U_{2})$. As this holds for all $W\in\W$, we deduce that $N(\U_{1}\vee\U_{2}|\W)\leq N(\U_1|\W)\cdot N(\U_2|\W)$. To conclude, just take the logarithm of this inequality.

\smallskip
(5) Given $\U\in \cov(K)$, the set $\{N_{\pi^{-1}(q)}(\U):q\in Q\}\subseteq \{0,\dots,N(\U)\}$ is a finite subset of $\N$, so its supremum $N(\U|\pi)$ is a maximum, and we can choose $q \in Q$ such that $N(\U|\pi)=N_{\pi^{-1}(q)}(\U)$. Choose also $W\in \W$ such that $q \in W$, then $\pi^{-1}(q) \subseteq \pi^{-1}(W)$, and so $N(\U|\W)\geq N_{\pi^{-1}(W)}(\U) \geq N_{\pi^{-1}(q)}(\U) = N(\U|\pi)$.

\smallskip
(6) Consider first the case $n=1$, i.e., we have to see that for each $\U\in \cov(K)$, there exists $\W\in \cov(Q)$ such that $N(\U|\W)= N(\U|\pi)$. 
Let $\U\in \cov(K)$. For each $q\in Q$, choose a subset $\U_q\subseteq \U$ such that $|\U_q|=N_{\pi^{-1}(q)}(\U)$ and 
$\pi^{-1}(q)\subseteq \bigcup \U_q=A_q$; clearly, $A_q\subseteq K$ is open. We claim that there is an open neighborhood $W_q$ of $q\in Q$ such that  
\begin{equation}\label{Eq:June14}
\pi^{-1}(q)\subseteq \pi^{-1}(W_q)\subseteq A_q.
\end{equation}
Indeed, being $Q$ a regular space, there is a family of open neighborhoods $\{O_i:i\in I\}$ of $q$ such that 
$\bigcap_{i\in I} \cl (O_i) = \{q\}$. Then, $\bigcap_{i\in I} \pi^{-1}(\cl( O_i )) = \pi^{-1}(q)\subseteq A_q$. 
Since $K$ is compact, $A_q$ is open and $\pi^{-1}(\cl(O_i))$ is closed, for all $i\in I$, there is a finite subset $J \subseteq I$ such that $\pi^{-1}(q) \subseteq \bigcap_{i\in J} \pi^{-1}(\cl( O_i )) \subseteq A_q$. Then, $W_q= \bigcap_{i\in J} O_i $ is an open neighborhood of $q$ in $Q$ that  satisfies \eqref{Eq:June14}. 
Let $\W=\{W_q \colon q\in Q\}\in\cov(Q)$. Then, for each $q\in Q$, 
\[N_{\pi^{-1}(W_q)}(\U) \leq N_{A_q}(\U)\leq|\U_q|=N_{\pi^{-1}(q)}(\U)\leq N(\U|\pi).\]
As this holds for all $q\in Q$, we deduce that $N(\U|\W)\leq N{(\U|\pi)}$. Equality holds by item (5).

Now we consider the general case. By the first part of the argument and item (2), for each $t=1,\dots,n$, there exists $\W_t\in \cov(Q)$ such that $N(\U_{t}|\W')=N(\U_{t}|\pi)$, for any $\W'$ that refines $\W_t$. Hence, letting $\W=\bigvee_{t=1}^n\W_t$, we have that
$N(\U_t|\W)=N(\U_t|\pi)$ for all $t=1,\dots,n$.

For the last assertion, combine with (2) and (5).
\end{proof}

Consider the following consequences of Proposition~\ref{tech_proposition_conditional}:

\begin{corollary}\label{tech_corollary_conditional}
Let $G$ be an amenable group, $K$ and $Q$ compact spaces, $K\ra{\rho}G$ and $Q\ra{\rho_Q}G$ two right $G$-actions, and suppose that the surjective continuous map $\pi\colon K\to Q$ is $G$-equivariant. Let $\U\in\cov(K)$ and $\W\in \cov(Q)$. Then:
\begin{enumerate}[(1)]
   \item $N(\U_{gF}|\pi)=N(\U_{F}|\pi)$ and $N(\U_{gF_1}|\W_{F_2})=N(\U_{F_1}|\W_{g^{-1}F_2})$, for all $F, F_1,F_2\in \Pf(G)$ and $g\in G$;
   \item $\log N(\U_F|\W) \leq  |F| \cdot \log N(\U)$, for all $F \in  \Pf(G)$;
   \item $\log N(\U_F|\W_F)\leq \log N(\U_{F_1}|\W_F)+\log N(\U_{F_2}|\W_F)\leq \log N(\U_{F_1}|\W_{F_1})+\log N(\U_{F_2}|\W_{F_2})$, for all $F_1,F_2\in\Pf(G)$ and $F=F_1\cup F_2$;
   \item $\log N(\U_F|\pi)\leq \log N(\U_{F_1}|\pi)+\log N(\U_{F_2}|\pi)$, for all $F_1,F_2,F=F_1\cup F_2\in\Pf(G)$.
\end{enumerate}
\end{corollary}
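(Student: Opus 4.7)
The plan is to derive each statement by combining the explicit definition of $\U_F$ and $\W_F$ with the basic properties of $N(-|-)$ listed in Proposition~\ref{tech_proposition_conditional}, exploiting the fact that, since $G$ is a group, each $\rho_g$ and $(\rho_Q)_g$ is a homeomorphism. The starting observation, to be recorded first, is that
\[
\U_{gF}=\rho_g^{-1}(\U_F)\quad\text{and}\quad \W_{gF}=(\rho_Q)_g^{-1}(\W_F),
\]
which follows from $\rho_{fg}=\rho_g\circ\rho_f$ (axiom RA.2), and that the $G$-equivariance $\pi\circ\rho_g=(\rho_Q)_g\circ\pi$ together with the bijectivity of $\rho_g$ yields
\[
\rho_g(\pi^{-1}(q))=\pi^{-1}((\rho_Q)_g(q))\quad\text{for every } q\in Q.
\]

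For (1), the first equality will follow by noting that $N_{\pi^{-1}(q)}(\rho_g^{-1}(\U_F))=N_{\rho_g(\pi^{-1}(q))}(\U_F)=N_{\pi^{-1}((\rho_Q)_g(q))}(\U_F)$ and then taking the supremum over $q\in Q$, using that $(\rho_Q)_g$ is a bijection of $Q$. For the second equality, a short calculation shows that the collection $\{\pi^{-1}(W):W\in \W_{F_2}\}$ coincides, up to the bijection $\rho_g$, with $\{\pi^{-1}(W'):W'\in ((\rho_Q)_g^{-1})^{-1}(\W_{F_2})\}=\{\pi^{-1}(W'):W'\in \W_{g^{-1}F_2}\}$; taking the relevant suprema gives the asserted identity.

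For (2), monotonicity in the second argument (item (2) of Proposition~\ref{tech_proposition_conditional}) gives $\log N(\U_F|\W)\leq \log N(\U_F)$, and then submultiplicativity together with the fact that each $\rho_f$ is a homeomorphism yields $N(\U_F)\leq \prod_{f\in F}N(\rho_f^{-1}(\U))=N(\U)^{|F|}$. Item (3) will be handled by first rewriting $\U_F=\U_{F_1}\vee\U_{F_2}$ and $\W_F=\W_{F_1}\vee\W_{F_2}$ (valid since $F=F_1\cup F_2$); submultiplicativity of $N(-|\W)$ from Proposition~\ref{tech_proposition_conditional}(4) gives the first inequality, and the monotonicity $\W_F\succeq\W_{F_i}$ combined with Proposition~\ref{tech_proposition_conditional}(2) gives the second.

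For (4), the key move is to apply Proposition~\ref{tech_proposition_conditional}(6) to the finite family $\{\U_F,\U_{F_1},\U_{F_2}\}$, obtaining a single $\W\in\cov(Q)$ that simultaneously witnesses $N(\U_F|\W)=N(\U_F|\pi)$ and $N(\U_{F_i}|\W)=N(\U_{F_i}|\pi)$ for $i=1,2$; then the identity $\U_F=\U_{F_1}\vee\U_{F_2}$ and Proposition~\ref{tech_proposition_conditional}(4) finish the argument. The only mildly delicate point is the bookkeeping in (1) to make sure the right/left action convention is correctly tracked when moving the group element $g$ from one side of the conditional bar to the other; the rest is routine combinatorics of covers.
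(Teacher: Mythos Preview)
Your proposal is correct and follows essentially the same route as the paper's proof: both arguments hinge on the identity $\U_{gF}=\rho_g^{-1}(\U_F)$ together with the bijectivity of $\rho_g$ and $(\rho_Q)_g$ for part~(1), and then combine items (2), (4), and (6) of Proposition~\ref{tech_proposition_conditional} exactly as you describe for parts~(3) and~(4). The only minor divergence is in part~(2): the paper first applies the conditional subadditivity (Proposition~\ref{tech_proposition_conditional}(4)) to get $\log N(\U_F|\W)\leq\sum_{f\in F}\log N(\U_f|\W)$ and then invokes the shift identity from~(1) to obtain $N(\U_f|\W)=N(\U|\W_{f^{-1}})\leq N(\U)$, whereas you drop the conditioning first via $N(\U_F|\W)\leq N(\U_F)$ and then bound $N(\U_F)\leq N(\U)^{|F|}$ using that each $\rho_f$ is a homeomorphism; both routes are equally short and valid.
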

\begin{proof}
(1) First we prove the second assertion: take the bijection $\W_{F_2}\to \W_{g^{-1}F_2}$ given by $W\mapsto (\rho_Q)_g(W)$, for all $W$ in $\W_{F_2}$. Furthermore, given $W\in \W_{F_2}$,  $\U'\subseteq \U_{gF_1}$ covers $\pi^{-1}(W)$ if and only if $\U''=\rho_g(\U')=\{\rho_g(U):U\in \U'\}\subseteq \U_{F_1}$ covers $\pi^{-1}((\rho_Q)_{g}(W))$. In fact, 
\[\pi^{-1}(W)\subseteq \bigcup\U'\quad \text{if and only if}\quad \pi^{-1}((\rho_Q)_{g}(W))=\rho_g(\pi^{-1}(W))\subseteq \rho_g\left( \bigcup_{U\in \U'}U\right)=\bigcup_{U\in\U'}\rho_g(U)=\bigcup\U''.\]

The proof of the first assertion follows the same line. In fact, the map $\U_{F}\to \U_{gF}$ such that $U\mapsto \rho_{g^{-1}}(U)$, for all $U\in\U_F$, is a bijection. Moreover, given $q\in Q$, a subset $\U'\subseteq \U_F$ covers $\pi^{-1}(q)$ if and only if $\U''=\rho_{g^{-1}}(\U')\subseteq \U_{gF}$ covers $\pi^{-1}((\rho_Q)_{g^{-1}}(q))$; and $(\rho_Q)_{g^{-1}}\colon Q\to Q$ is a bijection.

\smallskip
(2) By Proposition~\ref{tech_proposition_conditional}(4), we deduce that $\log N(\U_F|\W) \leq \sum_{f\in F}\log N(\U_f|\W)$. Furthermore, by part (1) and Proposition~\ref{tech_proposition_conditional}(2), $N(\U_f|\W)=N(\U|\W_{f^{-1}})\leq N(\U)$, for all $f\in F$. Hence, 
\[
\log N(\U_F|\W) \leq \sum_{f\in F}\log N(\U|\W_{f^{-1}})\leq |F|\cdot \log N(\U).
\]

\smallskip
(3) The statement follows by the following formula, where we apply Proposition~\ref{tech_proposition_conditional}(4) and (2) in the inequalities:
\[\log N(\U_F|\W_F)=\log N(\U_{F_1}\vee\U_{F_2}|\W_F)\leq \log N(\U_{F_1}|\W_F)+\log N(\U_{F_2}|\W_F)\leq \log N(\U_{F_1}|\W_{F_1})+\log N(\U_{F_2}|\W_{F_2}).\]

(4) By Proposition~\ref{tech_proposition_conditional}(6), there exists $\W\in\cov(Q)$ such that $N(\U_F|\W)=N(\U_F|\pi)$, {$N(\U_{F_1}|\W)=N(\U_{F_1}|\pi)$ and $N(\U_{F_2}|\W)=N(\U_{F_2}|\pi)$}. Hence, making use of Proposition~\ref{tech_proposition_conditional}(4), we get
\begin{equation*}
\log N(\U_F|\pi)=\log N(\U_F|\W) =\log N(\U_{F_1}\vee\U_{F_2}|\W) 
\leq \log N(\U_{F_1}|\W)+\log N(\U_{F_2}|\W) 
= \log N(\U_{F_1}|\pi)+\log N(\U_{F_2}|\pi).  \qedhere 
\end{equation*}
\end{proof}

The above corollary implies that, given $\U\in\cov(K)$ and $\W\in \cov(Q)$, the  two functions $\Pf(G)\to \R_{\geq0}$, defined by 
$$F\mapsto \log N(\U_F|\W_F)\ \mbox{ and } \ F\mapsto \log N(\U_F|\pi),$$
are both subadditive and left-invariant. So, they satisfy the hypotheses of the Ornstein-Weiss Lemma, hence, for every right F\o lner net $\s=\{F_i\}_{i\in I}$ for $G$, these two nets converge:
$$\left\{\frac{\log N(\U_{F_i}|\pi)}{|F_i|}\right\}_{i\in I} \quad \text{and}\quad \left\{\frac{\log N(\U_{F_i}|\W_{F_i})}{|F_i|}\right\}_{i\in I}.$$

\begin{lemma}\label{lemma_very_close_cotrajectories}
In the above setting, let $\s=\{F_i\}_{i\in I}$ be a right F\o lner net for $G$ and  $\varepsilon>0$. Then,  there exists $\bar\imath\in I$ such that
\begin{equation}\label{EqJune22}
\log N(\U_{F_j}|\pi) \leq  \frac{|F_j|}{|F_i|}  \log N(\U_{F_i}|\pi) + \varepsilon\ |F_j|,\ \ \text{for all $i,j\geq\bar\imath'$ in $I$}.
\end{equation}
\end{lemma}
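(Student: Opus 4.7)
The plan is to derive the estimate \eqref{EqJune22} directly from the Ornstein--Weiss convergence of the net $\{\phi(F_i)/|F_i|\}_{i\in I}$, where $\phi \colon \Pf(G) \to \R_{\geq 0}$ is the function $\phi(F) := \log N(\U_F|\pi)$. As already observed in the discussion immediately preceding the lemma, this convergence holds because $\phi$ is left-invariant by Corollary~\ref{tech_corollary_conditional}(1) and subadditive on unions by Corollary~\ref{tech_corollary_conditional}(4), so the Ornstein--Weiss Lemma applies to the right F\o lner net $\s$. Denote the resulting limit by $L := \lim_{i\in I} \phi(F_i)/|F_i| \in \R_{\geq 0}$.

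Given $\varepsilon > 0$, the plan is then to exploit the definition of the limit to pick $\bar\imath \in I$ such that
$$\left|\frac{\phi(F_i)}{|F_i|} - L\right| < \frac{\varepsilon}{2} \quad \text{for every } i \geq \bar\imath.$$
For any pair $i, j \geq \bar\imath$ in $I$, one then has
$$\frac{\phi(F_j)}{|F_j|} \leq L + \frac{\varepsilon}{2} \leq \frac{\phi(F_i)}{|F_i|} + \varepsilon,$$
and multiplying through by $|F_j|$ yields precisely \eqref{EqJune22}.

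The main point is that the heavy lifting --- namely, the Ornstein--Weiss convergence of $\phi(F_i)/|F_i|$ --- was already carried out in the paragraph preceding the statement, so there is no substantial obstacle here. The lemma is essentially a quantitative reformulation of that convergence, expressing that the values $\phi(F_j)/|F_j|$ lie eventually within $\varepsilon$ of each other for all sufficiently large indices, and then reading this reformulation back in terms of $\phi(F_j)$ instead of $\phi(F_j)/|F_j|$.
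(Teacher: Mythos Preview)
Your proof is correct and essentially identical to the paper's own argument: both exploit the convergence of the net $\{\log N(\U_{F_k}|\pi)/|F_k|\}_{k\in I}$ (established just before the lemma) to obtain the Cauchy-type estimate $\phi(F_j)/|F_j|\leq \phi(F_i)/|F_i|+\varepsilon$ for all $i,j\geq\bar\imath$, then multiply through by $|F_j|$. The only cosmetic difference is that the paper phrases this via the Cauchy property of a convergent net, whereas you route through the limit $L$ with an $\varepsilon/2$ argument.
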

\begin{proof} 
Being convergent, $\{\log N(\U_{F_k}|\pi)/|F_k|\}_{k\in I}$ is a Cauchy net in $\R_{\geq0}$. Therefore, there is some $\bar\imath\in I$ such that 
\[\left|\frac{\log N(\U_{F_j}|\pi)}{|F_j|}-\frac{\log N(\U_{F_i}|\pi)}{|F_i|}\right| \leq \varepsilon, \ \ \text{for all $\bar\imath\leq i, j\in I$.}\]
We then obtain the following inequality, which clearly implies \eqref{EqJune22}:
\[\frac{\log N(\U_{F_j}|\pi)}{|F_j|}  \leq \frac{\log N(\U_{F_i}|\pi)}{|F_i|} + \varepsilon, \ \ \text{for all $\bar\imath\leq i, j\in I$.}  \qedhere\]
\end{proof}

\subsection{Even covers and their properties}\label{evensec}

For the rest of this section, fix a compact  group $K$ with a right $G$-action $K\ra{\rho} G$ and a closed $S$-invariant subgroup $H\leq K$. Necessarily, $\rho_g(H)=H$ for every $g\in G$, since $\rho_g(H)\leq H$ and $\rho_{g^{-1}}(H)\leq H$ for every $g\in G$ implies $H\leq\rho_g(H)$ for every $g\in G$.
Denote by $\pi\colon K\to K/H$ the canonical projection on the space of left cosets $K/H$, and by $H\ra{\rho_H}\ G$ and $K/H\longra{\rho_{K/H}} G$ the induced right $G$-actions on $H$ and $K/H$, respectively.

\smallskip
An open cover $\U\in\cov(K)$ is said to be ({\em left}) {\em even} if  $\U=\{xU:x\in K\}$ for some open neighborhood $U$ of $1\in  K$. Let
\begin{equation}\label{EvenCov}
\U_{ K}[U]=\{xU:x\in K\}\quad\text{and}\quad \U_{K/H}[U]=\{\pi(xU):x\in K\}
\end{equation}
the even cover of $K$ associated with $U$ and the even cover of $K/H$ associated with $U$, respectively. When it is clear from the context, we simply write $\U[U]$ instead of $\U_K[U]$.

\smallskip
First, we see that for even covers $\U\in\cov(K)$ the quantity $N(\U|\pi)$ introduced in \S\ref{N--} has a simple and clear meaning with respect to $H$:

\begin{lemma}\label{NpiN} 
In the above setting, let $\U=\U[V]\in \cov(K)$ be an even cover and $F\in\Pf(G)$. Then, $N(\U_F|\pi)=N_H(\U_F)$.
\end{lemma}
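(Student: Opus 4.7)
The plan is to show that for every $k\in K$, the left translation $\tau_{k^{-1}}\colon K\to K$, $y\mapsto k^{-1}y$, is a homeomorphism that permutes the elements of $\U_F$ and maps $kH$ bijectively onto $H$. This will immediately give $N_{kH}(\U_F)=N_H(\U_F)$ for every $k\in K$, and since $\pi^{-1}(kH)=kH$ runs through all fibers as $k$ varies, we then obtain
\[N(\U_F|\pi)=\sup_{k\in K}N_{kH}(\U_F)=N_H(\U_F).\]

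First I would verify the key computation: for any $k\in K$, any $f\in F$, and any $x\in K$,
\[\tau_{k^{-1}}(\rho_f^{-1}(xV))=\{k^{-1}y:\rho_f(y)\in xV\}=\{y'\in K:\rho_f(ky')\in xV\}=\rho_f^{-1}(\rho_f(k)^{-1}xV),\]
where the last equality uses that $\rho_f$ is a group homomorphism, so $\rho_f(ky')=\rho_f(k)\rho_f(y')$. As $x$ ranges over $K$, so does $\rho_f(k)^{-1}x$, because left multiplication by $\rho_f(k)^{-1}$ is a bijection of $K$. Hence, as a family of open subsets of $K$,
\[\tau_{k^{-1}}(\rho_f^{-1}(\U[V]))=\{\rho_f^{-1}(x'V):x'\in K\}=\rho_f^{-1}(\U[V]),\]
that is, $\tau_{k^{-1}}$ permutes $\rho_f^{-1}(\U)$. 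Taking common refinements over $f\in F$, this means $\tau_{k^{-1}}$ also permutes the elements of $\U_F=\bigvee_{f\in F}\rho_f^{-1}(\U)$.

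Consequently, for any subfamily $\U'\subseteq\U_F$, the family $\tau_{k^{-1}}(\U')$ is again a subfamily of $\U_F$ of the same cardinality, and $\U'$ covers $kH$ if and only if $\tau_{k^{-1}}(\U')$ covers $\tau_{k^{-1}}(kH)=H$. This proves $N_{kH}(\U_F)=N_H(\U_F)$ for every $k\in K$, and the conclusion $N(\U_F|\pi)=N_H(\U_F)$ follows. There is no real obstacle: the whole argument rests on the single observation that even covers are left-translation invariant and that this invariance is transported through $\rho_f^{-1}$ precisely because $\rho_f$ is a group endomorphism.
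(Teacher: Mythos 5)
Your proof is correct and follows essentially the same approach as the paper's: both arguments hinge on the observation that left translation by a group element sends each member of $\U_F$ to another member of $\U_F$ (because each $\rho_f$ is a group endomorphism, so $k\,\rho_f^{-1}(xV)=\rho_f^{-1}(\rho_f(k)xV)$), and then transport a cover of $H$ to a cover of $kH$ (or vice versa). The only cosmetic difference is that you phrase the mechanism as a permutation of $\U_F$ induced by $\tau_{k^{-1}}$ and conclude the equality $N_{kH}(\U_F)=N_H(\U_F)$ directly, whereas the paper translates a minimal subcover of $H$ by $k$ to get one of $kH$ and records only the inequality $N_{kH}(\U_F)\le N_H(\U_F)$, which already suffices since the opposite inequality is trivial.
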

\begin{proof} 
Recall that $N(\U|\pi)=\sup_{k\in K}N_{kH}(\U)$, so it suffices to prove that $N_{kH}(\U_F)\leq N_H(\U_F)$ for all $k\in K$.

Fix $k\in K$. Let $F=\{g_1,\ldots,g_n\}$ and $\rho_m=\rho_{g_m}$ for every {$m=1,\ldots,n$}. Assume that 
$$\V=\{V_j:j=1,\ldots,\ell\},\quad \text{with}\ V_j=\rho_1^{-1}(x_{j,1}V)\cap\ldots\cap\rho_n^{-1}(x_{j,n}V)\ \mbox{ and }\ x_{j,1},\ldots,x_{j,n}\in K,$$ 
is a finite subset of $\U_F$ with $H\subseteq\bigcup\V=V_1\cup\ldots\cup V_\ell$ and $N_H(\U_F)=|\V|=\ell$.
The containment $H\subseteq\bigcup\V$ implies that 
$$kH\subseteq k\bigcup\V=  k(V_1\cup\ldots\cup V_\ell)=kV_1\cup\ldots \cup kV_\ell=\bigcup k\V.$$
Now, for each {$m=1,\ldots,n$}, let $k_m=\rho_m(k)$, so that $k=\rho_m^{-1}(k_m)$ and hence, for every $j=1,\ldots,\ell$,
$$kV_j=k (\rho_1^{-1}(x_{j,1}V)\cap\ldots\cap\rho_n^{-1}(x_{j,n}V))=k\rho_1^{-1}(x_{j,1}V)\cap\ldots\cap k\rho_n^{-1}(x_{j,n}V)=\rho_1^{-1}(k_1 x_{j,1}V)\cap\ldots\cap\rho_n^{-1}(k_n x_{j,n}V)=W_j.$$
Let $\W=\{W_j:j=1,\ldots,\ell\}$; then $k\V=\W\subseteq \U_F$, $kH\subseteq\bigcup\W$ and $|\W|\leq\ell=N_H(\U_F)$, so $N_{kH}(\U_F)\leq N_H(\U_F)$.
\end{proof}

Furthermore, extending the notation from Example~\ref{Gactionsextop2}, for any subset $V$ of $K/H$ and $F\in\Pf(G)$, we let 
\[C_F(\rho_{K/H},V)=\bigcap_{f\in F}(\rho_{K/H})_f^{-1}(V) = T_F((\rho_{K/H})_{\frak U},V).\]

In the sequel we use the fact that every compact group is SIN (i.e., has small invariant neighborhoods) in the sense of the following: 

\begin{fact}[{\cite[Corollary 1.12]{HM}}]\label{SIN}
Every compact group $K$ has a local base of invariant (under conjugation) neighborhoods of $1$,
i.e., $x^{-1}Vx \subseteq V$ for every $x\in K$.
\end{fact}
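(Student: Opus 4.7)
My plan is to show, given any open neighborhood $U$ of $1$ in $K$, how to construct an open, conjugation-invariant neighborhood $V\subseteq U$ of $1$. The argument has two steps: a uniform continuity statement for conjugation (powered by the compactness of $K$), followed by a ``symmetrization'' that produces the desired invariance.

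For the first step, consider the continuous conjugation map $c\colon K\times K\to K$ defined by $c(x,w)=xwx^{-1}$. Since $c(x,1)=1\in U$ for every $x\in K$, continuity of $c$ yields, for each $x\in K$, open neighborhoods $A_x\ni x$ and $V_x\ni 1$ with $c(A_x\times V_x)\subseteq U$. By compactness of $K$, finitely many $A_{x_1},\ldots,A_{x_n}$ cover $K$; set $V_0=\bigcap_{i=1}^n V_{x_i}$, an open neighborhood of $1$. Then for every $x\in K$, choosing $i$ with $x\in A_{x_i}$, one has
\[
xV_0x^{-1}\subseteq xV_{x_i}x^{-1}=c(\{x\}\times V_{x_i})\subseteq c(A_{x_i}\times V_{x_i})\subseteq U.
\]
Thus the single open neighborhood $V_0$ satisfies $xV_0x^{-1}\subseteq U$ uniformly in $x\in K$.

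For the second step, let $V=\bigcup_{x\in K}xV_0x^{-1}$. Each $xV_0x^{-1}$ is open (left and right translations are homeomorphisms), so $V$ is open; moreover $1\in V_0\subseteq V\subseteq U$, so $V$ is a neighborhood of $1$ contained in $U$. For the invariance under conjugation, observe that for any $y\in K$,
\[
y^{-1}(xV_0x^{-1})y=(y^{-1}x)V_0(y^{-1}x)^{-1},
\]
and as $x$ ranges over $K$, so does $y^{-1}x$; hence $y^{-1}Vy=V$, which in particular gives $y^{-1}Vy\subseteq V$ as required. Since $U$ was an arbitrary open neighborhood of $1$, the family of such $V$'s forms a local base at $1$ consisting of conjugation-invariant neighborhoods.

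The only genuinely non-formal point is the uniform continuity of conjugation in the $w$-variable, i.e.\ the passage from pointwise continuity of $c$ to the single neighborhood $V_0$ of $1$ that works for \emph{all} $x\in K$ simultaneously; this is a standard tube-lemma style argument that uses the compactness of $K$ essentially. No amenability or additional group-theoretic input is required beyond the continuity of multiplication and inversion.
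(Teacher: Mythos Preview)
Your proof is correct and self-contained. Note, however, that the paper does not actually prove this statement: it is recorded as a \emph{Fact} with a reference to \cite[Corollary~1.12]{HM}, so there is no proof in the paper to compare against. Your argument---the tube-lemma extraction of a uniform $V_0$ with $xV_0x^{-1}\subseteq U$ for all $x$, followed by the conjugation-symmetrization $V=\bigcup_{x\in K}xV_0x^{-1}$---is the standard elementary route and requires nothing beyond continuity of the group operations and compactness of $K$.
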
  

If the neighborhood $V$ of $1\in K$ is invariant, then $xV=Vx$ for every $x\in K$ and  $C_F(\rho,V)$ is invariant for every $F\in\Pf(G)$.

\begin{lemma}\label{tech_AT}
In the above setting, the following statements hold true:
\begin{enumerate}[(1)]
   \item each $\U\in \cov(K/H)$ has an even refinement, that is, $\U\preceq \U_{K/H}[U]$ for some open neighborhood $U$ of $1\in K$ (in particular, $N(\U)\leq N(\U_{K/H}[U])$);
   \item for each open neighborhood $U$ of $1\in K$ and $F\in\Pf(G)$, we have $\U_{K/H}[C_F(\rho,U)]\subseteq T_F((\rho_{K/H})_\cov,\U_{K/H}[U])$ (in particular, $N(T_F((\rho_{K/H})_\cov,\U_{K/H}[U]))\leq N(\U_{K/H}[C_F(\rho,U)])$);
   \item  given two open neighborhoods $U$ and $V$ of $\ 1\in  K$ with $V^{-1}V\subseteq U$, and $F\in\Pf(G)$, then $T_F((\rho_{K})_\cov,\U_{K}[V])$ refines $\U_{K}[C_F( \rho,U)]$  (in particular, $N(\U_{K}[C_F(\rho,U)])\leq N(T_F((\rho_{K})_\cov,\U_{K}[V]))$);
   \item given two open neighborhoods $U$ and $V$ of $1\in K$ such that $V=V^{-1}$ is invariant and $VV\subseteq U$, and $F\in\Pf(G)$, let $D$ be a subset of $K$ which is maximal with respect to the following property:
\begin{enumerate}[\rm ($*$)] 
  \item $\pi(d_1C_F(\rho,V))\cap \pi(d_2C_F( \rho,V))=\emptyset$, for all $d_1\neq d_2$ in $D$.
\end{enumerate}
Then, $\{\pi(dC_F(\rho,U)):d\in D\}$ is a subcover of $\U_{K/H}[C_F(\rho,U)]$, so that $N(\U_{K/H}[C_F(\rho,U)])\leq |D|$.
\end{enumerate}
\end{lemma}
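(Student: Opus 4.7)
The plan is to handle the four items separately; (1)--(3) are direct computations with even covers, while (4), the main technical point, requires the conjugation invariance of $V$ afforded by Fact~\ref{SIN}.

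For (1), given $\mathcal U\in\cov(K/H)$, for each $xH\in K/H$ pick some $V_{xH}\in\mathcal U$ with $xH\in V_{xH}$; since $\pi^{-1}(V_{xH})$ is an open neighborhood of $x$, there is a symmetric open neighborhood $W_x$ of $1\in K$ with $xW_x^2\subseteq\pi^{-1}(V_{xH})$. Compactness of $K$ yields $x_1,\ldots,x_n$ with $K=\bigcup_{i=1}^n x_iW_{x_i}$; then $U=\bigcap_{i=1}^n W_{x_i}$ works, since any $x\in x_iW_{x_i}$ satisfies $xU\subseteq x_iW_{x_i}^2\subseteq\pi^{-1}(V_{x_iH})$, i.e.\ $\pi(xU)\subseteq V_{x_iH}\in\mathcal U$, so $\mathcal U_{K/H}[U]$ refines $\mathcal U$.

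For (2) the key identities are $x\,\rho_f^{-1}(U)=\rho_f^{-1}(\rho_f(x)U)$ (because $\rho_f$ is a group homomorphism) and $\pi(\rho_f^{-1}(A))\subseteq(\rho_{K/H})_f^{-1}(\pi(A))$ (the $G$-equivariance of $\pi$). Combining them,
\[
\pi(xC_F(\rho,U))=\pi\!\Bigl(\bigcap_{f\in F}\rho_f^{-1}(\rho_f(x)U)\Bigr)\subseteq\bigcap_{f\in F}(\rho_{K/H})_f^{-1}(\pi(\rho_f(x)U)),
\]
and the right-hand side is a member of $T_F((\rho_{K/H})_\cov,\mathcal U_{K/H}[U])$; hence each element of $\mathcal U_{K/H}[C_F(\rho,U)]$ sits inside an element of $T_F(\cdots)$, and the asserted $N$-inequality follows. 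For (3), take an arbitrary element $\bigcap_{f\in F}\rho_f^{-1}(x_fV)$ of $T_F((\rho_K)_\cov,\mathcal U_K[V])$; if it is empty we are done, else pick $z_0$ in it and note that for every other $z$ in the same intersection and every $f\in F$,
\[
\rho_f(z_0^{-1}z)=\rho_f(z_0)^{-1}\rho_f(z)\in V^{-1}V\subseteq U,
\]
so $z_0^{-1}z\in C_F(\rho,U)$, i.e.\ $z\in z_0\,C_F(\rho,U)\in\mathcal U_K[C_F(\rho,U)]$.

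The heart of the lemma is (4). Zorn's lemma, applied to the (nonempty, closed under chain unions) family of subsets of $K$ satisfying ($*$), produces the maximal $D$. By maximality, for any $x\in K$ either $x\in D$---in which case $xH\in\pi(xC_F(\rho,U))$ because $1\in C_F(\rho,U)$---or there exist $d\in D$, $c_1,c_2\in C_F(\rho,V)$ and $h\in H$ with $xc_1h=dc_2$, equivalently $d^{-1}x=c_2h^{-1}c_1^{-1}$. The delicate step---and the principal obstacle, since $H$ need not be normal in $K$---is rewriting this as an element of $C_F(\rho,U)\,H$. Here I exploit the reassociation
\[
d^{-1}x=c_2h^{-1}c_1^{-1}=\bigl[c_2\,(h^{-1}c_1^{-1}h)\bigr]\cdot h^{-1},
\]
and estimate on $\rho_f$: by the conjugation invariance of $V$ (Fact~\ref{SIN}), using that $\rho_f(h)\in H\subseteq K$ also normalizes $V$, for every $f\in F$,
\[
\rho_f\bigl(c_2(h^{-1}c_1^{-1}h)\bigr)=\rho_f(c_2)\,\rho_f(h)^{-1}\rho_f(c_1)^{-1}\rho_f(h)\in V\cdot V\subseteq U,
\]
so $c_2(h^{-1}c_1^{-1}h)\in C_F(\rho,U)$ and therefore $d^{-1}x\in C_F(\rho,U)\,H$, i.e.\ $xH\in\pi(dC_F(\rho,U))$. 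Hence $\{\pi(dC_F(\rho,U)):d\in D\}$ covers $K/H$, as required; the SIN property is precisely the ingredient that compensates for the possible failure of normality of $H$ in $K$.
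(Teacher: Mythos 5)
Your proof is correct and follows essentially the same route as the paper for items (2)--(4): the same inclusion for (2), the same ``take a base point $z_0$ in the cover element'' argument for (3), and for (4) the same key move of conjugating an element of $C_F(\rho,V)$ by an element of $H$ and invoking the conjugation-invariance of $V$ to slide the $H$-coset representative out of the way (you write $c_2h^{-1}c_1^{-1}=[c_2(h^{-1}c_1^{-1}h)]h^{-1}$ where the paper writes $ths^{-1}=t(hs^{-1}h^{-1})h$, a cosmetic re-association, and you argue directly by maximality where the paper argues by contradiction). The only genuine divergence is in (1), where the paper cites a general Lebesgue Covering Lemma and you instead give a short self-contained Lebesgue-number argument via compactness and the group uniformity of $K$; both are fine, and yours has the modest advantage of keeping the proof self-contained.

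One small remark: in (4) you invoke Fact~\ref{SIN} as if it were needed in the proof, but the hypothesis of the lemma already hands you a conjugation-invariant $V$; the SIN property is only needed later (in Proposition~\ref{Propo_last}) to guarantee that such $V$ can be chosen. This does not affect correctness, merely attribution.
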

\begin{proof} 
(1) follows directly from a general version of the Lebesgue Covering Lemma (see~\cite[Theorem~27]{Kelley}).

\smallskip
(2) Here we use the left action $\tau$ of $K$ on itself by left traslations $_xt\colon k\mapsto xk$ and the induced left action $\bar \tau$ of $K$ on $K/H$ by left traslations {$_x\bar t\colon kH\mapsto xkH$. For $A \subseteq K/H$, we shall briefly write $xA$ in place of $_x\bar t(A)$}.  Since the projection $\pi\colon K\to K/H$ is $K$-equivariant, for $x\in K$ and $B \subseteq K$ one has $x\pi(B)= \pi(xB)=\pi(x)\pi(B)$. Now it only remains to note that, for every $x\in K$, 
\begin{align*}
\pi(xC_F(\rho,U)) &=x\pi(C_F(\rho,U))= x\pi\left(\bigcap_{f\in F}\rho_f^{-1}(U)\right)\subseteq x \bigcap_{f\in F}\pi(\rho_f^{-1}(U))=\bigcap_{f\in F} x \pi(\rho_f^{-1}(U))= \\
&= \bigcap_{f\in F}  \pi(x \rho_f^{-1}(U))= \bigcap_{f\in F}\pi( \rho_f^{-1}(\rho_f(x)U))= \bigcap_{f\in F}(\rho_{K/H})_f^{-1}(\pi(\rho_f(x)U))\in T_F((\rho_{K/H})_\cov,\U_{K/H}[U]).
\end{align*}

(3)  It is enough to verify that, if $B\in T_F(\rho_\cov,\U_K[V])$ and $y\in B$, then $B\subseteq y C_F(\rho,U)\in \U_K[C_F(\rho,U)]$.
Indeed, take $x,y\in B$; for each $f\in F$ there exists $x_f\in K$ such that $B= \bigcap_{f\in F}\rho_f^{-1}(x_fV)$; hence, 
$x = \rho_f^{-1}(x_fw_f)$ and $y=\rho_f^{-1}(x_fv_f)$ for appropriate $v_f, w_f \in V$. Then, $y^{-1}x=\rho_f^{-1}(v_f^{-1}w_f)\in \rho_f^{-1}(V^{-1}V)\subseteq \rho_f^{-1}(U)$ for all $f\in F$, and so $x=y(y^{-1}x)\in  y C_F(\rho,U)$.

\smallskip
(4) It is enough to show that $ K/H\subseteq \bigcup_{d\in D}\pi(dC_F(\rho,U))$. Assume for a contradiction that $\pi(k)\notin \bigcup_{d\in D}\pi(dC_F(\rho,U))$ for some $k\in K$, so that $k\not\in D$, and we verify that $D\cup\{k\}$ satisfies $(*)$. 
In fact, if  $\pi(dt)=\pi(ks)\in \pi(dC_F(\rho,V))\cap \pi(kC_F(\rho,V))$ for some $t,s\in C_F(\rho,V)$ and $d\in D$, then there exists $h\in H$ such that $ks=dth$. Let $s'=hs^{-1}h^{-1}\in C_F(\rho,V)$ (by using that $V=V^{-1}$ is invariant), then $k=kss^{-1}=dths^{-1}=dts'h$, with $ts'\in C_F(\rho,V) C_F(\rho,V) \subseteq  C_F(\rho,VV)\subseteq C_F(\rho, U)$. Hence, $\pi(k)\in \pi(dC_F(\rho,U))$, which contradicts our assumption that $\pi(k)\notin \bigcup_{d\in D}\pi(dC_F(\rho,U))$. Therefore, $D\cup\{k\}$ satisfies $(*)$, against our assumption that $D$ is maximal for $(*)$.
\end{proof}

\subsection{Proof of the Topological Addition Theorem}\label{ProofAT:top}

The following proposition follows by the results of \cite{CCK}, that extend to the setting of cancellative and right amenable monoids the formalism of quasi-tilings of amenable groups introduced by Ornstein and Weiss in \cite{OW}. The reason to refer to \cite{CCK} for a statement about amenable groups is that it is a very accessible and detailed source for such a technical argument like quasi-tilings. Moreover, contrary to what we need in this section, many standard sources for amenable group theory in the literature are restricted to the countable case.

\begin{proposition}\label{tilings_theo}
Let $G$ be an amenable group and $\s=\{F_i\}_{i\in I}$ a right F\o lner net for $G$. Then, for each $\varepsilon\in (0,1/2)$ and $\bar \jmath \in I$, there exist $n\in\N_+$, $\bar\jmath <i_1<\ldots <i_n$ in $I$ and $\bar \imath \in I$ such that the family $\mathcal T=\{F_{i_1},\dots, F_{i_n}\}\subseteq \Pf(G)$ 
$\varepsilon$-quasi-tiles $F_i$, for all $i\geq \bar\imath$ in $I$, that is, there is a family $\{C_1,\dots,C_n\}\subseteq\Pf(G)$ such that:
\begin{enumerate}[({QT.}1)]
   \item $C_tF_{i_t}\subseteq F_i$ and $|C_tF_{i_t}|\geq (1-\varepsilon)|C_t||F_{i_t}|$, for all $t=1,\dots,n$;
   \item the family $\{C_tF_{i_t}:t=1,\dots,n\}$ is pairwise disjoint;
   \item $|F_i\setminus \bigcup_{t=1}^nC_tF_{i_t}|\leq \varepsilon |F_i|$.
\end{enumerate}
\end{proposition}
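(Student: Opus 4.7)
The plan is to invoke the abstract quasi-tiling theorem of Ornstein and Weiss~\cite{OW}, in the form established in~\cite{CCK} for cancellative right amenable monoids (and hence, a fortiori, for amenable groups). That theorem asserts the existence, for every $\varepsilon \in (0,1/2)$, of an integer $n$ and of an invariance protocol $(K_j,\delta_j)$ with $K_j \in \Pf(G)$ and $\delta_j > 0$ (the pair $(K_j,\delta_j)$ being allowed to depend on previously chosen tiles $T_1,\dots,T_{j-1}$) such that the following holds: given any tuple $(T_1,\dots,T_n) \in \Pf(G)^n$ in which $T_j$ is $(K_j,\delta_j)$-right-invariant for every $j$, every $F \in \Pf(G)$ that is sufficiently right-invariant (with respect to a final pair $(K,\delta)$ determined by the whole tuple $(T_1,\dots,T_n)$) is $\varepsilon$-quasi-tiled by $\{T_1,\dots,T_n\}$ in the sense of (QT.1)--(QT.3).

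The task then reduces to selecting the tiles from the F\o lner net $\s$ with indices above $\bar\jmath$. Since $\s = \{F_i\}_{i \in I}$ is right F\o lner, for each $K \in \Pf(G)$ and each $\delta > 0$ the set $\{i \in I : |F_i K \setminus F_i| < \delta |F_i|\}$ is cofinal in $I$; intersecting it with the cofinal tail $\{i \in I : i > \bar\jmath\}$ still yields a cofinal set, so one can extract from $\s$ members of arbitrarily prescribed invariance with index beyond $\bar\jmath$. An easy induction on $j=1,\dots,n$ therefore produces a chain $\bar\jmath < i_1 < i_2 < \cdots < i_n$ in $I$ such that each $F_{i_j}$ meets the invariance requirement $(K_j,\delta_j)$ of the quasi-tiling theorem applied to the previously selected $F_{i_1},\dots,F_{i_{j-1}}$. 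Setting $\mathcal T = \{F_{i_1},\dots,F_{i_n}\}$, a final appeal to the F\o lner property of $\s$ yields $\bar\imath \in I$ such that $F_i$ is $(K,\delta)$-right-invariant for all $i \geq \bar\imath$, which is precisely the hypothesis under which the theorem supplies families $\{C_1,\dots,C_n\}\subseteq \Pf(G)$ witnessing (QT.1)--(QT.3) for $F=F_i$.

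The main obstacle is the bookkeeping: keeping track of the successive invariance requirements at each stage of the inductive construction, and translating the statement of the quasi-tiling theorem as phrased in~\cite{CCK} (using the authors' notions of invariance for cancellative right amenable monoids) into the F\o lner-net language used in our proposition. Once one accepts the quasi-tiling theorem in the form given by~\cite{CCK}, no further combinatorial input is required: the cofinality of the ``sufficiently invariant'' tails of a F\o lner net above any fixed index $\bar\jmath$ is exactly what is needed to place both the tiles $F_{i_j}$ and the quasi-tiled sets $F_i$ inside the original net $\s$.
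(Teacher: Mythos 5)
Your proposal is correct and takes essentially the same route as the paper, which supplies no written proof but simply cites~\cite{CCK} for the quasi-tiling theorem (extending Ornstein--Weiss to cancellative right amenable monoids without countability restrictions) and leaves the F\o lner-net translation to the reader. Your inductive selection of $F_{i_1},\dots,F_{i_n}$ and the final choice of $\bar\imath$ is exactly the intended bookkeeping; to tighten it slightly, note that for fixed finite $K\subseteq G$ and $\delta>0$ the set $\{i:|F_iK\setminus F_i|<\delta|F_i|\}$ is in fact a tail of $(I,\leq)$ (being a finite intersection of eventually-true conditions), and it is this stronger property, rather than mere cofinality, that guarantees it meets $\{i:i>\bar\jmath\}$, since two merely cofinal subsets of a directed set need not intersect.
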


The elements $F_{i_t}$ of $\mathcal T$ are called \emph{shapes} or \emph{tiles}, while the $C_j$ are called \emph{$(\mathcal T,\varepsilon)$-tiling centers} for $F_i$.

We apply the above proposition in the proof of the next one.
	
\begin{proposition}\label{Propo_next-to-last} 
$(\rho_H)_\cov\oplus(\rho_{K/H})_\cov\colon G\la{} \cov(H)\oplus\cov( K/H)$ weakly asymptotically dominates $G\la{\rho_\cov} \cov(K)$.
\end{proposition}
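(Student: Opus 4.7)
The plan is to produce, for every right F\o lner net $\s=\{F_i\}_{i\in I}$ for $G$ and every $\U\in\cov(K)$, sequences $\{(\V_n,\W_n)\}_{n\in\N}\subseteq\cov(H)\oplus\cov(K/H)$ and $\{f_n\}_{n\in\N}$ of real functions satisfying (iii$_1$)--(iii$_2$) of weak asymptotic domination. The proof would combine three ingredients already developed in the excerpt: the quasi-tiling theorem (Proposition \ref{tilings_theo}), the conditional-cover calculus of Section \ref{N--}, and the even-cover technology of Section \ref{evensec}.

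\textbf{Setup and auxiliary sequences.} Using Lemma \ref{tech_AT}(1) (applied with the trivial projection $K\to K$) and Fact \ref{SIN}, I would replace $\U$ by its refining even cover $\U_K[V_0]$, for $V_0$ a symmetric invariant open neighborhood of $1\in K$; monotonicity of the covering number under refinement reduces the claim to this case. I may also assume $1\in F_i$ for all $i\in I$. I take $\V_n=\U\cap H\in\cov(H)$ as a constant sequence, so that $N((\V_n)_{(\rho_H),F})=N_H(\U_F)$ for every $F\in\Pf(G)$. For each $n\in\N_+$, setting $\varepsilon_n=1/n$, I invoke Proposition \ref{tilings_theo} with a starting index $\bar{\jmath}_n\in I$ chosen large enough (see below) to obtain tiles $F_{i^{(n)}_1},\ldots,F_{i^{(n)}_{k_n}}$ from $\s$ and a threshold $\bar{\imath}_n\in I$ beyond which these $\varepsilon_n$-quasi-tile $F_i$ with tiling centers $C^{(n,i)}_t$. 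Proposition \ref{tech_proposition_conditional}(6) applied to the finite family $\{\U_{F_{i^{(n)}_t}}\}_{1\leq t\leq k_n}$ then yields $\W_n\in\cov(K/H)$ with $N(\U_{F_{i^{(n)}_t}}\mid\W_n)=N(\U_{F_{i^{(n)}_t}}\mid\pi)$ for every $t$.

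\textbf{Main estimate via quasi-tiling.} For $i\geq\bar{\imath}_n$ write $F_i=\bigsqcup_{t,c\in C^{(n,i)}_t}cF_{i^{(n)}_t}\sqcup L$ with $|L|\leq\varepsilon_n|F_i|$. Proposition \ref{tech_proposition_conditional}(3) bounds $\log N(\U_{F_i})$ by $\log N((\W_n)_{F_i})+\log N(\U_{F_i}\mid(\W_n)_{F_i})$. I would then apply Corollary \ref{tech_corollary_conditional}(3) iteratively on this decomposition, together with the translation invariance of Corollary \ref{tech_corollary_conditional}(1), to split the conditional term into per-tile contributions plus a leftover controlled by $\varepsilon_n|F_i|\log N(\U)$ via Proposition \ref{tech_proposition_conditional}(2). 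Since $1\in F_{i^{(n)}_t}$, the cover $(\W_n)_{F_{i^{(n)}_t}}$ refines $\W_n$, so Proposition \ref{tech_proposition_conditional}(2), the choice of $\W_n$, and Lemma \ref{NpiN} combine to give $\log N(\U_{F_{i^{(n)}_t}}\mid(\W_n)_{F_{i^{(n)}_t}})\leq\log N((\V_n)_{F_{i^{(n)}_t}})$. Finally, the Ornstein--Weiss convergence of $\log N((\V_n)_{F_j})/|F_j|$ (Lemma \ref{reallim}), combined with (QT.1)--(QT.2), bounds the sum $\sum_t|C^{(n,i)}_t|\log N((\V_n)_{F_{i^{(n)}_t}})$ by $(1-\varepsilon_n)^{-1}(\log N((\V_n)_{F_i})+2\varepsilon_n|F_i|)$ for $i$ large enough. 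Dividing by $|F_i|$ and taking
\[ f_n(r)=\frac{r}{1-\varepsilon_n}+\frac{2\varepsilon_n}{1-\varepsilon_n}+\varepsilon_n\log N(\U), \]
I verify that $f_n\to\id_{\R_{\geq 0}}$ uniformly on every bounded interval, and the resulting inequality gives (iii$_2$).

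\textbf{Main obstacle.} The chief delicacy is arranging a single threshold $j\in I$ valid for every $n\in\N$ in (iii$_2$), since my argument delivers the bound only for $i\geq\bar{\imath}_n$, which \emph{a priori} depends on $n$. I plan to cope with this by exploiting the fact that (iii$_1$) imposes no restriction on the first finitely many $f_n$: for $n$ below some fixed $n_0$ I let $f_n$ be the constant function equal to $\log N(\U)$, which makes (iii$_2$) automatic for those $n$; for $n\geq n_0$, by arranging the quasi-tilings so that the starting index $\bar{\jmath}_n$ can be taken common and by allowing the tile shapes to grow monotonically with $n$, the common choice $j=\bar{\imath}_{n_0}$ works for the whole tail of the sequence, reducing the problem to the quantitative estimate derived above.
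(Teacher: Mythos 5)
Your proposal follows essentially the same line as the paper's proof: reduce to even covers, invoke the quasi-tiling theorem (Proposition~\ref{tilings_theo}), decompose $F_i$ into tiles plus a small remainder via \eqref{basic_partitions_from_tiles_eq}, control $N(\U_{F_i}\mid\W_{F_i})$ through the per-tile estimates of Corollary~\ref{tech_corollary_conditional} together with Proposition~\ref{tech_proposition_conditional}(6) and Lemma~\ref{NpiN}, and feed the result through the Cauchy-type estimate of Lemma~\ref{lemma_very_close_cotrajectories}. The resulting function $f_n$ matches the paper's up to an immaterial constant in the $\frac{1}{n-1}$-term. The one place where you depart is the ``main obstacle'' paragraph, and there you correctly flag a genuine subtlety: the definition of weak asymptotic domination in (iii$_2$) asks for a \emph{single} threshold $j$ valid for every $n$, while the quasi-tiling machinery produces a threshold $\bar\imath_n$ (and an auxiliary cover $\W_n$ and a starting index from Lemma~\ref{lemma_very_close_cotrajectories}) that depend on $\varepsilon_n=1/n$. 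The paper's own write-up does not address this explicitly either: the phrase ``the constant sequence $(\V,\W)$'' conceals the $n$-dependence of $\W$ and of the threshold.

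Your proposed workaround does not, however, close this gap. Taking $j=\bar\imath_{n_0}$ cannot serve for $n>n_0$: the threshold $\bar\imath_n$ delivered by Proposition~\ref{tilings_theo} genuinely grows as $\varepsilon_n$ shrinks (finer quasi-tiling requires a later cut-off), and neither a common starting index $\bar\jmath$ nor monotonically growing tile shapes cap $\bar\imath_n$. Setting $f_n\equiv\log N(\U)$ for $n<n_0$ handles only the head of the sequence, not the tail. The correct observation---made by neither your proposal nor the paper---is that the mismatch is harmless. The sole consumer of weak asymptotic domination is Proposition~\ref{conjumon}, whose proof, after fixing $\varepsilon>0$, selects a single index $m$ via (iii$_1$) and then uses (iii$_2$) \emph{only for $n=m$}; the threshold it needs can therefore be allowed to depend on $m$. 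Thus the notion actually used downstream is the weaker one in which the threshold in (iii$_2$) may vary with $n$, and that is precisely what your argument (and the paper's) delivers. You should either record this weaker form as the operative notion or, equivalently, remark that Proposition~\ref{conjumon} invokes (iii$_2$) pointwise in $n$.
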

\begin{proof}
Fix $\varepsilon\in(0,1/2)$ and let $\U\in \cov(K)$. Up to taking a refinement, we can suppose that $\U$ is even (see Lemma~\ref{tech_AT}(1)). 
Let $\s=\{F_i\}_{i\in I}$ be a right F\o lner net for $G$. By Proposition~\ref{tech_proposition_conditional}(3), for every $i\in I$,
\begin{equation}\label{firstleq}
\log N(\U_{F_i})\leq \log N_{K/H}(\W_{F_i})+\log N(\U_{F_i}|\W_{F_i}).
\end{equation}

Next we consider $\log N(\U_{F_i}|\W_{F_i})$.  Observe that:
\begin{itemize}
  \item by Lemma~\ref{lemma_very_close_cotrajectories}, there exists $\bar\jmath_0\in I$ such that \eqref{EqJune22} holds with $\bar\imath = \bar\jmath_0$;
  \item by Proposition~\ref{tilings_theo}, there are $\bar\jmath_0< i_1<\dots<i_n$ in $I$ and $\bar\jmath_1\in I$ such that for all $i\geq \bar\jmath_1$ in $I$,  $\mathcal T=\{F_{i_1},\dots,F_{i_n}\}$ $\varepsilon$-quasi-tiles $F_i$;
  \item by Proposition~\ref{tech_proposition_conditional}(6), we can choose $\W\in \cov(Q)$ such that $N(\U_{F_{i_t}}|\W)=N(\U_{F_{i_t}}|\pi)$ for all $t=1,\dots,n$;
  \item by  Lemma~\ref{NpiN}, for any $i\in I$, $N(\U_{F_{i}}|\pi)=N_H(\U_{F_{i}})=N_H(\V_{\rho_H,F_{i}})=N_H(\V_{F_i})$, with $\V=\{U\cap H:U\in \U\}\in \cov(H)$, as $\U_{F_{i_t}}\cap H=(\U\cap H)_{\rho_H,F_i}=(\U\cap H)_{F_i}$ since $\rho_g^{-1}(H)=H$ for every $g\in G$.
\end{itemize}
Choose $\bar\jmath \in I$ such that $\bar\jmath\geq \bar\jmath_k$, with $k=0,1$. Fix $i\geq\bar\jmath$ in $I$, and choose a family $\{C_1,\dots,C_n\}\subseteq \Pf(G)$ of $(\mathcal T,\varepsilon)$-tiling centers for $F_i$. This gives some basic partitions: 
\begin{equation}\label{basic_partitions_from_tiles_eq}
F_i=F_i^*\sqcup F_i^\#,\mbox{ with }\ F_i^*= \bigcup_{t=1}^nC_tF_{i_t} = \bigsqcup_{t=1}^nC_tF_{i_t}, \ F_i^\# = F_i \setminus F_i^* \ \mbox{ and }\ |F_i^\#|\leq \varepsilon |F_i|.
\end{equation}
Apply Corollary~\ref{tech_corollary_conditional}(2) and (3) to get the following inequalities, where the last one makes use also of the last inequality in \eqref{basic_partitions_from_tiles_eq}:
\begin{equation}\label{first_deco_eq*}
\log N(\U_{F_i}|\W_{F_i})\leq \log N(\U_{F_i^*}|\W_{F_i}) + \log N(\U_{F_i^\#}|\W_{F_i}) \leq \log N(\U_{F_i^*}|\W_{F_i}) + \varepsilon |F_i| \log N(\U).
\end{equation}
To better understand the summand $\log N(\U_{{F_i^*}}|\W_{F_i})\leq \sum_{t=1}^n \log N(\U_{C_tF_{i_t}}|\W_{F_i})$
(see Corollary~\ref{tech_corollary_conditional}(3) and \eqref{basic_partitions_from_tiles_eq}), we can study each $N(\U_{C_tF_{i_t}}|\W_{F_i})$ separately, for $t=1,\dots,n$. In particular, by Corollary~\ref{tech_corollary_conditional}(1) and (3) and Proposition~\ref{tech_proposition_conditional}(2),
\begin{align*}
\log N(\U_{C_tF_{i_t}}|\W_{F_i}) &
\leq \sum_{c\in C_t}\log N(\U_{cF_{i_t}}|\W_{F_i})=\sum_{c\in C_t}\log N(\U_{F_{i_t}}|\W_{c^{-1}F_i})\leq |C_t|\log N(\U_{F_{i_t}}|\W)=|C_t| \log N(\U_{F_{i_t}}|\pi),
\end{align*}
where the last equality holds by the choice of $\W$. Hence, using \eqref{EqJune22} with $\bar\imath = \bar\jmath$,
\begin{align*}
\log N(\U_{C_tF_{i_t}}|\W_{F_i})
& \leq \frac{|F_{i_t}||C_t|}{|F_i|} \log N(\U_{F_i}|\pi) + {\varepsilon |F_{i_t}| |C_t|}.
\end{align*}
Taking the sum over $t$ of these last inequalities, we finally get the estimate we were missing in \eqref{first_deco_eq*}:
\[\begin{split}\log N(\U_{{F_i^*}}|\W_{F_i})&\leq \sum_{t=1}^n \left(\frac{|C_t||F_{i_t}|}{|F_i|} \log N(\U_{F_i}|\pi)+ {\varepsilon |C_t||F_{i_t}|}\right)
\leq \\ &\leq \left( \sum_{t=1}^n\frac{|C_t||F_{i_t}|}{|F_i|}\right) \log N(\U_{F_i}|\pi)+ {\varepsilon\sum_{t=1}^n|C_t||F_{i_t}|}
\leq \frac{1}{1-\varepsilon} \log N(\U_{F_i}|\pi) + {\frac{\varepsilon|F_i|}{1-\varepsilon}}.\end{split}\]
Combining this inequality with \eqref{first_deco_eq*}, and recalling that $N(\U_{F_i}|\pi)=N_H(\V_{F_i})$,
\[\log N(\U_{F_i}|\W_{F_i})\leq\frac{1}{1-\varepsilon} \log N_H(\V_{F_i}) + \frac{\varepsilon|F_i|}{1-\varepsilon} + \varepsilon|F_i| \log N(\U).\]
So, using also \eqref{firstleq}, we get that
\begin{align*}\log N(\U_{F_i})&\leq \log N_{K/H}(\W_{F_i})+ \frac{1}{1-\varepsilon} \log N_H(\V_{F_i}) + \frac{\varepsilon|F_i|}{1-\varepsilon} + \varepsilon|F_i| \log N(\U)\\
&\leq \frac{1}{1-\varepsilon}(\log N_{K/H}(\W_{F_i})+  \log N_H(\V_{F_i})) + \frac{\varepsilon|F_i|}{1-\varepsilon} + \varepsilon|F_i| \log N(\U).
\end{align*}
Dividing by $|F_i|$, and considering the last inequality with $\varepsilon=\frac{1}{n}$, we get
\[\frac{\log N(\U_{F_i})}{|F_i|}\leq \frac{n}{n-1}\left(\frac{\log N_H(\V_{F_i})}{|F_i|}+ \frac{\log N_{K/H}(\W_{F_i})}{|F_i|}\right)+\frac{1}{n-1}+\frac{1}{n}\log N(\U).\]
This means that, with $f_n(r)=\frac{n}{n-1} r+\frac{1}{n-1}+\frac{1}{n}\log N(\U)$, for $r\in \R$, and the constant sequence $(\V, \W)\in \cov (H)\oplus\cov(K/H)$, the inequality
$$\frac{v_{\cov}(T_{F_i}( \rho_\cov, \U))}{|F_i|}\leq f_n\left(\frac{v_{\cov}(T_{F_i}((\rho_H)_\cov\oplus (\rho_{K/H})_\cov, (\V,\W)))}{|F_i|}\right),$$
 witnesses the  weak asymptotic domination, since $f_n(r)-r$ uniformly converges to 0 on every bounded interval $[0,C]$.
\end{proof}

\begin{proposition}\label{Propo_last}
$G\la{\rho_\cov} \cov(K)$ dominates $(\rho_H)_\cov\oplus(\rho_{K/H})_\cov\colon G\la{} \cov(H)\oplus\cov( K/H)$.
\end{proposition}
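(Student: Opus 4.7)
By the definition of domination, I must produce, for each pair $(\V,\W)\in\cov(H)\oplus\cov(K/H)$, a single cover $\U\in\cov(K)$ such that
\[
\log N(\U_F)\;\geq\;\log N_H(\V_F)+\log N_{K/H}(\W_F)\quad\text{for every }F\in\Pf(G),
\]
equivalently, $N(\U_F)\geq N_H(\V_F)\cdot N_{K/H}(\W_F)$. The plan is first to reduce to even covers, then to build $\U$ as an even cover of $K$ generated by a suitable ``rectangular'' neighborhood of $1$, and finally to verify the multiplicative bound via a Haar-measure computation on the fibers of $\pi\colon K\to K/H$.

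First, by Lemma~\ref{tech_AT}(1) every open cover of a compact space admits an even refinement, so the even covers are cofinal in $\cov(H)$ and $\cov(K/H)$; since by \eqref{succ} the norm $v_\cov$ is monotone under refinement, Example~\ref{cofinal} shows that the dominated action is equivalent to its restriction to even covers. I may therefore assume $\V=\U_H[U_H]$ and $\W=\U_{K/H}[U_{K/H}]$ for symmetric open neighborhoods $U_H$ of $1$ in $H$ and $U_{K/H}$ of $1$ in $K/H$. Using Fact~\ref{SIN} and the continuity of $H\hookrightarrow K$ and of $\pi$, I then pick a symmetric open neighborhood $U$ of $1$ in $K$, invariant under inner conjugation, with $U\cap H\subseteq U_H$ and $\pi(U)\subseteq U_{K/H}$, and set $\U=\U_K[U]$.

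The core step is the per-$F$ inequality $N(\U_F)\geq N_H(\V_F)\cdot N_{K/H}(\W_F)$. I would derive it from the dual Haar-measure inequality
\[
\mu_K(C_F(\rho,U))\;\leq\;\mu_H(C_F(\rho_H,U_H))\cdot\mu_{K/H}(C_F(\rho_{K/H},U_{K/H})),
\]
where $\mu_K,\mu_H,\mu_{K/H}$ denote the normalized Haar measures. Applying Weil's integration formula $\mu_K(A)=\int_{K/H}\mu_H(k^{-1}A\cap H)\,d\mu_{K/H}(kH)$ to $A=C_F(\rho,U)$, the $G$-equivariance of $\pi$ forces $\pi(A)\subseteq C_F(\rho_{K/H},\pi(U))\subseteq C_F(\rho_{K/H},U_{K/H})$, so the integrand vanishes outside a set of $\mu_{K/H}$-measure at most $\mu_{K/H}(C_F(\rho_{K/H},U_{K/H}))$; and the identity $C_F(\rho,U)\cap H=C_F(\rho_H,U\cap H)\subseteq C_F(\rho_H,U_H)$ controls the central fiber by $\mu_H(C_F(\rho_H,U_H))$.

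The main obstacle has two parts: (i) obtaining a \emph{uniform} bound $\mu_H(k^{-1}C_F(\rho,U)\cap H)\leq\mu_H(C_F(\rho_H,U_H))$ on all other relevant fibers, and (ii) converting the resulting Haar-measure inequality back into the combinatorial $N$-inequality at every fixed $F$. For (i), I would exploit the conjugation-invariance of $U$ (inherited by $C_F(\rho,U)$) together with the bi-invariance of $\mu_H$ to transport each fiber computation back to the fiber above $1$; making that transport rigorous in the non-normal case---where left and right cosets of $H$ differ---is the delicate technical point. Step (ii) is immediate in the profinite case, where open subgroups $U\leq K$, $U\cap H\leq H$, $\pi(U)\leq K/H$ yield the exact identities $N(\U_K[U])=[K:U]=1/\mu_K(U)$ together with the multiplicativity $[K:U]=[K/H:\pi(U)]\cdot[H:U\cap H]$, and the general compact case then reduces to it by Peter--Weyl approximation of $K$ by compact Lie quotients and passage to the inverse limit.
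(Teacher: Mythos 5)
Your setup (reducing to even covers, choosing a rectangular neighborhood $U$ with $U\cap H\subseteq U_H$ and $\pi(U)\subseteq U_{K/H}$) matches the paper's use of Lemma~\ref{tech_AT}(1), and the intuition of running a Haar-measure argument along the fibration $\pi\colon K\to K/H$ is indeed the heart of the matter. But there is a genuine gap in how you propose to close the argument.

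Your step (ii) does not go through. You want to prove the measure inequality
\[
\mu_K\bigl(C_F(\rho,U)\bigr)\leq\mu_H\bigl(C_F(\rho_H,U_H)\bigr)\cdot\mu_{K/H}\bigl(C_F(\rho_{K/H},U_{K/H})\bigr)
\]
and then convert it into the covering-number inequality $N(\U_F)\geq N_H(\V_F)\cdot N_{K/H}(\W_F)$, delegating that conversion to the profinite case plus Peter--Weyl approximation. This fails twice over. First, the Peter--Weyl structure theorem exhibits $K$ as an inverse limit of compact \emph{Lie} quotients, not of finite ones, so it does not reduce anything to the profinite case; a non-profinite compact Lie group presents exactly the same difficulty you set out to avoid. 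Second, and more seriously, the kernels of those quotient maps $K\to K_i$ have no reason to be invariant under the $G$-action $\rho$, so the action does not descend to the approximating quotients at all; there is no induced dynamical system to take an entropy limit over.

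What the paper actually does is use Haar measure only to bound cardinalities of \emph{packing sets}, never to approximate covering numbers by reciprocals of measures. After setting up a nested chain of conjugation-invariant symmetric neighborhoods $V$, $W=V^4$, $U$ with $WW\subseteq U$, it picks maximal families $D\subseteq H$ and $E\subseteq K$ making the translates $dC_F(\rho_H,W\cap H)$ (resp.\ the images $\pi(eC_F(\rho,W))$) pairwise disjoint. Lemma~\ref{tech_AT}(2) and~(4) give the combinatorial upper bounds $N_H(\V_F)\leq|D|$ and $N_{K/H}(\W_F)\leq|E|$ with no measure involved. The one place Haar measure enters is the verification, via conjugation-invariance of $V$ and the disjointness property $(*)$ for the family $\{eC_F(\rho,VV)d\}$, that $|D|\cdot|E|\cdot\mu(C_F(\rho,VV))\leq 1\leq N(\U_K[C_F(\rho,VV)])\cdot\mu(C_F(\rho,VV))$, whence $|D|\cdot|E|\leq N(\U_K[C_F(\rho,VV)])\leq N(T_F(\rho_\cov,\U_K[V]))$ by Lemma~\ref{tech_AT}(3). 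This is the same Weil-formula content you are after, but packaged so that it transfers directly to covering numbers without any approximation step; your step~(i) fiber-transport (which can be made to work with the invariance of $U$) is replaced by the disjointness verification $(*)$. You should replace the Peter--Weyl closing move with this packing-versus-covering bookkeeping.
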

\begin{proof}
Given $\U\in \cov(K)$ and $\V\in\cov(H)$, by Lemma~\ref{tech_AT}(1), there is an open neighborhood $U$ of $1\in K$ such that $\V\preceq \U_{H}[U\cap H]$ and $\pi(\U)\preceq \U_{K/H}[U]$. Choose a symmetric and invariant neighborhood $V$ of $1\in K$ such that, letting $W=VVVV$, we have $WW \subseteq U$. Then $W$ is invariant in $K$, so $W\cap H$ is invariant in $H$.  Given $F\in \Pf(G)$, select  $D\subseteq H$ and $E\subseteq K$ to be maximal with respect to the following properties:
\begin{enumerate}
   \item[\rm ($*_H$)] $d_1C_F(\rho_H,W\cap H)\cap d_2C_F(\rho_H,W\cap H)=\emptyset$, for all $d_1\neq d_2$ in $D$;
   \item[\rm ($ *_{K/H}$)] $\pi(e_1C_F(\rho,W))\cap \pi(e_2C_F( \rho,W))=\emptyset$, for all $e_1\neq e_2$ in $E$.
  \end{enumerate}
The existence of such a subset $D$ of the compact group $H$ follows from the fact that its open subset $A=C_F(\rho_H,W\cap H)\ne \emptyset$ has a positive Haar measure, say $a>0$, while $H$ has Haar measure 1. Therefore, a set $D$ with ($*_H$) must have size at most $1/a$, so there exists a maximal one with  ($*_H$). As far as the  existence of $E$ is concerned, consider the open subset  $B=\pi(C_F(\rho_{K/H},W)\ne \emptyset$ of $K/H$. If $E\subseteq K$ satisfies ($ *_{K/H}$), then $e_1 \pi^{-1}(B) \cap e_2 \pi^{-1}(B)=\emptyset$, for all $e_1\neq e_2$ in $E$. So by  the previous argument, applied this time to the compact group $K$ and its open subset $\pi^{-1}(B)$, we conclude that there is a finite upper bound for the cardinalities of $E$ satisfying ($ *_{K/H}$). In particular, there is a maximal one. 

\smallskip
By Lemma~\ref{tech_AT}(2) and (4),  we deduce that:
\[N(T_F((\rho_H)_\cov,\V))\leq N(T_F((\rho_H)_\cov,\U_{H}[U\cap H]))\leq N(\U_{H}[C_F(\rho_H,U\cap H)])\leq |D|.\]
Similarly, by Lemma~\ref{tech_AT}(2) and (4), we obtain that:
\[N(T_F((\rho_{K/H})_\cov,\pi(\U)))\leq N(T_F((\rho_{K/H})_\cov,\U_{ K/H}[U])\leq N(\U_{K/H}[C_F(\rho,U)])\leq |E|.\]
In particular,
\[N(T_F((\rho_H)_\cov\oplus(\rho_{K/H})_\cov,(\V,\pi(\U)))=N(T_F((\rho_H)_\cov,\V))\cdot N(T_F((\rho_{K/H})_\cov,\pi(\U)))\leq |D|\cdot|E|.\]
Moreover, the following property holds:
\begin{enumerate}
\item[\rm ($*$)] 
$(e_1C_F(\rho,VV)d_1)\cap (e_2C_F(\rho,VV)d_2)=\emptyset$, for all $(e_1,d_1)\neq (e_2,d_2)$ in $E\times D$.
\end{enumerate}
In fact, assume that {$e_1c_1d_1=e_2c_2d_2$} for some $c_1,c_2\in C_F(\rho,VV)$, $e_1,e_2\in E$, and $d_1,d_2\in D$. Then,
$$\pi(e_1c_1)=\pi(e_1c_1d_1)=\pi(e_2c_2d_2)=\pi(e_2c_2)\in \pi(e_1C_F(\rho,VV))\cap \pi(e_2C_F( \rho,VV))\subseteq \pi(e_1C_F(\rho,W))\cap \pi(e_2C_F( \rho,W)),$$
and so  $e_1=e_2$ by ($ *_{K/H}$). Hence, $c_1d_1=c_2d_2$ and, therefore, 
$$c=c_1^{-1}c_2=d_1d_2^{-1}\in H\cap C_F(\rho,W)= C_F(\rho_H,W\cap H),$$ 
showing that $d_1=cd_2\in C_F(\rho_H,W\cap H)d_2 = d_2C_F(\rho_H,W\cap H)$, as $C_F(\rho_H,W\cap H)$ is invariant in $H$ 
(being $W\cap H$ invariant in the $S$-invariant subgroup $H$). This gives $d_1=d_2$ by ($*_H$). Hence, $\bigcup_{(e,d)\in E\times D}eC_F(\rho,VV)d\subseteq K$ is a disjoint union of $|E|\cdot|D|$-many measurable subsets, each of which has the same measure of $C_F(\rho,VV)$ (use that, being $K$ compact, it is unimodular and, therefore, $\mu$ is both left and right invariant), and so 
\[|E|\cdot|D|\cdot \mu(C_F(\rho,VV))\leq \mu(K)\leq N(\U_K[C_F(\rho,VV)])\cdot \mu(C_F(\rho,VV)).\]
Hence, $|E|\cdot|D| \leq N(\U_K[C_F(\rho,VV)])$. By Lemma~\ref{tech_AT}(3),  $N(\U_K[C_F(\rho,VV)])\leq N(T_F(\rho_\cov,\U_K[V]))$ and, combining all these estimates, we get:
\[N(T_F((\rho_H)_\cov\oplus(\rho_{K/H})_\cov,(\V,\pi(\U)))\leq |D|\cdot |E|\leq N(T_F(\rho_\cov,\U_K[V]).\]
Taking logarithms gives 
\(v_{\cov}(T_F((\rho_H)_\cov \oplus (\rho_{K/H})_\cov, (\V, \pi(\U)))) \leq v_{\cov}(T_F( \rho_\cov, \U_K[V]))\).
\end{proof}

As a consequence of the above propositions and of the results of \S\ref{subs_dual_ore}, we obtain the proof of (\text{\sc at$_\top$}):

\begin{proof}[{\bf Proof of the Topological Addition Theorem}]  First assume that $K\ra{\rho}G$ is a right action of an amenable group $G$ on a compact group $K$ and that $H$ is an $S$-invariant closed subgroup of $K$. Fix a right F\o lner net $\s$ for $G$. Since $\rho_{\cov}$ and $(\rho_H)_{\cov}\oplus(\rho_{K/H})_\cov$ are weakly asymptotically equivalent by Propositions~\ref{Propo_next-to-last} and~\ref{Propo_last}, 
\begin{equation}\label{ATfinaleq}
h(\rho_{\cov},\s)=h((\rho_H)_\cov\oplus(\rho_{K/H})_\cov,\s) = h((\rho_H)_{\cov},\s) + h({(\rho_{K/H})_\cov},\s),
\end{equation} 
by Propositions~\ref{wad} and~\ref{wAT}. Hence, \eqref{ATfinaleq} gives $h_{\top}(\rho)=h_{\top}(\rho_{{H}})+h_{\top}(\rho_{K/H})$ as already now that,  by Example~\ref{Gactionsextop}, $h_\top(\rho)=h(\rho_{\cov},\s)$, $h_\top(\rho_H)=h((\rho_H)_{\cov},\s)$ and $h_\top(\rho_{K/H})=h({(\rho_{K/H})_\cov},\s)$. 

\smallskip
Now consider the general case of a right $S$-action $K\ra{\rho}S$ of a cancellative and right amenable monoid $S$ on a compact group $K$ and let $H$ be a closed $S$-invariant (not necessarily normal) subgroup of $K$.  As usual, let $G=S^{-1}S$ be the group of left fractions of $S$.  
By Corollary~\ref{strong_exactness_coro}(1), we can identify $H^*$ with a closed $S$-invariant subgroup of $K^*$ (so that it makes sense to consider the space of left $H^*$-cosets $K^*/H^*$), and by Corollary~\ref{strong_exactness_coro}(2), we can identify $K^*/H^*$ with $(K/H)^*$. This allows us to apply the previous case  and write   
\begin{equation}\label{AT_top_eq_coro}
h_{\top}(\rho^*)=h_{\top}((\rho^*)_{H^*})+h_{\top}((\rho^*)_{K^*/H^*}). 
\end{equation}
In view of the above identifications,
$$h_{\top}((\rho^*)_{H^*})=h_\top((\rho_H)^*)\quad\text{and}\quad h_\top((\rho^*)_{K^*/H^*})=h_\top((\rho_{K/H})^*).$$
Furthermore, by Theorem~\ref{Th2}, we deduce that $h_{\top}(\rho^*)=h_{\top}(\rho)$, $h_{\top}((\rho_{ H})^*)=h_{\top}(\rho_{H})$ and $h_{\top}( (\rho_{K/H})^*)=h_{\top}( \rho_{K/H})$. Making these substitutions in \eqref{AT_top_eq_coro}, we get
\[h_{\top}(\rho)=h_{\top}(\rho_{ H})+h_{\top}( \rho_{K/H}).\qedhere\]
\end{proof}

\begin{question}
Is it possible to prove the general version of {AT}$_\top$ without making recurse to the Ore colocalization?
\end{question}

\section{Proof of the Bridge Theorem and the Algebraic Addition Theorem}\label{proofs}

The aim of this last section, is to prove the Bridge Theorem, as announced in the introduction. We start with a lemma showing how,  for a cancellative and right amenable monoid $S$, the kernel $\Ker(\lambda)$ of a left $S$-action $S\la{\lambda}X$ on a discrete Abelian group (from \S\ref{reduction_to_injective}) and the surjective core $E(\rho)$ of a right $S$-action $K\ra{\rho}S$ on a compact Abelian group (from \S\ref{subs_red_top_to_surj}) are both the dual construction of each other.  Recall that $\bar X=X/\Ker(\lambda)$ and $\bar K=E(\rho)$, and that $S\la{\bar\lambda}\bar X$ and $\bar K\ra{\bar\rho}S$ are the induced $S$-actions by $\lambda$ and $\rho$, respectively.

\begin{lemma}\label{first_tech_lemma_for_duality}
The following statements hold true:
\begin{enumerate}[(1)]
   \item Given a left $S$-action $S\la{\lambda}X$ on a discrete Abelian group $X$, let $K=\dual X$ and $K\longra{\rho=\dual{\lambda}}S$. Then, $\Ker(\lambda)^{\perp}=E(\rho)$ and the dual of $S\la{\bar\lambda}\bar X$ is conjugated to $\bar K\ra{\bar \rho}S$.
   \item Given a right $S$-action $K\ra{\rho}S$ on a compact Abelian group $K$, let $X=\dual K$ and $S\longla{\lambda=\dual{\rho}}X$. Then, $E(\rho)^{\perp}=\Ker(\lambda)$ and the dual of $\bar K\ra{\bar\rho}S$ is conjugated to $S\la{\bar\lambda}\bar X$.
\end{enumerate}
\end{lemma}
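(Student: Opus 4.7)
The plan is to treat (1) and (2) in parallel, since both boil down to the two standard formulas of Pontryagin-van Kampen duality recalled in \S\ref{subs:duality}: for a morphism $f\colon G\to G'$ in $\LCA$,
\[
\mathrm{Im}(f)^\perp=\Ker(\dual f),
\]
together with the general behaviour of $(-)^\perp$ on meets and joins of closed subgroups (it sends $\bigcap$ to the closure of $\sum$, and $\sum$ to $\bigcap$). I will derive the identity on subgroups first and then deduce the conjugation of actions.

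For (1), fix $s\in S$ and apply the displayed formula to $\rho_s=\dual{\lambda_s}\colon K\to K$: using $\rho_s^{\wedge\wedge}=\lambda_s$ (via $\omega$) we get $\rho_s(K)^\perp=\Ker(\lambda_s)$. Taking orthogonals and using $H^{\perp\perp}=H$ for closed $H$ gives $\Ker(\lambda_s)^\perp=\rho_s(K)$. Since the $\perp$-correspondence sends unions (= sums, here, by Lemma~\ref{props_ker_lemma}(1)) of subgroups of $X$ to intersections of closed subgroups of $K$, this yields
\[
\Ker(\lambda)^\perp=\Bigl(\bigcup_{s\in S}\Ker(\lambda_s)\Bigr)^{\!\perp}=\bigcap_{s\in S}\Ker(\lambda_s)^\perp=\bigcap_{s\in S}\rho_s(K)=E(\rho).
\]
For the conjugation, dualize the short exact sequence $0\to \Ker(\lambda)\to X\to \bar X\to 0$ in $\Ab$; by Pontryagin duality this gives $0\to \Ker(\lambda)^\perp\to K\to \dual{\Ker(\lambda)}\to 0$ in $\Comp$, so the inclusion $\varepsilon_K\colon \bar K=E(\rho)=\Ker(\lambda)^\perp\hookrightarrow K$ is canonically identified with $\dual{\pi_X}\colon\dual{\bar X}\to \dual X=K$. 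Because the canonical projection $\pi_X\colon X\to \bar X$ is $S$-equivariant (Corollary~\ref{coro_act_by_inj}), its dual is $S$-equivariant between the corresponding right $S$-actions, and the $S$-action on $\dual{\bar X}$ induced by $\bar\lambda$ is, by definition of the restricted/quotient action, exactly $\bar\rho$ transported along this isomorphism.

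For (2) I would argue symmetrically, now applying $\mathrm{Im}(f)^\perp=\Ker(\dual f)$ directly to $\rho_s\colon K\to K$ to obtain $\rho_s(K)^\perp=\Ker(\dual{\rho_s})=\Ker(\lambda_s)$. Since $X$ is discrete, every subgroup is closed, so no closure is needed when $\perp$ swaps intersections with sums; hence
\[
E(\rho)^\perp=\Bigl(\bigcap_{s\in S}\rho_s(K)\Bigr)^{\!\perp}=\sum_{s\in S}\rho_s(K)^\perp=\sum_{s\in S}\Ker(\lambda_s)=\bigcup_{s\in S}\Ker(\lambda_s)=\Ker(\lambda).
\]
Dualizing the short exact sequence $0\to \bar K\to K\to K/\bar K\to 0$ then identifies the projection $\dual{\varepsilon_K}$ with $\pi_X\colon X\to X/\Ker(\lambda)=\bar X$, and the $S$-equivariance of $\varepsilon_K$ (Corollary~\ref{coro_top_by_surj}) transfers $\bar\rho$ to $\bar\lambda$ under the Pontryagin identification $\dual{\bar K}\cong \bar X$.

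The only minor obstacle I anticipate is keeping track of the direction of equivariance when dualizing (a left $S$-action dualizes to a right $S$-action and vice versa) and justifying that $\perp$ commutes with the directed union $\bigcup_{s}\Ker(\lambda_s)$ in (1) and that no closure is needed in (2); the first follows from the fact that the $\perp$-correspondence is an anti-isomorphism of complete lattices of closed subgroups, the second from discreteness of $X$. Once these bookkeeping points are in place, both statements reduce to the two displayed identities above together with the $S$-equivariance of $\pi_X$ and $\varepsilon_K$ already established.
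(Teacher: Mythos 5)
Your proof is correct and follows essentially the same route as the paper: both reduce the subgroup identity to the lattice anti-isomorphism $(-)^\perp$ and the formula $\mathrm{Im}(f)^\perp=\Ker(\dual f)$ applied to the directed families $\{\Ker(\lambda_s)\}_{s\in S}$ and $\{\rho_s(K)\}_{s\in S}$, and both obtain the $S$-equivariance of the resulting isomorphism $\dual{\bar X}\cong\bar K$ from the way Pontryagin duality interchanges quotients and closed subgroups. You simply spell out the dualization of the short exact sequence $0\to\Ker(\lambda)\to X\to\bar X\to 0$ a bit more explicitly than the paper does.
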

\begin{proof}
Let us just prove (1), as part (2) follows similarly. Consider $(S,\leq)$ preordered by its right divisibility relation, then $\Ker(\lambda)$ and $E(\rho)$ are, respectively, the supremum in $\L(X)$ of the directed family $\{\Ker(\lambda_s):s\in S\}$ and the infimum in $\L(K)$ of the downward directed family $\{\rho_s(K):s\in S\}$. By the properties of the lattice anti-isomorphism $(-)^\perp\colon \L(X)\to \L(K)$, one deduces immediately that $\Ker(\lambda)^\perp=E(\rho)$ and, therefore, 
\begin{equation}\label{iso_ker_epicore}
\dual{\bar X}=\dual{(X/\Ker(\lambda))}\cong \Ker(\lambda)^\perp=E(\rho)=\bar K.
\end{equation}
Finally, for each $s\in S$, we have that $\lambda_s(\Ker(\lambda))=\Ker(\lambda)$ and $\bar\lambda_s$ is the injective endomorphism induced by $\lambda_s$ on the quotient $\bar X=X/\Ker(\lambda)$. As the Pontryagin duality sends (endomorphisms of) quotient groups to (endomorphisms of) closed subgroups of the dual, $\dual{{\bar\lambda}_{s}}$ can be identified with the restriction $\bar\rho_s$ of $\rho_s=\dual{\lambda_s}$ to $\bar K$. In other words, the isomorphism in \eqref{iso_ker_epicore} is $S$-equivariant.
\end{proof}

In the following lemma we show that ``the Ore colocalization of a right action on a compact Abelian group is the dual of the Ore localization of its dual action'' and, vice versa, that ``the Ore localization of a left action on a discrete Abelian group is the dual of the Ore colocalization of its dual action'' (see \S\ref{Ore_invariance_subs} and \S\ref{subs_dual_ore} for the definitions of the Ore localization $G\la{\lambda^*}X^*$ and the Ore colocalization $K^*\ra{\rho^*}G$, respectively).

\begin{lemma}\label{dual_ore_is_dual_of_ore}
The following statements hold true:
\begin{enumerate}[(1)]
\item Given a left $S$-action $S\la{\lambda}X$ on a discrete Abelian group $X$, let $K=\dual X$ and $K\longra{\rho=\dual{\lambda}}S$. 
 Then the dual of $G\la{\lambda^*}X^*$ is conjugated to $K^*\ra{\rho^*}G$.
\item Given a right $S$-action $K\ra{\rho}S$ on a compact Abelian group $K$, let $X=\dual K$ and $S\longla{\lambda=\dual{\rho}}X$. 
 Then the dual of $K^*\ra{\rho^*}G$ is conjugated to $G\la{\lambda^*}X^*$.
\end{enumerate}
\end{lemma}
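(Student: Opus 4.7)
The plan is to prove part (1) directly from the explicit constructions, using that Pontryagin duality converts filtered colimits in $\Ab$ to cofiltered limits in $\Comp$, and then to deduce part (2) from part (1) by Pontryagin reflexivity.

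For part (1), I would first dualize the direct system $\mathfrak X = \{(X_g = \bar X,\ \varepsilon_{gs,g} = \bar\lambda_s \colon X_{gs} \to X_g)\}_{(G,\leq_S)}$ of Definition \ref{def_x_star}. Since the Pontryagin dual functor turns filtered direct limits into cofiltered inverse limits, one obtains $\dual{X^*} \cong \varprojlim_{(G,\leq_S)} \dual{\mathfrak X}$, where the inverse system $\dual{\mathfrak X}$ is given by $\{(\dual{X_g} = \dual{\bar X},\ \dual{\bar\lambda_s} \colon \dual{X_g} \to \dual{X_{gs}})\}$. By Lemma \ref{first_tech_lemma_for_duality}(1) applied pointwise, the Pontryagin isomorphism $\psi \colon \dual{\bar X} \to \bar K$ is $S$-equivariant, so that $\psi \circ \dual{\bar\lambda_s} = \bar\rho_s \circ \psi$ for every $s \in S$; this yields an isomorphism between the inverse systems $\dual{\mathfrak X}$ and the system $\mathfrak K = \{(K_g = \bar K,\ \bar\rho_s \colon K_g \to K_{gs})\}_{(G,\leq_S)}$ of Lemma \ref{def_of_k_star_lemma}. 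Passing to inverse limits produces a canonical homeomorphism $\Psi \colon \dual{X^*} \to K^*$ of compact Abelian groups.

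Next I would check that $\Psi$ intertwines the $G$-actions $\dual{\lambda^*}$ and $\rho^*$. Under $\Psi$, the dualized canonical maps $\dual{\varepsilon_g} \colon \dual{X^*} \to \dual{X_g}$ correspond (by construction of the limit) to the projections $\pi_g \colon K^* \to K_g$. Dualizing the defining relation $\lambda^*_g \circ \varepsilon_h = \varepsilon_{gh}$ from \eqref{univ_prop_act_alg_comp_equation} gives
\[
\dual{\varepsilon_h} \circ \dual{\lambda^*_g} = \dual{\varepsilon_{gh}}, \qquad\text{for all } g,h \in G.
\]
Transporting along $\Psi$ yields $\pi_h \circ (\Psi \circ \dual{\lambda^*_g} \circ \Psi^{-1}) = \pi_{gh}$ for all $h \in G$, which is exactly the universal property \eqref{equational_un_prop_rho_star_eq} characterizing $\rho^*_g$. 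Hence $\Psi \circ \dual{\lambda^*_g} \circ \Psi^{-1} = \rho^*_g$ for every $g \in G$, i.e.\ $\Psi$ is a $G$-equivariant conjugation, proving (1).

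For part (2), I would simply apply (1) to the left $S$-action $S \la{\lambda = \dual\rho} X = \dual K$. Part (1) then produces a conjugation between $\dual{(\lambda^*)}$ and $(\dual\lambda)^*$; but by Pontryagin reflexivity (\S\ref{subs:duality}), the right $S$-action $\dual\lambda = \rho^{\wedge\wedge}$ is conjugated to $\rho$, so $(\dual\lambda)^*$ is conjugated to $\rho^*$ by functoriality of the Ore colocalization (Lemma \ref{def_of_k_star_lemma}(4)). Combining these conjugations and dualizing once more (using again reflexivity to identify $\lambda^*$ with $(\lambda^*)^{\wedge\wedge}$) gives the desired conjugation between $\dual{(\rho^*)}$ and $\lambda^*$. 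The main obstacle is purely bookkeeping: one has to make sure the isomorphisms of inverse systems in Step 2 respect the index set $(G,\leq_S)$ on the nose and that the various universal properties are applied with matching directions of arrows; once these identifications are written out carefully, the argument is essentially formal.
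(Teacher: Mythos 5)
Your argument is correct and follows essentially the same route as the paper's proof: dualize the direct system $\mathfrak{X}$ to an inverse system, use Lemma~\ref{first_tech_lemma_for_duality}(1) to identify the dualized system with $\mathfrak{K}$, pass to limits, and check $G$-equivariance by dualizing the universal property \eqref{univ_prop_act_alg_comp_equation} to recover \eqref{equational_un_prop_rho_star_eq}. The only difference is that the paper dismisses part (2) with ``follows similarly'' while you spell out the reflexivity-plus-functoriality bookkeeping explicitly; that expansion is correct and a reasonable addition, not a change of approach.
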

\begin{proof}
We just prove part (1), as (2) follows similarly. In Definition~\ref{def_x_star} we have introduced $X^*$ as a direct limit of a direct system $\mathfrak X$ of copies of $\bar X$, indexed by $(G,\leq_S)$, and with transition maps given by suitable maps of the form $\bar\lambda_s$ for some $s\in S$. Dually, in Definition~\ref{def_k_star}, we have introduced $K^*$ as the inverse limit of an inverse system $\mathfrak K$ of copies of $\bar K$, indexed by $(G,\leq_S)$, and with transition maps given by suitable maps of the form $\bar\rho_s$ for some $s\in S$. Now, since Lemma~\ref{first_tech_lemma_for_duality}(1) allows us to identify $\dual{\bar X}\ra{\dual{\bar\lambda}}S$ with $\bar K\ra{\bar\rho}S$, we deduce that the two inverse systems $\dual{\mathfrak X}$ and $\mathfrak K$ are isomorphic and, therefore, so are their limits: $\dual{(X^*)}\cong K^*$. Furthermore, this isomorphism is $S$-equivariant since the commutative diagrams of Lemma~\ref{def_lambda_star_lemma}(2) are sent, via Pontryagin duality, to the commutative diagrams that appear in Lemma~\ref{def_of_k_star_lemma}(2). In fact, $\rho^*$ is, by definition, the unique right $S$-action that makes these diagrams commute; as $\dual{(\lambda^*)}$ makes the same diagrams commute, the two actions have to be conjugated.
\end{proof}

\begin{proof}[\bf{Proof of the Bridge Theorem}]
(1) Let $K\ra{\rho}S$ be a right $S$-action on a compact Abelian group $K$ and $S\longla{\lambda=\dual\rho}X=\dual K$.
With the notation of \S\ref{sec_discrete} and \S\ref{sec_compact}, we have proved in Theorems~\ref{Th1} and~\ref{Th2}, respectively, that
\[h_\top(\rho)=h_\top(\rho^*) \quad\text{and}\quad h_{\alg}(\lambda)=h_\alg(\lambda^*).\]
Furthermore, $\rho^*$ is conjugated to $\dual{(\lambda^*)}$ by Lemma \ref{dual_ore_is_dual_of_ore}(1) and so $h_\top(\rho^*)= h_\top(\dual{(\lambda^*)})$ by Proposition~\ref{mon}(1), while $h_\top(\dual{(\lambda^*)})=h_\alg(\lambda^*)$ by Theorem~\ref{BT_for_invertible_actions}. From the above equalities we conclude that  \[
h_{\top}(\rho)=h_\top(\rho^*)=h_\top(\dual{(\lambda^*)})=h_\alg(\lambda^*)=h_{\alg}(\lambda).\] 

\smallskip
(2) Let $S\la{\lambda}X$ be a left $S$-action on a discrete Abelian group $X$ and $K=\dual X\longra{\rho=\dual\lambda} S$.
Since the left $S$-action $\dual\rho={\lambda}^{\wedge\wedge}$ is conjugated to $\lambda$ (as noted in the introduction), we deduce that $h_{\alg}(\lambda) = h_{\alg}({\lambda}^{\wedge\wedge})$ by Proposition~\ref{mon}(1). By part (1), $h_{\top}(\rho)=h_{\alg}(\dual\rho)=h_{\alg}(\lambda)$, as required.
\end{proof}

The above proof shows in particular that items (1) and (2) in the Bridge Theorem are equivalent. This makes it natural to state the Bridge Theorem asymmetrically, that is, in our case from the point of view of $K\ra{\rho} S$.

\begin{proof}[{\bf Proof of the Algebraic Addition Theorem}] 
Given a left $S$-action $S\la{\lambda} X$ on a discrete Abelian group $X$ and letting $Y$ be an \mbox{$S$-invariant} subgroup of $X$, denote by $\lambda_{Y}$ and $ \lambda_{X/Y}$ the obvious left $S$-actions induced by $\lambda$ on $Y$ and $X/Y$, respectively. Furthermore, consider the compact Abelian group $K=\dual X$, its closed subgroup $H=Y^\perp$ and its quotient group  $K/H$, and  the right $S$-action $\rho=\dual{\lambda}$. Then $H$ is $S$-invariant and the action $\rho_{H}$ induced by $\rho$ on $H$ by restriction is conjugated to $\dual{(\lambda_{X/Y})}$, while the right $S$-action $\rho_{K/H}$ induced by $\rho$ on $K/H$ is conjugated to $\dual{(\lambda_{Y})}$. We can now conclude via the following series of equalities:
{\belowdisplayskip=-14pt
\begin{align*}
h_\alg(\lambda)&=h_\top(\rho)&\text{by the Bridge Theorem;}\\ 
&=h_\top(\rho_H)+h_\top(\rho_{K/H}) & \text{by the Topological Addition Theorem;} \nonumber \\
&=h_\top(\dual{(\lambda_{X/Y})}\,)+h_\top({\dual{(\lambda_{Y})}}) &\text{by Proposition~\ref{properties_h_top}(1);} \nonumber \\
&=h_\alg( \lambda_{X/Y})+h_\alg(\lambda_{ Y})&\text{by the Bridge Theorem}.  
\end{align*}}
\end{proof}

\begin{question}
Is it possible to give a direct proof of $(\text{\sc at}_\alg)$ instead of deducing it from $(\text{\sc at}_\top)$ through the Bridge Theorem?
\end{question}

\appendix

\section{Bridge Theorem for the entropies of amenable group actions}\label{AppC}

In this appendix, we consider a discrete Abelian group $X$, its dual  compact Abelian group $K=\dual X$ (identifying $X$ with $\dual K$) and the following actions of an infinite amenable group $G$: $G\la{\lambda}X$ and $K\longra{\rho=\dual\lambda}G$.
Our goal is to prove the Bridge Theorem, that is, that $h_\alg(\lambda) = h_\top(\rho)$. 

\begin{remark}
 When $X$ is torsion (i.e., $\L^\fin(X)$ is cofinal in $\Pf^0(X)$ -- see Example~\ref{nmexalg}(3)) and so equivalently $K$ is profinite (i.e., $\L^{\op}(K)$ is cofinal in $\cov(K)$ -- see Example~\ref{nmextop2}(6)),
we have seen in Examples~\ref{Gactionsexalg}(4) and~\ref{Gactionsextop2}(4) that  the algebraic and the topological entropies may be written, respectively, only in terms of $\L^\fin(X)$ and $\L^{\op}(K)$, namely, for a right F\o lner net $\s$ for $G$, $h_\alg(\lambda)=h((\lambda_{\mathcal P})_{\restriction \L^\fin(X)},\s)$ and $h_\top(\rho)=h((\rho_\cov)_{\restriction \L^{\op}(K)},\s)$.
By using the isomorphism of normed monoids $\L^\fin(X)\cong \L^{\op}(K)$ given by $F\mapsto F^\perp$ from Example~\ref{nmextop2}(4), one can see that $(\lambda_{\mathcal P})_{\restriction \L^\fin(X)}$ and  $(\rho_\cov)_{\restriction \L^{\op}(K)}$ are conjugated and so conclude that $h_\alg(\lambda)=h_\top(\rho)$ by Proposition~\ref{wad}. 
This is done in~\cite{DGB-BT,GB,Weiss}.

Unfortunately, in general  this is not enough (think for example of the case when $X$ is torsion-free, where $\L^\fin(X)=\{0\}$ and $\L^{\op}(K)=\{K\}$) but the idea at the base of the above  strategy is still valid: to prove the Bridge Theorem one may try to suitably ``approximate'' the normed monoids ${\frak F}(X)$ and $\cov(K)$ by some other normed monoids: following an idea of \mbox{J.\ Peters}, we consider the monoids $\M_\alg(X)$ and $\M_\top(K)$ in \S\ref{Petersmon}, and verify that they are isomorphic.
\end{remark}

This appendix is organized as follows: 
\begin{enumerate}[\rm --]
   \item in \S\ref{harmonic_background} we recall the needed definitions and results from harmonic analysis;
   \item in \S\ref{Petersmon} we introduce the algebraic and the topological Peters monoids $\M_\alg(X)$ and $\M_\top(K)$, and the canonical left $G$-actions $\lambda_\alg$ and $\rho_\top$ on them induced by $\lambda$ and $\rho$ respectively, and we verify the isomorphism of normed monoids $\M_\alg(X)\cong \M_\top(K)$ that permits to see that $\lambda_\alg$ and $\rho_\top$ are conjugated; 
   \item in \S\ref{algebraic_approximation_part}  and \S\ref{topological_approximation_part} we prove that, respectively,
the left $G$-action $\lambda_\alg$ on $\M_\alg(X)$ and the left $G$-action $\lambda_{\mathfrak F}$ on ${\frak F}(X)$
are asymptotically equivalent, and the same holds true for   the left $G$-actions $\rho_\top$ on $\M_\top(K)$ and $\rho_\cov$ on $\cov(K)$;
   \item finally, in \S\ref{proof_BT}, we put all the pieces together to prove the Bridge Theorem.
\end{enumerate}

\subsection{Needed facts from harmonic analysis}\label{harmonic_background}

For the first part of this subsection let $\Gamma$ be an infinite\footnote{The results in this subsection remain true for finite locally compact Abelian groups (i.e., those that are both compact and discrete). Assuming $\Gamma$ to be infinite is just a technical simplification. Furthermore, the entropy (algebraic or topological) of $G$-actions on finite groups is always $0$, so this  restriction does not exclude any non-trivial case.} locally compact Abelian group and we fix a Haar measure $\mu$ on $\Gamma$. We denote by ${\frak U}(\Gamma)$ the family of symmetric compact neighborhoods of $0$ in $\Gamma$.
We are mainly interested in the case when $\Gamma=X$ is discrete, so 
$\mu$ is the counting measure $\mu(Y)=\sup\{|F|:F\in\Pf(Y)\}$ for $Y\subseteq X$ and $\mathfrak U(X)=\mathfrak F(X)$, and when $\Gamma=K$ is compact, in which case there is a unique Haar measure such that $\mu(K)=1$.

\smallskip
In this appendix we use the unit circle $\mathbb S$ in $\mathbb C$ in place of $\T$ when we consider the dual group $\dual\Gamma$, so that each $\gamma\in\dual\Gamma$ is a continuous complex-valued function $\gamma\colon\Gamma\to\mathbb S\subseteq\mathbb C$. This motivates us to use the multiplicative notation for the group $\dual\Gamma$,  with the sole exception of its neutral element that we write $ 0_{\dual\Gamma}$ (it should be kept in mind that this is the constant function $1$ on $\Gamma$).  

\medskip
Given a subset $E\subseteq \Gamma$, we denote by $\cl(E)$ its closure in $\Gamma$. For a Borel function $\phi\colon \Gamma\to \mathbb C$, we define its  {\em support} by: $\supp(\phi)=\cl{\{x\in \Gamma:\phi(x)\neq 0\}}\subseteq \Gamma$. Furthermore, we say that $\phi$ is:
\begin{enumerate}[\rm --]
   \item {\em absolutely integrable} if $\Vert\phi\Vert_1=\int_{x\in\Gamma}|\phi(x)| \d\mu(x)<\infty$. We denote by $L^1(\Gamma)$ the space of absolutely integrable functions, where we have identified those that coincide almost everywhere. By~\cite[\S{E7}]{Rudin}, $\Vert-\Vert_1$ induces a norm on $L^1(\Gamma)$ (so the triangular inequality holds);
   \item {\em positive} if it is real-valued (i.e., $\phi(\Gamma)\subseteq \R$) and $\phi(x)\geq 0$ (in $\R$), for all $x\in \Gamma$. Given a set $V$ of complex-valued functions on $\Gamma$, we denote by $V^+$ the subset of all positive functions in $V$;
   \item {\em positive-definite} if, for all $n\in\N_+$, $x_1,\dots,x_n\in \Gamma$ and $c_1,\dots,c_n\in\mathbb C$, the following holds:
\(\sum_{i,j=1}^nc_i\overline{c_j}\phi(x_i-x_j)\in\R_{\geq 0}\).
We let $\P(\Gamma)$ be the set of continuous and positive-definite functions on $\Gamma$. 
As every positive-definite function coincides with a continuous one almost everywhere (see~\cite[(3.21) and (3.35)]{Fol}), it is not restrictive to assume continuity.
\end{enumerate}
Obviously, $\dual\Gamma \subseteq \P(\Gamma)$. 

\smallskip
For the proof of the following properties of positive-definite functions we refer to~\cite[\S1.4.1]{Rudin}.

\begin{lemma}\label{positive_def}
In the above notation, let $\phi\in \P(\Gamma)$. Then $\phi^\star$, defined  by $\phi^\star(x)=\phi(-x)$ for all $x\in \Gamma$,  belongs to $\P(\Gamma)$ and 
\begin{enumerate}[(1)] 
    \item $\phi^\star(x)=\overline{\phi(x)}$  and $|\phi(x)|\leq \phi(0)$, for all $x\in \Gamma$. In particular, $\phi(0)=\overline {\phi(0)}\in \R_{\geq0}$;
    \item if $\phi\in\P(\Gamma)^+$, then $\phi=\phi^\star$. In particular, $\phi(0)$ is a maximum for $\phi$;
    \item if $\phi\in \P(\Gamma)^+$ has compact support, then $\Vert\phi\Vert_1\leq\phi(0)\cdot\mu(\supp(\phi))$.
\end{enumerate}
\end{lemma}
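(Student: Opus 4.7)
The plan is to derive all three items from the defining inequality of positive-definiteness, applied to a handful of carefully chosen small configurations of points and coefficients, essentially reproducing the classical argument in Rudin's book.

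First, that $\phi^\star\in\P(\Gamma)$ is immediate from the change of variables $\phi^\star(x_i-x_j)=\phi((-x_i)-(-x_j))$: testing positive-definiteness for $\phi^\star$ at $(x_1,\dots,x_n)$ with coefficients $(c_1,\dots,c_n)$ is the same as testing it for $\phi$ at $(-x_1,\dots,-x_n)$ with the same coefficients. Continuity is preserved under the reflection $x\mapsto -x$, so $\phi^\star$ is continuous.

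For item (1), I would exploit positive-definiteness with $n=2$, $x_1=0$, $x_2=x$, viewing the $2\times 2$ matrix $M=(\phi(x_i-x_j))_{i,j=1,2}$ as Hermitian positive semidefinite. Specialising $(c_1,c_2)=(1,0)$ gives $\phi(0)\in\R_{\geq 0}$. Choosing $(c_1,c_2)=(1,1)$ yields $2\phi(0)+\phi(x)+\phi(-x)\in\R_{\geq 0}$, and $(c_1,c_2)=(1,i)$ yields $2\phi(0)+i(\phi(-x)-\phi(x))\in\R_{\geq 0}$; together these force $\phi(-x)+\phi(x)\in\R$ and $\phi(-x)-\phi(x)\in i\R$, so $\phi(-x)=\overline{\phi(x)}$, which is exactly the identity $\phi^\star(x)=\overline{\phi(x)}$. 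The bound $|\phi(x)|\leq\phi(0)$ is then extracted from the non-negativity of $\det M=\phi(0)^2-\phi(x)\phi(-x)=\phi(0)^2-|\phi(x)|^2$.

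Item (2) is an immediate corollary: if $\phi(\Gamma)\subseteq \R_{\geq 0}$, then $\overline{\phi(x)}=\phi(x)$, so (1) gives $\phi^\star(x)=\phi(x)$, i.e., $\phi(-x)=\phi(x)$, and the inequality $|\phi(x)|\leq\phi(0)$ becomes the assertion that $\phi(0)$ is a maximum of $\phi$. For item (3), since $\phi\geq 0$ and $\phi$ vanishes off $\supp(\phi)$, one integrates using the pointwise upper bound from (2):
\[
\|\phi\|_1=\int_{\Gamma}\phi(x)\,\d\mu(x)=\int_{\supp(\phi)}\phi(x)\,\d\mu(x)\leq \phi(0)\cdot\mu(\supp(\phi)),
\]
which is finite since $\supp(\phi)$ is compact.

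There is no real obstacle here: the only bookkeeping issue is to keep the sign conventions straight in the quadratic form and to recognise $M$ as a Hermitian positive semidefinite matrix so that both $M_{12}=\overline{M_{21}}$ and $\det M\geq 0$ may be invoked. In fact, as the authors indicate by referring to \cite[\S1.4.1]{Rudin}, this is entirely standard, and the whole argument is just a matter of unwinding the definition.
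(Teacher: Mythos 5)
Your proof is correct. The paper gives no argument for this lemma — it simply refers to \cite[\S1.4.1]{Rudin} — and what you write out is precisely the standard argument from that reference: test positive-definiteness on the $2\times 2$ configuration $\{0,x\}$ with coefficients $(1,0)$, $(1,1)$, $(1,i)$ to get $\phi(0)\geq 0$ and $\phi(-x)=\overline{\phi(x)}$, then use $\det M\geq 0$ for the Hermitian PSD matrix $M=(\phi(x_i-x_j))$ to get $|\phi(x)|\leq\phi(0)$; items (2) and (3) then follow by specialization and by integrating the pointwise bound over the compact support. (A tiny bookkeeping note: with the paper's convention $\sum c_i\overline{c_j}\phi(x_i-x_j)$ and $(c_1,c_2)=(1,i)$, the cross-terms come out as $i(\phi(x)-\phi(-x))$ rather than $i(\phi(-x)-\phi(x))$, but this sign plays no role in the conclusion that $\phi(x)-\phi(-x)$ is purely imaginary.) The verification that $\phi^\star\in\P(\Gamma)$ via the substitution $x_i\mapsto -x_i$ is likewise sound.
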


If $\phi,\psi\in L^1(\Gamma)$, then $\int_{y\in \Gamma}|\phi(y)\psi(x-y)|\d\mu(y)<\infty$ for almost all $x\in \Gamma$ (see~\cite[\S 1.1.6(e)]{Rudin}), and so the {\em convolution} $\phi*\psi\colon \Gamma\to\mathbb C$ is defined almost everywhere by the following formula:
\[(\phi*\psi)(x)=\int_{y\in \Gamma}\phi(y)\psi(x-y)\d\mu(y).\]
 By~\cite[\S1.1.6]{Rudin}, if $\phi,\psi\in L^1(\Gamma)$, then $\Vert\phi*\psi\Vert_1\leq \Vert\phi\Vert_1\Vert\psi\Vert_1<\infty$, so that $\phi*\psi\in L^1(\Gamma)$.

Defining $\phi_x\colon \Gamma\to \mathbb C$ as $\phi_x(y)=\phi(xy)$ for all $y\in \Gamma$, we get $(\phi*\psi)(x)=\int_{y\in \Gamma}\phi(y)\psi_{-y}(x)\d\mu(y)$. For the proof of the following properties of convolutions we refer to~\cite[\S1.1.6, \S1.1.7]{Rudin}:

\begin{lemma}\label{norma_1}
In the above notation, let $\phi,$ $\psi,$ $\xi\in L^1(\Gamma)$. Then: 
\begin{enumerate}[ (1)]
   \item $(\phi*\psi)*\xi=\phi*(\psi*\xi)$, \ \ $\phi*\psi=\psi*\phi$\ and \ $\xi*(\phi+\psi)=\xi*\phi+\xi*\psi$;
   \item $\phi_x*\psi=\phi*\psi_x=(\phi*\psi)_x$, for all $x\in \Gamma$; in particular, 
$(\phi_1*\ldots * \phi_k)_x = (\phi_1)_{x_1}*\ldots * (\phi_k)_{x_k}$ whenever $x = x_1 + \ldots + x_k$, where $\phi_1, \ldots, \phi_k\in  L^1(\Gamma)$ and $x_1, \ldots, x_k\in \Gamma$;
   \item $\supp(\phi*\psi)\subseteq \supp(\phi)+\supp(\psi)$, so $\phi*\psi$ has compact support, provided $\phi$ and $\psi$ have compact support.
\end{enumerate}
\end{lemma}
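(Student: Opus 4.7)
The plan is to derive all three statements from two standard tools: Fubini's theorem and the translation invariance of the Haar measure $\mu$ on $\Gamma$. The bound $\Vert\phi*\psi\Vert_1\leq\Vert\phi\Vert_1\Vert\psi\Vert_1$ recalled just above the lemma guarantees that all the iterated integrals appearing in the proofs of (1) and (2) converge absolutely, so Fubini applies freely and each of the identities holds almost everywhere; they are therefore equalities in $L^1(\Gamma)$.

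For (1), I would first prove associativity by unwinding definitions into the double integral
\[
((\phi*\psi)*\xi)(x)=\int_{\Gamma}\!\int_{\Gamma}\phi(z)\psi(y-z)\xi(x-y)\,\d\mu(z)\,\d\mu(y),
\]
and then applying Fubini together with the substitution $w=y-z$ (justified by translation invariance of $\mu$) to rewrite the double integral as $(\phi*(\psi*\xi))(x)$. Commutativity comes from the substitution $y\mapsto x-y$ inside $\int\phi(y)\psi(x-y)\,\d\mu(y)$, again using invariance of $\mu$, and distributivity is immediate from linearity of the integral. For (2), the key calculation is the substitution $w=x+y$ in
\[
(\phi_x*\psi)(z)=\int\phi(x+y)\psi(z-y)\,\d\mu(y)=\int\phi(w)\psi((z+x)-w)\,\d\mu(w)=(\phi*\psi)(z+x),
\]
which gives $\phi_x*\psi=(\phi*\psi)_x$; the companion identity $\phi*\psi_x=(\phi*\psi)_x$ then follows by combining this with the commutativity from (1). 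The iterated version for $k$ factors is a direct induction on $k$: using associativity, write $\phi_1*\cdots*\phi_k=(\phi_1*\cdots*\phi_{k-1})*\phi_k$, apply the two-factor identity once to pull the translation by $x=y+x_k$ (with $y=x_1+\cdots+x_{k-1}$) into a translation by $y$ on the first convolution and by $x_k$ on the second, and then invoke the inductive hypothesis on the first factor.

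Part (3) is elementary and does not need integration theory. If $x\notin\supp(\phi)+\supp(\psi)$, then for every $y\in\Gamma$ either $y\notin\supp(\phi)$ (so $\phi(y)=0$) or $x-y\notin\supp(\psi)$ (so $\psi(x-y)=0$), forcing $(\phi*\psi)(x)=0$. Hence $\{x:(\phi*\psi)(x)\neq 0\}\subseteq\supp(\phi)+\supp(\psi)$, and taking closures yields $\supp(\phi*\psi)\subseteq\cl(\supp(\phi)+\supp(\psi))$. When both supports are compact, their Minkowski sum is the continuous image of the compact set $\supp(\phi)\times\supp(\psi)$ under the addition map $\Gamma\times\Gamma\to\Gamma$, hence compact and in particular closed, so the closure can be dropped. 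The only real nuisance throughout is that convolutions are defined merely almost everywhere, so the identities in (1) and (2) should be interpreted as equalities in $L^1(\Gamma)$; this is bookkeeping rather than a genuine obstacle, which is why the result is stated as a standard fact of harmonic analysis for which the references to \cite{Rudin} already cited by the authors are entirely sufficient.
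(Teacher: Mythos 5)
Your proof is correct. The paper itself does not prove this lemma: immediately before the statement it says ``For the proof of the following properties of convolutions we refer to~\cite[\S1.1.6, \S1.1.7]{Rudin}'', and no proof environment follows. So your route is genuinely different in that you supply a complete, self-contained argument via Fubini and translation invariance of the Haar measure where the authors are content to cite the literature. What your argument buys is transparency about exactly which measure-theoretic facts are being invoked (absolute integrability of the iterated integrals, unimodularity/inversion-invariance of $\mu$), at the cost of roughly a paragraph of bookkeeping that the authors deliberately outsourced.

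Two small remarks, neither a gap. First, in part (2) the calculation $(\phi_x * \psi)(z) = (\phi*\psi)(z+x)$ uses $\phi_x(y)=\phi(x+y)$; the paper writes $\phi_x(y)=\phi(xy)$, which is a typographical slip (multiplicative notation in an otherwise additive development, as the display $(\phi*\psi)(x)=\int\phi(y)\psi_{-y}(x)\d\mu(y)$ immediately after confirms), so your reading is the intended one. Second, in part (3) your proof literally yields $\supp(\phi*\psi)\subseteq\cl(\supp(\phi)+\supp(\psi))$, and you correctly note that the closure is superfluous once both supports are compact, which is the only case the paper uses (Corollary~\ref{prescribed_support} and the computation with $\psi_n*\psi_n$ over the discrete group $X$). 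Taking the paper's item (3) at face value without any compactness hypothesis, the closure really is needed on the right-hand side, so your phrasing is in fact more careful than the cited statement.
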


For the sake of completeness we include a proof of the following properties needed in the sequel.

\begin{lemma}\label{norma-}
Let $\phi,$ $\psi\in L^1(\Gamma)^+$. Then:
\begin{enumerate}[ (1)]
   \item $\Vert\phi-\psi\Vert_1\geq \Vert\phi\Vert_1-\Vert\psi\Vert_1$;
   \item $\Vert\phi*\psi\Vert_1=\Vert\phi\Vert_1\Vert\psi\Vert_1$;
   \item $\phi*\phi^\star$ is positive-definite, provided $\phi\in L^2(\Gamma)$ (that is, $\int_{x\in\Gamma}|\phi(x)|^2\d\mu(x)<\infty$);
   \item $(\phi*\psi)(x)\leq \phi(0)\cdot \Vert\psi\Vert_1$, provided $\phi\in\mathfrak P(\Gamma)$; 
   \item if  $\phi_1, \ldots, \phi_k\in  L^1(\Gamma)^+$ and $\psi_1, \ldots, \psi_k\in  L^1(\Gamma)^+$, with $\phi_j \leq \psi_j$ for {$j=1,\ldots, k$}, then $\phi_1*\ldots * \phi_k  \leq \psi_1*\ldots * \psi_k $.
\end{enumerate}
\end{lemma}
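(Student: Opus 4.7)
The plan is to prove the five items essentially in the order given, all by elementary manipulations, using only positivity, the Fubini theorem, and Lemma~\ref{positive_def}. For item (1), I would simply invoke the reverse triangle inequality: since $\Vert\phi\Vert_1 = \Vert(\phi-\psi)+\psi\Vert_1 \leq \Vert\phi-\psi\Vert_1 + \Vert\psi\Vert_1$, rearranging gives the claim (notice positivity is not even needed).

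For (2), I would exploit positivity to remove the absolute values that cause the inequality in $\Vert\phi*\psi\Vert_1 \leq \Vert\phi\Vert_1\Vert\psi\Vert_1$. Indeed, $\phi*\psi$ is positive pointwise almost everywhere (being an integral of a positive integrand), so
\[
\Vert\phi*\psi\Vert_1 = \int_{\Gamma}\!\int_{\Gamma}\phi(y)\psi(x-y)\,\d\mu(y)\d\mu(x) = \int_{\Gamma}\phi(y)\!\int_{\Gamma}\psi(x-y)\,\d\mu(x)\d\mu(y) = \Vert\phi\Vert_1\Vert\psi\Vert_1,
\]
where Fubini applies by positivity and translation invariance of $\mu$ gives the inner equality. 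For (4), the key ingredient is that a positive-definite $\phi$ satisfies $|\phi(y)|\leq \phi(0)$ by Lemma~\ref{positive_def}(1); combined with positivity of $\psi$ one gets
\[
(\phi*\psi)(x) = \int_{\Gamma}\phi(y)\psi(x-y)\,\d\mu(y) \leq \phi(0)\int_{\Gamma}\psi(x-y)\,\d\mu(y) = \phi(0)\,\Vert\psi\Vert_1.
\]

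The slightly less mechanical step will be (3), where I want to check positive-definiteness of $\phi*\phi^\star$ for $\phi\in L^1(\Gamma)\cap L^2(\Gamma)$. The plan is to recognise the convolution as a Gramian. Using that $\phi$ is real-valued (so $\phi^\star(z)=\phi(-z)$) and translation-invariance of $\mu$, a direct computation gives, for all $n\in\N_+$, all $x_1,\dots,x_n\in\Gamma$ and $c_1,\dots,c_n\in\mathbb C$:
\[
\sum_{i,j=1}^{n}c_i\overline{c_j}\,(\phi*\phi^\star)(x_i-x_j) = \int_{\Gamma}\sum_{i,j=1}^n c_i\overline{c_j}\,\phi(u+x_i)\phi(u+x_j)\,\d\mu(u) = \int_{\Gamma}\Bigl|\sum_{i=1}^{n}c_i\phi(u+x_i)\Bigr|^2\d\mu(u) \geq 0.
\]
The $L^2$-hypothesis guarantees that the integrand is actually integrable, and continuity of $\phi*\phi^\star$ (standard for the convolution of two $L^2$-functions via Young's inequality, or inherited from Lemma~\ref{norma_1}) completes the verification that $\phi*\phi^\star\in\mathfrak P(\Gamma)$.

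Finally, for (5) I would argue by induction on $k$. The base case $k=2$ is the pointwise monotonicity of convolution on positive functions: since $\phi_1,\phi_2\geq 0$ and $\phi_j\leq\psi_j$,
\[
(\phi_1*\phi_2)(x)=\int_{\Gamma}\phi_1(y)\phi_2(x-y)\,\d\mu(y) \leq \int_{\Gamma}\psi_1(y)\psi_2(x-y)\,\d\mu(y) = (\psi_1*\psi_2)(x).
\]
For the inductive step, note that $\phi_1*\ldots*\phi_k$ remains positive (by the same argument that makes $\phi_1*\phi_2$ positive), so one can apply the base case to the pair $(\phi_1*\ldots*\phi_{k-1},\phi_k)\leq(\psi_1*\ldots*\psi_{k-1},\psi_k)$, where the first component inequality is the inductive hypothesis. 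The only subtle point here is that we silently use associativity and commutativity of $*$ from Lemma~\ref{norma_1}(1) to avoid worrying about the order of factors; this presents no obstacle.
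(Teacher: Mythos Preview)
Your proposal is correct and follows essentially the same approach as the paper. The paper's proof is terser: it handles (1) pointwise rather than via the norm triangle inequality, cites Rudin \S1.4.2 for (3), and dismisses (5) as ``clear''; your explicit Gramian computation for (3) and the induction for (5) simply spell out what those references contain.
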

\begin{proof}
(1) For all $x\in\Gamma$, we have $|\phi(x)-\psi(x)|\geq |\phi(x)|-|\psi(x)|$. Furthermore, by linearity of integration, 
$$\Vert\phi-\psi\Vert_1=\int_{x\in\Gamma}|\phi(x)-\psi(x)|\d\mu(x)\geq\int_{x\in\Gamma}|\phi(x)|\d\mu(x)-\int_{x\in\Gamma}|\psi(x)|\d\mu(x)=\Vert\phi\Vert_1-\Vert\psi\Vert_1.$$

(2) By the proof of~\cite[Theorem 1.1.6(e)]{Rudin} we have that $\int_{x\in \Gamma} \left(\int_{y\in \Gamma}|\phi(x-y)\psi(y)|\d\mu(y)\right)\d\mu(x) = \Vert\phi\Vert_1\Vert\psi\Vert_1$ (this is a consequence of the Fubini Theorem). It remains to take into account that, when $\phi$ and $\psi$ are positive, then 
\begin{align*}
 \int_{x\in \Gamma} \left(\int_{y\in \Gamma}|\phi(x-y)\psi(y)|\d\mu(y)\right) \d\mu(x) &=  \int_{x\in \Gamma} \left( \int_{y\in \Gamma} \phi(x-y)\psi(y) \d\mu(y)\right) \d\mu(x)= \\
& = \int_{x\in \Gamma} \left| \int_{y\in \Gamma} \phi(x-y)\psi(y) \d\mu(y)\right| \d\mu(x) = \Vert\phi*\psi\Vert_1.
\end{align*}

\smallskip
(3) follows by~\cite[\S1.4.2]{Rudin}  and (5) is clear.

\smallskip
(4) By Lemma~\ref{positive_def}(1), we have $\phi(\Gamma)\subseteq [0,\phi(0)]\subseteq \R_{\geq0}$, so by Lemma~\ref{norma_1}(1),
$$(\phi*\psi)(x)=(\psi*\phi)(x)=\int_{y\in \Gamma}\phi(x-y)\psi(y)\d\mu(y)\leq \phi(0)\cdot\int_{y\in \Gamma}\psi(y)\d\mu(y)=\phi(0)\cdot \Vert\psi\Vert_1.\qedhere$$
\end{proof}

As a consequence of the above lemmas, we get:

\begin{corollary}\label{prescribed_support}
For each $U\in {\frak U}(\Gamma)$, there is a non-zero $\phi\in L^1(\Gamma)^+\cap \P(\Gamma)$ with $\supp(\phi)\subseteq U$.
\end{corollary}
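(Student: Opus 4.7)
The plan is to use the classical construction $\phi = \psi * \psi^\star$, where $\psi$ is the characteristic function of a small symmetric compact neighborhood of $0$.

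First I would use the continuity of addition in $\Gamma$ at $0$, together with the fact that $\Gamma$ is locally compact, to find $V \in {\frak U}(\Gamma)$ with $V+V \subseteq U$. (Start with an arbitrary symmetric compact neighborhood $V_0 \subseteq U$; by continuity of addition, one can shrink it to a symmetric compact neighborhood $V$ with $V+V \subseteq V_0$.) Next, define
\[
\psi = \chi_V \colon \Gamma \to \R, \qquad \phi = \psi * \psi^\star.
\]
Since $V$ is compact, $\mu(V) < \infty$, so $\psi \in L^1(\Gamma)^+ \cap L^2(\Gamma)^+$; moreover $\psi^\star = \psi$ because $V = -V$.

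The function $\phi$ will have all the required properties by combining the lemmas of \S\ref{harmonic_background}. Specifically: (i) $\phi \in L^1(\Gamma)$ by the subadditivity $\Vert\phi\Vert_1 \leq \Vert\psi\Vert_1^2 < \infty$ from \S1.1.6 of Rudin (used in the paragraph before Lemma \ref{norma_1}), and $\phi \geq 0$ as a convolution of nonnegative functions; (ii) $\phi$ is positive-definite by Lemma \ref{norma-}(3), since $\psi \in L^2(\Gamma)$; (iii) by Lemma \ref{norma_1}(3),
\[
\supp(\phi) \subseteq \supp(\psi) + \supp(\psi^\star) \subseteq V + V \subseteq U;
\]
(iv) by Lemma \ref{norma-}(2),
\[
\Vert\phi\Vert_1 = \Vert\psi\Vert_1 \cdot \Vert\psi^\star\Vert_1 = \mu(V)^2 > 0,
\]
because $V$, being a neighborhood of $0$, has positive Haar measure; hence $\phi$ is non-zero.

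The only delicate point is the continuity requirement built into the definition of $\P(\Gamma)$. Here one can either invoke the remark already made in \S\ref{harmonic_background} (that every positive-definite function agrees almost everywhere with a continuous one, so continuity may be assumed without loss of generality), or observe directly that $\phi = \psi * \psi^\star$ with $\psi \in L^2(\Gamma)$ is genuinely continuous, because the map $x \mapsto \psi_x$ is continuous from $\Gamma$ to $L^2(\Gamma)$ and $\phi(x) = \langle \psi_{-x}, \psi\rangle_{L^2}$. Either way, $\phi \in L^1(\Gamma)^+ \cap \P(\Gamma)$, $\supp(\phi) \subseteq U$, and $\phi \neq 0$, which completes the proof.
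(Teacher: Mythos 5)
Your argument is correct and follows the same overall strategy as the paper: build $\phi = g * g^\star$ with $g$ supported in a small symmetric neighborhood $V$ satisfying $V - V \subseteq U$, then read off integrability, positive-definiteness, support and non-vanishing from the lemmas of \S\ref{harmonic_background}. The only genuine difference is the choice of seed: you take $g = \chi_V$, while the paper instead uses the Tychonoff property of $\Gamma$ to pick a \emph{continuous} $[0,1]$-valued bump $f$ with $f(0)=1$ and $\supp(f) \subseteq V$, setting $\phi = f * f^\star$. The paper's choice makes continuity of $\phi$ immediate (a convolution of continuous compactly supported functions is continuous), whereas yours needs the extra fact that the convolution of two $L^2$ functions is continuous — which you correctly supply as the second of your two remedies, and which is indeed the sound one. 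Your first remedy (replace $\phi$ by an a.e.-equal continuous representative) works as well but needs one more sentence: a continuous function vanishing a.e.\ on the open set $\Gamma \setminus U$ must vanish identically there, since any non-empty open set has positive Haar measure, so the corrected representative still has support inside $U$. A small side benefit of your route is that it makes Corollary~\ref{chichi}(1) fall out of the same computation rather than requiring a separate verification.
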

\begin{proof}
Choose $V\in{\frak U}(\Gamma)$ such that $V-V\subseteq U$ and an open subset $V$ of $\Gamma$ with $0\in V'\subseteq V$. As $\Gamma$ is Tychonoff, there is a continuous function $f\colon \Gamma\to [0,1]$ such that $f(0)=1$ and $\supp(f)\subseteq \cl(V')\subseteq V$, so that $\supp( f^\star)\subseteq -V$.  Letting $\phi=f* f^\star$, we have $\supp(\phi)\subseteq V-V\subseteq U$ and $\phi\in L^1(\Gamma)$ by Lemmas~\ref{norma_1}(3) and~\ref{norma-}(2), $\phi\in\P(\Gamma)$ by Lemma~\ref{norma-}(3), and it is easy to see that $\phi$ is non-zero and positive.
\end{proof}
 
For a subset $C$ of $\Gamma$ we denote by $\chi_C\colon\Gamma\to \mathbb C$ the {\em characteristic function} of $C$ (i.e., $\chi_C(x)=1$ if $x\in C$ and $\chi_C(x)=0$ otherwise). If $C$ is compact and symmetric, then $\chi_C\in L^1(\Gamma)^+$ and $\chi_C=(\chi_C)^\star$. By Lemma~\ref{norma-}(1) and (4), we get item (1) in the following corollary. Item (2) follows from Lemma~\ref{norma-}(2) and $\Vert\chi_C\Vert_1=\mu(C)$.

\begin{corollary}\label{chichi}
Let $C\in {\frak U}(\Gamma)$, then:
\begin{enumerate}[ (1)] 
   \item $\chi_C*\chi_C\in L^1(\Gamma)^+\cap \P(\Gamma)$;
   \item $\Vert\chi_C*\chi_C\Vert_1=\mu(C)^2$.
\end{enumerate}
\end{corollary}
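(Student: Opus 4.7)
The plan is to verify both assertions by assembling facts that have just been established about convolutions, using that $\chi_C$ is a bounded, compactly supported, non-negative Borel function. Since $C \in \mathfrak U(\Gamma)$ is a symmetric compact neighborhood of $0$, one has $\chi_C = \chi_C^\star$ and $\chi_C \in L^1(\Gamma)^+ \cap L^2(\Gamma)$, the latter because $\|\chi_C\|_1 = \mu(C) < \infty$ and $\chi_C$ is bounded by $1$.

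For item (1), positivity of $\chi_C * \chi_C$ is immediate from the defining integral, as the integrand is a product of non-negative functions. Integrability is built into Lemma~\ref{norma_1}, since the convolution of two $L^1$-functions lies in $L^1$. To obtain positive-definiteness, I would appeal to Lemma~\ref{norma-}(3) with $\phi = \chi_C \in L^2(\Gamma)$: this produces $\chi_C * \chi_C^\star \in \mathfrak P(\Gamma)$, which coincides with $\chi_C * \chi_C$ because of the symmetry $\chi_C^\star = \chi_C$. Continuity is not automatic for an $L^1$-convolution, but, as recalled right after the definition of $\mathfrak P(\Gamma)$, every positive-definite function agrees almost everywhere with a continuous one, so we may identify $\chi_C * \chi_C$ with its continuous representative.

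For item (2), the equality $\|\chi_C * \chi_C\|_1 = \mu(C)^2$ is a one-line application of Lemma~\ref{norma-}(2) with $\phi = \psi = \chi_C$, combined with the elementary identity $\|\chi_C\|_1 = \int_\Gamma \chi_C(x)\, \d\mu(x) = \mu(C)$.

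I do not anticipate any real obstacle; the whole statement is bookkeeping on top of the lemmas proved immediately before it. The only mildly delicate point is the reconciliation of positive-definiteness (furnished by Lemma~\ref{norma-}(3)) with the continuity clause in the definition of $\mathfrak P(\Gamma)$, which is handled exactly by the remark alluded to above.
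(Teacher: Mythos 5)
Your proof is correct and in substance matches the paper's (very terse) justification: item (2) is an immediate consequence of Lemma~\ref{norma-}(2) together with $\Vert\chi_C\Vert_1 = \mu(C)$, and item (1) reduces to Lemma~\ref{norma-}(3) applied to $\phi = \chi_C \in L^2(\Gamma)$, using the symmetry $\chi_C^\star = \chi_C$ to rewrite $\chi_C * \chi_C^\star = \chi_C * \chi_C$. Your handling of continuity via the remark after the definition of $\mathfrak P(\Gamma)$ is the standard move and exactly what the paper intends.

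One small observation: the paper's text preceding the corollary cites ``Lemma~\ref{norma-}(1) and (4)'' as the source for item (1), but neither of those parts furnishes positive-definiteness; this appears to be a typo for (3) (and possibly (2) for $L^1$-membership). Your identification of Lemma~\ref{norma-}(3) as the operative input is the correct reading. Also, you could avoid the ``identify with the continuous representative'' step entirely by noting that the convolution of two $L^2$-functions on a locally compact group is automatically continuous (even in $C_0(\Gamma)$), but the route via the remark in the paper is equally valid and stays within what the paper has explicitly recorded.
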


\subsubsection{The Fourier transform}

The {\em Fourier transform} of $\phi\in L^1(\Gamma)$ is defined as 
\[
\widehat\phi\colon\dual\Gamma\to \mathbb C,\quad\text{with }\quad\widehat\phi(\gamma)= (\phi*\gamma)(0) =\int_{y\in \Gamma}\phi(y)\gamma(-y) \d\mu(y)=\int_{y\in \Gamma}\phi(y)\gamma^{-1}(y) \d\mu(y)= \int_{y\in \Gamma}\phi(y)\overline{\gamma(y)} \d\mu(y).
\]
Then $\widehat\phi$ is a bounded continuous function on $\dual \Gamma$ that vanishes at infinity (see~\cite[(4.13)]{Fol}). In case $\phi$ is positive-definite, we can say more:

\begin{lemma}\label{first} 
If $\phi\in L^1(\Gamma)^+\cap \mathfrak P(\Gamma)$, then $\widehat\phi\in L^1(\dual\Gamma)^+\cap \mathfrak P(\dual\Gamma)$.
\end{lemma}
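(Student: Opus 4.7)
The plan is to reduce all three required properties of $\widehat\phi$---continuity, positive-definiteness and integrability---to two standard black boxes of harmonic analysis on a locally compact Abelian group: Bochner's theorem characterising continuous positive-definite functions as Fourier–Stieltjes transforms of positive finite measures, and the Plancherel--Bochner inversion theorem (see \cite[\S1.4--1.5]{Rudin}). Note first that the continuity of $\widehat\phi$ and the fact that it vanishes at infinity have already been recalled immediately before the statement, so I only need to establish $\widehat\phi \in L^1(\dual\Gamma)^+$ and $\widehat\phi \in \mathfrak{P}(\dual\Gamma)$.

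For positive-definiteness on $\dual\Gamma$, I would exploit only the positivity of $\phi$. Since $\phi \in L^1(\Gamma)^+$, the assignment $A\mapsto \int_A \phi\,d\mu$ is a positive finite Borel measure on $\Gamma$, and by the very definition of the Fourier transform
\[
\widehat\phi(\gamma) = \int_{\Gamma} \overline{\gamma(x)}\,\phi(x)\, d\mu(x)
\]
is nothing else than the Fourier--Stieltjes transform of this measure. Applying Bochner's theorem on $\dual\Gamma$ (whose Pontryagin dual is $\Gamma$ by Pontryagin--van Kampen duality, see \S\ref{subs:duality}), it follows that $\widehat\phi \in \mathfrak{P}(\dual\Gamma)$.

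For positivity and integrability, I would exploit the positive-definiteness of $\phi$. By Bochner's theorem on $\Gamma$, there is a positive finite Borel measure $\nu$ on $\dual\Gamma$ such that $\phi(x) = \int_{\dual\Gamma}\gamma(x)\,d\nu(\gamma)$ for all $x\in\Gamma$. Since additionally $\phi\in L^1(\Gamma)$, the Plancherel--Bochner inversion theorem identifies $\nu$ as the measure $\widehat\phi\, d\mu_{\dual\Gamma}$, with the dual Haar measure $\mu_{\dual\Gamma}$ normalised so that the inversion formula holds. In particular, $\widehat\phi$ is the Radon--Nikodym derivative of a positive finite measure, hence $\widehat\phi \geq 0$ almost everywhere and $\Vert\widehat\phi\Vert_1 = \nu(\dual\Gamma) <\infty$. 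Since $\widehat\phi$ is moreover continuous, the almost-everywhere inequality upgrades to $\widehat\phi \geq 0$ pointwise on $\dual\Gamma$.

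The only real obstacle is a bookkeeping one: to invoke the inversion theorem cleanly I must fix the Haar measure on $\dual\Gamma$ to be the Plancherel dual of the Haar measure $\mu$ on $\Gamma$, so that the identity $d\nu = \widehat\phi\, d\mu_{\dual\Gamma}$ holds without an extra constant. Once this normalisation is adopted (and it is harmless for our purposes, since throughout the appendix all statements about $\widehat\phi$ are up to normalisations of Haar measures), the proof is immediate from the two cited theorems. An alternative, more pedestrian path avoids Bochner by first establishing the conclusion for the dense subclass of functions of the form $\psi * \psi^\star$ with $\psi\in L^1(\Gamma)\cap L^2(\Gamma)$ (Lemma~\ref{norma-}(3) and Corollary~\ref{chichi} already produce such building blocks) and then approximating; but this amounts to reproving Plancherel, so the cleanest route remains the direct appeal to \cite{Rudin}.
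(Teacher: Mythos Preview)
Your proposal is correct and follows essentially the same route as the paper: both use Bochner's theorem (applied with the positive measure $\phi\,d\mu$ on $\Gamma$) to obtain positive-definiteness of $\widehat\phi$, and the Fourier inversion theorem to obtain $\widehat\phi\in L^1(\dual\Gamma)^+$. The only cosmetic difference is that the paper is explicit about the sign convention---Bochner yields directly that $(\widehat\phi)^\star$ is positive-definite, and one then passes to $\widehat\phi$ via Lemma~\ref{positive_def}---whereas you absorb this into the phrase ``Fourier--Stieltjes transform''.
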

\begin{proof} Pick $\phi \in L^1(\Gamma)^+\cap \mathfrak P(\Gamma)$. Then $\widehat\phi\in L^1(\dual\Gamma)$ by the Fourier Inversion Theorem (see~\cite[page 22]{Rudin}) and $\widehat\phi$ is positive by~\cite[Corollary 4.23]{Fol}. 

Let now $\mu_\phi$ be the non-negative and bounded (as $\phi\in L^1(\Gamma)^+$) regular measure defined on a generic Borel subset $E$ of $\Gamma$ by $\mu_\phi(E)=\int_{x\in E}\phi(x)\d\mu(x)$. According to~\cite[\S1.3.4]{Rudin}, for every $\gamma \in \dual\Gamma$, 
$$\widehat\phi(\gamma) =\int_{x\in\Gamma}\gamma(-x)\d\mu_\phi(x) = {\int_{x\in\Gamma}\gamma(x)^{-1}\d\mu_\phi(x)}= \int_{x\in\Gamma}\gamma^{-1}(x)\d\mu_\phi(x).$$
Trading $\gamma$ for $\gamma^{-1}$ gives  $(\widehat\phi)^\star (\gamma) =\widehat\phi(\gamma^{-1}) =   \int_{x\in\Gamma}\gamma(x)\d\mu_\phi(x).$
By the Bochner Theorem (see~\cite[page 19]{Rudin}), $({\widehat\phi})^\star\in \mathfrak P(\dual\Gamma)$.  By Lemma \ref{positive_def}, this proves that ${\widehat\phi}\in \mathfrak P(\dual\Gamma)$. 
\end{proof}

As a consequence of the above result, and in particular of the fact that $\widehat\phi$ is positive and positive-definite, we get that $\widehat\phi$ is symmetric (i.e., $\widehat\phi=({\widehat\phi})^\star$) by Lemma~\ref{positive_def}(2).

\smallskip
A proof of the following fact can be found in \cite[Theorem (32.9)]{HR}, but we offer here a self-contained proof for the sake of completeness, since we use it in Lemma~\ref{Peters-top}.

\begin{lemma}\label{posdef}
If $\phi,\psi\in L^1(\Gamma)$ are positive-definite, then $\phi\cdot\psi$ is positive-definite.
\end{lemma}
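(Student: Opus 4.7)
The plan is to translate the statement into a linear-algebraic fact about positive semi-definite matrices and reduce to the Schur product theorem (the Hadamard/entrywise product of two Hermitian PSD matrices is Hermitian PSD). Fix $n\in\N_+$, elements $x_1,\ldots,x_n\in \Gamma$ and scalars $c_1,\ldots,c_n\in \mathbb C$; the goal is to show that $\sum_{i,j}c_i\overline{c_j}\,(\phi\cdot\psi)(x_i-x_j)\geq 0$. Introduce the matrices $A=[\phi(x_i-x_j)]_{i,j}$ and $B=[\psi(x_i-x_j)]_{i,j}$. Using Lemma~\ref{positive_def}(1), which gives $\phi(-x)=\overline{\phi(x)}$ and similarly for $\psi$, one checks that $A$ and $B$ are Hermitian; the positive-definiteness of $\phi$ and $\psi$ is then exactly the statement that $A$ and $B$ are positive semi-definite. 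Since the entries of the Hadamard product $A\circ B$ are $\phi(x_i-x_j)\psi(x_i-x_j)$, the required inequality is equivalent to $A\circ B$ being positive semi-definite.

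To carry out the Schur-product step in this specific setting (without invoking it as a black box), I would use the spectral decomposition of the Hermitian positive semi-definite matrix $B$, writing $B=\sum_{k}\lambda_k v_k v_k^{\ast}$ with $\lambda_k\geq 0$ and $v_k=(v_{k,1},\ldots,v_{k,n})^{T}\in\mathbb C^n$. Substituting $\psi(x_i-x_j)=\sum_k \lambda_k v_{k,i}\overline{v_{k,j}}$ yields
\[
\sum_{i,j} c_i\overline{c_j}\,\phi(x_i-x_j)\psi(x_i-x_j)=\sum_{k}\lambda_k \sum_{i,j}(c_i v_{k,i})\,\overline{(c_j v_{k,j})}\,\phi(x_i-x_j),
\]
and each inner sum is non-negative by applying the positive-definiteness of $\phi$ to the vector $(c_i v_{k,i})_{i=1}^{n}\in\mathbb C^n$. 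Because $\lambda_k\geq 0$ for every $k$, the whole right-hand side is non-negative, which proves the lemma.

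I do not foresee a serious obstacle: the only slightly delicate point is verifying Hermiticity (which is immediate from Lemma~\ref{positive_def}(1)) and keeping track of the reindexing when substituting the spectral decomposition. As an alternative route, one could appeal to Bochner's theorem (already used implicitly in Lemma~\ref{first}): writing $\phi=\widehat{\mu}$ and $\psi=\widehat{\nu}$ for positive bounded regular measures $\mu,\nu$ on $\dual{\Gamma}$, the product $\phi\cdot\psi$ is the Fourier transform of the convolution $\mu\ast\nu$, which is again a positive bounded measure, whence $\phi\cdot\psi\in\P(\Gamma)$. The linear-algebraic approach above is, however, shorter and self-contained, matching the style of the appendix.
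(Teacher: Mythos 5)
Your proof is correct, but it takes a genuinely different route from the one in the paper. The paper's argument goes through Bochner's theorem in both directions: it writes $\phi=\int_{\dual\Gamma}\gamma(\cdot)\,\d\mu(\gamma)$ and $\psi=\int_{\dual\Gamma}\gamma(\cdot)\,\d\nu(\gamma)$ for non-negative bounded regular measures $\mu,\nu$ on $\dual\Gamma$, observes that $\phi\cdot\psi$ is then represented by the convolution $\mu*\nu$ (again a non-negative bounded measure), and invokes the converse direction of Bochner's theorem to conclude positive-definiteness. You sketch exactly this route as your ``alternative,'' so you independently hit on the paper's proof; your primary argument, however, is the linear-algebraic one: reduce to the Schur (Hadamard) product theorem for Hermitian positive semi-definite matrices by forming the Gram-type matrices $A=[\phi(x_i-x_j)]$ and $B=[\psi(x_i-x_j)]$, establish Hermiticity from $\phi(-x)=\overline{\phi(x)}$ (Lemma~\ref{positive_def}(1)), and then expand $B=\sum_k\lambda_k v_kv_k^\ast$ spectrally to write $\sum_{i,j}c_i\overline{c_j}(\phi\cdot\psi)(x_i-x_j)=\sum_k\lambda_k\sum_{i,j}(c_iv_{k,i})\overline{(c_jv_{k,j})}\phi(x_i-x_j)\geq 0$. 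This is a complete, self-contained argument. The trade-off is one of taste and dependencies: your approach is more elementary (finite-dimensional spectral theorem only) and avoids the measure-theoretic apparatus, whereas the paper's Bochner route is shorter given that Bochner's theorem is already in play in the surrounding appendix (e.g.\ in the proof of Lemma~\ref{first}) and keeps all the positivity arguments in the same harmonic-analytic framework. Both are valid; yours is arguably the cleaner ``from scratch'' proof, while the paper's is the more economical one in context.
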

\begin{proof}
By the Bochner Theorem, there exists non-negative measures $\mu$ and $\nu$ on $\dual\Gamma$ such that, for every $x\in\Gamma$,
\begin{equation}\label{Eq:July2}
\phi(x)=\int_{\gamma\in\dual\Gamma}\gamma(x) \d\mu(\gamma) \ \mbox{ and } \ \psi(x)=\int_{\gamma\in\dual\Gamma}\gamma(x) \d\nu(\gamma).
\end{equation}
%
After multiplication in \eqref{Eq:July2} one obtains
$$
\phi(x)\psi(x)=\int_{\gamma\in\dual\Gamma}\gamma(x) \d\mu(\gamma)\int_{\gamma'\in\dual\Gamma}\gamma'(x) \d\nu(\gamma')
=\int_{\gamma\in\dual\Gamma} \left(\int_{\gamma'\in\dual\Gamma}\gamma(x)\gamma'(x)\d\nu(\gamma')\right) \d\mu(\gamma)
=\int_{\gamma\in\dual\Gamma}\left(\int_{\gamma'\in\dual\Gamma}(\gamma\cdot\gamma')(x)\d\nu(\gamma') \right)\d\mu(\gamma).
$$
This chain of equalities can extend by making use of the standard topological isomorphism $\omega_\Gamma \colon \Gamma \to \Gamma^{\wedge\wedge}$: 
$$
\int_{\gamma\in\dual\Gamma}\left( \int_{\gamma'\in\dual\Gamma}(\gamma\cdot\gamma')(x)\d\nu(\gamma') \right)\d\mu(\gamma)
=\int_{\gamma\in\dual\Gamma}\left(\int_{\gamma'\in\dual\Gamma} \omega_\Gamma(x)(\gamma\cdot\gamma')) \d\nu(\gamma')\right)\d\mu(\gamma).
$$
At this point we can make use of the convolution $\mu * \nu$, that is again a non-negative measure (see equation (2) from~\cite[\S1.3.1]{Rudin}), which allows us to write (see (4) from~\cite[\S1.3.1]{Rudin})
$$
\int_{\gamma\in\dual\Gamma}\left(\int_{\gamma'\in\dual\Gamma} \omega_\Gamma(x)(\gamma\cdot\gamma')) \d\nu(\gamma')\right)\d\mu(\gamma)
=\int_{\gamma\in\dual\Gamma}\omega_\Gamma(x)(\gamma)\d(\mu*\nu)(\gamma) =\int_{\gamma\in\Gamma}\gamma(x)\d(\mu*\nu)(\gamma).
$$
This gives,  for every $x\in \Gamma$, 
$$(\phi\cdot\psi)(x)=\int_{\gamma\in\dual\Gamma}\gamma(x)\d(\mu*\nu)(\gamma).$$
From the opposite implication of the Bochner Theorem we deduce that that $\phi\cdot\psi$ is positive-definite.
\end{proof}

 Next we see how the Fourier transform behaves with respect to product and convolution.

\begin{lemma}\label{cdot*} Let $\phi,\psi\in L^1(\Gamma)$. Then:
\begin{enumerate}[(1)]
  \item $\widehat{\phi * \psi} =  \widehat \phi \cdot  \widehat \psi$;
  \item $\widehat{\phi \cdot \psi} = \widehat \phi  * \widehat \psi$ if moreover $\phi,\psi\in L^2(\Gamma)$.
\end{enumerate}
\end{lemma}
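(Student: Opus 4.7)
The plan is to prove part (1) directly by Fubini's theorem plus translation invariance of the Haar measure, and then to deduce part (2) from (1) via Pontryagin--van Kampen duality on $\dual\Gamma$, combined with a density argument to handle the integrability issues that arise.

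For part (1), the strategy is to unfold the definitions:
\[
\widehat{\phi*\psi}(\gamma) = \int_{x\in\Gamma}\int_{y\in\Gamma}\phi(y)\,\psi(x-y)\,\overline{\gamma(x)}\,d\mu(y)\,d\mu(x).
\]
Since $|\gamma(x)|=1$ on $\Gamma$, the absolute value of the double integrand is bounded by $|\phi(y)||\psi(x-y)|$, and the iterated integral of this majorant equals $\|\phi\|_1\|\psi\|_1<\infty$ by Lemma~\ref{norma-}(2) (or directly by Fubini on $L^1$). Fubini thus permits interchanging the order of integration. After swapping, make the change of variable $z=x-y$ using translation invariance of $\mu$; since $\gamma$ is a group homomorphism, $\overline{\gamma(y+z)}=\overline{\gamma(y)}\cdot\overline{\gamma(z)}$, and the double integral splits as a product yielding $\widehat\phi(\gamma)\widehat\psi(\gamma)$.

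For part (2), first note that both sides are well-defined bounded continuous functions on $\dual\Gamma$: the left-hand side since $\phi\cdot\psi\in L^1(\Gamma)$ by Cauchy--Schwarz ($\|\phi\psi\|_1\leq\|\phi\|_2\|\psi\|_2$), and the right-hand side since $\widehat\phi,\widehat\psi\in L^2(\dual\Gamma)$ by the Plancherel theorem, so that $\widehat\phi*\widehat\psi$ is pointwise bounded by $\|\widehat\phi\|_2\|\widehat\psi\|_2$. The idea is then to restrict attention to a convenient dense subclass $\mathcal A\subseteq L^1(\Gamma)\cap L^2(\Gamma)$ consisting of functions $\phi$ such that also $\widehat\phi\in L^1(\dual\Gamma)\cap L^2(\dual\Gamma)$. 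Such a class is dense in $L^2(\Gamma)$: for instance, take $\phi$ of the form $f_1*f_2$ with $f_1,f_2$ continuous of compact support, so that $\widehat\phi=\widehat{f_1}\cdot\widehat{f_2}\in L^1\cap L^2(\dual\Gamma)$ by part (1) and Cauchy--Schwarz on $\dual\Gamma$. For $\phi,\psi\in\mathcal A$, apply part (1) on the dual group $\dual\Gamma$ to $\widehat\phi$ and $\widehat\psi$, obtaining $\widehat{\widehat\phi*\widehat\psi}=\widehat{\widehat\phi}\cdot\widehat{\widehat\psi}$. Fourier inversion (valid here since $\widehat\phi,\widehat\psi\in L^1$) identifies $\widehat{\widehat\phi}$ with a reflection of $\phi$, and similarly for $\psi$; comparing with $\widehat{\widehat{\phi\cdot\psi}}$, Fourier inversion on $\dual\Gamma$ yields the identity $\widehat{\phi\cdot\psi}=\widehat\phi*\widehat\psi$ on $\mathcal A$.

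The main obstacle will be the extension from the dense subclass $\mathcal A$ to arbitrary $\phi,\psi\in L^1(\Gamma)\cap L^2(\Gamma)$: both sides depend nonlinearly on $(\phi,\psi)$ (as a product and as a convolution of Fourier transforms), so one must pass to the limit simultaneously in $L^1$ and $L^2$. I plan to do this by choosing approximating sequences $\phi_n\to\phi$, $\psi_n\to\psi$ in $L^1\cap L^2$, and using Plancherel to guarantee $\widehat{\phi_n}\to\widehat\phi$ and $\widehat{\psi_n}\to\widehat\psi$ in $L^2(\dual\Gamma)$, from which Cauchy--Schwarz yields uniform pointwise convergence $\widehat{\phi_n}*\widehat{\psi_n}\to\widehat\phi*\widehat\psi$; the convergence $\widehat{\phi_n\psi_n}\to\widehat{\phi\psi}$ in $L^\infty(\dual\Gamma)$ follows from $L^1$-continuity of the Fourier transform combined with the Cauchy--Schwarz bound $\|\phi_n\psi_n-\phi\psi\|_1\leq\|\phi_n-\phi\|_2\|\psi_n\|_2+\|\phi\|_2\|\psi_n-\psi\|_2$.
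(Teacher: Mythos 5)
The paper does not prove this lemma at all: it simply cites Theorem~1.2.4(a),(b) of Rudin for part (1) and (4.36) of Folland for part (2). Your proposal therefore supplies an argument where the paper only points to the literature, and it is in substance correct. Part (1) is exactly the classical Fubini-plus-translation-invariance computation. Your plan for (2) --- establish the identity on a dense subclass $\mathcal A$ of $L^1(\Gamma)\cap L^2(\Gamma)$ whose members have Fourier transform in $L^1(\dual\Gamma)\cap L^2(\dual\Gamma)$, using part (1) on the dual group together with Fourier inversion, then pass to the limit via Plancherel and the $L^1\to L^\infty$ continuity of the transform --- is the standard textbook route, and the convergence estimates you spell out do close the loop.

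One step deserves more careful phrasing. You say to ``compare with $\widehat{\widehat{\phi\cdot\psi}}$,'' but applying Fourier inversion to the function $\phi\cdot\psi$ would require $\widehat{\phi\cdot\psi}\in L^1(\dual\Gamma)$, which is not among your hypotheses on $\mathcal A$. The clean version is to apply inversion instead to $g:=\widehat\phi*\widehat\psi$, which you already know lies in $L^1(\dual\Gamma)$: by part (1) on $\dual\Gamma$ one has $\widehat g=\widehat{\widehat\phi}\cdot\widehat{\widehat\psi}=\phi^\star\cdot\psi^\star=(\phi\cdot\psi)^\star\in L^1(\Gamma)$, so inversion for $g$ gives $g(\gamma)=\int_\Gamma(\phi\cdot\psi)^\star(x)\,\gamma(x)\,d\mu(x)=\widehat{\phi\cdot\psi}(\gamma)$ directly, with no assumption on $\widehat{\phi\cdot\psi}$. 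As a side remark, part (2) also admits a shorter proof that bypasses the density argument altogether: for $\phi,\psi\in L^1(\Gamma)\cap L^2(\Gamma)$ apply the Parseval identity $\langle f,h\rangle_{L^2(\Gamma)}=\langle\widehat f,\widehat h\rangle_{L^2(\dual\Gamma)}$ with $f=\phi$ and $h=\overline{\psi}\cdot\gamma$, and then a one-line change of variables on $\dual\Gamma$ produces $(\widehat\phi*\widehat\psi)(\gamma)$.
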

\begin{proof} (1) is  \cite[Theorem 1.2.4(a),(b)]{Rudin} and (2) is \cite[(4.36)]{Fol}.
\end{proof}

 Finally, in our case of interest the Fourier transform is involutive:

\begin{lemma}\label{hathat}
If $\phi\in L^1(\Gamma)\cap\mathfrak P(\Gamma)^+$, then $\widehat{\widehat \phi}= \phi$.
\end{lemma}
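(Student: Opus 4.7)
The plan is to derive this result as a direct consequence of the Fourier Inversion Theorem combined with the symmetry of positive functions that are positive-definite. The starting point is Lemma~\ref{first}, which guarantees that the hypothesis $\phi\in L^1(\Gamma)\cap \mathfrak{P}(\Gamma)^+$ is preserved under the Fourier transform: namely, $\widehat\phi\in L^1(\dual\Gamma)^+\cap \mathfrak{P}(\dual\Gamma)$. In particular, both $\phi$ and $\widehat\phi$ lie in $L^1$ of the respective groups, which is precisely the integrability hypothesis needed to iterate the transform and apply the classical inversion formula.

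Next, I would invoke the Fourier Inversion Theorem (see~\cite[page~22]{Rudin} or~\cite[(4.32)]{Fol}): with the measure conventions used in this paper, for every $\psi\in L^1(\Gamma)$ with $\widehat\psi\in L^1(\dual\Gamma)$ one has, after the identification $\omega_\Gamma\colon \Gamma\to \Gamma^{\wedge\wedge}$, the formula $\widehat{\widehat\psi}(x)=\psi(-x)$ for every $x\in\Gamma$. Applied to our $\phi$, this yields $\widehat{\widehat\phi}(x)=\phi(-x)$ for all $x\in\Gamma$.

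Finally, the hypothesis that $\phi$ is both positive and positive-definite forces the symmetry $\phi=\phi^\star$ by Lemma~\ref{positive_def}(2), that is, $\phi(-x)=\phi(x)$ for all $x\in\Gamma$. Substituting this into the inversion formula produces $\widehat{\widehat\phi}(x)=\phi(x)$ for every $x\in\Gamma$, which is the desired equality $\widehat{\widehat\phi}=\phi$.

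The only real subtlety is a bookkeeping one rather than a mathematical obstacle: one must make sure the normalization of the Haar measures on $\Gamma$ and $\dual\Gamma$ is chosen so that the Fourier Inversion Theorem holds in the stated form (without a multiplicative constant); this is the standard convention employed in \cite{Rudin,Fol}, and it is consistent with the conventions used throughout \S\ref{harmonic_background}. No further computation is required.
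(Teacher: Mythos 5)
Your proof is correct and follows essentially the same route as the paper's: both apply the inversion formula $\widehat{\widehat\phi}=\phi^\star$ (the paper cites \cite[(4.32)]{Fol}) and then conclude via the symmetry $\phi^\star=\phi$ from Lemma~\ref{positive_def}(2). Your additional invocation of Lemma~\ref{first} to ensure $\widehat\phi\in L^1(\dual\Gamma)$ is a useful bit of explicit bookkeeping that the paper leaves implicit.
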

\begin{proof}
By Lemma~\ref{positive_def}(2), $ \phi^\star = \phi$. Hence, $\widehat{\widehat \phi} = \phi^\star = \phi$, by \cite[(4.32)]{Fol}.
\end{proof}

\subsection{Algebraic and topological Peters monoids}\label{Petersmon}

For the compact Abelian group $K$, denote by $0\colon K\to \mathbb C$ the $0$ function and define the \emph{topological Peters monoid} 
$$
\M_\top(K)=(L^1(K)^+\cap \P(K))\setminus \{0\},\quad \text{with}\ w_\top\colon \M_\top(K)\to \R_{\geq0},\ \text{such that}\ w_\top(\phi)=\log \frac{\phi(0)}{\Vert\phi\Vert_1}.
$$
The definition of $w_\top$ is correct since $\phi(0)\geq \Vert\phi\Vert_1$ by Lemma~\ref{positive_def}(3) and $\Vert\phi\Vert_1\neq 0$, having excluded those $\phi$ that are $0$ almost everywhere.

\begin{lemma}\label{Peters-top}
In the above notation, the following statements hold true:
\begin{enumerate}[(1)]
  \item $\widehat{\phi \cdot \psi} = \widehat \phi  * \widehat \psi$ \  for \ $\phi , \psi \in \M_{\top}(K)$; 
  \item $(\M_{\top}(K),\cdot)$ is a commutative monoid, with unit $\chi_K$;
  \item the norm $w_\top\colon \M_{\top}(K)\to \R_{\geq0}$  is monotone.
\end{enumerate}
\end{lemma}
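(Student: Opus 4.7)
For part~(1), I would apply Lemma~\ref{cdot*}(2), whose only nontrivial hypothesis is that $\phi,\psi\in L^2(K)$. Since $K$ is compact and normalized (so $\mu(K)=1$), and since any $\phi\in L^1(K)^+\cap\mathfrak P(K)$ satisfies $0\leq\phi(x)\leq\phi(0)$ pointwise by Lemma~\ref{positive_def}(1), one has $\int_K|\phi|^2\,\d\mu\leq\phi(0)\|\phi\|_1<\infty$, so $\M_\top(K)\subseteq L^2(K)$ and (1) is immediate.

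For part~(2), I would check that the pointwise product of two elements of $\M_\top(K)$ is again in $\M_\top(K)$: positivity is obvious; continuity is obvious; positive-definiteness follows from Lemma~\ref{posdef}; the $L^1$ bound $\|\phi\cdot\psi\|_1\leq\psi(0)\|\phi\|_1<\infty$ uses that $\psi\leq\psi(0)$ pointwise (Lemma~\ref{positive_def}(1)); and $\phi\cdot\psi$ is nonzero because it is continuous with $(\phi\cdot\psi)(0)=\phi(0)\psi(0)>0$ (each factor is strictly positive by Lemma~\ref{positive_def}(3), as otherwise the function would be $0$ a.e.\ and hence excluded). Associativity and commutativity are inherited from pointwise multiplication of functions. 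Finally, $\chi_K$ is the constant function $1$; it belongs to $\mathfrak P(K)$ (being the trivial character $0_{\dual K}$), lies in $L^1(K)^+$ with $\|\chi_K\|_1=1$, and is a two-sided identity for pointwise multiplication.

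For part~(3), monotonicity means $w_\top(\phi)\leq w_\top(\phi\cdot\psi)$ for all $\phi,\psi\in\M_\top(K)$. Unfolding the definition,
\[
w_\top(\phi\cdot\psi)-w_\top(\phi)=\log\frac{(\phi\cdot\psi)(0)\,\|\phi\|_1}{\phi(0)\,\|\phi\cdot\psi\|_1}=\log\frac{\psi(0)\,\|\phi\|_1}{\|\phi\cdot\psi\|_1},
\]
so the claim reduces to $\|\phi\cdot\psi\|_1\leq\psi(0)\|\phi\|_1$. This in turn follows at once by integrating the pointwise inequality $\phi\cdot\psi\leq\psi(0)\cdot\phi$, which holds because $\psi(x)\leq\psi(0)$ by Lemma~\ref{positive_def}(1) and $\phi\geq 0$.

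None of the three parts should pose a genuine obstacle; the only points to be careful about are the nontriviality of $\phi\cdot\psi$ in~(2) (handled by continuity at $0$) and making sure that the defining hypotheses for Lemma~\ref{cdot*}(2) are met in~(1) (handled by the boundedness of positive-definite functions together with compactness of $K$).
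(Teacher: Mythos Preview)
Your proposal is correct and follows essentially the same approach as the paper's proof: both use Lemma~\ref{cdot*}(2) for part~(1), Lemma~\ref{posdef} for closure under products in part~(2), and the pointwise bound $\psi\leq\psi(0)$ from Lemma~\ref{positive_def}(1) to obtain $\|\phi\cdot\psi\|_1\leq\psi(0)\|\phi\|_1$ in part~(3). The only cosmetic difference is that in part~(1) the paper invokes continuity on a compact space to get $L^2$ membership, while you use the pointwise bound $\phi\leq\phi(0)$; both are equally valid and your treatment of nontriviality and the unit in part~(2) is slightly more explicit than the paper's.
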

\begin{proof} 
(1) Since $\phi , \psi$ are continuous (hence, $\phi , \psi \in L^2(K)$), it is enough to apply Lemma~\ref{cdot*}(2).

\smallskip
(2) Assume, that $\phi , \psi \in  \M_{\top}(K)$. Clearly, they commute as they are complex-valued functions. Moreover, $\phi$ and $\psi$ are non-zero continuous function in $L^1(K)^+$, and hence, $\phi\cdot \psi$ is continuous, so $\phi\cdot \psi \in L^1(K)^+$. Finally, $\phi \cdot  \psi$ is positive-definite by Lemma~\ref{posdef}. 

\smallskip
(3) Let $\phi,\psi\in\M_\top(K)$. For every $x\in K$, $|\phi(x)\psi(x)|=\phi(x)\psi(x)\leq \phi(x)\psi(0)$, by Lemma~\ref{positive_def}(1). Then, we have that $\Vert \phi\cdot\psi\Vert_1\leq \Vert\phi\Vert_1 \psi(0)$, and so $$\frac{\phi(0)}{\Vert \phi\Vert_1}\leq \frac{\phi(0)\psi(0)}{\Vert\phi\cdot\psi\Vert_1}=\frac{(\phi\cdot\psi)(0)}{\Vert\phi\cdot\psi\Vert_1},$$ which gives $w_\top(\phi)\leq w_\top(\phi\cdot\psi)$. 
\end{proof}

Dually, for the discrete Abelian group $X$, denote by $0\colon X\to \mathbb C$ the $0$-function and define the \emph{algebraic Peters monoid}
$$\M_\alg(X)=(L^1(X)^+\cap \P(X))\setminus\{0\},\quad \text{with}\ w_\alg\colon \M_\alg(X)\to \R_{\geq0},\ \text{such that}\ w_\alg(\phi)=\log \frac{\Vert \phi\Vert_1}{\phi(0)}.$$
This makes sense since $\Vert\phi\Vert_1=\sum_{x\in X}\phi(x)\geq \phi(0)\neq 0$.

\begin{lemma}\label{Peters-alg}
In the above notation, the following statements hold true:
\begin{enumerate}[ (1)]
    \item if $\phi  \in \M_{\alg}(X)$ (resp., $\M_{\top}(K)$), then $\widehat \phi \in \M_{\top}(K)$ (resp., $\M_{\alg}(X)$);
    \item $(\M_{\alg}(X),*)$ is a commutative monoid, with unit $\chi_{\{0\}}$;
    \item the norm $w_\alg\colon \M_{\alg}(X)\to \R_{\geq0}$ is monotone.
\end{enumerate}
\end{lemma}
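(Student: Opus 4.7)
The plan is to mirror as closely as possible the proof of Lemma \ref{Peters-top}, exploiting the symmetry between $X$ and $K$ under the Pontryagin duality $X=\dual K$. The main tools are Lemma \ref{first} (the Fourier transform sends $L^1\cap\P$ into $L^1\cap \P$), Lemma \ref{hathat} (involutivity on positive elements of $L^1\cap\P$), and Lemma \ref{cdot*} (interchanging convolution and product).

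For part (1), I would simply apply Lemma \ref{first} with $\Gamma=X$ (so $\dual\Gamma=K$) or $\Gamma=K$ (so $\dual\Gamma=X$): if $\phi\in L^1(\Gamma)^+\cap\P(\Gamma)$ is non-zero, then $\widehat\phi\in L^1(\dual\Gamma)^+\cap\P(\dual\Gamma)$, and non-triviality of $\widehat\phi$ follows from $\widehat{\widehat\phi}=\phi$ via Lemma \ref{hathat}. This gives both directions at once.

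For part (2), commutativity and associativity of $*$ come directly from Lemma \ref{norma_1}(1). The unit $\chi_{\{0\}}$ is clearly in $L^1(X)^+$ and non-zero; positive-definiteness is a short direct check (grouping the $x_i$'s by value shows $\sum_{i,j}c_i\overline{c_j}\chi_{\{0\}}(x_i-x_j)=\sum_k|\sum_{i:x_i=y_k}c_i|^2\geq 0$), and $\chi_{\{0\}}*\phi=\phi$ is immediate in the discrete setting. The delicate point is showing that $\M_\alg(X)$ is closed under $*$. For $\phi,\psi\in\M_\alg(X)$, $\phi*\psi$ lies in $L^1(X)^+$ by Lemma \ref{norma_1}, is non-zero by Lemma \ref{norma-}(2), and only positive-definiteness remains. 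Here I would route through the Fourier transform: by (1), $\widehat\phi,\widehat\psi\in\M_\top(K)$, and by Lemma \ref{Peters-top}(2) their product $\widehat\phi\cdot\widehat\psi$ lies in $\M_\top(K)$; applying (1) again gives $\widehat{\widehat\phi\cdot\widehat\psi}\in\M_\alg(X)$. But by Lemma \ref{cdot*}(2) (valid since $\widehat\phi,\widehat\psi$ are continuous on the compact group $K$, hence in $L^2(K)$) and Lemma \ref{hathat}, $\widehat{\widehat\phi\cdot\widehat\psi}=\widehat{\widehat\phi}*\widehat{\widehat\psi}=\phi*\psi$, so $\phi*\psi\in\M_\alg(X)$.

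For part (3), I would compute directly: using $\|\phi*\psi\|_1=\|\phi\|_1\|\psi\|_1$ from Lemma \ref{norma-}(2), the inequality $w_\alg(\phi)\leq w_\alg(\phi*\psi)$ reduces (after cancelling $\|\phi\|_1$ in the logarithm) to $(\phi*\psi)(0)\leq \phi(0)\|\psi\|_1$, which is precisely the content of Lemma \ref{norma-}(4) evaluated at $x=0$.

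The only real obstacle is the positive-definiteness of $\phi*\psi$ in part (2): there is no direct convolution-level proof from the definition of positive-definiteness, which is why I go through the Fourier transform, using parts already established. Once one trusts the bookkeeping for the repeated applications of $\widehat{(-)}$ (in particular that Lemma \ref{cdot*}(2) applies on $K$), everything else is essentially routine.
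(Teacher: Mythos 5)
Your proposal is correct and follows essentially the same route as the paper: part (1) via Lemma~\ref{first}, part (3) by combining Lemma~\ref{norma-}(2) and (4), and part (2) by proving positive-definiteness of $\phi*\psi$ through the Fourier transform, using Lemma~\ref{hathat}, part (1), Lemma~\ref{Peters-top}(2), and the identity $\widehat{\widehat\phi\cdot\widehat\psi}=\widehat{\widehat\phi}*\widehat{\widehat\psi}$ (which you cite as Lemma~\ref{cdot*}(2) directly while the paper routes through Lemma~\ref{Peters-top}(1), but that is the same fact). The only cosmetic addition is your explicit verification that $\chi_{\{0\}}$ is positive-definite, which the paper leaves implicit.
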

\begin{proof} 
(1)  follows from Lemma~\ref{first}.

\smallskip
(2) Assume, that $\phi , \psi \in  \M_{\alg}(X)$. Then $\phi * \psi \in L^1(X)^+$, by  Lemma~\ref{norma-}(2), and $\phi*\psi=\psi*\phi$ by Lemma~\ref{norma_1}(1). 

It remains to check that  $\phi *  \psi$ is positive-definite. By Lemma~\ref{hathat}, $\widehat{\widehat \phi} = \phi$ and $\widehat{\widehat \psi} = \psi$.
This gives $\phi * \psi = \widehat{\widehat \phi} * \widehat{\widehat \psi} =\widehat{\widehat \phi \cdot \widehat{\psi}}$, by Lemma \ref{Peters-top}(1), using the fact that $\widehat \phi , \widehat{\psi} \in \M_{\top}(K)$ by (1). Since $\widehat \phi \cdot \widehat{\psi} \in  \M_{\top}(K)$, by Lemma \ref{Peters-top}(2), using again item (1), we deduce that $\phi * \psi =\widehat{\widehat \phi \cdot \widehat{\psi}}  \in  \M_{\alg}(X)$.

\smallskip
(3) For $\phi,\psi\in\M_\alg(X)$, using Lemma~\ref{norma-}(2) and (4), we get
$$
\frac{\Vert \phi*\psi\Vert_1}{(\phi*\psi)(0)}\geq\frac{\Vert \phi\Vert_1\Vert\psi\Vert_1}{\phi(0)\Vert\psi\Vert_1}=\frac{\Vert \phi\Vert_1}{\phi(0)},
$$
that is, $w_\alg(\phi*\psi)\geq w_\alg(\phi)$.
 \end{proof}

 Now we are in position to prove the following fundamental isomorphism between the algebraic and the topological Peters monoid. From now on, $K=\dual X$.

\begin{theorem}\label{inversion}
In the above setting, $\widehat{(-)}\colon \M_\alg(X)\to\M_\top(K)$, such that $\phi\mapsto\widehat\phi$ 
is an isomorphism of normed monoids. In particular, $\widehat{\chi_{\{0\}}}=\chi_K$, $\widehat{\phi*\psi}=\widehat\phi\cdot\widehat\psi$ and $w_\alg(\phi)=w_\top(\widehat\phi)$, for all $\phi,\psi\in \M_\alg(X)$.
\end{theorem}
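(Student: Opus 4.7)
The plan is to verify the four properties that collectively say $\widehat{(-)}\colon \M_\alg(X)\to \M_\top(K)$ is an isomorphism of normed monoids: that it is well-defined, a monoid homomorphism, norm-preserving, and invertible. Most of these follow by assembling results established earlier in the excerpt; the main substantive computation is the norm preservation, which hinges on Fourier inversion.

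First, well-definedness of $\widehat{(-)}\colon \M_\alg(X)\to \M_\top(K)$ is exactly Lemma~\ref{Peters-alg}(1), and analogously $\widehat{(-)}\colon \M_\top(K)\to \M_\alg(X)$ is well-defined. For the monoid homomorphism property, the identity $\widehat{\phi*\psi}=\widehat\phi\cdot\widehat\psi$ is Lemma~\ref{cdot*}(1), while the computation $\widehat{\chi_{\{0\}}}(\gamma) = \int_{x\in X}\chi_{\{0\}}(x)\overline{\gamma(x)}\,\d\mu(x)=\overline{\gamma(0)}=1$ for every $\gamma\in K$ shows $\widehat{\chi_{\{0\}}}=\chi_K$.

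The crucial step is the norm preservation $w_\alg(\phi)=w_\top(\widehat\phi)$ for $\phi\in \M_\alg(X)$. Recall that the neutral element $0_K\in K=\dual X$ is the trivial character, i.e.~the constant function $1$ on $X$; hence, since $\phi$ is positive,
\[
\widehat\phi(0_K)=\int_{x\in X}\phi(x)\overline{0_K(x)}\,\d\mu(x)=\int_{x\in X}\phi(x)\,\d\mu(x)=\Vert\phi\Vert_1.
\]
In the other direction, by Lemma~\ref{hathat} applied to $\phi\in L^1(X)\cap \P(X)^+$, we have $\widehat{\widehat\phi}=\phi$, so evaluating at $0\in X$ and using that $\gamma(0)=1$ for every $\gamma\in K$ together with positivity of $\widehat\phi$ (Lemma~\ref{first}):
\[
\phi(0)=\widehat{\widehat\phi}(0)=\int_{\gamma\in K}\widehat\phi(\gamma)\overline{\gamma(0)}\,\d\mu(\gamma)=\int_{\gamma\in K}\widehat\phi(\gamma)\,\d\mu(\gamma)=\Vert\widehat\phi\Vert_1.
\]
Combining these two equalities,
\[
w_\top(\widehat\phi)=\log\frac{\widehat\phi(0_K)}{\Vert\widehat\phi\Vert_1}=\log\frac{\Vert\phi\Vert_1}{\phi(0)}=w_\alg(\phi),
\]
which is the required norm equality.

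Finally, invertibility follows from Lemma~\ref{hathat} applied symmetrically: the Fourier transform $\widehat{(-)}\colon \M_\top(K)\to \M_\alg(X)$ is a two-sided inverse of $\widehat{(-)}\colon \M_\alg(X)\to \M_\top(K)$, since $\widehat{\widehat\phi}=\phi$ for $\phi$ in either monoid (applying the lemma both to $\phi\in L^1(X)\cap \P(X)^+$ and to $\widehat\phi\in L^1(K)\cap\P(K)^+$, which lies in the hypothesis of Lemma~\ref{hathat} by Lemma~\ref{first}). The main subtlety — and the only place where a nontrivial harmonic-analytic input enters — is the Fourier Inversion Theorem packaged in Lemma~\ref{hathat}, which is what allows the seemingly asymmetric definitions of $w_\alg$ and $w_\top$ (quotient of $\Vert-\Vert_1$ and value at $0$, in opposite orders) to match up under the Fourier transform.
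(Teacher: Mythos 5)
Your proof is correct and follows essentially the same route as the paper's: well-definedness via Lemma~\ref{Peters-alg}(1), the homomorphism property via Lemma~\ref{cdot*}(1), the two pairings $\widehat\phi(0_K)=\Vert\phi\Vert_1$ and $\Vert\widehat\phi\Vert_1=\phi(0)$ by direct computation plus the Fourier inversion of Lemma~\ref{hathat}, and invertibility again from Lemma~\ref{hathat} combined with Lemma~\ref{Peters-alg}(1). The only slight addition on your side is the explicit verification that $\widehat{\chi_{\{0\}}}=\chi_K$, which the paper's proof leaves implicit.
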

\begin{proof} By Lemma~\ref{Peters-alg}(1), the function is well-defined, and by Lemma~\ref{cdot*}(1), it is a homomorphism of monoids.
Moreover, for every $\phi\in\M_\alg(X)$, $w_\alg(\phi)=w_\top(\widehat \phi)$, as 
\begin{align*}
&\widehat{\phi}(0_K)=\int_{x\in X}\phi(x)\overline{0_K(x)}\d\mu(x)=\int_{x\in X}\phi(x)\d\mu(x)=\Vert\phi\Vert_1,\\
&\Vert{\widehat{\text{} \phi}}\Vert_1=\int_{x\in K}\widehat\phi(x)\d\mu(x)=\int_{x\in K}\widehat\phi(x)\overline{0_{\dual K}(x)}\d\mu(x)=\widehat{\widehat{\text{}\phi}}({0_{K^\wedge}})=\phi({0_X}),
\end{align*}
 where we use Lemma~\ref{hathat} for the last equality.
 Since $\phi=\widehat{\widehat{\phi}}$ by Lemma~\ref{hathat}, and $\widehat\phi\in \M_\alg(\dual K)$ by Lemma~\ref{Peters-alg}(1), we conclude that $\widehat{(-)}\colon \M_\alg(X)\to\M_\top(K)$ has $\widehat{(-)}\colon \M_\top(K)\to\M_\alg(\dual K)=\M_\alg(X)$ as inverse function and so it is bijective.
\end{proof}

\subsubsection{Transfering the actions to Peters monoids}

A given automorphism $\alpha\colon X\to X$ of the discrete Abelian group $X$ induces a map:
\[(-)\circ\alpha^{-1}\colon\M_{\alg}(X)\to \M_\alg(X),\quad\text{such that}\quad \phi\mapsto\phi\circ\alpha^{-1}.\]
It is not difficult to verify the properties in the following lemma, showing that $(-)\circ \alpha^{-1}$ is an isomorphism of normed monoids. In item (4) the need to take $C$ symmetric, comes from the fact that for a $C \in \Pf(X)$ one has $\chi_C $ and/or $\chi_{\alpha(C)}\in \M_\alg(X)$ precisely when $C \in \mathfrak F(X)$.

\begin{lemma}\label{Malg}
Given an automorphism $\alpha\colon X\to X$ of the discrete Abelian group $X$ and $\phi,\psi\in \M_\alg(X)$,
\begin{enumerate}[ (1)]
   \item $\phi\circ\alpha^{-1}\in \M_\alg(X)$;
   \item $\Vert\phi\circ\alpha^{-1}\Vert_1=\Vert\phi\Vert_1$ \ and \ $(\phi\circ\alpha^{-1})(0)=\phi(0)$;
   \item $(\phi*\psi)\circ\alpha^{-1}=(\phi\circ\alpha^{-1})*(\psi\circ\alpha^{-1})$ \ and \ $\chi_{\{0\}}\circ\alpha^{-1}=\chi_{\{0\}}$;
   \item if $C\in{\frak F}(X)$, then $\chi_{\alpha(C)}=\chi_C\circ \alpha^{-1}$  and so $\chi_{\alpha(C)}*\chi_{\alpha(C)}=(\chi_C*\chi_C)\circ {\alpha^{-1}}$.
\end{enumerate}
\end{lemma}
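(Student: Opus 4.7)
The plan is to unwind each assertion from the definitions, exploiting two basic facts: the counting measure $\mu$ on the discrete group $X$ is invariant under any bijection (in particular under $\alpha^{-1}$), and $\alpha^{-1}$ is a group automorphism, so $\alpha^{-1}(0)=0$ and $\alpha^{-1}(x-y)=\alpha^{-1}(x)-\alpha^{-1}(y)$.

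For part (1), I will verify the three defining properties of $\M_\alg(X)$ for $\phi\circ\alpha^{-1}$. Positivity and non-vanishing are immediate since $\alpha^{-1}$ is a bijection. For $L^1$, the invariance of the counting measure gives $\|\phi\circ\alpha^{-1}\|_1=\|\phi\|_1$ (so $\phi\circ\alpha^{-1}\in L^1(X)^+$), which also proves the first equation of (2). For positive-definiteness, given $x_1,\dots,x_n\in X$ and $c_1,\dots,c_n\in\mathbb C$, set $y_i=\alpha^{-1}(x_i)$; then by linearity of $\alpha^{-1}$,
\[
\sum_{i,j=1}^n c_i\overline{c_j}\,(\phi\circ\alpha^{-1})(x_i-x_j)=\sum_{i,j=1}^n c_i\overline{c_j}\,\phi(y_i-y_j)\in\R_{\geq 0},
\]
since $\phi\in\mathfrak P(X)$. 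The second equation of (2) follows from $\alpha^{-1}(0)=0$.

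For part (3), the convolution identity follows by the change of variable $y=\alpha^{-1}(z)$:
\[
((\phi*\psi)\circ\alpha^{-1})(x)=\sum_{y\in X}\phi(y)\psi(\alpha^{-1}(x)-y)=\sum_{z\in X}\phi(\alpha^{-1}(z))\psi(\alpha^{-1}(x-z))=((\phi\circ\alpha^{-1})*(\psi\circ\alpha^{-1}))(x),
\]
where I used again the bijectivity of $\alpha^{-1}$ (to reindex the sum) and its additivity (to rewrite $\alpha^{-1}(x)-\alpha^{-1}(z)=\alpha^{-1}(x-z)$). The identity $\chi_{\{0\}}\circ\alpha^{-1}=\chi_{\{0\}}$ is just $\alpha^{-1}(0)=0$.

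Finally, part (4) is a direct check: for $C\in\mathfrak F(X)$ and $x\in X$, $\chi_{\alpha(C)}(x)=1$ iff $x\in\alpha(C)$ iff $\alpha^{-1}(x)\in C$ iff $(\chi_C\circ\alpha^{-1})(x)=1$, so $\chi_{\alpha(C)}=\chi_C\circ\alpha^{-1}$; the second identity in (4) is then obtained by applying the convolution formula from (3) to $\phi=\psi=\chi_C$. No step is subtle here — the lemma is entirely a verification that composition with an automorphism of $X$ commutes cleanly with the monoid structure and norm of $\M_\alg(X)$; the only mild care needed is keeping track of the fact that the counting measure is bijection-invariant (which is why the $L^1$ norm, the value at $0$, and convolution all transport without any Jacobian factor).
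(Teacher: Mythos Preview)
Your proof is correct and is precisely the direct verification from the definitions that the paper itself omits; the paper merely states that these properties are ``not difficult to verify'' and that $(-)\circ\alpha^{-1}$ is an isomorphism of normed monoids, leaving the details to the reader.
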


The following left $G$-action is well defined  by Lemma~\ref{Malg}:
\[G\la{\lambda_\alg} \M_\alg(X),\quad\text{such that}\ \ (\lambda_\alg)_g(\phi)=\phi\circ\lambda_g^{-1},\ \text{for all}\ \phi\in \M_{\alg}(X),g\in G.\]

Dually, a topological automorphism $\alpha\colon K\to K$ of the compact Abelian group $K$ induces a map
\[(-)\circ\alpha\colon\M_{\top}(K)\to \M_\top(K),\quad\text{such that}\quad \phi\mapsto\phi\circ\alpha.\]
It is not hard to check that $(-)\circ\alpha$ is an isomorphism of normed monoids:

\begin{lemma}\label{Mtop}
Given a topological automorphism $\alpha\colon K\to K$ of the compact Abelian group $K$ and $\phi,\psi\in \M_\top(K)$,
\begin{enumerate}[ (1)]
   \item $\phi\circ\alpha\in \M_\top(K)$;
   \item $\Vert\phi\circ\alpha\Vert_1=\Vert\phi\Vert_1$ and $(\phi\circ\alpha)(0)=\phi(0)$;
   \item $(\phi\cdot\psi)\circ\alpha=(\phi\circ\alpha)\cdot(\psi\circ\alpha)$ and $\chi_K\circ\alpha=\chi_K$;
   \item if $U\in{\frak U}(K)$, then $\chi_{\alpha^{-1} (U)}=\chi_U\circ \alpha$.
\end{enumerate}
\end{lemma}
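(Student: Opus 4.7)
The plan is to verify the four items by direct computation, leaning on two structural properties of a topological automorphism $\alpha\colon K\to K$ of a compact Abelian group: $\alpha$ is a continuous group isomorphism (so $\alpha(0)=0$ and $\alpha(x-y)=\alpha(x)-\alpha(y)$), and $\alpha$ preserves the normalized Haar measure $\mu$ on $K$ (which follows from the uniqueness of $\mu$ up to a scalar together with compactness, giving $\mu\circ\alpha^{-1}=\mu$; this is essentially the Halmos paradigm recalled in Remark~\ref{rem:Halmos}).

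First I would dispose of (3) and (4), which are purely pointwise. For (3), $((\phi\cdot\psi)\circ\alpha)(x)=\phi(\alpha(x))\psi(\alpha(x))=(\phi\circ\alpha)(x)\cdot(\psi\circ\alpha)(x)$, and $\chi_K\circ\alpha=\chi_K$ since $\alpha(K)\subseteq K$. For (4), $\chi_U(\alpha(x))=1$ exactly when $\alpha(x)\in U$, i.e.\ $x\in\alpha^{-1}(U)$, giving $\chi_U\circ\alpha=\chi_{\alpha^{-1}(U)}$. Then (2) is next: $(\phi\circ\alpha)(0)=\phi(\alpha(0))=\phi(0)$, and by the change of variables $y=\alpha(x)$ together with invariance of $\mu$ under $\alpha$,
\[
\|\phi\circ\alpha\|_1=\int_K|\phi(\alpha(x))|\,\d\mu(x)=\int_K|\phi(y)|\,\d\mu(y)=\|\phi\|_1.
\]

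For (1), I need $\phi\circ\alpha\in L^1(K)^+\cap\P(K)$ with $\phi\circ\alpha\ne 0$. Continuity and positivity are inherited from $\phi$ through the continuous surjection $\alpha$; $L^1$-membership is already covered by (2); and $\phi\circ\alpha\ne 0$ because $\alpha$ is a bijection and $\phi\ne 0$. The substantive check is positive-definiteness: for $x_1,\dots,x_n\in K$ and $c_1,\dots,c_n\in\mathbb C$, using that $\alpha$ is a group homomorphism,
\[
\sum_{i,j=1}^n c_i\overline{c_j}(\phi\circ\alpha)(x_i-x_j)=\sum_{i,j=1}^n c_i\overline{c_j}\phi(\alpha(x_i)-\alpha(x_j))\geq 0,
\]
since the right-hand side is the defining positive-definiteness sum for $\phi$ evaluated at the points $\alpha(x_1),\dots,\alpha(x_n)$.

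Finally, to conclude that $(-)\circ\alpha$ is an isomorphism of normed monoids one assembles: (3) gives that it is a monoid homomorphism, (2) gives that $w_\top$ is preserved (since both numerator $\phi(0)$ and denominator $\|\phi\|_1$ are), and the inverse map is $(-)\circ\alpha^{-1}$, which falls in the same framework because $\alpha^{-1}$ is itself a topological automorphism. No real obstacle is expected here—the only mildly delicate point is the positive-definiteness in (1), which nevertheless reduces immediately to the fact that $\alpha$ respects the group operation.
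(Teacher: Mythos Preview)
Your proof is correct and is precisely the direct verification the paper has in mind; the paper itself omits the proof, remarking only that ``it is not hard to check that $(-)\circ\alpha$ is an isomorphism of normed monoids''. Your argument fills in exactly those routine details, with the key observations being Haar-measure invariance under $\alpha$ for (2) and the homomorphism property of $\alpha$ for positive-definiteness in (1).
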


The following left $G$-action is well-defined by Lemma~\ref{Mtop}:
\[G\la{\rho_\top} \M_\top(K),\quad\text{such that}\ \ (\rho_\top)_g(\phi)=\phi\circ\rho_g,\ \text{for all}\ \phi\in \M_{\top}(X),g\in G.\]

\begin{proposition}\label{conjugated}
The left $G$-actions $G\la{\lambda_\alg} \M_\alg(X)$ and $G\la{\rho_\top} \M_\top(K)$ are conjugated via the isomorphism of  normed monoids induced by the Fourier transform $f=\widehat{(-)}\colon \M_\alg(X)\to\M_\top(K)$. 
Thus, $h(\lambda_\alg, \s)) = h(\rho_\top, \s)$ for any right F\o lner net $\s$ for $G$.

\end{proposition}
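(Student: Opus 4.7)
The plan is straightforward: Theorem~\ref{inversion} already supplies an isomorphism of normed monoids $f = \widehat{(-)}\colon \M_\alg(X) \to \M_\top(K)$, so all that remains is to verify $G$-equivariance, namely that $f \circ (\lambda_\alg)_g = (\rho_\top)_g \circ f$ for every $g \in G$. Once this is done, the final entropy equality is an immediate consequence of Proposition~\ref{conjumon}, since any pair of conjugated actions of normed monoids is (trivially) weakly asymptotically equivalent.

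For the equivariance, I would fix $g \in G$, $\phi \in \M_\alg(X)$ and an arbitrary $\gamma \in K = \dual X$, and unwind both sides from the definition. By definition of $(\lambda_\alg)_g$ and of the Fourier transform,
$$
\bigl(f\circ (\lambda_\alg)_g\bigr)(\phi)(\gamma) \;=\; \widehat{\phi\circ \lambda_g^{-1}}(\gamma) \;=\; \sum_{x\in X}\phi(\lambda_g^{-1}(x))\,\overline{\gamma(x)}.
$$
Since $X$ is discrete, the Haar (counting) measure is invariant under the bijection $\lambda_g$, so the change of variable $y = \lambda_g^{-1}(x)$ is legitimate and yields
$$
\sum_{y\in X}\phi(y)\,\overline{\gamma(\lambda_g(y))} \;=\; \widehat{\phi}(\gamma\circ \lambda_g).
$$
On the other hand, $\rho_g = \dual{\lambda_g}$ acts on characters by $\rho_g(\gamma) = \gamma\circ\lambda_g$, so
$$
\bigl((\rho_\top)_g\circ f\bigr)(\phi)(\gamma) \;=\; (\widehat{\phi}\circ \rho_g)(\gamma) \;=\; \widehat{\phi}(\gamma\circ\lambda_g).
$$
The two expressions coincide, which establishes the equivariance of $f$, and hence the conjugacy of $\lambda_\alg$ and $\rho_\top$.

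There is no real obstacle here: all the delicate points (positivity, positive-definiteness, integrability, and compatibility of the Fourier transform with the monoid operations $*$ and $\cdot$ as well as with the norms $w_\alg$ and $w_\top$) have already been packaged into Theorem~\ref{inversion} and the preceding lemmas of~\S\ref{Petersmon}. The only thing to watch for is using $\lambda_g^{-1}$ inside $\phi$ (as dictated by the convention $(\lambda_\alg)_g(\phi)=\phi\circ\lambda_g^{-1}$ from Lemma~\ref{Malg}), so that the exponent on the composed $\lambda_g$ comes out matching the convention $\rho_g = \dual{\lambda_g}$ on the topological side.
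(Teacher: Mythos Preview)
Your proof is correct and follows essentially the same approach as the paper: both verify the $G$-equivariance $f\circ(\lambda_\alg)_g=(\rho_\top)_g\circ f$ by a direct computation of $\widehat{\phi\circ\lambda_g^{-1}}(\gamma)$, using the change of variable $x\mapsto\lambda_g(x)$ in the sum defining the Fourier transform and the identity $\rho_g(\gamma)=\gamma\circ\lambda_g$. The entropy equality is then drawn from Proposition~\ref{conjumon} in both cases.
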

\begin{proof} We have to verify the formula $f\circ (\lambda_\alg)_g=(\rho_\top)_g\circ f$,  for all $g\in G$. Indeed, fix $g\in G$ and $\phi\in \M_\alg(X)$, then 
$(f\circ (\lambda_\alg)_g)(\phi)=\widehat{\phi\circ\lambda_g^{-1}}$ and $((\rho_\top)_g\circ f)(\phi)=\widehat\phi\circ\rho_g$. Now, for $\gamma\in K=\dual X$, since $\rho_g=\dual{(\lambda_g)}$,
\[\begin{split}\widehat{\phi\circ\lambda_g^{-1}}(\gamma)&=((\phi\circ\lambda_g^{-1})*\gamma)(0)=\sum_{x\in X}\phi(\lambda_g^{-1}(x))\gamma(x)=\\&= \sum_{x\in X}\phi(x)\gamma(\lambda_g(x))=(\phi*(\gamma\circ\lambda_g))(0)=(\phi*\rho_g(\gamma))(0)=(\widehat\phi\circ\rho_g)(\gamma),\end{split}\]
and this gives the required equality.
The last assertion follows from Proposition~\ref{conjumon}.
\end{proof}

\subsection{Approximating the algebraic side}\label{algebraic_approximation_part}

The goal of this subsection is to prove that the left $G$-actions $G\la{\lambda_{\frak F}} {\frak F}(X)$ (see Example~\ref{Gactionsexalg}) and $G\la{\lambda_\alg} \M_\alg(X)$ are asymptotically equivalent.


We use several times the following immediate consequence of Lemmas~\ref{norma-}(2) and~\ref{Malg}(2).

\begin{corollary}\label{several}
For every $\phi\in\M_\alg(X)$ and $F\in\Pf(G)$, $\Vert T_F(\lambda_\alg,\phi)\Vert_1=\Vert\phi\Vert_1^{|F|}$ and $\Vert T_F(\lambda_\alg,\phi*\phi)\Vert_1=\Vert\phi\Vert^{2|F|}_1$.
\end{corollary}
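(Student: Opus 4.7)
\medskip

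The statement is a purely formal consequence of the two cited lemmas once one unwinds the definition of $T_F$ in the normed monoid $(\M_\alg(X),*)$. Since the monoid operation on $\M_\alg(X)$ is convolution (with unit $\chi_{\{0\}}$, by Lemma~\ref{Peters-alg}(2)), my first step is to write
\[
T_F(\lambda_\alg,\phi) \;=\; \mathop{\scalebox{1.1}{$\ast$}}_{g\in F} (\lambda_\alg)_g(\phi) \;=\; \mathop{\scalebox{1.1}{$\ast$}}_{g\in F} (\phi\circ\lambda_g^{-1}),
\]
so that computing $\|T_F(\lambda_\alg,\phi)\|_1$ amounts to evaluating the $L^1$-norm of an iterated convolution of $|F|$ translates of $\phi$.

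For the first identity, I then proceed in two moves. By Lemma~\ref{Malg}(2), each translate satisfies $\|\phi\circ\lambda_g^{-1}\|_1=\|\phi\|_1$, so all $|F|$ factors have identical $L^1$-norm. Next, since $\phi\in\M_\alg(X)\subseteq L^1(X)^+$, all the $\phi\circ\lambda_g^{-1}$ are positive (Lemma~\ref{Malg}(1)), and so by repeated application of Lemma~\ref{norma-}(2) (which gives multiplicativity $\|\varphi*\psi\|_1=\|\varphi\|_1\|\psi\|_1$ on positive functions) one obtains by a straightforward induction on $|F|$ the equality
\[
\|T_F(\lambda_\alg,\phi)\|_1 \;=\; \prod_{g\in F} \|\phi\circ\lambda_g^{-1}\|_1 \;=\; \|\phi\|_1^{|F|}.
\]

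For the second identity, I would simply apply the first one with $\phi*\phi$ in place of $\phi$: this is legitimate because $\phi*\phi\in\M_\alg(X)$ (Lemma~\ref{Peters-alg}(2)), and again by Lemma~\ref{norma-}(2) applied once, $\|\phi*\phi\|_1=\|\phi\|_1^2$. Substituting into the first identity yields $\|T_F(\lambda_\alg,\phi*\phi)\|_1 = \|\phi*\phi\|_1^{|F|} = \|\phi\|_1^{2|F|}$.

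There is no genuine obstacle here; the corollary is essentially a bookkeeping statement whose content is that positivity of the elements of $\M_\alg(X)$ upgrades the general submultiplicativity $\|\varphi*\psi\|_1\leq\|\varphi\|_1\|\psi\|_1$ to an equality, and the translation-invariance of Haar measure (Lemma~\ref{Malg}(2)) ensures the action $\lambda_\alg$ preserves the $L^1$-norm of every element. The only thing to be mildly careful about is that both Lemmas~\ref{norma-}(2) and \ref{Malg}(2) require positivity, which is automatic from the definition $\M_\alg(X)\subseteq L^1(X)^+$; this is why the results fail to extend verbatim to arbitrary elements of $L^1(X)$.
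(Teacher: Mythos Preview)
Your proof is correct and follows exactly the approach the paper indicates: the corollary is stated there as an ``immediate consequence of Lemmas~\ref{norma-}(2) and~\ref{Malg}(2)'' with no further detail, and you have simply unwound that hint by writing $T_F(\lambda_\alg,\phi)$ as an iterated convolution, invoking norm-preservation under $(\lambda_\alg)_g$ and multiplicativity of $\|\cdot\|_1$ on positive functions, and then specializing to $\phi*\phi$ for the second identity.
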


\begin{proposition}\label{first_half_alg_dom}
$G\la{\lambda_\alg} \M_\alg(X)$ asymptotically dominates $G\la{\lambda_{\frak F}} {\frak F}(X)$.
\end{proposition}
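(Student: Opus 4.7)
The plan is to exploit the factorization that arises from taking $\phi_n = \chi_{D_n}\ast\chi_{D_n}$ for a sequence $\{D_n\} \subseteq \mathfrak{F}(X)$ with $C \subseteq D_n$ and $|D_n|\to\infty$. By Corollary~\ref{chichi} each $\phi_n \in \mathcal{M}_\alg(X)$. The commutativity and associativity of convolution, together with Lemma~\ref{Malg}, yield
\[
T_F(\lambda_\alg, \phi_n) \;=\; *_{s\in F}\bigl(\chi_{\lambda_s(D_n)}\ast \chi_{\lambda_s(D_n)}\bigr) \;=\; \Psi_{F,n}\ast\Psi_{F,n},
\qquad \Psi_{F,n}\;:=\;*_{s\in F}\chi_{\lambda_s(D_n)}.
\]
Since $D_n$ is symmetric, so is each $\chi_{\lambda_s(D_n)}$ and therefore $\Psi_{F,n}$; consequently
$T_F(\lambda_\alg,\phi_n)(0) = (\Psi_{F,n}\ast\Psi_{F,n})(0) = \|\Psi_{F,n}\|_2^{\,2}$. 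On the other hand, Corollary~\ref{several} gives $\|T_F(\lambda_\alg,\phi_n)\|_1 = |D_n|^{2|F|}$, so
\[
w_\alg\bigl(T_F(\lambda_\alg,\phi_n)\bigr) \;=\; \log\frac{|D_n|^{2|F|}}{\|\Psi_{F,n}\|_2^{\,2}}.
\]

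The asymptotic domination inequality then reduces, after division by $|F|$ and exponentiation, to the uniform bound
\[
\|\Psi_{F,n}\|_2^{\,2} \;\leq\; \frac{|D_n|^{2|F|}}{|T_F(\lambda_{\mathfrak F},C)|}\cdot e^{\varepsilon_n|F|}.
\]
Interpreting $p_{F,n}(y) := \Psi_{F,n}(y)/|D_n|^{|F|}$ as the law of $\sum_{s\in F}X_s$ with the $X_s$ independent and uniform on $\lambda_s(D_n)$, this is precisely the assertion that the gap between the R\'enyi entropies $H_0(p_{F,n})$ (where $p_{F,n}$ uses $C$ in place of $D_n$) and $H_2(p_{F,n})$ is at most $\varepsilon_n|F|$. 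The Cauchy--Schwarz inequality $\|\Psi_{F,n}\|_1^{\,2}\leq|\mathrm{supp}(\Psi_{F,n})|\cdot\|\Psi_{F,n}\|_2^{\,2}$ gives the trivial lower bound $|D_n|^{2|F|}/|T_F(\lambda_{\mathfrak F},D_n)|$; what we need is the matching upper bound, which holds exactly when $\Psi_{F,n}$ is (approximately) uniform on its support.

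The choice of $D_n$ is guided by the torsion subcase, where $\langle C\rangle$ is finite: taking $D_n=\langle C\rangle$ makes $\Psi_{F,n}$ exactly uniform on the subgroup $T_F(\lambda_{\mathfrak F},\langle C\rangle)\supseteq T_F(\lambda_{\mathfrak F},C)$, and the inequality holds with $\varepsilon_n=0$. In general, we choose $D_n$ to be a finite symmetric subset of $X$ containing $C$ and almost invariant under the $G$-action in a suitable sense (for instance, obtained by averaging indicators of orbits under a right F\o lner net in $G$); such $D_n$ exist because the $G$-action on $X$ by automorphisms is amenable, $G$ being amenable with every stabilizer containing the origin.

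The main obstacle is making this uniform in $F$ while keeping $\phi_n$ independent of $F$. For this, the F\o lner-type defect of $D_n$ needs to control the non-uniformity of $\Psi_{F,n}$ on its support simultaneously for all $F$; the natural way to achieve this is to combine the almost-invariance of $D_n$ with the Ornstein--Weiss $\varepsilon$-quasi-tiling of $F$ (Proposition~\ref{tilings_theo}), so that on each tile $\Psi_{F,n}$ factors through the subgroup-uniform situation up to a defect controlled by $|D_n\triangle\lambda_s(D_n)|/|D_n|$. The resulting error can be summed and averaged into a single $\varepsilon_n$ depending only on $n$, with $\varepsilon_n\to 0$ as $D_n$ becomes more invariant, completing the proof of asymptotic domination.
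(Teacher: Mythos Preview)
Your setup is the same as the paper's: you take $\phi_n=\chi_{D_n}\ast\chi_{D_n}\in\M_\alg(X)$, factor $T_F(\lambda_\alg,\phi_n)=\Psi_{F,n}\ast\Psi_{F,n}$ with $\Psi_{F,n}=\ast_{s\in F}\chi_{\lambda_s(D_n)}$, and correctly identify $w_\alg(T_F(\lambda_\alg,\phi_n))=\log\bigl(|D_n|^{2|F|}/\Vert\Psi_{F,n}\Vert_2^2\bigr)$ via Corollary~\ref{several}. The difficulty is entirely in the choice of $D_n$, and here your proposal goes astray.

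Your strategy is to make $\Psi_{F,n}$ approximately uniform on its support by choosing $D_n$ almost invariant under the $G$-action $\lambda$. But this is the wrong kind of invariance. Even if $\lambda_s(D_n)=D_n$ exactly for every $s\in G$ (think of $G$ acting trivially), one has $\Psi_{F,n}=(\chi_{D_n})^{\ast|F|}$, the $|F|$-fold self-convolution of $\chi_{D_n}$; this is far from uniform on its support $D_n^{(|F|)}$ unless $D_n$ is a subgroup, and the defect grows with $|F|$. So $G$-almost-invariance of $D_n$ gives you nothing, and neither Proposition~\ref{tilings_theo} nor the amenability of the $G$-action on $X$ rescues the argument. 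The claim that suitable $D_n$ exist because the action is amenable is also unsubstantiated: F{\o}lner sets live in $G$, not in $X$.

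What is actually needed is almost-invariance of $D_n$ under \emph{shifts by elements of the given $H\in\mathfrak F(X)$}, and the paper obtains this by taking $D_n=H^{(n)}$, the $n$-fold sumset of $H$. The key containment $h+H^{(n)}\subseteq H^{(n+1)}$ for $h\in H$ translates, after applying $\lambda_{f}$, into $(\chi_{\lambda_f(H^{(n+1)})})_{h}\geq \chi_{\lambda_f(H^{(n)})}$ for $h\in\lambda_f(H)$. Convolving over $f\in F$ gives $T_F(\lambda_\alg,\phi_{n+1})(x)\geq T_F(\lambda_\alg,\phi_n)(0)$ for every $x\in T_F(\lambda,H^{(2)})$; summing over such $x$ yields
\[
|T_F(\lambda,H)|\leq|T_F(\lambda,H^{(2)})|\leq \frac{\Vert T_F(\lambda_\alg,\phi_{n+1})\Vert_1}{T_F(\lambda_\alg,\phi_n)(0)}=\frac{\Vert T_F(\lambda_\alg,\phi_n)\Vert_1}{T_F(\lambda_\alg,\phi_n)(0)}\cdot\left(\frac{|H^{(n+1)}|}{|H^{(n)}|}\right)^{2|F|}.
\]
The error term $\varepsilon_n=2\log(|H^{(n+1)}|/|H^{(n)}|)$ tends to $0$ because Abelian groups have polynomial growth; this is the structural fact you are missing. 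Note the argument never tries to make a single $\Psi_{F,n}$ close to uniform: it compares $\phi_{n+1}$ to $\phi_n$ and lets the ratio absorb the non-uniformity.
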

\begin{proof}
Given $H\in {\frak F}(X)$, let $H^{(n)}=\{h_1+\ldots+h_n:h_j\in H,{j=1,\ldots,n}\}$ for all $n\in\N_+$, and define
\[\phi_n=\chi_{H^{(n)}}*\chi_{H^{(n)}}\quad\text{and}\quad \varepsilon_n=2\cdot\log\frac{|H^{(n+1)}|}{|H^{(n)}|}.\]
As Abelian groups have polynomial growth, $\{\varepsilon_n\}_{n\in\N_+}$ converges to $0$. Choose $n\in \N_{+}$ and  $F=\{f_1,\dots,f_k\}\in\Pf(G)$. 
First, with Lemma~\ref{Malg}(3) and (4), compute 
\begin{align*}
T_F(\lambda_\alg,\phi_{n})&= (\lambda_\alg)_{f_1}(\phi_n)*\ldots*(\lambda_\alg)_{f_k}(\phi_n)\\
&= (\phi_n\circ\lambda_{f_1}^{-1}) *\ldots * (\phi_n\circ\lambda_{f_k}^{-1})\\
&=((\chi_{H^{(n)}}*\chi_{H^{(n)}})\circ\lambda_{f_1}^{-1}) *\ldots * ((\chi_{H^{(n)}}*\chi_{H^{(n)}})\circ\lambda_{f_k}^{-1}) \\
&= ((\chi_{H^{(n)}}\circ \lambda_{f_1}^{-1})*(\chi_{H^{(n)}}\circ\lambda_{f_1}^{-1})) *\ldots * ((\chi_{H^{(n)}}\circ \lambda_{f_k}^{-1})*(\chi_{H^{(n)}}\circ\lambda_{f_k}^{-1}))\\
&=(\chi_{\lambda_{f_1}(H^{(n)})})*(\chi_{\lambda_{f_1}(H^{(n)})})*\ldots*(\chi_{\lambda_{f_k}(H^{(n)})})*(\chi_{\lambda_{f_k}(H^{(n)})});
\end{align*}
and also, by Corollaries~\ref{several} and~\ref{chichi}(2),
\begin{equation}\label{vert}
\Vert T_F(\lambda_\alg,\phi_n)\Vert_1=|H^{(n)}|^{2|F|}.
\end{equation}
Given $x\in T_F(\lambda,H^{(2)})$, write  $x=\sum_{j=1}^k(h_j+h_j')$ with $h_j,h_j'\in \lambda_{f_j}(H)$. By Lemmas~\ref{norma_1}(2) and~\ref{Malg}(3) and (4),
\begin{align}\label{Eq:July5*}
T_F(\lambda_\alg,\phi_{n+1})_{x}&=((\lambda_\alg)_{f_1}(\phi_{n+1}))_{h_1+h'_1}*\ldots*((\lambda_\alg)_{f_k}(\phi_{n+1}))_{h_k+h'_k}\\
\notag&=(\chi_{\lambda_{f_1}(H^{(n+1)})})_{h_1}*(\chi_{\lambda_{f_1}(H^{(n+1)})})_{h'_1}*\ldots*(\chi_{\lambda_{f_k}(H^{(n+1)})})_{h_k}*(\chi_{\lambda_{f_k}(H^{(n+1)})})_{h'_k} .
\end{align}
Since $(\chi_{\lambda_f(H^{(n+1)})})_h\geq \chi_{\lambda_f(H^{(n)})}$, for all $f\in F$ and $h\in \lambda_f(H)$, we deduce, with Lemma~\ref{norma-}(5),  that 
$$(\chi_{\lambda_{f_1}(H^{(n+1)})})_{h_1}*(\chi_{\lambda_{f_1}(H^{(n+1)})})_{h'_1}*\ldots*(\chi_{\lambda_{f_k}(H^{(n+1)})})_{h_k}*(\chi_{\lambda_{f_k}(H^{(n+1)})})_{h'_k} 
\! \geq \! (\chi_{\lambda_{f_1}(H^{(n)})})*(\chi_{\lambda_{f_1}(H^{(n)})})*\ldots*(\chi_{\lambda_{f_k}(H^{(n)})})*(\chi_{\lambda_{f_k}(H^{(n)})})\! =\! T_F(\lambda_\alg,\phi_{n}).$$
Using \eqref{Eq:July5*} and the above inequality,  after evaluation at $0$,  we get 
\begin{equation}\label{first_eq_appendix}
T_F(\lambda_\alg,\phi_{n+1})(x) = T_F(\lambda_\alg,\phi_{n+1})_{x}(0)\geq T_F(\lambda_\alg,\phi_{n})(0).
\end{equation}
Hence,
\begin{equation*}
\Vert T_F (\lambda_{\alg}, \phi_{n+1})\Vert_1=\sum_{x\in X}T_F (\lambda_{\alg}, \phi_{n+1})(x)\overset{(*)}\geq \sum_{x\in T_{F}(\lambda,H^{(2)})}T_F (\lambda_{\alg}, \phi_{n+1})(x)\overset{(**)}\geq |T_{F}(\lambda,H^{(2)})|\cdot T_F(\lambda_\alg,\phi_n)(0),
\end{equation*}
where $(*)$ follows as we are shrinking the indexing family, while $(**)$ follows by \eqref{first_eq_appendix}. We conclude that:
\begin{equation*}
|T_{F}(\lambda,H^{(2)})|\leq \frac{\Vert T_F (\lambda_{\alg}, \phi_{n+1})\Vert_1}{T_F(\lambda_\alg,\phi_n)(0)}=\frac{\Vert T_F (\lambda_{\alg}, \phi_{n})\Vert_1}{T_F(\lambda_\alg,\phi_n)(0)}\cdot\frac{\Vert T_F (\lambda_{\alg}, \phi_{n+1})\Vert_1}{\Vert T_F (\lambda_{\alg}, \phi_{n})\Vert_1}=\frac{\Vert T_F(\lambda_\alg,\phi_n)\Vert_1}{T_F(\lambda_\alg,\phi_n)(0)}\cdot\left(\frac{|H^{(n+1)}|}{|H^{(n)}|}\right)^{2|F|},
\end{equation*}
where the last equality follows from \eqref{vert}. Taking logarithms, one gets the desired inequality: 
\[\log|T_{F}(\lambda,H)|\leq \log|T_{F}(\lambda,H^{(2)})|\leq w_\alg(T_F(\lambda_\alg,\phi_n))+|F|\cdot\varepsilon_n.\qedhere\]
\end{proof}

The converse of Proposition~\ref{first_half_alg_dom} is technically a bit more challenging, as it relies on a series of successive reductions, that correspond to the various parts of the following lemma:

\begin{lemma}\label{tech_for_second_alg_domination} 
Let $\phi\in \M_\alg(X)$ and $F\in\Pf(G)$. Then:
\begin{enumerate}[ (1)]
  \item $w_\alg(T_F(\lambda_\alg,\phi))\leq w_\alg(T_F(\lambda_\alg,\phi*\phi))$;
  \item for $\psi=\frac{\phi}{\Vert\phi\Vert_1}$, $\Vert\psi\Vert_1=1$ and $w_\alg(T_F(\lambda_\alg,\phi))= w_\alg(T_F(\lambda_\alg,\psi))$;
  \item given $n\in\N_{+}$, there is $H\in {\frak F}(X)$ such that $\Vert\phi-\phi\cdot \chi_H\Vert_1\leq \frac{1}{2n}$;
  \item suppose that $\Vert\phi\Vert_1=1$, take $n\in \N$ with $n>1$, and let  $\xi=\phi\cdot\chi_H$ be as in (3), then $\xi*\xi\in \M_\alg(X)$ and
\[w_\alg(T_F(\lambda_\alg,\phi*\phi))\leq w_\alg(T_F(\lambda_\alg,\xi*\xi))+|F|\log\frac{n}{n-1}.\]
\end{enumerate}
\end{lemma}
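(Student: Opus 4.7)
The four items form a chain of reductions which successively enlarge and normalize the test function while controlling the change in $w_\alg$. I would prove them in order; items (1)--(3) are short, while (4) is the genuine technical step.

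For (1), the key observation is that each $(\lambda_\alg)_f$ is a monoid endomorphism of $(\M_\alg(X),*)$ by Lemma \ref{Malg}(3), so the $T_F$-construction commutes with convolution: $T_F(\lambda_\alg,\phi*\phi) = T_F(\lambda_\alg,\phi)*T_F(\lambda_\alg,\phi)$. The inequality then follows from the monotonicity of $w_\alg$ on $(\M_\alg(X),*)$ (Lemma \ref{Peters-alg}(3)). Item (2) is immediate: passing from $\phi$ to $\psi=\phi/\|\phi\|_1$ scales $T_F(\lambda_\alg,\phi)$ by $\|\phi\|_1^{-|F|}$ in both its $L^1$-norm and its value at zero (via Corollary \ref{several} and linearity of evaluation), leaving $w_\alg$ unchanged. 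For (3), the convergence of $\sum_{x\in X}\phi(x)$ yields a finite $H_0\subseteq X$ with $\sum_{x\notin H_0}\phi(x)\leq 1/(2n)$; taking $H = H_0\cup(-H_0)\cup\{0\}\in\mathfrak F(X)$, the function $\phi-\phi\cdot\chi_H$ is supported on $X\setminus H$ with $L^1$-norm bounded by this tail sum.

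For (4), I would first check $\xi*\xi\in\M_\alg(X)$. The function $\xi=\phi\cdot\chi_H$ is positive, has finite support, and is symmetric (because $H$ is symmetric and $\phi=\phi^\star$ by Lemma \ref{positive_def}(2)); hence $\xi=\xi^\star$, and $\xi*\xi=\xi*\xi^\star$ is positive-definite by Lemma \ref{norma-}(3). Positivity, $L^1$-integrability and non-vanishing of $\xi*\xi$ are automatic from the finite support and positivity of $\xi$. Setting $\Phi = T_F(\lambda_\alg,\phi)$ and $\Xi = T_F(\lambda_\alg,\xi)$, one obtains $T_F(\lambda_\alg,\phi*\phi)=\Phi*\Phi$ and $T_F(\lambda_\alg,\xi*\xi)=\Xi*\Xi$ exactly as in (1). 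Symmetry of $\phi$ propagates through translates and convolutions, so both $\Phi$ and $\Xi$ are symmetric, and the autocorrelations collapse to squared $L^2$-norms: $(\Phi*\Phi)(0)=\|\Phi\|_2^2$ and $(\Xi*\Xi)(0)=\|\Xi\|_2^2$.

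Combined with Corollary \ref{several}, which gives $\|\Phi*\Phi\|_1=\|\phi\|_1^{2|F|}=1$ and $\|\Xi*\Xi\|_1=\|\xi\|_1^{2|F|}$, the claimed inequality $w_\alg(\Phi*\Phi)\leq w_\alg(\Xi*\Xi)+|F|\log(n/(n-1))$ is equivalent to
\[
\frac{\|\Xi\|_2^2}{\|\Phi\|_2^2} \;\leq\; \|\xi\|_1^{2|F|}\cdot\Bigl(\frac{n}{n-1}\Bigr)^{|F|}.
\]
Since $\xi\leq\phi$ pointwise, Lemma \ref{norma-}(5) gives $\Xi\leq\Phi$ pointwise, so the left-hand side is at most $1$. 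It then suffices to check $\|\xi\|_1\geq\sqrt{(n-1)/n}$, and from (3) one has $\|\xi\|_1\geq 1-1/(2n)$; the elementary estimate $(1-1/(2n))^2 = 1-1/n+1/(4n^2)\geq (n-1)/n$ closes the argument. The main obstacle is the very first sub-step of (4): ensuring $\xi*\xi\in\M_\alg(X)$, since positive-definiteness is typically destroyed by truncation. The autocorrelation device $\xi*\xi=\xi*\xi^\star$ is what rescues the argument, and this is precisely what makes step (1) necessary in the first place.
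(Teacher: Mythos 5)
Your proof is correct, and it takes a slightly different (and cleaner) route than the paper at two places. For item (1), the paper argues directly: it bounds $T_F(\lambda_\alg,\phi*\phi)(0) = \sum_x T_F(\lambda_\alg,\phi)(x)T_F(\lambda_\alg,\phi)(-x) \leq \|\phi\|_1^{|F|}T_F(\lambda_\alg,\phi)(0)$ using Lemma~\ref{positive_def}(1), then combines with Corollary~\ref{several}. Your route --- observing $T_F(\lambda_\alg,\phi*\phi) = T_F(\lambda_\alg,\phi)*T_F(\lambda_\alg,\phi)$ because each $(\lambda_\alg)_f$ is a $*$-homomorphism, and then invoking the monotonicity of $w_\alg$ from Lemma~\ref{Peters-alg}(3) --- is shorter and more structural, and it is precisely the same observation you reuse in (4). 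Items (2) and (3) match the paper's proofs essentially verbatim.

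For item (4), the two proofs are rearrangements of the same estimates, but you handle one subtlety more carefully than the paper does. The paper opens with ``Since $\xi\in\M_\alg(X)$ by definition, also $\xi*\xi\in\M_\alg(X)$,'' but the first clause is not justified: $\xi = \phi\cdot\chi_H$ has no reason to be positive-definite, since $\chi_H$ for $H\in\mathfrak F(X)$ typically is not (e.g., $\chi_{\{-1,0,1\}}$ on $\Z$ has Fourier transform $1+2\cos\theta$, which takes negative values). Your observation that $\xi$ is nonetheless symmetric and finitely supported, so that $\xi*\xi = \xi*\xi^\star$ is positive-definite by Lemma~\ref{norma-}(3), is the correct way to get $\xi*\xi\in\M_\alg(X)$; the membership of $\xi$ itself is neither true nor needed, as $T_F(\lambda_\alg,\xi)$ still makes sense as a convolution of $L^1$-functions. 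After that, your $L^2$/autocorrelation reformulation --- reducing the claim to $\|\Xi\|_2^2/\|\Phi\|_2^2 \leq \|\xi\|_1^{2|F|}(n/(n-1))^{|F|}$, using $\Xi\leq\Phi$ for the left side and $\|\xi\|_1\geq 1-1/(2n)$ for the right --- is equivalent to the paper's, which bounds $\|\xi*\xi\|_1\geq(n-1)/n$ directly via the triangle inequality $\|\xi*\xi-\phi*\phi\|_1\leq 2\|\xi-\phi\|_1$; both hinge on the same two inputs ($T_F(\lambda_\alg,\xi*\xi)(0)\leq T_F(\lambda_\alg,\phi*\phi)(0)$ from $\xi\leq\phi$, plus a lower bound for the $L^1$-norm of $\xi*\xi$). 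Your framing makes the role of positive-definiteness --- the identity $(\Phi*\Phi)(0)=\|\Phi\|_2^2$ --- explicit, which is a pedagogical gain.
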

\begin{proof}
(1) By Corollary~\ref{several}, $\Vert T_F(\lambda_\alg,\phi*\phi)\Vert_1=\Vert\phi\Vert^{2|F|}_1$. Furthermore,
\begin{align*}
T_F(\lambda_\alg,\phi*\phi)(0)&=\sum_{x\in X}T_F(\lambda_\alg,\phi)(x)\cdot T_F(\lambda_\alg,\phi)(-x)\overset{(*)}\leq \sum_{x\in X}T_F(\lambda_\alg,\phi)(x)\cdot T_F(\lambda_\alg,\phi)(0)\\
&= \Vert T_F(\lambda_\alg,\phi)\Vert_1\cdot T_F(\lambda_\alg,\phi)(0)= \Vert\phi\Vert^{|F|}_1\cdot T_F(\lambda_\alg,\phi)(0),
\end{align*}
where  $(*)$ holds by Lemma~\ref{positive_def}(1).
Using  both equalities in Corollary~\ref{several} and the above computation, we get:
\[\frac{\Vert T_F(\lambda_\alg,\phi)\Vert_1}{T_F(\lambda_\alg,\phi)(0)}=\frac{\Vert\phi\Vert_1^{|F|}}{T_F(\lambda_\alg,\phi)(0)}=\frac{\Vert\phi\Vert_1^{2|F|}}{\Vert\phi\Vert_1^{|F|}\cdot T_F(\lambda_\alg,\phi)(0)}\leq \frac{\Vert T_F(\lambda_\alg,\phi*\phi)\Vert_1}{T_F(\lambda_\alg,\phi*\phi)(0)}.\]

\smallskip
(2) Let $\varepsilon=\Vert\phi\Vert_1^{-1}\in \R_{>0}$. Then
$\Vert\psi\Vert_1=\sum_{x\in X}\psi(x)=\varepsilon\cdot \sum_{x\in X}\phi(x)=\varepsilon\cdot \Vert\phi\Vert_1=1$. Similarly, if $F=\{f_1,\dots,f_k\}$, then
\[T_F(\lambda_{\alg},\psi)(0)=(\psi^{\lambda_{f_1}}*\ldots*\psi^{\lambda_{f_k}})(0)=\varepsilon^k\cdot(\phi^{\lambda_{f_1}}*\ldots*\phi^{\lambda_{f_k}})(0)=\varepsilon^k\cdot T_F(\lambda_{\alg},\phi)(0).\]
Now, by Corollary~\ref{several}, 
$\Vert T_F(\lambda_{\alg},\psi)\Vert_1=\Vert\psi\Vert_1^k=1$ and $\Vert T_F(\lambda_{\alg},\phi)\Vert_1=\Vert\phi\Vert_1^k=\varepsilon^{-k}$. 
Combining the above computations we can easily conclude:
\[
w_\alg(T_F(\lambda_{\alg},\phi))=\frac{\Vert\phi\Vert_1^k}{T_F(\lambda_{\alg},\phi)(0)}=\frac{1}{\varepsilon^{k}\cdot T_F(\lambda_{\alg},\phi)(0)}=\frac{1}{T_F(\lambda_{\alg},\psi)(0)}=w_\alg(T_F(\lambda_{\alg},\psi)).
\]

\smallskip
(3) follows since $\Vert\phi\Vert_1=\sum_{x\in X}\phi(x)=\sup\{\sum_{x\in H}\phi(x):H\in{\frak F}(X)\}<\infty$.

\smallskip
(4)  Since $\xi\in\M_\alg(X)$ by definition, also $\xi*\xi\in\M_\alg(X)$. Furthermore, by the triangular inequality and  Lemma~\ref{norma-}(2) and (5),
\begin{align*}
\Vert\xi*\xi-\phi*\phi\Vert_1&=\Vert\xi*\xi-\xi*\phi+\xi*\phi-\phi*\phi\Vert_1\leq\Vert\xi*\xi-\xi*\phi\Vert_1+\Vert\xi*\phi-\phi*\phi\Vert_1\leq\\
&\leq \Vert\xi\Vert_1\Vert\xi-\phi\Vert_1+\Vert\phi\Vert_1\Vert\xi-\phi\Vert_1\overset{(*)}{\leq} 2\Vert\xi-\phi\Vert_1<\frac{1}{n},
\end{align*}
where $(*)$ follows since $\Vert\xi\Vert_1\leq \Vert\phi\Vert_1=1$. 
 As $\Vert\phi*\phi\Vert_1=\Vert\phi\Vert_1^2=1$ by Lemma~\ref{norma-}(2), we get 
\begin{equation}\label{22722}
\Vert\phi*\phi\Vert_1-\Vert\xi*\xi-\phi*\phi\Vert_1\geq 1-\frac{1}{n}=\frac{n-1}{n}.
\end{equation}
Furthermore, the fact that $\xi\leq \phi$ implies that $\xi*\xi\leq \phi*\phi$ by Lemma~\ref{norma-}(5), and  so $(\lambda_\alg)_g(\xi*\xi)\leq (\lambda_\alg)_g(\phi*\phi)$ for each $g\in G$; thus,  $T_F(\lambda_\alg,\xi*\xi)(0)\leq T_F(\lambda_\alg,\phi*\phi)(0)$.  Using the latter inequality, \eqref{22722}, Corollary~\ref{several} (to see that $\Vert T_F(\lambda_\alg,\xi*\xi)\Vert_1=\Vert\xi*\xi\Vert^{|F|}_1$)  and that $\Vert T_F(\lambda_\alg,\phi*\phi)(0)\Vert_1=\Vert \phi\Vert_1^{2|F|}=1$ by Corollary~\ref{several}, we deduce that:
\begin{equation*}\begin{split}
\frac{\Vert T_F(\lambda_\alg,\xi*\xi)\Vert_1}{T_F(\lambda_\alg,\xi*\xi)(0)}&\geq \frac{\Vert\xi*\xi\Vert^{|F|}_1}{T_F(\lambda_\alg,\phi*\phi)(0)}
\geq\frac{(\Vert\phi*\phi\Vert_1-\Vert\xi*\xi-\phi*\phi\Vert_1)^{|F|}}{T_F(\lambda_\alg,\phi*\phi)(0)}>\\
&>\frac{((n-1)/n)^{|F|}}{T_F(\lambda_\alg,\phi*\phi)(0)}=\frac{\Vert T_F(\lambda_\alg,\phi*\phi)(0)\Vert_1}{T_F(\lambda_\alg,\phi*\phi)(0)}\cdot \left(\frac{n-1}{n}\right)^{|F|}.
\end{split}\end{equation*}
To conclude, divide both sides by $((n-1)/n)^{|F|}$ and apply logarithms.
\end{proof}

\begin{proposition}
$G\la{\lambda_{\frak F}} {\frak F}(X)$ asymptotically dominates $G\la{\lambda_\alg} \M_\alg(X)$.
\end{proposition}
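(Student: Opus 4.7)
My plan is to invoke the four reductions collected in Lemma~\ref{tech_for_second_alg_domination} in sequence, and then to close the chain with a crude but effective support bound on convolutions of translates of $\xi_n*\xi_n$. Given $\phi \in \M_\alg(X)$, I first replace $\phi$ by its normalization $\psi = \phi/\Vert\phi\Vert_1$: by part (2) of the lemma, $\Vert\psi\Vert_1=1$ and $w_\alg(T_F(\lambda_\alg,\phi))=w_\alg(T_F(\lambda_\alg,\psi))$, so it suffices to dominate $w_\alg(T_F(\lambda_\alg,\psi))$. Then I apply part (1) to get $w_\alg(T_F(\lambda_\alg,\psi))\leq w_\alg(T_F(\lambda_\alg,\psi*\psi))$, and then parts (3)--(4) to produce, for each $n \in \N_{>1}$, an $H_n \in \mathfrak F(X)$ and the function $\xi_n = \psi\cdot \chi_{H_n}$ such that
\[
w_\alg(T_F(\lambda_\alg,\psi*\psi)) \leq w_\alg(T_F(\lambda_\alg,\xi_n*\xi_n)) + |F|\log \tfrac{n}{n-1}.
\]

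The main remaining step is to bound $w_\alg(T_F(\lambda_\alg,\xi_n*\xi_n))$ by $\log|T_F(\lambda,H_n^{(2)})|$, where $H_n^{(2)} = H_n + H_n \in \mathfrak F(X)$. Since $\supp(\xi_n)\subseteq H_n$, by Lemma~\ref{norma_1}(3) $\supp(\xi_n*\xi_n)\subseteq H_n^{(2)}$; applying the translation identity $\supp(\phi\circ\lambda_f^{-1}) = \lambda_f(\supp(\phi))$ and the additivity of support under convolution, I get
\[
\supp\big(T_F(\lambda_\alg, \xi_n*\xi_n)\big) \subseteq \sum_{f\in F}\lambda_f(H_n^{(2)}) = T_F(\lambda,H_n^{(2)}).
\]
Let $M_n = T_F(\lambda_\alg, \xi_n*\xi_n) \in \M_\alg(X)$. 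By Lemma~\ref{positive_def}(1)--(2), $M_n$ is positive-definite and non-negative, so it attains its maximum at $0$. Hence
\[
\Vert M_n\Vert_1 = \sum_{x\in \supp M_n} M_n(x) \leq |\supp M_n|\cdot M_n(0) \leq |T_F(\lambda,H_n^{(2)})|\cdot M_n(0),
\]
which gives $w_\alg(M_n) \leq \log|T_F(\lambda,H_n^{(2)})|$.

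Putting the chain together and setting $\varepsilon_n = \log\frac{n}{n-1}$ (which tends to $0$),
\[
w_\alg(T_F(\lambda_\alg,\phi)) \leq \log|T_F(\lambda,H_n^{(2)})| + \varepsilon_n|F| = v_{\mathfrak F}(T_{F}(\lambda_{\mathfrak F}, H_n^{(2)})) + \varepsilon_n|F|,
\]
for every $F \in \Pf(G)$ and every $n$, establishing that $\lambda_{\mathfrak F}$ asymptotically dominates $\lambda_\alg$ via the sequence $\{H_n^{(2)}\}_{n}$ in $\mathfrak F(X)$. The only place where I expect a potential pitfall is making sure the chosen $H_n$ (and therefore $H_n^{(2)}$) truly lies in $\mathfrak F(X)$, i.e., is finite, symmetric, and contains $0$: this is built into the statement of part (3) of the lemma and is preserved by $H_n \mapsto H_n + H_n$, so there is no real difficulty. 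All of the heavy lifting has been pushed into Lemma~\ref{tech_for_second_alg_domination} and into the elementary observation that for a positive-definite non-negative function the $L^1$-norm is bounded by the size of the support times the value at $0$.
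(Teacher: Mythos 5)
Your proof is correct and follows essentially the same strategy as the paper: normalize $\phi$, apply the four parts of Lemma~\ref{tech_for_second_alg_domination} in the same order to reduce to $\xi_n*\xi_n$, and then bound $w_\alg(T_F(\lambda_\alg,\xi_n*\xi_n))$ by $\log|T_F(\lambda,E_n)|$ for a suitable finite symmetric set $E_n$ containing the support. The only cosmetic differences are that you choose the explicit $E_n = H_n^{(2)}$ instead of an abstract $E_n\supseteq\supp(\psi_n*\psi_n)$, and you bound $\Vert M_n\Vert_1 \leq |\supp M_n|\cdot M_n(0)$ directly via Lemma~\ref{positive_def} rather than packaging the same estimate as a convolution $\chi_{T_F(\lambda,E_n)}*M_n(0)$ as the paper does.
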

\begin{proof}
Given $\phi\in \M_\alg(X)$,  let $\psi=\frac{\phi}{\Vert\phi\Vert_1}$ and choose, for each $1<n\in\N$, $H_n\in{\frak F}(X)$ such that $\Vert\psi-\psi\cdot\chi_{H_n}\Vert_1<1/(2n)$ (we can do that by Lemma~\ref{tech_for_second_alg_domination}(3)). Letting $\psi_n=\psi\cdot \chi_{H_n}$, we have that $\psi_n*\psi_n$ has finite support by Lemma~\ref{norma_1}(3). Choose $E_n\in{\frak F}(X)$ containing $\supp(\psi_n*\psi_n)$, set $\varepsilon_n=\log|n/(n-1)|$, and let us verify that, for each $F\in\Pf(G)$,
\[w_{\alg}(T_F(\lambda_\alg,\phi))\leq \log|T_F(\lambda,E_n)|+|F|\cdot\varepsilon_n.\]
Using Lemma~\ref{tech_for_second_alg_domination}(2), (1) and (4), we can simplify this as follows 
\[w_\alg(T_F(\lambda_\alg,\phi))=w_\alg(T_F(\lambda_\alg,\psi)\leq w_\alg(T_F(\lambda_\alg,\psi*\psi))\leq w_\alg(T_F(\lambda_\alg,\psi_n*\psi_n))+|F|\cdot\varepsilon_n . \]
Hence, we are left with the proof of the following inequality:
\begin{equation}\label{eq_second_alg_dom}
w_{\alg}(T_F(\lambda_\alg,\psi_n*\psi_n))\leq \log|T_F(\lambda,E_n)|.
\end{equation}
For this, consider $\chi_{T_F(\lambda,E_n)}*T_F(\lambda_\alg,\psi_n*\psi_n)(0)$. Then,
\[
\chi_{T_F(\lambda,E_n)}*T_F(\lambda_\alg,\psi_n*\psi_n)(0)=\sum_{x\in X}\chi_{T_F(\lambda,E_n)}(x)\cdot T_F(\lambda_\alg,\psi_n*\psi_n)(-x)=\Vert T_F(\lambda_\alg,\psi_n*\psi_n)\Vert_1,
\]
since $\supp(T_F(\lambda_\alg,\psi_n*\psi_n))\subseteq T_F(\lambda,E_n)$, which is symmetric. Furthermore, by Lemma~\ref{positive_def}(2),
\[\chi_{T_F(\lambda,E_n)}*T_F(\lambda_\alg,\psi_n*\psi_n)(0)=\sum_{x\in X}\chi_{T_F(\lambda,E_n)}(x)\cdot T_F(\lambda_\alg,\psi_n*\psi_n)(-x)\leq |T_F(\lambda,E_n)|T_F(\lambda_\alg,\psi_n*\psi_n)(0).\spadesuit\]
Now \eqref{eq_second_alg_dom} is obtained by combining the above computations and taking logarithms.
\end{proof}

The above two propositions give immediately the following:

\begin{corollary}\label{cor1}
$G\la{\lambda_{\frak F}} {\frak F}(X)$ and $G\la{\lambda_\alg} \M_\alg(X)$ are asymptotically equivalent. 
Hence, $h(\lambda_{\frak F}, \s) = h(\lambda_\alg , \s)$ for every right F\o lner net $\s$ for $G$.
\end{corollary}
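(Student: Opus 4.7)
The plan is to obtain Corollary \ref{cor1} essentially as a formal consequence of the two preceding propositions, since all the hard analytic work (bounding finite-subset trajectories by trajectories of convolved positive-definite functions and vice versa) has already been carried out there. Concretely, the first preceding proposition establishes that $G\la{\lambda_\alg}\M_\alg(X)$ asymptotically dominates $G\la{\lambda_{\frak F}}{\frak F}(X)$, while the second establishes the opposite asymptotic domination. Mutual asymptotic domination is precisely the definition of asymptotic equivalence, so the first assertion of the corollary is immediate.

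For the entropy equality, I would chain together two results from Section \ref{AppA}. First, the lemma following the definitions of the various types of domination shows that asymptotic domination implies weak asymptotic domination; applied in both directions it yields that $\lambda_{\frak F}$ and $\lambda_\alg$ are weakly asymptotically equivalent. Then Proposition \ref{conjumon} directly gives $h(\lambda_{\frak F},\s)=h(\lambda_\alg,\s)$ for every right F\o lner net $\s$ for $G$, independently of the choice of $\s$.

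There is no genuine obstacle at this step: the delicate combinatorial estimates—controlling $|T_F(\lambda,H^{(2)})|$ by $w_\alg(T_F(\lambda_\alg,\phi_n))$ via polynomial growth, and the successive reductions from $\phi$ to $\psi=\phi/\Vert\phi\Vert_1$, then to $\phi\ast\phi$, and finally to a compactly supported truncation $\xi=\phi\cdot\chi_H$—were all absorbed into the two propositions. The only thing one must be slightly careful about is to verify that the $\varepsilon_n\to 0$ sequences produced in those proofs are independent of $F\in\Pf(G)$, so they legitimately witness asymptotic (rather than merely weak asymptotic) domination; this is visible in both proofs, since $\varepsilon_n=2\log(|H^{(n+1)}|/|H^{(n)}|)$ on one side and $\varepsilon_n=\log(n/(n-1))$ on the other depend only on $n$.

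Thus the proof proposal reduces to a single line citing the two propositions for the equivalence, followed by one application each of the implication ``asymptotic $\Rightarrow$ weakly asymptotic'' and of Proposition \ref{conjumon} for the entropy equality.
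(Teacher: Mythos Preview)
Your proposal is correct and matches the paper's approach exactly: the paper simply states that Corollary~\ref{cor1} follows immediately from the two preceding propositions, and your elaboration via the implication ``asymptotic $\Rightarrow$ weakly asymptotic'' together with Proposition~\ref{conjumon} is precisely how the entropy equality is meant to be extracted from the framework of \S\ref{AppA}.
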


\subsection{Approximating the topological side}\label{topological_approximation_part}

The goal of this subsection is to verify the asymptotic equivalence of two pairs of left $G$-actions: first we show that $G\la{\rho_\top} \M_\top(K)$ and $G\la{\rho_{\frak U}}{\frak U}(K)$ (see Example~\ref{Gactionsextop2}) are asymptotically equivalent and, after that, we show that $G\la{\rho_{\frak U}}{\frak U}(K)$ and $G\la{\rho_{\cov}} {\cov}(K)$ (see Example~\ref{Gactionsextop}) are equivalent. 

\begin{proposition}\label{first_alg_dom}\label{cor2}
$G\la{\rho_{\frak U}}{\frak U}(K)$ and $G\la{\rho_\top}\M_\top(K)$ are asymptotically equivalent.  
Hence, $h(\rho_{\frak U},\s ) = h(\rho_\top, \s)$ for every right F\o lner net $\s$ for $G$.
\end{proposition}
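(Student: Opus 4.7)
The plan is to establish asymptotic domination in each direction and then invoke Proposition~\ref{wad} to conclude that $h(\rho_{\mathfrak U},\s)=h(\rho_\top,\s)$ for any right F\o lner net $\s$ for $G$. The bridge between the two normed monoids is the map $U\mapsto \chi_U\ast \chi_U$ from $\mathfrak U(K)$ to $\M_\top(K)$, which is well-defined by Corollary~\ref{chichi}(1) and satisfies $(\chi_U\ast\chi_U)(0)=\mu(U)$ and $\|\chi_U\ast\chi_U\|_1=\mu(U)^2$ by Corollary~\ref{chichi}(2). Two features of this assignment will do the work: its support is controlled by $U+U$ (Lemma~\ref{norma_1}(3)) and it is equivariant under topological automorphisms in the sense that $(\chi_U\ast\chi_U)\circ\rho_g=\chi_{\rho_g^{-1}(U)}\ast\chi_{\rho_g^{-1}(U)}$, thanks to Lemma~\ref{Mtop}(4) together with the fact that, on the compact group $K$, every $\rho_g$ preserves the normalized Haar measure.

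For the first direction, I would show that $\rho_\top$ actually \emph{dominates} $\rho_{\mathfrak U}$ (without any asymptotic slack). Given $U\in\mathfrak U(K)$, choose a symmetric compact neighborhood $V$ of $1$ with $V+V\subseteq U$ and set $\phi_V=\chi_V\ast\chi_V\in\M_\top(K)$. For each $F\in\Pf(G)$, using Lemma~\ref{norma_1}(3) fibrewise, the support of $T_F(\rho_\top,\phi_V)=\prod_{g\in F}\phi_V\circ\rho_g$ is contained in $\bigcap_{g\in F}\rho_g^{-1}(V+V)\subseteq C_F(\rho,U)$. Combining this with the pointwise bound $\phi_V(y)\leq\phi_V(0)=\mu(V)$ (Lemma~\ref{positive_def}(1)) yields $\|T_F(\rho_\top,\phi_V)\|_1\leq\mu(V)^{|F|}\mu(C_F(\rho,U))$, while $T_F(\rho_\top,\phi_V)(0)=\mu(V)^{|F|}$. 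Taking the logarithm of the ratio gives $w_\top(T_F(\rho_\top,\phi_V))\geq -\log\mu(C_F(\rho,U))=v_{\mathfrak U}(T_F(\rho_{\mathfrak U},U))$, which is the desired domination.

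For the converse direction, I would build, for a given $\phi\in\M_\top(K)$, a sequence of symmetric sublevel sets. Since $\phi$ is continuous, positive, and symmetric (Lemma~\ref{positive_def}(2)) with maximum $\phi(0)>0$ at $1$, the sets $U_n=\{x\in K:\phi(x)\geq (1-1/n)\phi(0)\}$ lie in $\mathfrak U(K)$ for all $n\geq 2$. For $x\in C_F(\rho,U_n)$ one has $\phi(\rho_g(x))\geq (1-1/n)\phi(0)$ for every $g\in F$, so $T_F(\rho_\top,\phi)(x)\geq (1-1/n)^{|F|}\phi(0)^{|F|}$. Integrating over $C_F(\rho,U_n)$ gives $\|T_F(\rho_\top,\phi)\|_1\geq (1-1/n)^{|F|}\phi(0)^{|F|}\mu(C_F(\rho,U_n))$, whence, with $\varepsilon_n=-\log(1-1/n)\to 0$,
\[
\frac{w_\top(T_F(\rho_\top,\phi))}{|F|}\ \leq\ \frac{v_{\mathfrak U}(T_F(\rho_{\mathfrak U},U_n))}{|F|}+\varepsilon_n.
\]
This is precisely the asymptotic domination of $\rho_\top$ by $\rho_{\mathfrak U}$. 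Combining the two dominations with Proposition~\ref{wad} yields the equality of $h(\rho_{\mathfrak U},\s)$ and $h(\rho_\top,\s)$.

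The only subtle point is the first direction: one must simultaneously (i) keep $\phi_V$ fixed in $F$, (ii) trap the support of $T_F(\rho_\top,\phi_V)$ inside $C_F(\rho,U)$ rather than merely in a thickening of it, and (iii) upgrade the $L^\infty$--bound $\phi_V\leq\phi_V(0)$ into a clean $L^1$ estimate. The chosen $V$ with $V+V\subseteq U$ and the support calculus for convolutions resolve all three issues in one stroke, while the second direction is comparatively soft — the main obstacle there is simply ensuring that the approximating neighborhoods $U_n$ are symmetric compact neighborhoods of $1$, which is automatic from the symmetry of positive-definite positive functions.
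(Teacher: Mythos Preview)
Your proof is correct and follows essentially the same route as the paper's: for the first domination both of you pick a positive-definite function supported in $U$ (the paper via Corollary~\ref{prescribed_support}, you via the explicit $\chi_V*\chi_V$ with $V+V\subseteq U$), and for the second both use superlevel sets $\{x:\phi(x)\geq c\,\phi(0)\}$ of $\phi$. The only cosmetic difference is that you take the superlevel set $U_n$ itself as the element of $\mathfrak U(K)$, justified by the symmetry of $\phi\in\P(K)^+$, whereas the paper passes to a smaller $E_n\subseteq V(\phi,1/n)$.
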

\begin{proof} 
We show first that $G\la{\rho_\top}\M_\top(K)$ dominates $G\la{\rho_{\frak U}} {\frak U}(K)$.
Let $U\in {\frak U}(K)$. By  Corollary~\ref{prescribed_support}, there is $\phi\in \M_\top(K)$  such that $\supp(\phi)\subseteq U$ and, up to rescaling, we may suppose that $\phi(0)=1$. Then, ${\Vert T_F(\rho_\top,\phi)\Vert_1}\leq\mu(T_F(\rho,U))$, by Lemma~\ref{positive_def}(3).
Therefore, as $T_F(\rho_\top,\phi)(0)=\prod_{g\in F}(\phi\circ\rho_g)(0)=\phi(0)^{|F|}$,
\[
w_\top(T_F(\rho_\top,\phi))=\log\left(\frac{\phi(0)^{|F|}}{\Vert T_F(\rho_\top,\phi)\Vert_1}\right)\geq-\log\mu(T_F(\rho,U))=v_{\frak U}(T_F(\rho,U)).
\]

\smallskip
Next we check that $G\la{\rho_{\frak U}}{\frak U}(K)$ asymptotically dominates $G\la{\rho_\top}\M_\top(K)$.
Let $\phi \in \M_\top(K)$ and set $c=\phi(0)>0$. For each $r\in\mathbb R_{>0}$, we let $B_0(r)\subseteq \mathbb C$ be the open ball of radius $r$ around $0$. Furthermore, for each $\varepsilon>0$ we set:
\[
V(\phi,\varepsilon)=K\setminus \phi^{-1}(B_0(c/(1+\varepsilon)))\subseteq K.
\]
Since $\phi$ is continuous and each $B_0(r)$ is open, also $\phi^{-1}(B_0(r))$ is open, showing that $V(\phi,\varepsilon)$ is closed (and, therefore, compact). Furthermore, $\phi(0)\notin \cl(B_0(c/(1+\varepsilon))))$, so $V(\phi,\varepsilon)$ contains an open neighborhood of $0$. 

Now, for each $g\in G$, we have that  $\rho_g^{-1}(V(\phi,\varepsilon))=V(\phi\circ\rho_g,\varepsilon)$, and so, for each $k\in K$,  
\begin{equation}\label{chiE}
\frac{(1+\varepsilon)}{\phi(0)}\cdot(\phi\circ\rho_g)(k)\geq \chi_{\rho_g^{-1}(V(\phi,\varepsilon))}(k),\quad\text{for every $k\in K$.}
\end{equation}
For each $n\in\N_{+}$, choose $E_n\in {\frak U}(K)$ such that $E_n\subseteq V(\phi,1/n)$. Pick an arbitrary $F\in\Pf(G)$ and let $\varepsilon=1/n$.  With $g$ running in $F$, multiply all inequalities \eqref{chiE} and apply the norm $\Vert-\Vert_1$ to obtain: 
\[\begin{split}
\left(\frac{n+1}{n}\right)^{\vert F\vert}\frac{\Vert T_F(\rho_\top,\phi)\Vert_1}{T_F(\rho_\top,\phi)(0)}&= \left(\frac{n+1}{n}\right)^{|F|}\frac{\lVert\prod_{g\in F}\phi\circ\rho_g\rVert_1}{\phi(0)^{|F|}}\geq \\
&\geq \left\Vert\prod_{g\in F}\chi_{\rho_g^{-1}(V(\phi,1/n))}\right\Vert_1 \geq  \left\Vert\prod_{g\in F}\chi_{\rho_g^{-1}(E_n)} \right\Vert_1
\! =\mu\left(\bigcap_{g\in G}\rho_g^{-1}(E_n)\right)=\mu(T_F(\rho_{\frak U},E_n)).
\end{split}\]
Let $\varepsilon_n=\log((n+1)/n)$. Taking logarithms we get the desired inequality $w_\top(T_F(\rho_\top,\phi))\leq -\log\mu(T_F(\rho_{\frak U},E_n))+|F|\cdot \varepsilon_n.$ 

The last assertion follows from Proposition~\ref{conjumon}.
\end{proof}

Next we show that $G\la{\rho_{\frak U}} {\frak U}(K)$ is equivalent to $G\la{\rho_{\cov}} {\cov}(K)$ assuming that $K$ is an arbitrary compact group.

\begin{proposition}\label{prop_cov_doms_neig}\label{cor3}
$G\la{\rho_{\frak U}}{\frak U}(K)$ and $G\la{\rho_{\cov}}{\cov}(K)$ are equivalent. Hence, $h(\rho_{\frak U}, \s ) = h(\rho_{\cov}, \s)$
for every right F\o lner net $\s$.
\end{proposition}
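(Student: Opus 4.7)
The plan is to establish mutual domination between $G\la{\rho_{\frak U}}{\frak U}(K)$ and $G\la{\rho_{\cov}}{\cov}(K)$, from which equivalence follows by definition and the equality of entropies then falls out of Proposition~\ref{wad}. I will work directly with the unwound norms: $v_{\frak U}(T_F(\rho_{\frak U},U)) = -\log\mu(C_F(\rho,U))$ and $v_{\cov}(T_F(\rho_\cov,\U)) = \log N(\U_{\rho,F})$. The main tool will be Lemma~\ref{tech_AT} together with the elementary observation that, for a symmetric compact neighborhood $W\in\mathfrak U(K)$, the even cover $\U_K[W]$ satisfies $N(\U_K[W]) \geq 1/\mu(W)$, because any minimal subcover of size $n$ of $K$ by left translates of $W$ forces $n\,\mu(W)\geq \mu(K)=1$ by left-invariance of $\mu$; the same reasoning applied to $W=C_F(\rho,U)$ gives $N(\U_K[C_F(\rho,U)])\geq 1/\mu(C_F(\rho,U))$.

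\medskip

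For the first direction, $G\la{\rho_\cov}\cov(K)$ dominates $G\la{\rho_{\frak U}}{\frak U}(K)$: given $U\in\mathfrak U(K)$, I would pick $V\in\mathfrak U(K)$ with $V^{-1}V=VV\subseteq U$ (which exists by continuity of the group operation) and take $\U:=\U_K[V]\in\cov(K)$. By Lemma~\ref{tech_AT}(3), $T_F(\rho_\cov,\U_K[V])$ refines $\U_K[C_F(\rho,U)]$, so by \eqref{succ} and the packing observation,
\[
N(T_F(\rho_\cov,\U)) \geq N(\U_K[C_F(\rho,U)]) \geq \frac{1}{\mu(C_F(\rho,U))}.
\]
Taking logarithms yields $v_{\frak U}(T_F(\rho_{\frak U},U)) \leq v_\cov(T_F(\rho_\cov,\U))$ for every $F\in\Pf(G)$, which is exactly the domination condition.

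\medskip

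For the converse direction, $G\la{\rho_{\frak U}}{\frak U}(K)$ dominates $G\la{\rho_\cov}\cov(K)$: given $\U\in\cov(K)$, Lemma~\ref{tech_AT}(1) produces an even refinement $\U\preceq \U_K[U]$ for some neighborhood $U$ of $1\in K$, which we may shrink to a symmetric compact neighborhood. Invoking Fact~\ref{SIN}, I would then select a symmetric invariant $V\in\mathfrak U(K)$ with $VV\subseteq U$; this $V$ is the candidate $y$ witnessing domination. The key chain of inequalities is
\[
N(\U_{\rho,F}) \ \underset{\eqref{succ}}{\leq}\ N(T_F(\rho_\cov,\U_K[U])) \ \underset{\text{Lem.\ref{tech_AT}(2)}}{\leq}\ N(\U_K[C_F(\rho,U)]) \ \underset{\text{Lem.\ref{tech_AT}(4)}}{\leq}\ |D|,
\]
where $D\subseteq K$ is maximal with the property that $\{dC_F(\rho,V):d\in D\}$ is pairwise disjoint (with $H=\{1\}$, condition $(*)$ of Lemma~\ref{tech_AT}(4) reduces to disjointness of these translates). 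The pairwise disjointness together with the left-invariance of the Haar probability measure $\mu$ gives $|D|\,\mu(C_F(\rho,V))\leq \mu(K)=1$, hence $|D|\leq 1/\mu(C_F(\rho,V))$. Taking logarithms yields $v_\cov(T_F(\rho_\cov,\U)) \leq -\log\mu(C_F(\rho,V)) = v_{\frak U}(T_F(\rho_{\frak U},V))$, as required.

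\medskip

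Combining the two dominations gives the equivalence of $\rho_{\frak U}$ and $\rho_\cov$ as left $G$-actions on normed monoids. The final claim $h(\rho_{\frak U},\s)=h(\rho_\cov,\s)$ for every right F\o lner net $\s$ for $G$ then follows from Proposition~\ref{wad}, since equivalent actions are in particular weakly asymptotically equivalent. I expect no serious obstacle: the only subtlety is making sure that the neighborhood $V$ chosen in the second direction is simultaneously symmetric, invariant, and satisfies $VV\subseteq U$, which is precisely the combined content of Fact~\ref{SIN} and the continuity of the group operation; once this is secured, the packing/covering bookkeeping is dictated entirely by Lemma~\ref{tech_AT} and the invariance of $\mu$.
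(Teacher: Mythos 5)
Your argument is essentially the paper's: both directions use Lemma~\ref{tech_AT} (parts (1)--(4)) together with the Haar-measure packing bound $|D|\cdot\mu(C_F(\rho,V))\leq 1$ and the refinement bound $N(T_F(\rho_\cov,\U_K[W]))\leq N(\U_K[C_F(\rho,W)])$, establishing mutual domination and then invoking Proposition~\ref{wad}. Your bookkeeping is slightly reorganized (you factor through $N(\U_K[C_F(\rho,U)])\geq 1/\mu(C_F(\rho,U))$ where the paper writes the packing inequality directly), but this is cosmetic.

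Two small remarks. First, a plus: you explicitly invoke Fact~\ref{SIN} to pick $V$ symmetric \emph{and} invariant before applying Lemma~\ref{tech_AT}(4); the paper only requires $U\in\mathfrak U(K)$ with $UU\subseteq W$ and does not mention invariance, even though the lemma's hypothesis $(*)$ needs the invariance to push the right translate to the left. Your version is the more careful one. Second, a minor slip about types: in the first direction you write ``pick $V\in\mathfrak U(K)$ with $V^{-1}V=VV\subseteq U$'' and then form $\U_K[V]$, but elements of $\mathfrak U(K)$ are \emph{compact} symmetric neighborhoods, whereas even covers $\U_K[V]$ in $\cov(K)$ require $V$ to be \emph{open}; similarly in the second direction you speak of shrinking the $U$ from Lemma~\ref{tech_AT}(1) ``to a symmetric compact neighborhood,'' which would destroy the openness needed for $\U_K[U]$ to be an open cover (and for $C_F(\rho,U)$ to be open). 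The fix is what the paper does: keep the even-cover base open and only require the inner packing set to be in $\mathfrak U(K)$ (symmetric, compact, and --- as you correctly add --- invariant). None of the actual inequalities you wrote depend on this, so the substance is intact.
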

\begin{proof} 
First we show that $G\la{\rho_{\cov}} {\cov}(K)$ dominates $G\la{\rho_{\frak U}}{\frak U}(K)$. To this end, let $U\in {\frak U}(K)$, choose an open subset $V$ of $K$ such that $1\in V\subseteq V^{-1}V\subseteq U$, and consider the open cover $\V=\U_K[V] \in\cov(K)$. Fix $F\in\Pf(G)$.
By Lemma~\ref{tech_AT}(3), $\W=T_F(\rho_\cov,\V)$ refines the even cover $\U_K[C_F(\rho,U)]$. Therefore, $K$ can be covered by a family of $N(\W)$-many members of $\W$, each of which is contained in a translate of $C_F(\rho,U)$, hence $1=\mu(K)\leq N(\W)\cdot \mu(C_F(\rho,U))$. This proves that
\begin{equation*}\label{May16}
-\log\mu(C_F(\rho,U))\leq \log N(T_F(\rho_\cov,\V)).
\end{equation*} 

To see that $G\la{\rho_{\frak U}}{\frak U}(K)$ dominates $G\la{\rho_{\cov}} {\cov}(K)$, let $\V\in \cov(K)$. By Lemma~\ref{tech_AT}(1), there exists an open neighborhood $W$ of $1$ such that $\V \preceq \U_K[W]$. Then, for each $F\in\Pf(G)$, 
\begin{equation}\label{last_prop_dom_app_eq1}
\log N(T_F(\rho_\cov,\V))\leq \log N(T_F(\rho_\cov,\U_K[W])).
\end{equation}
Choose $U\in {\frak U}(K)$ such that $UU\subseteq W$. Recall that there exists a finite subset $D$ of $K$ such that $\mathcal D=\{dC_F(\rho,W):d\in D\}$ is a subcover of $\U_K[C_F(\rho,W)]$ (by Lemma~\ref{tech_AT}(4)) and  the union $\bigcup_{d\in D} (dC_F(\rho,U))$ is disjoint; so,
$$N(\U_K[C_F(\rho,W)]) \leq |D|\ \mbox{ and } \ |D|\cdot \mu(C_F(\rho,U)) \leq \mu(K)\leq 1.$$ 
This gives $N(\U_K[C_F(\rho, W)]) \leq \mu(C_F(\rho,U))^{-1}$. As $N(T_F(\rho_\cov,\U_K[W])) \leq N(\U_K[C_F(\rho,W)])$ by Lemma~\ref{tech_AT}(2),
we conclude that $N(T_F(\rho_\cov,\U_K[W]))\leq  \mu(C_F(\rho,U))^{-1}$.  Taking logarithms, together with \eqref{last_prop_dom_app_eq1}, we get 
$$\log N(T_F(\rho_\cov,\V))\leq\log N(T_F(\rho_\cov,\U_K[W]))\leq -\log\mu(C_F(\rho,U)).$$

 This proves that $G\la{\rho_{\frak U}}{\frak U}(K)$ and $G\la{\rho_{\cov}} {\cov}(K)$ are equivalent.  The last assertion follows from Proposition~\ref{conjumon}. 
\end{proof}

\subsection{The proof of the Bridge Theorem for amenable group actions}\label{proof_BT}

\begin{theorem}[Bridge Theorem]\label{BT_for_invertible_actions} Let $G$ be an amenable group, $K$ a compact Abelian group, $X$ a discrete Abelian group and let $K\ra{\rho} S$ and $S\la{\lambda}X$ be a right and a left $G$-action, respectively.
Then: 
\begin{enumerate}[(1)]
\item $h_\top(\rho)=h_\alg(\dual\rho)$; 
\item $h_\alg(\lambda)=h_\top(\dual\lambda)$. 
\end{enumerate}
\end{theorem}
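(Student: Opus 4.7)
The plan is to chain together the series of equalities between the entropies of various induced actions on normed monoids, as outlined in the remark opening the appendix. The backbone will be established in four intermediate results that I would prove in the subsections \S\ref{Petersmon}, \S\ref{algebraic_approximation_part} and \S\ref{topological_approximation_part}: Proposition~\ref{conjugated} (conjugation $\lambda_\alg \simeq \rho_\top$ via Fourier transform), Corollary~\ref{cor1} (asymptotic equivalence $\lambda_{\frak F} \sim \lambda_\alg$), Proposition~\ref{cor2} (asymptotic equivalence $\rho_{\frak U} \sim \rho_\top$), and Proposition~\ref{cor3} (equivalence $\rho_{\frak U} \sim \rho_\cov$). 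Each of these yields equality of entropies by Propositions~\ref{wad}/\ref{conjumon}.

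For part (1), fix a right F\o lner net $\s$ for $G$ and chain the equalities: by definition (Example~\ref{Gactionsextop}) $h_\top(\rho) = h(\rho_\cov,\s)$; then Proposition~\ref{cor3} gives $h(\rho_\cov,\s) = h(\rho_{\frak U},\s)$; then Proposition~\ref{cor2} gives $h(\rho_{\frak U},\s) = h(\rho_\top,\s)$; then Proposition~\ref{conjugated} (the Fourier isomorphism $\M_\alg(X) \cong \M_\top(K)$ conjugating the induced $G$-actions, with $\lambda = \dual\rho$) gives $h(\rho_\top,\s) = h(\lambda_\alg,\s)$; then Corollary~\ref{cor1} gives $h(\lambda_\alg,\s) = h(\lambda_{\frak F},\s)$; and finally $h(\lambda_{\frak F},\s) = h_\alg(\lambda)$ by Example~\ref{Gactionsexalg}(3). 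Composing all of this yields $h_\top(\rho) = h_\alg(\dual\rho)$.

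Part (2) is then a short reduction to part (1). Set $K = \dual X$ and $\rho = \dual\lambda$, so $\dual\rho = \lambda^{\wedge\wedge}$. By Pontryagin duality, $\lambda^{\wedge\wedge}$ is conjugated to $\lambda$, hence $h_\alg(\dual\rho) = h_\alg(\lambda)$ by Proposition~\ref{mon}(1). Applying part (1) to $\rho$ we obtain $h_\top(\dual\lambda) = h_\top(\rho) = h_\alg(\dual\rho) = h_\alg(\lambda)$, as required.

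The only substantive work is already isolated in the four preliminary results, and I expect the main technical hurdle to be Proposition~\ref{conjugated}, or rather the groundwork for it: verifying that the Fourier transform $\widehat{(-)}\colon \M_\alg(X) \to \M_\top(K)$ is well-defined, multiplicative (sending $*$ to $\cdot$), involutive, and norm-preserving in the sense $w_\alg(\phi) = w_\top(\widehat\phi)$, together with the verification that it intertwines $\lambda_\alg$ with $\rho_\top$. The approximation arguments of \S\ref{algebraic_approximation_part} and \S\ref{topological_approximation_part} are more elementary convolution/measure estimates; in particular, the asymptotic domination of $\lambda_{\frak F}$ by $\lambda_\alg$ hinges on the polynomial growth of $|H^{(n+1)}|/|H^{(n)}|$ in Abelian groups, while the domination of $\rho_\top$ by $\rho_{\frak U}$ relies on continuity of positive-definite functions through the sublevel sets $V(\phi,\varepsilon)$.
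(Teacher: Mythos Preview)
Your proposal is correct and follows essentially the same approach as the paper: the identical chain of equalities through Examples~\ref{Gactionsexalg} and~\ref{Gactionsextop}, Corollary~\ref{cor1}, Proposition~\ref{conjugated}, Proposition~\ref{cor2}, and Proposition~\ref{cor3}. The only cosmetic difference is that the paper proves part~(2) directly and remarks that (1) and (2) are equivalent, whereas you prove (1) directly and deduce (2) via Pontryagin duality and Proposition~\ref{mon}(1); the underlying argument is the same.
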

\begin{proof} 
Since (1) and (2) are equivalent, we choose to prove (2). Let $\rho=\dual\lambda$. For a fixed right F\o lner net $\s$ for $G$, we have, 
by Example~\ref{Gactionsexalg} and Example~\ref{Gactionsextop}, 
\[h_\alg(\lambda)=h(\lambda_{\frak F},\s)\quad\text{and}\quad h_\top(\rho)=h(\rho_{\cov},\s).\]
Furthermore,  $h(\lambda_{\frak F},\s)=h( \lambda_\alg,\s)$ by Corollary~\ref{cor1}, $h(\lambda_\alg,\s) = h(\rho_\top,\s)$ by Proposition~\ref{conjugated}, 
$h(\rho_\top,\s) = h(\rho_{\frak U},\s) $ by Proposition~\ref{cor2}, and $h(\rho_{\frak U},\s) =h(\rho_\cov,\s)$  by Proposition~\ref{cor3}. 
We conclude that 
\[h_\alg(\lambda)=h(\lambda_{\frak F},\s)=h(\lambda_\alg,\s)=h(\rho_\top,\s)=h(\rho_{\frak U},\s)= h(\rho_\cov,\s) =h_\top(\rho). \qedhere\]
\end{proof}

---------------------------------------------------------

\bigskip
\noindent
Department of Mathematics, Computer Science and Physics, University of Udine, \\
Via Delle Scienze 206, 33100 Udine, Italy

\medskip
\noindent Department of Mathematics, Computer Science and Physics, University of Udine, \\
Via Delle Scienze
206, 33100 Udine, Italy

\medskip
\noindent 
Departament de Matem\`atiques, Facultat de Ci\`encies, Universitat Aut\`onoma de Barcelona,\\
Facultat de Ci\`encies, Edifici C, 08193, Bellaterra (Barcelona), Spain

\end{document}